\documentclass[11pt,a4paper]{article}
\usepackage[a4paper,hmargin=25mm,vmargin=25mm]{geometry}
\usepackage[english]{babel}
\usepackage[utf8]{inputenc}
\usepackage[T1]{fontenc}
\usepackage{lmodern}

\usepackage{multirow}

\usepackage{authblk}
\usepackage{amsmath}
\usepackage{amsbsy}
\usepackage{amsfonts}
\usepackage{amssymb}
\usepackage{amscd}
\usepackage{amsthm}
\usepackage{color}
\usepackage{appendix}

\usepackage{dsfont}
\usepackage{units}
\usepackage{pdfsync}
\usepackage{tikz}
\usepackage{pgfplots}
\usetikzlibrary{calc}
\usepackage{caption}
\usepackage{subcaption}
\usepackage{hyperref}
\hypersetup{colorlinks=true, linkcolor=black,citecolor=black}
\usepackage{stmaryrd}

\newtheorem{de}{Definition}
\newtheorem{remark}{Remark}
\newtheorem{lemma}{Lemma}

\newtheorem{property}{Property}
\newtheorem{thm}{Theorem}
\newtheorem{corollary}{Corollary}

\newcommand{\Diff}{\text{Diff}}

\DeclareMathOperator{\Com}{com}
\DeclareMathOperator{\vol}{vol}
\DeclareMathOperator{\Id}{Id}
\DeclareMathOperator{\myEnd}{End}

\providecommand{\M}{\ensuremath{\boldsymbol{M}} }
\providecommand{\q}{\ensuremath{\boldsymbol{q}}}

\providecommand{\btf}{\ensuremath{\boldsymbol{\tilde{f}}} }

\def\R{\mathbb{R}}
\def\N{\mathbb{N}}
\def\S{\mathbb{S}}

\def\Haus{\mathcal{H}}

\def\og{\overline{g}}
\def\onabla{\overline{\nabla}}
\def\cI{\mathcal{I}}
\def\cJ{\mathcal{J}}
\def\indset{\llbracket 1,d\rrbracket}
\def\Gampol{\Gamma_{\text{pol}}}
\def\Cpol{C_{\text{pol}}}
\def\op{\overline{p}}
\def\talpha{{\tilde{\alpha}}}
\def\tbeta{{\tilde{\beta}}}

\providecommand{\f}{\ensuremath{\boldsymbol{f}} }
\providecommand{\x}{\ensuremath{\boldsymbol{x}} }
\providecommand{\C}{\ensuremath{\boldsymbol{C}} }

\providecommand{\g}{\ensuremath{\boldsymbol{g}} }
\providecommand{\h}{\ensuremath{\boldsymbol{h}} }
\providecommand{\bzeta}{\ensuremath{\boldsymbol{\zeta}} }
\providecommand{\p}{\ensuremath{\boldsymbol{p}} }

\providecommand{\ie}{{\it i.e.} }

\newcommand\rst[2]{{#1}_{\restriction_{#2}}}

\newcommand{\mres}{\mathbin{\vrule height 1.6ex depth 0pt width
0.13ex\vrule height 0.13ex depth 0pt width 1.3ex}}

\usepackage{algorithm}
\usepackage{algpseudocode}

\algblock[corp]{Begin}{End}

\DeclareUnicodeCharacter{00A0}{~}

\author[1]{N. Charon}
\author[2]{B. Charlier}
\author[3]{A. Trouvé}
\affil[1]{\small CIS, Johns Hopkins University, USA}
\affil[2]{\small IMAG, UMR 5149, Université de Montpellier, France}
\affil[3]{\small CMLA, UMR 8536, \'Ecole normale supérieure de Cachan, France}

\title{Metamorphoses of functional shapes in Sobolev spaces}
\date{}

\begin{document}

\maketitle

\begin{abstract}
In this paper, we describe in detail a model of geometric-functional variability between fshapes. These objects were introduced for the first time by the authors in \cite{Charlier15} and are basically the combination of classical deformable manifolds with additional scalar signal map. Building on the aforementioned work, this paper's contributions are several. We first extend the original $L^2$ model in order to represent signals of higher regularity on their geometrical support with more regular Hilbert norms (typically Sobolev). We describe the bundle structure of such fshape spaces with their adequate geodesic distances, encompassing in one common framework usual shape comparison and image metamorphoses. We then propose a formulation of matching between any two fshapes from the optimal control perspective, study existence of optimal controls and derive Hamiltonian equations and conservation laws describing the dynamics of geodesics. Secondly, we tackle the discrete counterpart of these problems and equations through appropriate finite elements interpolation schemes on triangular meshes. At last, we show a few results of metamorphosis matchings on synthetic and several real data examples in order to highlight the key specificities of the approach.     
\end{abstract}

%

\tableofcontents

\section{Introduction}
Shape or pattern analysis is a long standing and still widely studied problem that has recently found many interesting connections with fields as varied as geometry mechanics, image processing, machine learning or computational anatomy. In its simplest form, it consists in estimating/quantifying deformations between geometric objects, typically a deformable template onto a target (registration) or multiple different subjects from a population group (atlas estimation).  

There are already many existing deformation models under which registration problems may be formulated, \cite{Rueckert1999,Arsigny2006,Ashburner2007,Vercauteren2009} are examples among others where deformations belong to specific groups of diffeomorphisms. This paper falls in the context of the Large Deformation Diffeomorphic Metric Mapping (LDDMM) model \cite{Dupuis1998,Beg2005,Younes} that has found quite a lot of attention over the past decade and triggered the development of \textit{diffeomorphometry}, roughly speaking the analysis through a common Riemannian framework of the shape variability for many modalities of geometric objects including landmarks \cite{Joshi2000}, images \cite{Beg2005}, unlabeled point clouds \cite{Glaunes2004}, curves and surfaces \cite{Glaunes2006,Durrleman4,Charon2} or tensor fields \cite{Miller2009}.  

Among numerous extensions of the original LDDMM model, some works have looked into enriching the pure diffeomorphic setting in order to account for shape variations that may not be retrieved solely by deformations. This was in particular the motivation behind the concept of \textit{metamorphosis} introduced in the seminal paper \cite{Trouve1}. Metamorphoses combine diffeomorphic transport with an additional dynamic evolution of the template, and elegantly extends Riemannian metrics to these types of transformations. So far, metamorphoses have been defined and studied in the situation of landmarks, images and more recently on measures \cite{Richardson2015}.

The main contribution of this paper is to construct a generalized metamorphosis framework and corresponding matching formulation for a class of objects coined as \textit{functional shapes} in a very recent article by the authors \cite{Charlier15}. These functional shapes or fshapes are essentially scalar signals but, unlike images, supported on deformable shapes as curves, surfaces or more generally submanifolds of given dimension. In other words, they encompass mathematical objects like textured surfaces (Figure \ref{fig:example_cortical_thickness}); these are increasingly found in datasets issued from medical imaging, one common example being thickness maps estimated on anatomical membranes \cite{Lee15} or functional maps measured on cortical surfaces by fMRI. 

One of the principal difficulty in analyzing the variability of fshapes in both their geometric and texture components is that it does not exactly fall in the standard approach of shape spaces and diffeomorphometry. In \cite{Charlier15}, a first tentative extension of LDDMM was introduced in place under the name of 'tangential model' where transformations of functional shapes are basically decoupled between a diffeomorphism of the support and an additive residual signal map living on the template coordinate system. This provides a fairly simple and easy-to-implement extension of the large deformation model. There are however several downsides to this approach. The main one is that signal evolution in this tangential model is static which results in a framework that lacks all the theoretical guarantees of a real metric setting like LDDMM.

A seemingly more adequate way is to adapt the idea of image metamorphosis to our situation of deformable geometric supports, which involves the introduction of a dynamic model and metric for signal variations. This has been summarily proposed as the \textit{fshape metamorphosis} framework in \cite{Charlier15} where it was shown that we can then recover a metric structure on fshape bundles. The former paper, however, restricted to the theoretical analysis of the model in the simplest case of signal functions in the $L^2$ space and did not study more in depth the dynamics of geodesics. It also evidenced some significant limitations due to the lack of regularity in the signal part.

The present paper is meant as both a comprehensive complement and extension to \cite{Charlier15}. More specifically, we redefine functional shapes' bundles and metamorphoses in the more general context of Sobolev spaces and show that we obtain again complete metric spaces of fshapes. We then go further in formulating, in the infinite-dimensional setting, the natural generalization of registration for fshapes as a well-posed optimal control problem and deriving the Hamiltonian equations underlying dynamics of the control system. This whole framework has the interest of including within an integrated setting both large deformation registration of submanifolds as well as metamorphosis of classical images. 

Based on these results, we formulate the equivalent discrete matching problem for fshapes represented as textured polyhedral meshes and deduce an fshape matching algorithm akin to geodesic shooting schemes. The algorithm is applied on a few synthetic as well as real data examples to illustrate, in the last section, the interest of metamorphosis over the simpler tangential model as well as the possible benefits of higher regularities for signal metrics.

Authors have intended to make the paper as self-contained as possible. Yet a few definitions and derivations are not repeated within the text for the sake of concision. This is the case in particular for the issue of data fidelity terms between shapes and fshapes, which have been thoroughly studied in several previous publications we point to in section \ref{ssec:existence_solutions}.

\section{Functional Shape spaces}
\subsection{Shape spaces of submanifolds}
\label{ssec:shape_space}
We start by recalling a few concepts and definitions about classical shape spaces that we borrow in part from \cite{arguillere14:_shape}. We shall consider shapes that are geometrical objects embedded into a given ambient vector space $\R^n$. More specifically, in the case of interest of this paper, these will be submanifolds (with or without boundary) in $\R^n$ of dimension $d$, for $1\leq d \leq n$ and such that $X$ and the boundary $\partial X$ are of regularity $s$ with $s\geq 0$. Any of such submanifold $X$ may be represented using a partition of unity and parametrization functions $q \in C^{s}(M,\R^n)$ where $M$ can be for instance an open domain of $\R^d$ or the $d$-dimensional sphere $\S^d$ (in the case of a closed manifold). Moreover, each $X$ carries a volume measure given by the restriction $\mathcal{H}^{d} \mres X$ of the $d$-dimensional \textit{Hausdorff measure} in $\R^n$.

In the special $s=0$, we shall assume by convention that $X$ is a $d$-dimensional bounded \textit{rectifiable subset} of $\R^n$ (cf \cite{Federer} or \cite{Simon} for more detailed definitions), in other words that there is a countable set of Lipschitz regular parametrization functions on $\R^d$ (not just continuous) that covers $\mathcal{H}^{d}$-almost all of $X$. Rectifiable subsets include regular submanifolds as well as polyhedral meshes for instance and thus constitute a nice setting to model both discrete and continuous shapes. 

As in classical shape space theory, geometrical shapes are acted on by groups of diffeomorphisms of the ambient space $\R^n$. We will denote by $\text{Diff}^{p}_{\text{Id}}$ the group of $C^{p}$-diffeomorphisms of $\R^n$ converging to $\text{Id}$ at infinity. This is an open subset of the Banach affine space $\text{Id} + C^{p}_{0}(\R^n,\R^n)$, with $\Gamma^{p}(\R^n)\doteq C^{p}_{0}(\R^n,\R^n)$ being the set of $C^p$ vector field of $\R^n$ vanishing at infinity together with all its derivatives up to order $p$, equipped with the norm 
$$\|v\|_{p,\infty} = \sum_{i=0}^{p} \sup \left\{ \left|\frac{\partial^i v}{\partial x_1^{i_1} \ldots \partial x_n^{i_n}}(x)\right| \ | \ x \in \R^n, (i_1,\ldots,i_n) \in \N^n, \ i_1+\ldots+i_n =i \right\}$$ 

Now, $\text{Diff}^{p}_{\text{Id}}$ acts on $d$-dimensional $C^s$ submanifold $X$ for any $p\geq s$ by the simple transport equation:
\begin{equation}
 \label{eq:action_shape}
 \phi \cdot X \mapsto \phi(X)
\end{equation} 
for all $\phi \in \text{Diff}^{p}_{\text{Id}}$. If $X$ is given through a parametrization $q \in \mathcal{S} \doteq \text{Emb}^{s}(M,\R^n)$ (assuming a unique parametrization to simplify), the set of $C^s$ embeddings of $M$ into $\R^n$, then this action is just equivalent to $\phi \circ q$. It is also transitive on the set of all submanifolds given by these embeddings. When $p \geq \max\{1,s\}$, the action has additional smoothness and regularity properties that make $\mathcal{S}$ a \textbf{shape space} of order $s$ in the general vocabulary and setting of \cite{Arguillere2015}. In particular, as shown in \cite{arguillere14:_shape}, for all $q \in \text{Emb}^{s}(M,\R^n)$ the mapping $R_{q}: \ \phi \mapsto \phi \circ q $ is differentiable and its differential, denoted by $\xi_{q}: \ \Gamma^{p}(\R^n) \rightarrow C^{s}(M,\R^n)$, $v\mapsto v\circ q$ is called the \textit{infinitesimal action} of the group. In addition, for any time-dependent smooth velocity field $v \in L^2([0,1],\Gamma^{p}(\R^n))$ that is square integrable in time, the flow equation $\dot{q}(t) = \xi_{q(t)} v(t)$ with any initialization $q(0)=q_0 \in \mathcal{S}$ has a unique solution $q(\cdot) \in H^1([0,1],\mathcal{S})$, $q(t)$ being the state at time $t$.

\subsection{Large deformation metrics and LDDMM framework}
\label{ssec:lddmm}
Defining a metric on the previous shape space is done in a general way by constructing right-equivariant metrics on the acting group of diffeomorphisms \cite{Younes}. This is what is addressed by the now well-studied Large Deformation Diffeomorphic Metric Mapping (LDDMM) model where deformations are generated from Hilbert spaces of smooth vector fields. We give a brief summary in the following paragraphs. 

One starts from a Hilbert space $V$ that is assumed to be continuously embedded into one of the previous space $\Gamma^{p}(\R^n)$. In that case, the metric on $V$ which we write $\|u\|_{V}$ is controlled by the supremum norms of $u$ and its derivatives up to order $p$. In most situations, $V$ is constructed as a \textbf{Reproducing Kernel Hilbert Space} (RKHS) in which case $V$ is generated from a vector-valued kernel $K_V(x,y)$ with desired smoothness and where for any $x,y \in \R^n$, $K_{V}(x,y)$ is a $n\times n$ matrix such that $K$ satisfies the usual positive-definiteness property: 
$$ \sum_{i,j} \alpha_i^{T} K_{V}(x_i,x_j) \alpha_j > 0 $$    
for all finite sets of distinct points $x_i$ and vectors $\alpha_i$ (not simultaneously vanishing). Such kernels generally corresponds to Green's functions of some differential operators $L_{V}: \ V \rightarrow L^2(\R^n,\R^n)$ and the metric $\| \cdot \|_{V}$ has the expression
\begin{equation}
 \|u\|_{V}^{2} = \langle L_{V} u, u \rangle_{L^2}
\end{equation}
More details and examples of such kernels and operators can be found in \cite{Younes} chap.13. 

Now, since these vector fields are regular enough, as already mentioned in the previous section, the flow application of any time-dependent vector field $v \in L^2([0,1],V)$ which is the mapping $\phi_{t}$ of $\R^n$ defined by:
\begin{equation}
 \label{eq:flow}
 \left\{ \begin{array}[h]{l}
\dot{\phi}_{t} = v_{t} \circ \phi_{t} \\
\phi_{0} = \text{Id}
\end{array}
\right. 
\end{equation}
exists at all times $t \in [0,1]$ which defines a curve of diffeomorphisms in $H^{1}([0,1],\text{Diff}^{p}_{\text{Id}})$. The set of all attainable flows at time $1$, $G_{V} \doteq \{\phi_{1}^{v} \ | \ v \in L^2([0,1],V) \}$ is a subgroup of $\text{Diff}^{p}_{\text{Id}}$. In addition, it can be equipped with a right-invariant distance defined as the minimal path length or action of all curves joining two given elements in $G_V$. In other words, for any $\phi \in G_{V}$:
\begin{equation}
 \label{eq:dist_GV}
d_{G_V}(\text{Id},\phi) = \inf \left\{ \int_{0}^{1} \|v_t\|_{V}^{2} dt \ | \ v \in L^2([0,1],V), \ \phi_{1}^{v} = \phi \right\}
\end{equation}
This whole setting does not exactly correspond to a Riemannian metric but finds a nice interpretation in (infinite-dimensional) sub-Riemannian geometry where the curves $\phi_{t}$ defined by \eqref{eq:flow} may be thought as horizontal curves in $\text{Diff}^{p}_{\text{Id}}$ for the sub-Riemannian structure induced by $V$ and $\|.\|_{V}$, cf \cite{Arguillere2015b,Arguillere2015}. Minimizing paths between two diffeomorphisms of $G_V$ are thus still called a \textit{geodesics}, although it is generally in a sub-Riemannian understanding. The dynamics of these geodesics can be further described within a Hamiltonian formulation, which we shall detail later on. 

The distance \eqref{eq:dist_GV} on the deformation group $G_V$ induces in turn a distance between the shapes introduced in the previous subsection. For two $C^s$ submanifolds $X_0$ (template) and $X_1$ (target) such that $X_1$ is in the orbit of $X_0$ for the action of $G_V$,
\begin{equation}
 \label{eq:dist_S}
 d_{\mathcal{S}}(X_0,X_1) = \inf\left\{ \int_{0}^{1} \|v_t\|_{V}^{2} dt \ | \ v \in L^2([0,1],V), \ \phi_{1}^{v}(X_0) = X_1 \right\}
\end{equation}
Note that when $X_0$ and $X_1$ are parametrized by $q_0$ and $q_1$, then $X_{t} = \phi_{t}^{v}(X_0)$ is parametrized by $q(t)=\phi_{t}^{v} \circ q_0$ and by differentiating, we get back the state evolution equation $\dot{q}(t) = \xi_{q(t)} v(t)$.   

This way of quantifying shape variation is however only well-defined within the orbit of a template shape $X_0$ under the action of $G_V$. In practice, the exact matching constraint $\phi_{1}^{v}(X_0) = X_1$ is not realizable, either because the group $G_V$ may not be big enough to account for all possible deformations or because shapes might not even be diffeomorphic due to noise perturbations. This issue can be resolved generically by considering instead a variational problem of the form:
\begin{equation}
 \label{eq:registration_shape}
 \inf\left\{ \int_{0}^{1} \|v_t\|_{V}^{2} dt + g(q(1)) \ | \ \dot{q}(t) = \xi_{q(t)} v(t), \ v \in L^2([0,1],V) \right\}
\end{equation}
where $g$ is a data attachment term measuring the discrepancy between the approximate matched shape $q(1)$ with the actual target $q_1$. The minimization of \eqref{eq:registration_shape} is exactly the formulation of registration between two shapes in the LDDMM model. This can be thought as an \textbf{optimal control} type of problem in infinite dimensions since the control is here given by the time-dependent velocity field $v$; this interpretation has been thoroughly studied in \cite{arguillere14:_shape} and used in the rigorous derivation of Hamiltonian equations for the deformation dynamics, which we shall come back to in section \ref{sec:optimal_control}. 

The actual construction of the discrepancy term $g$ in \eqref{eq:registration_shape} in the situation of submanifolds is also a delicate issue. For instance, defining $g$ through the parametrization space like the $L^2$ metric $g(q(1)) = \int_{M} |q(1)(m) - q_1(m)|^2 dm$ is problematic in several respects, first because parametrizations are generally not available in practical situations where shapes are rather given as vertices with meshes and second because this type of discrepancy term is a metric between parametrizations but not necessarily between shapes, in the sense that it is \textbf{not} invariant to reparametrization.

A lot of work has been done in order to propose data attachment terms that are geometrical (invariant to reparametrization). We may cite for example the quotient Sobolev metrics on spaces of immersed curves presented in \cite{Bauer2014139,Bauer2014_Rtransforms}. An alternative path that has been actively investigated is the one of discrepancy terms obtained from geometric measure theory representations like measures \cite{Glaunes2004}, currents \cite{Glaunes2006} and more recently varifolds \cite{Charon2,Durrleman-Charon}. These have the interesting advantage of being constructible for discrete and smooth shapes of all dimensions/codimensions while being fairly simple to compute numerically. We refer to the previous papers for more detailed discussions on this topic.

\subsection{Functional shapes}
The general setting of shape spaces and large deformation models being summarized in the previous sections, we now turn to the main topic of this paper, which is about proposing an extended mathematical setting for functional shapes. The notion of functional shapes in computational anatomy was introduced originally in \cite{Charon1} and later developed into a more complete framework in \cite{Charlier15}. However, the model presented there was restricted to signals in $L^2$ spaces and it has been observed that the corresponding metrics may be too weak in some situations and generate instability in matching algorithms, cf \cite{Charlier15} section 9 and \cite{Nardi2015}. In addition, the derivation of dynamical equations in the continuous setting was left aside and just expressed for the discrete problem. In the rest of this section, we intend to set up more formal and general definitions of functional shapes, fshape spaces and metrics on these spaces. 

Functional shapes are essentially objects that correspond to signals like images but defined on deformable geometries. 
\begin{de}
 We say that the couple $(X,f)$ is a functional shape (or fshape) of regularity $s$ in $\R^n$, with $s \in \mathbb{N}$, if $X$ is a bounded $C^s$ submanifold of $\R^n$ and $f: \ X \rightarrow \R$ is a real-valued function on $X$ that belongs to $H^{s}(X)$, the set of Sobolev functions of order $s$ on $X$. 
\end{de}
Typically, we will call $X$ the \textit{geometrical support} of the fshape and $f$ the \textit{signal} attached to this support. For $s=0$, $H^{0}(X)$ is by convention the space $L^2(X)$ of square integrable functions on $X$, i.e of measurable functions $f$ such that  
\begin{equation}
 \label{eq:L2_X}
 \|f\|_{L^2(X)}^2 \doteq \int_{X} |f(x)|^2 d \Haus^{d}(x) < \infty 
\end{equation}
For $s\geq 1$, the Sobolev space $H^{s}(X)$ on the submanifold $X$ is defined in several equivalent ways in the literature. Following \cite{Aubin1982,Hebey}, it can be defined for instance as the completion of the space of smooth functions on $X$ for the norm:
\begin{equation}
 \label{eq:Hs_X}
 \|f\|_{H^{s}(X)}^2 \doteq \sum_{k=0}^{s} \|\nabla^{k}f\|_{L^2(X)}^2
\end{equation}
These are all Hilbert spaces for the inner product defined by $\langle f_1,f_2 \rangle_{H^{s}(X)} = \sum_{k=0}^{s} \langle \nabla^{k}f_1, \nabla^{k}f_2\rangle_{L^2(X)}^2$. We should precise here that for $s\geq 1$ we interpret the $s$ times covariant derivative $\nabla^s f$ of the function $f$ as a $(0,s)$ type tensor on the manifold $X$ and that $|\nabla^s f|$ denotes the trace norm of tensors given by $\sqrt{T^* T}$ where $T^*$ is the adjoint for the Riemannian metric on $X$. For example, if $s=1$, $\nabla f \in TX$ and $|\nabla f |^2$ at each $x \in X$ reduces to the usual norm of vector for the Euclidean inner product on the ambient space $\R^n$. 

\begin{figure}
\centering
 \begin{tabular}{cc}
 \includegraphics[width=8cm]{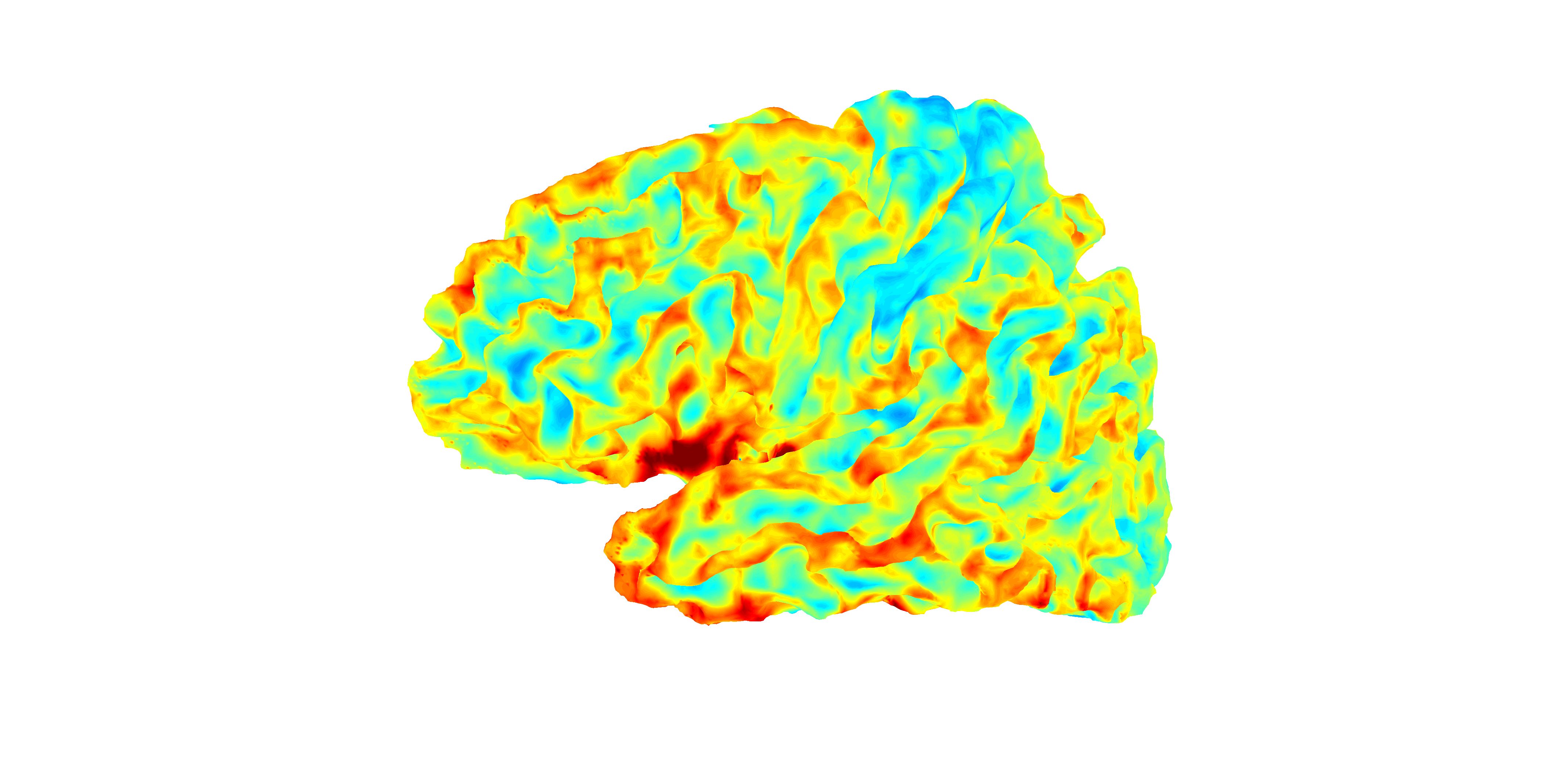} & \includegraphics[width=8cm]{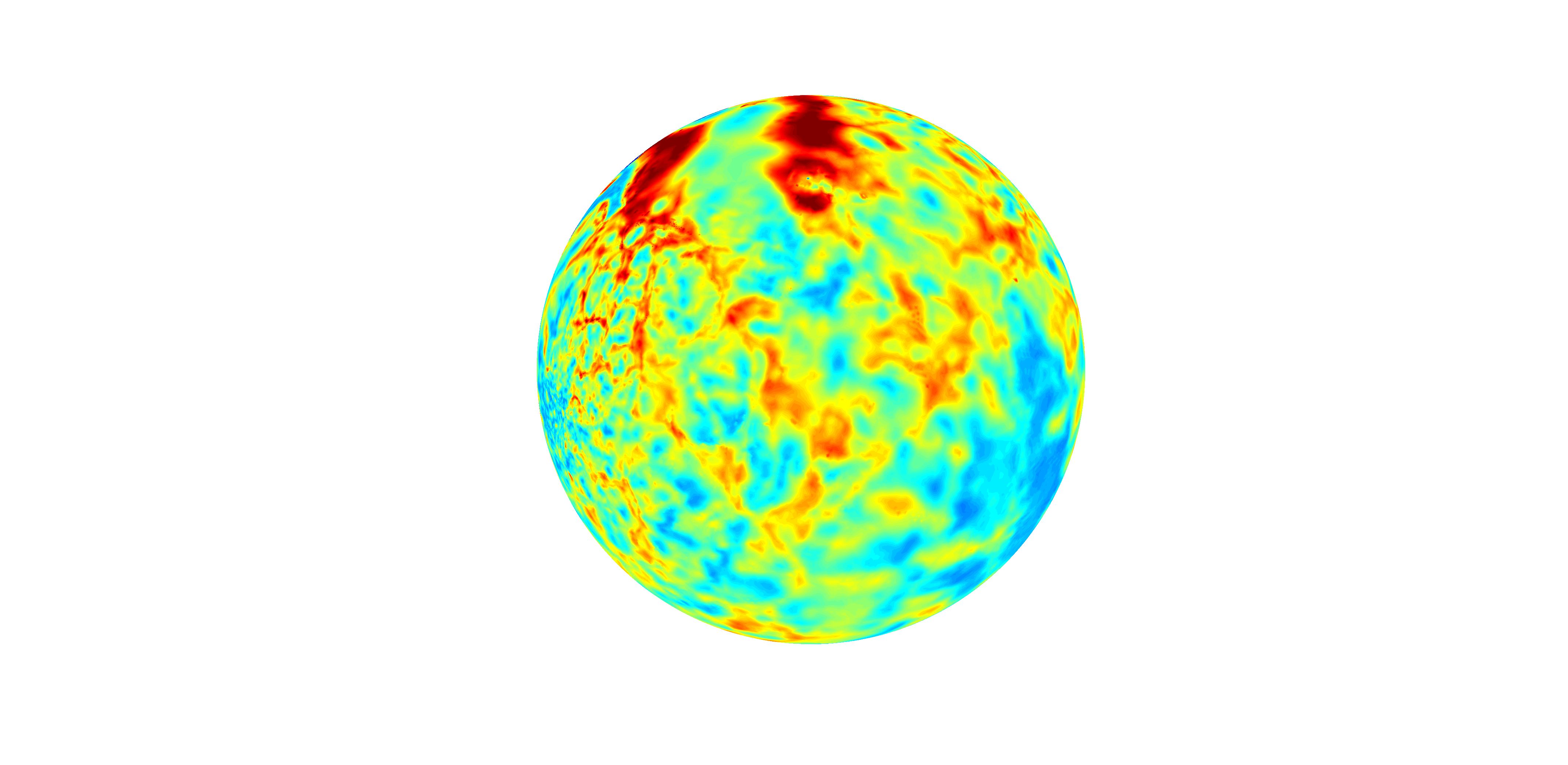} \\
 $(X,f)$ & $(q,\check{f})$
 \end{tabular}
\caption{Example of functional shape from Computational Anatomy: a cortical surface with thickness estimations (left) and a corresponding spherical parametrization (right).}
\label{fig:example_cortical_thickness}
\end{figure}

\begin{remark}
 Note that one may also define the $H^s$ norm on $X$ as follows
\begin{equation}
 \label{eq:Hs_X2}
 \|f\|_{H^{s}(X)}^2 = \sum_{k=0}^{s} \langle f, \Delta_X^k f \rangle_{L^2(X)} 
\end{equation}
where $\Delta_{X}$ denotes the \textbf{Laplace-Beltrami operator} on $X$, i.e minus the divergence of the tangential gradient on the manifold $X$. This gives a norm equivalent to \eqref{eq:Hs_X} on the subspace $H_{0}^{s}(X)$, the completion of the space of smooth compactly supported function in the interior of $X$. For $s=1$, \eqref{eq:Hs_X} and \eqref{eq:Hs_X2} are in fact exactly equal on $H_{0}^{1}(X)$ thanks to Stokes formula.
\end{remark}

We now seek a generalization of shape spaces presented in section \ref{ssec:shape_space} to structure sets of fshapes and account for combined variations in geometry and signal. Using the notations and definitions recalled in \ref{ssec:shape_space}, let $\mathcal{S}$ be a shape space of $C^s$ submanifolds (for the action of a deformation group $G \subset \text{Diff}^{p}_{\text{Id}}$, $p \geq \max\{1,s\}$). We introduce the following definition
\begin{de}
 The fshape bundle of regularity $s$ modeled on $\mathcal{S}$ is the vector bundle:
 \begin{equation}
  \label{eq:fs_bundle_def}
  \mathcal{F}_{\mathcal{S}}^{s} = \{(X,f) \ | \ X \in \mathcal{S}, \ f \in H^{s}(X) \} 
 \end{equation}
\end{de}
This is an extension to more general Sobolev spaces of the similar definition for $L^2$ that can be found in \cite{Charlier15}.

In the situations of interest for this paper, we will consider exclusively groups $G=G_V$ obtained as flows of time-dependent velocity fields modeled on an Hilbert space $V$ of vector fields with adequate regularity as explained in section \ref{ssec:lddmm}. In that case, shape spaces are generally taken as orbits for the action of $G_V$ of a particular bounded $C^s$ submanifold $X_0$ (called template), i.e $\mathcal{S} \doteq \{\phi(X_0) \ | \ \phi\in G_V\}$ which turns $\mathcal{S}$ into a homogeneous space. The previous action extends naturally to $\mathcal{F}_{\mathcal{S}}^{s}$ as follows:
\begin{equation}
\label{eq:action_defo_fshape}
  \phi \cdot (X,f) \doteq (\phi(X),f\circ \phi^{-1}) 
\end{equation}
which corresponds to the idea of deforming the geometry by $\phi$ while pulling the signal back onto the deformed shape $\phi(X)$. This is well-defined within our setting thanks to: 
\begin{lemma}
\label{lemma:Sobolev_transport1}
 For all $f \in H^s(X)$ and $\phi \in \text{Diff}^{s'}_{\text{Id}}$ with $s'=\max\{s,1\}$, $f\circ \phi^{-1} \in H^s(\phi(X))$.
\end{lemma}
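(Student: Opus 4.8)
The plan is to reduce the statement to a local, chart-based assertion and then invoke the stability of Euclidean Sobolev spaces under composition with a sufficiently smooth diffeomorphism. First I would cover $X$ by a finite family of charts coming from the parametrization $q \in \text{Emb}^{s}(M,\R^n)$ and fix a subordinate partition of unity; since $X$ is bounded, this reduces $\|f\|_{H^{s}(X)}$, up to equivalence of norms, to a finite sum of flat Sobolev norms $\|f\circ q\|_{H^{s}(U_i)}$ of the coordinate representatives on open sets $U_i\subset\R^d$. The very same cover of $\phi(X)$ is furnished by the parametrization $\phi\circ q$, which is $C^s$ precisely because $\phi\in\text{Diff}^{s'}_{\text{Id}}$ with $s'=\max\{s,1\}\geq s$. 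The crucial observation is that, read in these coordinates, the pulled-back signal on $\phi(X)$ is $(f\circ\phi^{-1})\circ(\phi\circ q)=f\circ q$, i.e. its coordinate representative is literally the same function as for $f$ on $X$.

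Everything thus reduces to comparing two intrinsic $H^{s}$ norms of the single coordinate function $f\circ q$: one for the Riemannian metric $g = q^*\langle\cdot,\cdot\rangle_{\R^n}$ induced on $X$ and one for $g_\phi = (\phi\circ q)^*\langle\cdot,\cdot\rangle_{\R^n}$ induced on $\phi(X)$. I would then show that each intrinsic norm is equivalent to the flat $H^{s}(U_i)$ norm. This is where the regularity of $\phi$ enters: the coefficients of $g$ and $g_\phi$ and of the Christoffel symbols entering the covariant derivatives $\nabla^{k}$, $k\leq s$, are continuous, hence bounded above and below on the compact closures $\overline{U_i}$, the bounds for $g_\phi$ being controlled by $\|\phi\|_{s',\infty}$ (since $\phi-\text{Id}\in C^{s'}_0(\R^n,\R^n)$ has all derivatives bounded, $d\phi$ and its derivatives are uniformly bounded and $d\phi$ is uniformly invertible on the compact $X$). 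Composing the two equivalences yields $\|f\circ\phi^{-1}\|_{H^{s}(\phi(X))}\leq C\,\|f\|_{H^{s}(X)}$ with $C$ depending only on $\|\phi\|_{s',\infty}$ and the template geometry.

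An equivalent and perhaps more transparent route for $s\geq 1$ is to argue directly by the chain rule rather than passing through the flat norm. The tangential gradient of $f\circ\phi^{-1}$ at $\phi(x)$ is the image of $\nabla f(x)$ under the tangential differential of $\phi$, and iterating this identity expresses $\nabla^{k}(f\circ\phi^{-1})$ as a finite sum of contractions of some $\nabla^{j}f$, $j\leq k$, against tensors built from the derivatives of $\phi$ up to order $k\leq s$. Since those derivatives are bounded on $X$, one obtains pointwise $|\nabla^{k}(f\circ\phi^{-1})(\phi(x))|\leq C\sum_{j\leq k}|\nabla^{j}f(x)|$; integrating against $\Haus^{d}\mres\phi(X)$ and using the change of variables formula $\int_{\phi(X)}u\,d\Haus^{d}=\int_{X}(u\circ\phi)\,J\phi\,d\Haus^{d}$, whose tangential Jacobian $J\phi$ is bounded above and below on the compact $X$, controls $\|f\circ\phi^{-1}\|_{H^{s}(\phi(X))}$ by $\|f\|_{H^{s}(X)}$. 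The case $s=0$ is handled separately and trivially, using only this Jacobian bound to transport $L^2$ integrability.

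The main obstacle I anticipate is not any single estimate but the bookkeeping at the borderline regularity: a $C^{s}$ submanifold carries only a $C^{s-1}$ induced metric, so one must verify that the intrinsic $H^{s}$ norm built from covariant derivatives is genuinely well defined and equivalent to the chart norm at this smoothness, and track carefully that forming $\nabla^{s}(f\circ\phi^{-1})$ never costs more than the $s'$ derivatives of $\phi$ that are available. Checking that $s'=\max\{s,1\}$ is exactly the regularity needed in both the top-order term and the Christoffel corrections is the delicate point, and the direct chain-rule argument is the cleanest way to keep that count explicit.
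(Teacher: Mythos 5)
Your proposal is correct and follows essentially the same route as the paper: the paper itself dispatches this lemma with a citation to Hebey, but its quantitative strengthening (Theorem \ref{theo:Sobolev_control_phi}, proved in Appendix \ref{appendix:proof_theorem_control_phi}) is established exactly as you describe — partition of unity, the observation that the coordinate representative of $f\circ\phi^{-1}$ in the chart $\phi\circ q$ is again $f\circ q$, and a comparison of the two intrinsic $H^s$ norms by expressing covariant derivatives in terms of coordinate derivatives with coefficients (metric, cometric, Christoffel symbols) controlled by $\rho_{s'}(\phi)$. The borderline-regularity bookkeeping you flag is precisely what the paper's $\Gampol^{p,s}$ classes and Lemma \ref{lem:20.8.1} are designed to handle.
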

This is a classical result for Sobolev spaces on compact manifolds (see for example \cite{Hebey} chap. 2). Yet, for the rest of this paper, we shall also need some more precise control of $\|f \circ \phi^{-1}\|_{H^{s}(\phi(X))}$ with respect to $\|f\|_{H^s(X)}$ and the deformation $\phi$. The essential result is the following: 
\begin{thm}
\label{theo:Sobolev_control_phi}
 There exists a polynomial function $P$ such that for any $f \in H^s(X)$ and $\phi \in \text{Diff}^{s'}_{\text{Id}}$ we have 
 \begin{equation}
  \|f \circ \phi^{-1}\|_{H^s(\phi(X))}^2 \leq P(\rho_{s'}(\phi)) \|f\|_{H^s(X)}^2
 \end{equation}
 where $\rho_s(\phi)\doteq \sum_{k\leq s'}\|d^k (\phi-\Id)\|_\infty+\|d^k(\phi^{-1}-\Id)\|_\infty$
\end{thm}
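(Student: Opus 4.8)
The plan is to reduce the statement to a comparison of two Riemannian metrics on the \emph{fixed} manifold $X$. Set $Y=\phi(X)$ and $\tilde f = f\circ\phi^{-1}$, and endow $X$ with the pulled-back metric $h \doteq \phi^* g_Y$, where $g_Y$ is the metric induced on $Y$ by the ambient Euclidean structure. By construction $\phi\colon (X,h)\to(Y,g_Y)$ is an isometry, and isometries preserve covariant derivatives, trace norms of tensors and Riemannian volume; since moreover $\phi^*\tilde f = f$, we obtain the exact identity $\|\tilde f\|_{H^s(Y)} = \|f\|_{H^s(X,h)}$. Thus it suffices to bound the $H^s$ norm of the \emph{same} function $f$, computed with the perturbed metric $h$, by $P(\rho_{s'}(\phi))^{1/2}$ times its $H^s$ norm for the original induced metric $g_X$. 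This separates cleanly the geometry of $X$ (which is fixed and only contributes constants that may be absorbed into $P$) from the dependence on $\phi$, which now enters solely through the coefficients of $h$.

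Next I fix once and for all a finite atlas $\{(U_a,u_a)\}$ of $X$ with a subordinate partition of unity $\{\chi_a\}$, which is possible since $X$ is a bounded $C^{s}$ submanifold; all constants attached to this atlas depend only on $X$. In a chart $u=u_a$ the components of $h$ read $h_{ab} = (\partial_a u)^\top A\,(\partial_b u)$ with $A=(d\phi)^\top d\phi$ evaluated along $u$, to be compared with $(g_X)_{ab}=(\partial_a u)^\top(\partial_b u)$. Writing $d\phi = \Id + d(\phi-\Id)$ gives $\|A\|_\infty \le (1+\|d(\phi-\Id)\|_\infty)^2$, so the eigenvalues of $h$ relative to $g_X$ are bounded above by a polynomial in $\rho_{s'}(\phi)$. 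For the crucial lower bound I use $(d\phi)^{-1} = d(\phi^{-1})\circ\phi$, so that the smallest singular value of $d\phi$ is bounded below by $\|(d\phi)^{-1}\|_\infty^{-1}\ge (1+\|d(\phi^{-1}-\Id)\|_\infty)^{-1}$; this is exactly where the $\phi^{-1}$ terms of $\rho_{s'}$ are needed, and it yields $c(\rho_{s'}(\phi))^{-1}\, g_X \le h \le c(\rho_{s'}(\phi))\, g_X$ for a polynomial $c$. Differentiating $h_{ab}$ up to order $s-1$ expresses its derivatives as universal polynomials in the derivatives of $u$ (bounded by the atlas) and the derivatives of $\phi$ up to order $s'$, hence bounded by a polynomial in $\rho_{s'}(\phi)$.

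From these two facts I derive the required control of every metric-dependent quantity of $(X,h)$: the inverse metric $h^{-1}$ (whose entries are bounded through Cramer's rule by $\det h$ and the cofactors, hence polynomially in $\rho_{s'}$ using the two-sided bound above), the Christoffel symbols $\Gamma^{(h)}=\tfrac12 h^{-1}\partial h$ and all their derivatives up to order $s-1$, and the volume density $\sqrt{\det h}$. The covariant derivative $\nabla^{(h),k}f$ is then expanded in coordinates by the usual Leibniz/Faà-di-Bruno recursion as a universal polynomial in the Christoffel symbols (and their derivatives) contracted with the ordinary partial derivatives $\partial^j(f\circ u)$, $j\le k$; contracting the resulting $(0,k)$-tensor with $h^{-1}$ to form $|\nabla^{(h),k}f|_h^2$ and multiplying by $\sqrt{\det h}$ yields the pointwise estimate $|\nabla^{(h),k}f|_h^2\,\sqrt{\det h} \le P_k(\rho_{s'}(\phi))\sum_{j\le k}|\partial^j(f\circ u)|^2$ for some polynomial $P_k$. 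The base case $k=0$ reduces to the change-of-variables factor $\sqrt{\det h}$, i.e. the tangential Jacobian of $\phi$, and is handled identically; this also covers the case $s=0$, where $s'=1$ is precisely what is required to define this Jacobian.

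Finally I integrate each pointwise bound against the partition of unity and sum over the atlas. The left-hand sides reassemble into $\|\nabla^{(h),k}f\|_{L^2(X,h)}^2$, summing to $\|f\|_{H^s(X,h)}^2=\|\tilde f\|_{H^s(Y)}^2$; the right-hand sides are bounded by $P(\rho_{s'}(\phi))$ times the coordinate quantity $\sum_a\sum_{j\le s}\int \chi_a\,|\partial^j(f\circ u_a)|^2\,du$, which is in turn $\le C_X\,\|f\|_{H^s(X)}^2$ by the standard equivalence, on the fixed compact manifold $X$, of the coordinate Sobolev norm with the intrinsic one \eqref{eq:Hs_X}. Collecting all the polynomial factors into a single polynomial $P$ gives the claim. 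The main obstacle is the uniform two-sided metric comparison of the second paragraph: the upper bound is immediate, but the lower bound, i.e. the non-degeneracy of $d\phi$ along $TX$ controlled polynomially from below, is what forces the presence of the $\phi^{-1}$ contributions in $\rho_{s'}$ and is the one point where the argument would break down for a merely Lipschitz or non-invertible map.
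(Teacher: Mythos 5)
Your proposal is correct and follows essentially the same route as the paper's Appendix~A proof: pull the $H^s(\phi(X))$ norm back to the fixed manifold $X$ via the pullback metric $g^{\phi}$, work in a finite atlas with a partition of unity, control the metric, cometric (via the determinant lower bound coming from $d\phi^{-1}$), Christoffel symbols and volume density polynomially in $\rho_{s'}(\phi)$, exploit the triangular relation between covariant and coordinate derivatives, and finally compare with the $\phi=\Id$ case to recover the intrinsic $H^s(X)$ norm. The paper merely packages the polynomial bounds into the abstract classes $\Gampol^{p,s}$ and $\Cpol^{p,s}$ (also recording Lipschitz-in-$\phi$ estimates it reuses later for Lemma~\ref{lemma:continuity_Hsphi}), but the substance of the argument is the one you give.
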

The proof is slightly technical and requires passing in local coordinates with partition of unity. It is presented with full details in Appendix \ref{appendix:proof_theorem_control_phi}. 

For diffeomorphisms belonging to a group $G_V$, Theorem \ref{theo:Sobolev_control_phi} implies the following bound:
\begin{corollary}
\label{cor:Sobolev_control_GV}
 If the Hilbert space $V$ is continuously embedded into $\Gamma^{s'}$, then there exists constants $C,\kappa \geq 0$ such that for all $\phi \in G_V$ and $f \in H^s(X)$, we have 
 \begin{equation}
  \|f \circ \phi^{-1}\|_{H^s(\phi(X))}^2 \leq C\exp(\kappa d_{G_V}(\text{Id},\phi)) \|f\|_{H^s(X)}^2
 \end{equation}
\end{corollary}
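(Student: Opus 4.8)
The plan is to feed into Theorem~\ref{theo:Sobolev_control_phi} an exponential control of the quantity $\rho_{s'}(\phi)$ by the geodesic distance $d_{G_V}(\text{Id},\phi)$. Since $P$ is a fixed polynomial and a polynomial of an exponential is again dominated by an exponential, it suffices to establish a bound of the form $\rho_{s'}(\phi) \leq a\, e^{b\, d_{G_V}(\text{Id},\phi)}$ for constants $a,b\geq 0$: plugging it into $P$ and using $P(r)\le c_P(1+r)^{\deg P}$ will give $P(\rho_{s'}(\phi))\le C e^{\kappa d_{G_V}(\text{Id},\phi)}$, which combined with Theorem~\ref{theo:Sobolev_control_phi} is exactly the claim.

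First I would fix $\phi \in G_V$ and, for $\varepsilon>0$, pick $v \in L^2([0,1],V)$ with $\phi_1^v = \phi$ and $\int_0^1 \|v_t\|_V^2\, dt \leq d_{G_V}(\text{Id},\phi)+\varepsilon$. The continuous embedding $V\hookrightarrow \Gamma^{s'}$ provides $c_V>0$ with $\|v_t\|_{s',\infty}\leq c_V\|v_t\|_V$ for all $t$, so that Cauchy--Schwarz on the unit interval yields
\begin{equation*}
 \int_0^1 \|v_t\|_{s',\infty}\, dt \leq c_V \int_0^1 \|v_t\|_V\, dt \leq c_V \left(\int_0^1 \|v_t\|_V^2\, dt\right)^{1/2} \leq c_V\sqrt{d_{G_V}(\text{Id},\phi)+\varepsilon}.
\end{equation*}

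The technical core is then a flow estimate $\rho_{s'}(\phi_1^v) \leq \Phi\!\left(\int_0^1 \|v_t\|_{s',\infty}\,dt\right)$ with $\Phi$ of exponential growth. I would obtain it by differentiating the flow equation $\dot\phi_t = v_t\circ\phi_t$ in space up to order $s'$: by the chain rule (Faà di Bruno), each iterated differential $d^k\phi_t$ solves a linear ODE in which $d^k\phi_t$ appears with coefficient $dv_t\circ\phi_t$, the remaining source terms being products of lower-order $d^j\phi_t$ ($j<k$) with derivatives $d^i v_t$ evaluated along the flow, all dominated by $\|v_t\|_{s',\infty}$. Assembling these into a single differential inequality for $\sum_{k\le s'}\|d^k(\phi_t-\text{Id})\|_\infty$ and integrating via Gronwall gives the exponential control. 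The contribution of $\phi^{-1}$ to $\rho_{s'}$ is handled identically once one notes that $(\phi_1^v)^{-1}$ is the time-$1$ flow of the reversed field $w_t \doteq -v_{1-t}$, which has the same $L^1$-in-time $\Gamma^{s'}$ norm, so the same bound applies. These estimates are classical and could alternatively be quoted from \cite{Younes,arguillere14:_shape}.

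Combining the last two steps, letting $\varepsilon\to 0$, and writing $d\doteq d_{G_V}(\text{Id},\phi)$, I obtain $\rho_{s'}(\phi) \leq \Phi(c_V \sqrt{d})\leq a\, e^{b\sqrt{d}}$; since $\sqrt{d}\le 1+d$ this becomes $\rho_{s'}(\phi)\le a e^{b} e^{b d}$, and applying $P$ as above closes the argument. The main obstacle is precisely the higher-order flow estimate of the third paragraph: for $s'=1$ it reduces to the standard Jacobian/Gronwall bound, but for larger $s'$ the bookkeeping of the coupled ODEs satisfied by the iterated differentials $d^k\phi_t$ must be done carefully so that the resulting constants depend only on $s'$ and $c_V$ and remain uniform in $v$.
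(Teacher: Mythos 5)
Your proposal is correct and follows essentially the same route as the paper: both combine Theorem~\ref{theo:Sobolev_control_phi} with the standard exponential flow estimates of \cite{Younes} (chap.~8) controlling $\rho_{s'}(\phi_1^v)$ and $\rho_{s'}((\phi_1^v)^{-1})$ by $\int_0^1\|v_t\|_V^2\,dt$, and then absorb the fixed polynomial $P$ into the exponential. The only difference is that you sketch the Gronwall/Fa\`a di Bruno derivation of the flow bound and use a near-minimizing $v$ instead of quoting the estimate and the attainment of the infimum, which is a matter of detail rather than of method.
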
 
\begin{proof}
 This is essentially a consequence of some properties of flows detailed in \cite{Younes} chap.8, in particular that when $V \hookrightarrow \Gamma^{s}$, for all $t \in [0,1]$:
 \begin{align*}
  \sum_{k\leq s}\|d^k (\phi-\Id)\|_\infty &\leq \alpha e^{\beta \int_{0}^{1} \|v_{t'}\|_V^2 dt'} \\
  \sum_{k\leq s}\|d^k (\phi^{-1}-\Id)\|_\infty &\leq \alpha e^{\beta \int_{0}^{1} \|v_{t'}\|_V^2 dt'}
 \end{align*}
where $\alpha$ and $\beta$ are two positive constants independent of $v$. In addition, using the fact that if $\phi \in G_V$, there exists $v \in L^2([0,1],V)$ such that $\phi = \phi_1^v$ and $d_{G_V}(\text{Id},\phi)^2 = \int_{0}^{1} \|v_t\|_V^2 dt$, we obtain directly the result thanks to Theorem \ref{theo:Sobolev_control_phi}.
\end{proof}

The action of $G_V$ on the fshape bundle considered so far only accounts for the geometrical part of fshape variability, or in other words for horizontal motions in the fshape bundle. To complete it, we also need to introduce vertical motions in $\mathcal{F}_{\mathcal{S}}^{s}$ which are essentially variations of signal functions within a given fiber. Thus we shall consider fshape transformations as combinations of a geometrical deformation $\phi \in G_V$ and addition of a residual signal function $\zeta$ on the signal part of the fshape. Namely, if $(X,f) \in \mathcal{F}_{\mathcal{S}}^{s}$ and $(\phi,\zeta) \in G_V \times H^{s}(X)$, we shall consider the 'action':
\begin{equation}
\label{eq:action_fshape}
  (\phi,\zeta) \cdot (X,f) \doteq (\phi(X),(f+\zeta)\circ \phi^{-1}) 
\end{equation}
Note that unlike the classical setting of shape spaces, without further assumptions, this can be no longer considered as an actual group action since the set of all transformations $(\phi,h)$ in $\mathcal{F}_{\mathcal{S}}^{s}$ is not even a group but should be rather thought as a section of the bundle $G_V \times \mathcal{F}_{\mathcal{S}}^{s}$. Yet, the previous notions together with equation \eqref{eq:action_fshape} provides a fairly natural generalization to fshapes. It is for instance easy to verify that we now recover a \textit{transitivity} property extending the one on $\mathcal{S}$, in the sense that for any fshapes $(X_1,f_1)$ and $(X_2,f_2) \in \mathcal{F}_{\mathcal{S}}^{s}$, there exists $\phi \in G_V$, $h \in H^s(X_1)$ such that $(\phi,\zeta) \cdot (X_1,f_1) = (X_2,f_2)$.

\subsection{Metamorphoses}
The question we address now is to extend the LDDMM metrics on the shape space $\mathcal{S}$ defined as in equation \eqref{eq:dist_S} to a Sobolev fshape bundle $\mathcal{F}_{\mathcal{S}}^{s}$ constructed over $\mathcal{S}$. The metrics we shall consider rely on the model of \textbf{metamorphosis}. Metamorphoses were first introduced in the case of $L^2$ images and landmarks in \cite{Trouve1} and regularly completed from the theoretical and numerical perspective thereafter. Among other references, one can quote the works of \cite{Holm2009} extending the Euler-Poincar\'e equations on diffeomorphisms to metamorphoses, or more recently \cite{Richardson2013} studying metamorphoses in spaces of discrete measures. 

Metamorphoses for fshapes have been approached (yet only superficially so far) in one previous paper by the authors \cite{Charlier15}, that partly treated the case of $L^2$ signals (fshapes of regularity 0) but mainly focused on a simplified so called 'tangential' model of fshape transformations. In the following, we build up on these results by proposing a more general metamorphosis framework also valid for fshapes of higher regularity. 

As we recalled previously, for the LDDMM model, distances on shape spaces are obtained by induction from right-invariant distances $d_{G_V}$ on the acting group of diffeomorphisms or equivalently from the infinitesimal metric $\|\cdot\|_V$ on the tangent space $V$ to $G_V$ at $\text{Id}$. In order to provide a similar sub-Riemannian structure on geometric-functional transformations, we start by introducing a dynamic model for those transformations named fshape metamorphosis. 

Let $\mathcal{F}_{\mathcal{S}}^{s}$ be a fshape bundle. If $(X,f)$ is a specific fshape in $\mathcal{F}_{\mathcal{S}}^{s}$, we define a metamorphosis of $(X,f)$ as a couple of a time-varying infinitesimal deformation $v \in L^2([0,1],V)$ and infinitesimal signal variation $h \in L^2([0,1],H^{s}(X))$. The time integration of $(v,h) \in L^2([0,1],V\times H^{s}(X))$ parametrizes an fshape transformation path $(\phi_t^v,\zeta_t^h)$ with $\phi_{t}^{v} \in G_{V}$ and $\zeta_{t}^{h} \in H^{s}(X)$ through the dynamical equations:
\begin{equation}
 \label{eq:flow_metam}
 \left\{ \begin{array}[h]{l}
 \dot{\phi_{t}^{v}} = v_{t} \circ \phi_{t}^{v} \\
 \dot{\zeta_t^h} = h_{t} \\
 \phi_0^{v} = \text{Id}, \ \zeta_0^{h} = 0
\end{array}
\right. 
\end{equation}
We then define the infinitesimal metric on $V \times H^{s}(X)$ by 
\begin{equation*}
 \|(v,h)\|_{(X,f)}^2 = \frac{\gamma_{V}}{2} \|v\|_{V}^{2} + \frac{\gamma_{f}}{2} \|h\|_{H^{s}(X)}^{2} 
\end{equation*}
where $\gamma_{V}, \gamma_f>0$ are weighting parameters. In integrated form, this gives the following energy of the path $(\phi_t^v,\zeta_t^h)$:
\begin{equation}
\label{eq:energy_def}
 E_{X}(v,h) = \frac{\gamma_{V}}{2} \int_{0}^{1} \|v_t\|_{V}^{2} dt + \frac{\gamma_{f}}{2} \int_{0}^{1} \|h_t \circ (\phi_t^v)^{-1}\|_{H^{s}(X_t)}^{2} dt 
\end{equation}
with $X_{t} \doteq \phi_{t}^{v}(X)$. Note that the penalty on the signal variation $h_t$ at each time is measured on the deformed submanifold $X_t$ with respect to the metric $\|\cdot\|_{H^{s}(X_t)}$. The framework presented in \cite{Charlier15} as \textit{tangential model} is obtained by precisely neglecting those metric changes and taking the approximation $\|h_t\|_{H^{s}(X_0)}^2$ instead. We first remark that $\|h_t \circ (\phi_t^v)^{-1}\|_{H^{s}(X_t)}$ is well-defined since thanks to Lemma \ref{lemma:Sobolev_transport1} and Corollary \ref{cor:Sobolev_control_GV}, we know that $h_t \circ (\phi_t^v)^{-1} \in H^{s}(X_t)$ and in addition we have for all $t\in [0,1]$, 
\begin{align*}
 \|h_t \circ (\phi_t^v)^{-1}\|_{H^{s}(X_t)}^2 &\leq C\exp(\kappa d_{G_V}(\text{Id},\phi_t^v)) \|f\|_{H^s(X)}^2 \\
 &\leq C\exp \left(\kappa t \left(\int_{0}^{t} \|v_{t'}\|_V^2 dt'\right)^{1/2} \right) \|f\|_{H^s(X)}^2 \\
 &\leq C\exp \left(\kappa \left(\int_{0}^{1} \|v_{t'}\|_V^2 dt'\right)^{1/2} \right) \|f\|_{H^s(X)}^2 
\end{align*}
which gives that $\int_{0}^{1} \|h_t \circ (\phi_t^v)^{-1}\|_{H^{s}(X_t)}^{2} dt$ is finite thanks to the assumptions on $v$ and $h$.  

Mimicking the previous setting on shape space, we can define a distance between two given fshapes $(X,f)$ and $(X',f')$ in the bundle $\mathcal{F}_{\mathcal{S}}^{s}$:
\begin{equation}
 \label{eq:dist_fsbundle}
 d_{\mathcal{F}_{\mathcal{S}}^{s}}((X,f),(X',f'))^{2} = \inf \{ E(v,h) \ | \ (\phi_1^v,\zeta_1^h) \cdot (X,f) = (X',f') \}
\end{equation}
This is a direct extension to fshapes of equation \eqref{eq:dist_S} in the sense that it is easy to verify that if $f$ and $f'$ are both constant and equal signals on $X$ and $X'$ then we have exactly $d_{\mathcal{F}_{\mathcal{S}}^{s}}((X,f),(X',f')) = d_{\mathcal{S}}(X,X')$. 
\begin{thm}
\label{theo:distance_fshape_bundle}
 $d_{\mathcal{F}_{\mathcal{S}}^{s}}$ is a distance on the fshape bundle $\mathcal{F}_{\mathcal{S}}^{s}$ and for all $(X,f)$ and $(X',f')$ there exists a geodesic path $(v,h) \in L^2([0,1],V\times H^s(X))$ i.e such that $d_{\mathcal{F}_{\mathcal{S}}^{s}}((X,f),(X',f'))^{2} = E_{X}(v,h)$.
\end{thm}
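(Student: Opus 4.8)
The plan is to establish, in order, the metric axioms for $d_{\mathcal{F}_{\mathcal{S}}^{s}}$ and then the existence of minimizers by the direct method. Finiteness of the infimum is immediate from the transitivity property stated just above: any two fshapes are joined by at least one admissible $(\phi,\zeta)$, and choosing $v\in L^2([0,1],V)$ with $\phi_1^v=\phi$ together with the constant signal control $h_t\equiv\zeta$ produces a competitor whose energy is finite by the bound established just before the theorem. Nonnegativity is clear. For \emph{symmetry} I would reverse a given metamorphosis: if $(v,h)$ steers $(X,f)$ to $(X',f')$, set $\tilde v_s=-v_{1-s}$ and $\tilde h_s=-h_{1-s}\circ(\phi_1^v)^{-1}$; a direct computation shows the reversed flow $\psi_s=\phi_{1-s}^v\circ(\phi_1^v)^{-1}$ satisfies $\dot\psi_s=\tilde v_s\circ\psi_s$, that $(\tilde v,\tilde h)$ steers $(X',f')$ back to $(X,f)$, and that $\tilde h_s\circ\psi_s^{-1}=-h_{1-s}\circ(\phi_{1-s}^v)^{-1}$, so after the substitution $t=1-s$ both energy terms are unchanged. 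Hence $E_{X'}(\tilde v,\tilde h)=E_X(v,h)$ and symmetry follows.

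For the \emph{triangle inequality} I would pass to the length $\mathcal{L}(v,h)=\int_0^1\sqrt{\gamma_V\|v_t\|_V^2+\gamma_f\|h_t\circ(\phi_t^v)^{-1}\|_{H^s(X_t)}^2}\,dt$. Cauchy--Schwarz gives $\mathcal{L}^2\le 2E$, while the flow and signal equations are covariant under time reparametrization (if $t=\tau(u)$ then $\tilde v_u=\dot\tau(u)v_{\tau(u)}$ and $\tilde h_u=\dot\tau(u)h_{\tau(u)}$), which makes $\mathcal{L}$ reparametrization-invariant and lets one reach constant speed, so that $2\,d_{\mathcal{F}_{\mathcal{S}}^{s}}^2=\inf\mathcal{L}^2=(\inf\mathcal{L})^2$. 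The inequality then reduces to concatenating two near-optimal exact-matching paths and reparametrizing onto $[0,1]$, lengths being additive. For \emph{definiteness}, suppose the distance vanishes and take exact matchings $(v^n,h^n)$ with $E_X\to0$: the deformation term forces $d_{G_V}(\Id,\phi_1^{v^n})\to0$, hence $\phi_1^{v^n}\to\Id$ uniformly by the flow estimates, and since $\phi_1^{v^n}(X)=X'$ is constant we get $X=X'$. Writing $\zeta_1^{h^n}=\int_0^1 h_t^n\,dt$ and applying Theorem \ref{theo:Sobolev_control_phi} to bound $\|h_t^n\|_{H^s(X)}$ by $\|h_t^n\circ(\phi_t^{v^n})^{-1}\|_{H^s(X_t^n)}$ (with a uniformly bounded constant, since the deformations stay close to $\Id$), Minkowski's inequality in time gives $\|\zeta_1^{h^n}\|_{H^s(X)}\to0$; passing to the limit in $(f+\zeta_1^{h^n})\circ(\phi_1^{v^n})^{-1}=f'$ yields $f=f'$.

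It remains to produce a minimizer. I would take a minimizing sequence $(v^n,h^n)$, so that $\int_0^1\|v_t^n\|_V^2\,dt$ is bounded and $(v^n)$ is bounded in the Hilbert space $L^2([0,1],V)$. The bound on the signal energy controls $\|h_t^n\circ(\phi_t^{v^n})^{-1}\|_{H^s(X_t^n)}$, and again invoking Theorem \ref{theo:Sobolev_control_phi} with the uniformly bounded quantities $\rho_{s'}(\phi_t^{v^n})$ (finite because $\int\|v^n\|_V^2$ is bounded, via Corollary \ref{cor:Sobolev_control_GV}) transfers this to a uniform bound on $\int_0^1\|h_t^n\|_{H^s(X)}^2\,dt$, so $(h^n)$ is bounded in $L^2([0,1],H^s(X))$. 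Extracting a weakly convergent subsequence $v^n\rightharpoonup v^*$, $h^n\rightharpoonup h^*$, I would use the standard LDDMM fact that weak convergence of velocities implies strong (locally uniform, up to order $p$) convergence of the flows $\phi_t^{v^n}\to\phi_t^{v^*}$ (cf. \cite{Younes,arguillere14:_shape}); combined with $\zeta_1^{h^n}\rightharpoonup\zeta_1^{h^*}$ this passes the exact endpoint constraint to the limit, giving $(\phi_1^{v^*},\zeta_1^{h^*})\cdot(X,f)=(X',f')$.

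The crux, and the step I expect to be hardest, is the \emph{weak lower semicontinuity of the signal energy}, because the integrand couples the control $h$ with the velocity $v$ through the moving support $X_t^v$ and the pulled-back metric. I would rewrite it as $\int_0^1\langle h_t,h_t\rangle_{t,n}\,dt$, where $\langle g_1,g_2\rangle_{t,n}=\langle g_1\circ(\phi_t^{v^n})^{-1},g_2\circ(\phi_t^{v^n})^{-1}\rangle_{H^s(X_t^n)}$ is, by Theorem \ref{theo:Sobolev_control_phi}, an inner product on the fixed space $H^s(X)$ uniformly equivalent to the reference one and represented by a self-adjoint positive operator $A_{t,n}$. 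The strong convergence of the flows yields $A_{t,n}\to A_{t,*}$ in operator norm uniformly in $t$. Then the elementary quadratic inequality $\langle h_t^n,h_t^n\rangle_{t,n}\ge 2\langle h_t^n,h_t^*\rangle_{t,n}-\langle h_t^*,h_t^*\rangle_{t,n}$, integrated in time, combined with the convergence $A_{t,n}h_t^*\to A_{t,*}h_t^*$ strongly in $L^2([0,1],H^s(X))$ (dominated convergence) against the weak convergence $h^n\rightharpoonup h^*$, gives $\liminf_n\int_0^1\langle h_t^n,h_t^n\rangle_{t,n}\,dt\ge\int_0^1\langle h_t^*,h_t^*\rangle_{t,*}\,dt$. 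Adding the (convex, hence weakly lower semicontinuous) deformation term yields $E_X(v^*,h^*)\le\liminf E_X(v^n,h^n)=d_{\mathcal{F}_{\mathcal{S}}^{s}}^2$, and since $(v^*,h^*)$ is admissible it is a geodesic.
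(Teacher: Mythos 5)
Your proof is correct and follows the same overall skeleton as the paper's (time reversal for symmetry, transitivity for finiteness, the direct method for existence of geodesics), but several steps are executed differently in ways worth noting. For the triangle inequality you pass to the length functional and use reparametrization invariance to reach constant speed, whereas the paper concatenates two paths directly with the explicit time rescalings $\alpha,\beta$ satisfying $1/\alpha+1/\beta=1$ and optimizes over $\alpha$; the two devices are equivalent, the paper's being more concrete and yours making the sub-Riemannian length structure explicit. For definiteness you give a direct argument from a sequence with $E_X(v^n,h^n)\to 0$ (forcing $\phi_1^{v^n}\to\Id$ and $\zeta_1^{h^n}\to 0$), while the paper simply invokes the already-established existence of a minimizer with zero energy; yours is self-contained but needs the small extra limit passage in $(f+\zeta_1^{h^n})\circ(\phi_1^{v^n})^{-1}=f'$, which you do address. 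The most substantive divergence is in the lower semicontinuity of the signal energy: the paper extracts weak convergence of $(h^n)$ for the pulled-back metric $L^2([0,1],H^{s,\phi^{\infty}})$ and combines weak lower semicontinuity of that norm with the pointwise continuity statement of Lemma \ref{lemma:continuity_Hsphi}, whereas you represent the moving inner products by operators $A_{t,n}$ on the fixed space $H^s(X)$ and run the standard quadratic-expansion argument $\langle h^n,A_nh^n\rangle\geq 2\langle h^n,A_nh^*\rangle-\langle h^*,A_nh^*\rangle$ against the operator convergence $A_{t,n}\to A_{t,*}$. Since equivalent Hilbert norms induce the same weak topology, the two settings are interchangeable; your version is arguably more transparent about where uniformity in $n$ is used, but note that the uniform operator-norm convergence $A_{t,n}\to A_{t,*}$ is not a consequence of the statement of Theorem \ref{theo:Sobolev_control_phi} alone (whose polynomial bound need not tend to $1$) — it requires the finer Lipschitz-in-$\phi$ estimates built into the definition of $\Cpol^{0,s}$ in Appendix \ref{appendix:proof_theorem_control_phi}, which is essentially the same ingredient the paper channels through Lemma \ref{lemma:continuity_Hsphi}.
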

\begin{proof}
 The proof can be adapted from the ones of Theorems 1 and 2 in \cite{Charlier15} that deal with the case $s=0$. We repeat the essential steps with general $s$ for the sake of completeness.
 \begin{itemize}
  \item[$\bullet$] For symmetry, one simply needs to consider the time reversal of the geometric and functional velocities. For $(X,f),(X',f') \in \mathcal{F}_{\mathcal{S}}^{s}$ and any $(v,h)$ such that $\phi_1^v(X)=X'$ and $(f+\zeta_1^h)\circ (\phi_1^v)^{-1} = f'$, let:
  \begin{equation*}
   \tilde{v_t} = -v_{1-t}, \ \ \tilde{h_t} = -h_{1-t} \circ (\phi_1^{v})^{-1}
  \end{equation*}
Then it is clear that $\tilde{v} \in L^2([0,1],V)$ and from Corollary \ref{cor:Sobolev_control_GV} that $\tilde{h} \in L^2([0,1],H^s(X'))$. Also, with usual results on the flow (cf \cite{Younes} chap. 8), we know that $\phi_t^{\tilde{v}}\circ \phi_1^v \equiv \phi_{1-t}^v $ and thus $\phi_1^{\tilde{v}}(X')=X$. Similarly $(f'+\zeta_1^{\tilde{h}})\circ \phi_1^{\tilde{v}} =f$ and we have 
  \begin{align*}
   E_{X'}(\tilde{v},\tilde{h}) &=  \frac{\gamma_{V}}{2} \int_{0}^{1} \|\tilde{v}_t\|_{V}^{2} dt + \frac{\gamma_{f}}{2} \int_{0}^{1} \|\tilde{h}_t \circ (\phi_t^{\tilde{v}})^{-1}\|_{H^{s}(X_{1-t})}^{2} dt \\
   &= \frac{\gamma_{V}}{2} \int_{0}^{1} \|v_{1-t}\|_{V}^{2} dt + \frac{\gamma_{f}}{2} \int_{0}^{1} \|h_{1-t} \circ (\phi_t^{\tilde{v}} \circ \phi_1^v)^{-1}\|_{H^{s}(X_{1-t})}^{2} dt \\ 
   &= \frac{\gamma_{V}}{2} \int_{0}^{1} \|v_t\|_{V}^{2} dt + \frac{\gamma_{f}}{2} \int_{0}^{1} \|h_{1-t} \circ (\phi_{1-t}^{v})^{-1}\|_{H^{s}(X_{1-t})}^{2} dt \\
   &= E_{X}(v,h)
  \end{align*}
  By taking minimums over all $(v,h)$, we directly conclude that $d_{\mathcal{F}_{\mathcal{S}}^{s}}((X,f),(X',f')) = d_{\mathcal{F}_{\mathcal{S}}^{s}}((X',f'),(X,f))$.
  \item[$\bullet$] Triangular inequality can be obtained by concatenating path $(v,h)\in L^2([0,1],V\times H^s(X))$ from $(X,f)$ to $(X',f')$ and path $(v',h')\in L^2([0,1],V\times H^s(X'))$ from $(X',f')$ to $(X'',f'')$. The operation is defined in the following way:
  \begin{equation*}
   (\tilde{v_t},\tilde{h_t}) = (\alpha v_{\alpha t}, \alpha h_{\alpha t}) \mathds{1}_{0\leq t \leq 1/\alpha} + (\beta v_{\beta(t-1/\alpha)}, \beta h_{\beta(t-1/\alpha)} \circ \phi_1^{v}) \mathds{1}_{1/\alpha \leq t \leq 1}
  \end{equation*}
  where $\alpha, \beta$ are positive number such that $1/\alpha + 1/\beta=1$. This leads to $(\tilde{v},\tilde{h}) \in L^2([0,1],V \times H^s(X))$ with $\phi_1^{\tilde{v}}(X)=X''$ and $(f+\zeta_1^{\tilde{h}}) \circ (\phi_1^{\tilde{v}})^{-1} = f''$. Therefore, $d_{\mathcal{F}_{\mathcal{S}}^{s}}((X,f),(X',f')) \leq E_{X}(\tilde{v},\tilde{h})^{1/2}$. Moreover, it's easy to check that:
  \begin{equation*}
   E_{X}(\tilde{v},\tilde{h})^{1/2} = \frac{1}{\alpha} E_{X}(v,h)^{1/2} + \frac{1}{\beta} E_{X'}(v',h')^{1/2}
  \end{equation*}
which, by choosing $\alpha = (E_{X}(v,h)^{1/2} + E_{X'}(v',h')^{1/2})/E_{X}(v,h)^{1/2}$, gives $E_{X}(\tilde{v},\tilde{h})^{1/2} = E_{X}(v,h)^{1/2} + E_{X'}(v',h')^{1/2}$. The triangular inequality for $d_{\mathcal{F}_{\mathcal{S}}^{s}}$ follows immediately.

 \item[$\bullet$] The distance between any $(X,f),(X',f') \in \mathcal{F}_{\mathcal{S}}^{s}$ is finite. This is simply because of the transitivity of the action of $G_V \times H^s(X)$ on $\mathcal{F}_{\mathcal{S}}^{s}$. More specifically, as $X$ and $X'$ belong to $\mathcal{S}$, there exists $v\in L^2([0,1],V)$ such that $\phi_1^v(X) =X'$ by definition of $\mathcal{S}$. Now, one can set $h_t = f'\circ \phi_1^v - f$ for all $ t\in [0,1]$ and evidently $h_t \in H^s(X)$, $(f + \zeta_1^{h}) \circ (\phi_1^v)^{-1} = f'$. It results that $d_{\mathcal{F}_{\mathcal{S}}^{s}}((X,f),(X',f')) \leq E_{X}(v,h)^{1/2} < \infty$

 \item[$\bullet$] We next show that given $(X,f),(X',f') \in \mathcal{F}_{\mathcal{S}}^{s}$ there exists $(v,h)\in L^2([0,1],V\times H^s(X))$ such that $d_{\mathcal{F}_{\mathcal{S}}^{s}}((X,f),(X',f')) = E_{X}(v,h)^{1/2}$. Using the previous point and the definition of the distance, we know that there exists a sequence $(v^n,h^n) \in \left(L^2([0,1],V\times H^s(X))\right)^{\mathbb{N}}$ such that $E_{X}(v^n,h^n)^{1/2}\rightarrow d_{\mathcal{F}_{\mathcal{S}}^{s}}((X,f),(X',f'))< \infty$. This implies that the sequences $(v^n)$ is bounded in $L^2([0,1],V)$. Therefore up to an extraction, we can assume that $v^n \rightharpoonup v^{\infty}$ where $\rightharpoonup$ denotes the weak convergence. Now, $v^n \rightharpoonup v$ in $L^2([0,1],V)$ implies that $\phi_{t}^{v^{n}}$ converges to $\phi_{t}^{v^{\infty}}$ as well as all derivatives up to order $s$ uniformly on $t \in [0,1]$ and on $x$ in any compact subset of $\R^n$ (cf \cite{Younes} Theorem 8.11), and thus uniformly on $X$. On the other hand, we have that the sequence 
 \begin{equation*}
  \int_{0}^{1} \|h_t^n \circ (\phi_t^{v^n})^{-1}\|_{H^s(X_t^{n})}^2 dt
 \end{equation*}
 with $X_t^{n} \doteq \phi_t^{v^n}(X)$ is bounded. Applying Theorem \ref{theo:Sobolev_control_phi} with $\phi_{t}^{v^\infty} \circ (\phi_{t}^{v^n})^{-1}$ and using the previous uniform convergence of the $\phi_{t}^{v^{n}}$, we can see that $(h^n)$ is also bounded for the metric defined by:
 \begin{align*}
  \|h\|_{L^2([0,1],H^{s,\phi^\infty})}^2 &\doteq \int_{0}^{1} \|h_t\|_{H^{s,\phi_t^{v^\infty}}}^2 dt \\
  & = \int_{0}^{1} \|h_t \circ (\phi_t^{v^\infty})^{-1}\|_{H^s(X_t^{\infty})}^2 dt
 \end{align*}
with $X_t^{\infty} \doteq \phi_t^{v^\infty}(X)$. Therefore, up to a second extraction we can assume that there exists $h^{\infty} \in L^2([0,1],H^s(X))$ such that $h^n \rightharpoonup h^\infty$ weakly for the above metric. The next thing to show is that the functional $(v,h) \mapsto E_{X}(v,h)$ is lower (semi-)continuous for these topologies on $v$ and $h$. For the velocity field $v$, it is clear that $v\mapsto \int_0^{1} \|v_t\|_V^2 dt$ is lower semicontinuous. As for the second term, we have, using the weak semicontinuity with respect to the metric $L^2([0,1],H^{s,\phi^\infty})$, 
 \begin{align}
 \label{eq:dist_semcont1}
  &\int_{0}^{1} \|h^\infty_t \circ (\phi_t^{v^\infty})^{-1}\|_{H^{s}(X_t^{\infty})}^{2} dt \nonumber \\
  &\leq \lim \inf_{n\rightarrow +\infty} \int_{0}^{1} \|h_t^{n} \circ (\phi_t^{v^\infty})^{-1}\|_{H^{s}(X_t^{\infty})}^{2} dt
 \end{align}
Next, since $(\phi_t^{v^n})$ converges to $\phi_t^{v^\infty}$ uniformly on every compact as well as all derivatives up to order $s$, with Lemma \ref{lemma:continuity_Hsphi} in Appendix \ref{appendix:proof_theorem_control_phi}, we have for any $h \in L^2([0,1],H^s(X))$ and $t \in [0,1]$,
\begin{align}
\label{eq:dist_semcont2}
 \|h_t \circ (\phi_t^{v^\infty})^{-1}\|_{H^{s}(X_t^{\infty})} &= \|h_t\|_{H^{s,\phi_t^{v^\infty}}} \nonumber \\
 &= \lim_{n\rightarrow +\infty} \|h_t\|_{H^{s,\phi_t^{v^n}}}
\end{align}
It results from \eqref{eq:dist_semcont1} and \eqref{eq:dist_semcont2} that:
\begin{align*}
  &\int_{0}^{1} \|h^\infty_t \circ (\phi_t^{v^\infty})^{-1}\|_{H^{s}(X_t^{\infty})}^{2} dt \\
  &\leq \lim \inf_{n\rightarrow +\infty} \int_{0}^{1} \|h_t^{n} \circ (\phi_t^{v^n})^{-1}\|_{H^{s}(X_t^{n})}^{2} dt
\end{align*}
and consequently
\begin{align*}
 d_{\mathcal{F}_{\mathcal{S}}^{s}}((X,f),(X',f')) \leq E_X(v^\infty,h^\infty) \leq \lim \inf_{n\rightarrow +\infty} E_X(v^n,h^n) = d_{\mathcal{F}_{\mathcal{S}}^{s}}((X,f),(X',f'))
\end{align*}
leading to the result.

 \item[$\bullet$] Finally, we can prove that $d_{\mathcal{F}_{\mathcal{S}}^{s}}((X,f),(X',f')) = 0 \ \Rightarrow \ X=X', \ f=f'$. This is because, with the previous point, there exists $(v,h) \in L^2([0,1],V\times H^s(X))$ such that $d_{\mathcal{F}_{\mathcal{S}}^{s}}((X,f),(X',f')) = E_{X}(v,h)^{1/2} = 0$. Then $v=0$ and $h=0$ which leads to $\phi_1^{v} = \text{Id}, \ \zeta_1^{h} = 0$ and gives the desired result.

 \end{itemize} 
 \end{proof}
The fact that we eventually obtained a distance on the fshape bundle is not trivial and precisely originates from the way the energy of infinitesimal metamorphoses was defined. It's also important to remind that the simpler ``tangential'' model for fshape transformations that was detailed and exploited in \cite{Charlier15} does not provide a real distance nor even a pseudo-distance as opposed to metamorphoses. We can add to Theorem \ref{theo:distance_fshape_bundle} a few other properties of the spaces $\mathcal{F}_{\mathcal{S}}^{s}$, in particular:
\begin{property}
\label{prop:completeness}
 The space $\mathcal{F}_{\mathcal{S}}^{s}$ equipped with its distance \eqref{eq:dist_fsbundle} is a complete metric space.
\end{property}
\begin{proof}
 Consider a Cauchy sequence $(X^p,f^p)_{p \in \N}$ in $\mathcal{F}_{\mathcal{S}}^{s}$. We can assume that up to the extraction of a subsequence, we have $d_{\mathcal{F}_{\mathcal{S}}^{s}}((X^{p-1},f^{p-1}),(X^{p},f^{p})) \leq 2^{-(p-1)/2}$. Thanks to Theorem \ref{theo:distance_fshape_bundle}, we can write $X^{p} = \phi_{1}^{v^{p-1}}(X^{p-1})$ and $f^{p}= (f^{p-1} + \zeta_1^{h^{p-1}}) \circ (\phi_{1}^{v^p})^{-1}$ with $E_{X^p}(v^p,h^p) \leq 2^{-p}$. This implies in particular that $\int_{0}^{1} \|v^p_t\|_V^2 dt \leq 2^{-p}$ and consequently $\psi^{p} \doteq \phi_1^{v^{p-1}} \circ \phi_1^{v^{p-2}} \circ \ldots \circ \phi_1^{v^0}$ is a Cauchy sequence in the group $G_V$. It was shown (Theorem 8.15 in \cite{Younes}) that $G_V$ is itself a complete metric space; therefore $\psi^{p}$ converges to $\psi^{\infty}$. Let's write $X^{\infty} \doteq \psi^{\infty}(X_0)$. On the other hand, we have that $\xi^{p} = \zeta_1^{h^{p-1}} \circ \psi^{p-1} +\ldots+ \zeta_1^{h^0} \in H^{s}(X^0)$ thanks to Lemma \ref{lemma:Sobolev_transport1}. Now for all $p \in \N$, 
 \begin{align}
 \label{eq:bound_xi}
  \|\xi^{p} - \xi^{p-1}\|_{H^s(X^0)}^2 &= \|\zeta_1^{h^{p-1}} \circ \psi^{p-1}\|_{H^s(X_0)}^2 \nonumber \\
  &\leq \int_{0}^{1} \|h_t^{p-1} \circ \psi^{p-1}\|_{H^s(X_0)}^2 dt \nonumber \\
  &\leq \int_{0}^{1} \|h_t^{p-1} \circ (\phi_t^{v^{p-1}})^{-1} \circ(\phi_t^{v^{p-1}} \circ \psi^{p-1})\|_{H^s(X_0)}^2 dt \nonumber \\
  &\leq \int_{0}^{1} C\exp \left( \kappa d_{G_V}(\text{Id},\phi_t^{v^{p-1}} \circ \psi^{p-1})\right) \|h_t^{p-1} \circ (\phi_t^{v^{p-1}})^{-1}\|_{H^s(\phi_t^{v^{p-1}}(X_{p-1}))}^2 dt
 \end{align}
by using the bound of Corollary \ref{cor:Sobolev_control_GV}. Now 
\begin{align*}
 d_{g_V}(\text{Id},\phi_t^{v^{p-1}} \circ \psi^{p-1}) &\leq d_{g_V}(\text{Id},\psi^{p-1}) + d_{g_V}(\psi^{p-1},\phi_t^{v^{p-1}} \circ \psi^{p-1}) \\
 &\leq d_{g_V}(\text{Id},\psi^{p-1}) + d_{g_V}(\text{Id},\phi_t^{v^{p-1}})
\end{align*}
thanks to the right-invariance of $d_{G_V}$, and we know that $d_{g_V}(\text{Id},\psi^{p-1})$ converges to $d_{g_V}(\text{Id},\psi^{\infty})$ as $p\rightarrow \infty$ while $d_{g_V}(\text{Id},\phi_t^{v^{p-1}}) \leq 2^{p-1}$ so the first term on the right of inequality \eqref{eq:bound_xi} is bounded. It gives eventually that:
\begin{align}
 \|\xi^{p} - \xi^{p-1}\|_{H^s(X^0)}^2 &\leq \text{cst}.\int_{0}^{1} \|h_t^{p-1} \circ (\phi_t^{v^{p-1}})^{-1}\|_{H^s(\phi_t^{v^{p-1}}(X_{p-1}))}^2 dt \\
  &\leq \text{cst}.2^{p-1}
\end{align}
This shows that $\xi^{p}$ is also a Cauchy sequence in $H^{s}(X^0)$ and therefore $\xi^{p} \rightarrow \xi^{\infty} \in H^s(X^0)$. We write $f^{\infty} \doteq (f_0 + \xi^{\infty}) \circ (\psi^{\infty})^{-1} \in H^{s}(X^{\infty})$. 

The previous points show that $(X^{\infty},f^{\infty}) \in \mathcal{F}_{\mathcal{S}}^{s}$ and we only need to verify that $(X^p,f^p)$ indeed converges to $(X^{\infty},f^{\infty})$ for the metric $d_{\mathcal{F}_{\mathcal{S}}^{s}}$. To do so, we construct a path parametrized by a certain $(v,h)$ connecting $(X^p,f^p)$ to $(X^{\infty},f^{\infty})$. It is defined on dyadic intervals $[t^k,t^{k+1}]$ with $t_k \doteq \sum_{j=1}^{k} 2^{-j}$ by:
\begin{align}
 &v_t = 2^{k+1} v^{p+k}_{2^{k+1}(t-t^{k})}, \nonumber \\
 &h_t = 2^{k+1} (h^{p+k}_{2^{k+1}(t-t^{k})} \circ \psi^{p+k} \circ (\psi^{p})^{-1})
\end{align}
One can check that $v \in L^{2}([0,1],V)$, $h\in L^{2}([0,1],H^{s}(X^p))$ and that the flow of $(v,h)$ on the interval $[t^k,t^{k+1}]$ is given by $t\mapsto (\phi_{2^{k+1}(t-t_k)}^{v^{p+k}} \circ \psi^{p+k} \circ (\psi^{p})^{-1}, \zeta^{h^{p+k}_{2^{k+1}(t-t^{k})}}\circ \psi^{p+k} \circ (\psi^{p})^{-1})$. It results that for all $k \in \N$, $\phi_{t_k}^{v}(X^p)=X^{p+k}$, $(f^p + \zeta_{t_k}^{h}) \circ (\phi_{t_k}^{v})^{-1} = f^{p+k}$. Moreover, $\phi_{1}^{v}(X^p) = X^{\infty}$ and $(f^p + \zeta_{1}^{h}) \circ (\phi_{1}^{v})^{-1} = f^{\infty}$ using the convergence shown before. From the definition of the distance, we have that 
\begin{align*}
 d_{\mathcal{F}_{\mathcal{S}}^{s}}( (X^p,f^p),(X^{\infty},f^{\infty}))^2 &\leq E_{X^p}(v,h)^2 \\
 &\leq \sum_{k=0}^{\infty} \left( \int_{t_k}^{t_{k+1}} \frac{\gamma_V}{2} \|v_t\|_{V}^2 + \frac{\gamma_f}{2} \|h_t \circ \phi_{t}^{v} \|_{H^s}^2 dt \right) \\
 &\leq \sum_{k=0}^{\infty} \underbrace{\left( \int_{0}^{1} \frac{\gamma_V}{2} \|v_t^{p+k}\|_{V}^2 + \frac{\gamma_f}{2} \|h_t^{p+k} \circ (\phi_{t}^{v^{p+k}})^{-1} \|_{H^s}^2 dt \right)}_{=E_{X^{p+k}}(v^{p+k},h^{p+k})} \\
 &\leq \sum_{k=0}^{\infty} 2^{-p-k} \\
 &\leq 2^{-(p-1)}
\end{align*}
which completes the proof of Property \ref{prop:completeness}.
\end{proof}

\subsection{The embedding point of view}
All previous notions of functional shapes and metamorphoses may be transposed to the representation of shapes as parametrizations, which will be essential in particular for the theoretical derivations of the following section. Namely, we can represent any geometrical support $X$ by a $C^{s'}$-regular embedding ($s'=\max\{s,1\}$) $q \in \text{Emb}^{s'}(M,\R^n)$ where $M$ is the parameter set which is typically a compact manifold (possibly with boundary) of dimension $d$ and regularity at least $s'$, for example an open subset of $\R^d$ in the simplest situation. 

In this embedded setting, a functional shape may be equivalently given by a couple $(q,\check{f})$ where $\check{f}$ is a function on the parameter space related to $f$ by $\check{f} = f \circ q$. We give an illustration of an fshape and one parametric version in Figure \ref{fig:example_cortical_thickness}. The Sobolev metric of equation \eqref{eq:Hs_X} can be also expressed in the parameter space $M$ based on the pullback metric and covariant derivatives of tensors. For example, in the case $s=1$ and $M=\Omega \subset \R^d$ an open subset, we have
 \begin{equation}
 \label{eq:H1_Om}
  \|f\|_{H^{1}(X)}^2 = \frac{1}{2} \left( \int_{\Omega} \check{f}(m)^2 . |G_q(m)|^{1/2} dm + \int_{\Omega} (\nabla \check{f}(m))^{T} G_q(m)^{-1} \nabla \check{f}(m).|G_q(m)|^{1/2} dm \right)
 \end{equation}
where $G_q(m)$ denotes the pullback metric to $M$ from the one on $X$ induced by the Euclidean structure of $\R^n$, i.e $G_q(m) = (\partial_i q(m) \cdot \partial_j q(m))_{i,j=1,\ldots,d}$, the square root of its determinant $|G_q(m)|^{1/2}$ giving the induced volume density. More generally, for $q \in \text{Emb}^s(M,\R^n)$ and a signal $\check{f} \in H^s(M)$ the pullback $H^s$ norm on $M$ that we denote $\|\cdot\|_{H^s_q}$ can be expressed as follows:
\begin{equation}
 \label{eq:Hs_norm_immersion}
 \|\check{f}\|_{H^{s}_q}^2 = \|\check{f}\circ q^{-1}\|_{H^{s}(X)}^2 = \sum_{k=0}^{s} \int_{M} g_{k}^{0}(\nabla^{k} \check{f},\nabla^{k} \check{f}) \vol(g)  
\end{equation}
with $\nabla^k$ being a shortcut for $\nabla^{k,q}$, the $k$ times covariant derivative induced on $M$ by the embedding $q$, $g_k^0$ the induced product metric on $(0,k)$-tensors of $M$ and $\vol(g)$ the corresponding volume density as previously. We also refer to \cite{Bauer2012} for a more detailed exposition. 

With a given $q\in \text{Emb}^{s'}(M,\R^n)$, the equivalence between $f$ and the parametric representation $\check{f}$ is justified by:
\begin{lemma}
 \label{lemma:Sobolev_parametric}
 The application $f\mapsto f\circ q$ is an isomorphism between $H^s(X)$ and $H^s(M)$. In addition, there exists a constant $C\geq 0$ (depending on $q$) such that for all $f \in H^s(X)$:
 \begin{equation}
  \frac{1}{C} \|\check{f}\|_{H^s(M)} \leq \|f\|_{H^s(X)} = \|\check{f}\|_{H^s_q(M)} \leq C \|\check{f}\|_{H^s(M)}
 \end{equation}
\end{lemma}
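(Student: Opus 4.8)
The plan is to fix a reference smooth Riemannian metric $g_0$ on the compact manifold $M$ (or simply the Euclidean structure when $M=\Omega\subset\R^d$), which defines the norm $\|\cdot\|_{H^s(M)}$, and to compare it with the pullback metric $G_q$ that defines $\|\cdot\|_{H^s_q(M)}$. The central equality $\|f\|_{H^s(X)}=\|\check f\|_{H^s_q(M)}$ is just the definition \eqref{eq:Hs_norm_immersion} of the pullback norm applied to $\check f=f\circ q$ (so that $\check f\circ q^{-1}=f$); hence the real content is the two-sided bound between $\|\check f\|_{H^s_q(M)}$ and $\|\check f\|_{H^s(M)}$. The map $f\mapsto f\circ q$ is a linear bijection from $C^{s'}$-functions on $X$ onto $C^{s'}$-functions on $M$ because $q$ is a $C^{s'}$-diffeomorphism onto its image; once the norm equivalence is established on this dense subspace, the statement that it extends to an isomorphism of the Hilbert completions $H^s(X)\simeq H^s(M)$ follows by density and completeness.

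First I would record the geometric input coming from $q\in\text{Emb}^{s'}(M,\R^n)$ and compactness of $M$. Since $dq$ has full rank everywhere and $M$ is compact, the smallest singular value of $dq$ is bounded below, so $G_q=(dq)^T dq$ is uniformly positive definite and uniformly comparable to $g_0$: there are constants $0<c\le C$ with $c\,g_0\le G_q\le C\,g_0$ as quadratic forms on all of $M$. In particular the volume densities $\vol(G_q)$ and $\vol(g_0)$ are comparable, which already settles the $s=0$ ($L^2$) term. Moreover $G_q$ is of class $C^{s'-1}$, so its Christoffel symbols $\Gamma^{G_q}$ are of class $C^{s'-2}$ (read as ``continuous'' when $s'\le 1$), and all the quantities $G_q$, $G_q^{-1}$, $\Gamma^{G_q}$ together with the partial derivatives of them that occur below are bounded on $M$ by compactness.

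The heart of the argument, for $s\ge 1$, is a pointwise comparison of the covariant derivatives $\nabla^{k,q}\check f$ with flat partial derivatives in local charts. I would work in a finite atlas with a subordinate partition of unity and use the standard fact that, in each chart, the components of $\nabla^{k,q}\check f$ equal $\partial^\alpha\check f$ for $|\alpha|=k$ plus a linear combination of lower-order partials $\partial^\beta\check f$ ($|\beta|<k$) whose coefficients are universal polynomials in $\Gamma^{G_q}$ and its derivatives up to order $k-2$. These coefficients are bounded (this is exactly where $q\in C^{s'}$ with $s'=\max\{s,1\}$ enters, since $\partial^{k-2}\Gamma^{G_q}\sim\partial^k q$ must stay bounded for $k\le s$), so contracting with the comparable metric $G_q$ gives pointwise $|\nabla^{k,q}\check f|_{G_q}\lesssim\sum_{|\beta|\le k}|\partial^\beta\check f|$; conversely, an induction on $k$ peeling off the top-order term recovers $\sum_{|\alpha|\le k}|\partial^\alpha\check f|$ from $\sum_{j\le k}|\nabla^{j,q}\check f|_{G_q}$. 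Integrating against $\vol(G_q)$ and using its comparability with $\vol(g_0)$ yields a two-sided bound between $\|\check f\|_{H^s_q(M)}$ and the flat chart norm $\sum_{|\alpha|\le s}\|\partial^\alpha\check f\|_{L^2}$. Running the identical computation for $g_0$ shows $\|\check f\|_{H^s(M)}$ is equivalent to the same flat chart norm, and composing the two equivalences gives the stated inequality with a constant depending only on $q$ (through $c,C$ and the bounds on the Christoffel symbols).

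The main obstacle is the bookkeeping in this last step: controlling how iterating $\nabla^{q}$ mixes in lower-order derivatives of $\check f$ through the Christoffel symbols, and checking that $q\in C^{s'}$ supplies exactly enough bounded derivatives of $G_q$ to keep every coefficient appearing in $\nabla^{k,q}\check f$ for $k\le s$ bounded. The induction that peels off the top-order partial derivative must be organized so that the lower-order remainder terms are absorbed by the inductive hypothesis rather than producing a loss of derivatives; on a fixed compact $M$ this is routine, but it is the only place where genuine care is needed.
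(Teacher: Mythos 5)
Your proposal is correct and follows essentially the same route as the paper's proof: both reduce to the uniform comparability of the pullback metric $G_q$ with a reference metric on the compact $M$, express the covariant derivatives $\nabla^{k,q}\check f$ in local charts as triangular combinations of the flat partial derivatives with coefficients built from the (bounded) Christoffel symbols, pass through the flat chart norm $\sum_{|\alpha|\le s}\|\partial^\alpha\check f\|_{L^2}$ as the common intermediary, and obtain the reverse inequality by symmetry. The paper packages the chart computation as Lemma \ref{lem:20.8.1} and the end of the proof of Theorem \ref{theo:Sobolev_control_phi} and cites Hebey for the isomorphism statement, but the substance is identical to what you describe.
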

\begin{proof}
The proof may be adapted using similar elements as in the proof of Theorem \ref{theo:Sobolev_control_phi} in Appendix \ref{appendix:proof_theorem_control_phi}. We will just indicate the main lines here. The first part of the statement is a consequence of Proposition 2.2 in \cite{Hebey}. If $\bar{g}$ and $g$ denote respectively the original Riemannian metric on $M$ and the one induced on $M$ from the restriction of the Euclidean metric on the submanifold $X$ by the embedding $q$, we know from e.g \cite{Hebey} that there exists a constant $\widetilde{C}>0$ depending on the bounds of $q$ and its first order derivatives on the compact manifold $M$ such that:
\begin{equation*}
 \frac{1}{\widetilde{C}} \bar{g} \leq g \leq \widetilde{C} \bar{g}
\end{equation*}
in the sense of bilinear forms, and similarly for the cometrics. Now, given a coordinate system on a certain neighborhood $K \subset M$, following the same reasoning as in Lemma \ref{lem:20.8.1}, we can show an equivalent equality eq.\eqref{eq:20.8.2} between coordinate derivatives of $f$ and the covariant derivatives with respect to the metric $g$ where coefficients are all bounded from above on $K$ by a certain constant (dependent on $q$ and its derivatives up to order $k$). Then we can invoke the same arguments as in the end of the proof of Theorem \ref{theo:Sobolev_control_phi} and thus obtain successively constants $C'$ and $C''$ such that:
\begin{align*}
 \sum_{k=0}^s \int_K g^0_k(\nabla^k \check{f},\nabla^k \check{f})\vol(g) \leq C' \sum_{k=0}^s\int_K|\partial^k \check{f}|^2dx \leq C' C''\int_K \bar{g}^0_k(\bar{\nabla}^k \check{f},\bar{\nabla}^k \check{f})\vol(\bar{g})
\end{align*}
and thus $\|\check{f}\|_{H^s_q} \leq C \|\check{f}\|_{H^s(M)}^2$. A reverse inequality is obtained by simply redoing the previous reasoning with $q^{-1}:X \rightarrow M$.
\end{proof}

Following these lines, we can then basically identify the previous bundle $\mathcal{F}_{\mathcal{S}}^{s}$ with the product space $\text{Emb}^{s'}(M,\R^n) \times H^s(M)$. Any fshape transformation $(\phi,\zeta)$ becomes, once put in parametrization, an element $(\phi,\check{\zeta})$ of $G_V \times H^s(M)$ that acts on $(q,\check{f})$ by: 
\begin{equation}
\label{eq:action_fshape_param}
  (\phi,\check{\zeta}) \cdot (q,\check{f}) \doteq (\phi \circ q,\check{f} + \check{\zeta} ) 
\end{equation}
It is then quite clear that this is now a group action of the direct product group $G_V \times H^s(M)$ on $\text{Emb}^{s'}(M,\R^n) \times H^s(M)$ and that the action is transitive, which turns this fshape space into a more usual shape space \cite{Arguillere2015} but for an extended group of transformations. 

The dynamics of a metamorphosis of an fshape $(q_0,\check{f}_0)$ writes:
\begin{equation}
 \label{eq:dynam_param}
 \left\{ \begin{array}[h]{l}
 \dot{q}_t = v_{t} \circ q_t \\
 \dot{\check{f_t}} = h_{t}
\end{array}
\right. 
\end{equation}
with $v \in L^{2}([0,1],V)$ and $\check{h} \in L^{2}([0,1],H^{s}(M))$. The energy of $(v,\check{h})$ corresponding to \eqref{eq:energy_def} for the embedding representation becomes:
\begin{align}
 \label{eq:energy_param1}
 E_{q_0}(v,\check{h}) &= \frac{\gamma_{V}}{2} \int_{0}^{1} \|v_t\|_{V}^{2} dt + \frac{\gamma_{f}}{2} \int_{0}^{1} \|\check{h}_t \circ q_t^{-1}\|_{H^{s}(q_{t}(M))}^{2} dt \nonumber \\
 &= \frac{\gamma_{V}}{2} \int_{0}^{1} \|v_t\|_{V}^{2} dt + \frac{\gamma_{f}}{2} \sum_{k=0}^{s} \int_{0}^{1} \int_{M} g^0_{t,k}(\nabla^k \check{f_t},\nabla^k \check{f_t})\vol(g_t) dt 
\end{align}
where we use the shortcut notation $g_t$ for the metric on $M$ obtained by pullback from the embedding $q_t$, and $\nabla$, unless stated otherwise, denote the covariant derivative for that metric.

At this point, it's important to note that if the representation of shapes as embeddings does provide an alternative setting for fshapes analysis that will be exploited in the following paragraphs, it does not directly embody the invariance of the objects to reparametrizations. This issue will be addressed separately in \ref{ssec:conservation_laws}.

\section{Matching between fshapes: optimal control formulation}
\label{sec:optimal_control}

\subsection{Inexact matching}
In the previous section, we have presented the mathematical setting to model functional shapes of Sobolev regularity, defined metamorphoses of fshapes and quantified distances on these spaces. The distance $d_{\mathcal{F}_{\mathcal{S}}^{s}}$ is only well-defined between two fshapes belonging to the same bundle. In that case, computing the distance amounts in finding a geodesic path mapping the first fshape \textbf{exactly} on the second one. As already discussed at the end of section \ref{ssec:lddmm}, this is only achievable if the geometric supports are themselves equivalent up to a diffeomorphism in the group $G_V$. 

For practical applications in shape analysis, exact registration under the previous framework is generally not relevant either because actual deformations of the geometric supports in a population of fshapes are not entirely modeled by diffeomorphisms in $G_V$ and Sobolev signal variations or because it is essential to regularize the estimated transformation to obtain more significant results from the point of view of statistical analysis. Thus, it is common to solve instead \textbf{inexact matching} problems that involve an additional data attachment (or dissimilarity) term. 

In the context of functional shapes with the metamorphosis setting that was introduced above, given parametrized template fshape $(q_0,\check{f_0})$ and a target $(q^{\text{tar}},\check{f}^{\text{tar}})$, we will focus on variational problems that have the general form:
\begin{equation}
 \label{eq:registration_fshape}
\left\{ \begin{array}[h]{l}
 (v^*,\check{h}^*) = \text{arg} \inf\left\{ E_{q_0}(v,\check{h}) + A(q_1,\check{f}_1) \ | \ v \in L^2([0,1],V), \ \check{h} \in L^2([0,1],H^s(M)) \right\} \\
 \dot{q_t}= v_t \circ q_t = \xi_{q_t} v_t \\
 \dot{\check{f_t}} = \check{h_{t}}
\end{array}
\right. 
\end{equation}
where $A$ is the data attachment term between the transformed fshape $(q_1,\check{f}_1)$ and the target, therefore measuring the registration mismatch. In other words, while $(q_1,\check{f}_1)$ belongs to the same bundle as the template by construction, $A$ can be thought as a cross-bundle term that accounts for possible variability outside the bundle. We shall keep this term as general as it can be for now but specific choices will be discussed below. Note that we have adopted here the point of view of parametrizations instead of fshapes strictly speaking, essentially as a necessary theoretical intermediate for the next developments of this section.

Equation \eqref{eq:registration_fshape} is once again an optimal control problem, this time with two controls given by the deformation field $v_t$ and the variable $\check{h_t}$ of signal transformation. The fundamental questions that are addressed in the following sections deal with the existence of such optimal controls as well as their characterization in terms of Hamiltonian dynamics that will be later exploited for the design of matching algorithms.  
 
\subsection{Existence of solutions}
\label{ssec:existence_solutions}
The existence of solutions to the problem of equation \eqref{eq:registration_fshape} depends on the properties of the data attachment term $g$. Using classical arguments of functional analysis, we have that:
\begin{thm}
\label{thm:existence1}
 If the functional $(v,\check{h})\mapsto A(q_1,\check{f_1})=A(\phi_{1}^{v}\circ q_0,\check{f_0}+\zeta_1^{\check{h}})$ is weakly lower semicontinuous in $L^2([0,1],V\times H^{s}(M))$, then there exists at least one solution to the optimal control problem in equation \eqref{eq:registration_fshape}. 
\end{thm}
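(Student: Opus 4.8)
The plan is to apply the direct method of the calculus of variations, reusing almost verbatim the compactness and lower-semicontinuity machinery already assembled in the proof of Theorem \ref{theo:distance_fshape_bundle}. First I would fix a minimizing sequence $(v^n,\check{h}^n) \in L^2([0,1],V\times H^s(M))$ for the functional $J(v,\check{h}) \doteq E_{q_0}(v,\check{h}) + A(q_1,\check{f}_1)$. Assuming, as is the case for all the data attachment terms used in practice, that $A$ is bounded below, boundedness of $J(v^n,\check{h}^n)$ forces $E_{q_0}(v^n,\check{h}^n)$ to be bounded as well. The deformation part of the energy then yields immediately that $(v^n)$ is bounded in $L^2([0,1],V)$, so up to extraction $v^n \rightharpoonup v^\infty$ weakly; by the continuity of the flow with respect to weak convergence of the velocity (Theorem 8.11 in \cite{Younes}), $\phi_t^{v^n} \to \phi_t^{v^\infty}$, together with all derivatives up to order $s$, uniformly on $[0,1]$ and on $X$.

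Next I would control the signal controls $\check{h}^n$. Exactly as in the proof of Theorem \ref{theo:distance_fshape_bundle}, I apply Theorem \ref{theo:Sobolev_control_phi} to the diffeomorphism $\phi_t^{v^\infty}\circ(\phi_t^{v^n})^{-1}$ and use the uniform convergence of the flows to deduce that the bounded quantity $\int_0^1 \|\check{h}^n_t \circ (\phi_t^{v^n})^{-1}\|_{H^s(q_t^n(M))}^2\,dt$ also bounds $(\check{h}^n)$ in the fixed $\phi^\infty$-pullback metric $L^2([0,1],H^{s,\phi^\infty})$. Up to a second extraction, $\check{h}^n \rightharpoonup \check{h}^\infty$ weakly for that metric. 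Since by Lemma \ref{lemma:Sobolev_parametric} the pullback norm $\|\cdot\|_{H^{s,\phi_t^{v^\infty}}}$ is equivalent to $\|\cdot\|_{H^s(M)}$ with constants depending continuously on $t$, hence uniformly over the compact interval $[0,1]$, this weak convergence is equivalent to $\check{h}^n \rightharpoonup \check{h}^\infty$ in $L^2([0,1],H^s(M))$. Thus $(v^n,\check{h}^n)\rightharpoonup (v^\infty,\check{h}^\infty)$ weakly in $L^2([0,1],V\times H^s(M))$, and the limit is admissible since the flow of $(v^\infty,\check{h}^\infty)$ exists by the results of section \ref{ssec:lddmm}.

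It then remains to pass to the liminf in both terms of $J$. For the energy, weak lower semicontinuity is exactly what was established through inequalities \eqref{eq:dist_semcont1} and \eqref{eq:dist_semcont2}: the deformation term is weakly l.s.c. as the square of a Hilbert norm, while the signal term is handled by combining the weak l.s.c. in the fixed $\phi^\infty$-metric with the pointwise-in-$t$ convergence of the moving norms $\|\cdot\|_{H^{s,\phi_t^{v^n}}}$ to the fixed-metric norm $\|\cdot\|_{H^{s,\phi_t^{v^\infty}}}$. This yields $E_{q_0}(v^\infty,\check{h}^\infty)\leq \liminf_n E_{q_0}(v^n,\check{h}^n)$. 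For the data term, the hypothesis that $(v,\check{h})\mapsto A(q_1,\check{f}_1)$ is weakly l.s.c. on $L^2([0,1],V\times H^s(M))$ applies directly to the weakly convergent sequence constructed above, giving $A(q_1^\infty,\check{f}_1^\infty)\leq\liminf_n A(q_1^n,\check{f}_1^n)$. Adding the two inequalities shows $J(v^\infty,\check{h}^\infty)\leq \liminf_n J(v^n,\check{h}^n) = \inf J$, so $(v^\infty,\check{h}^\infty)$ is a minimizer.

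The step I expect to be the main obstacle is the coercivity and lower semicontinuity of the signal part of the energy, precisely because that term is measured with respect to the moving pullback metric $g_t$, which itself depends on the control $v$ being varied. The resolution, already worked out for the distance in Theorem \ref{theo:distance_fshape_bundle}, is to decouple the two controls by freezing the limiting flow $\phi^\infty$: one first transfers all estimates onto the fixed $\phi^\infty$-metric using Theorem \ref{theo:Sobolev_control_phi} and the uniform flow convergence, and only afterwards exploits convexity and weak lower semicontinuity within that fixed Hilbert structure. Once this decoupling is in place, the remainder is the routine direct-method argument, the genuinely new input being solely the abstract weak lower semicontinuity assumed on $A$.
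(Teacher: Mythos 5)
Your proposal is correct and follows essentially the same route as the paper's own proof: boundedness and weak convergence of $(v^n)$ with uniform convergence of the flows, transfer of the signal bounds to the fixed pullback metric $L^2([0,1],H^{s,\phi^\infty})$ via Theorem \ref{theo:Sobolev_control_phi}, identification of weak convergence there with weak convergence in $L^2([0,1],H^s(M))$ through the norm equivalences of Corollary \ref{cor:Sobolev_control_GV} and Lemma \ref{lemma:Sobolev_parametric}, and then the lower-semicontinuity argument of Theorem \ref{theo:distance_fshape_bundle} for the energy combined with the assumed weak lower semicontinuity of $A$. Your explicit remark that $A$ should be bounded below to get coercivity is a point the paper leaves implicit, but it changes nothing of substance.
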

\begin{proof}
 Let $(v^{n},\check{h}^{n})$ be a minimizing sequence. Then, it is clear that $(v^{n})$ must be bounded in $L^2([0,1],V)$ which, up to an extraction, implies that $v^n \rightharpoonup v^{*}$ and $\phi_{t}^{n}\doteq \phi_{t}^{v^{n}}$ converges to $\phi_{t}^{*} \doteq \phi_{t}^{v^{*}}$ uniformly on every compact and for all $t \in [0,1]$ as well as all derivatives of order at most $s$. In addition, the quantity
 \begin{equation*}
  \int_{0}^{1} \|\check{h}^n_t \circ (q^n_t)^{-1}\|_{H^{s}(q^n_{t}(M))}^{2} dt = \int_{0}^{1} \|h^n_t \circ (\phi_t^{n})^{-1}\|_{H^{s}(\phi_t^{n}\circ q(M))}^{2} dt
 \end{equation*}
is also bounded. Applying Theorem \ref{theo:Sobolev_control_phi} with $\phi_{t}^{*} \circ (\phi_{t}^{n})^{-1}$ and the previous uniform convergence of the $\phi_{t}^{n}$, we obtain that the sequence $(h^n)$ is bounded for the metric:
 \begin{equation*}
  \|h\|_{L^2([0,1],H^{s,\phi^{*}})}^2 \doteq \int_{0}^{1} \|h^n_t \circ (\phi_t^{*})^{-1}\|_{H^{s}(\phi_t^{*}\circ q(M))}^{2} dt
 \end{equation*}
 It results that we can assume, up to another extraction, that $(h^n)$ weakly converges to a certain $h^*$ in $L^2([0,1],H^{s,\phi^{*}})$. In addition, once again with Corollary \ref{cor:Sobolev_control_GV} applied to $(\phi_{t}^{*})^{-1}$, we get that there exists a constant $C$ (depending on $\phi^{*}$) such that for all $\check{h} \in L^{2}([0,1],H_{q}^{s}(M))$:
 \begin{equation*}
  \|\check{h}\|_{L^2([0,1],H_{q}^{s}(M))} \leq C \|h\|_{L^2([0,1],H^{s,\phi^{*}})}
 \end{equation*}
and adding the result of Lemma \ref{lemma:Sobolev_parametric}, there is a constant $C'$ (depending on $\phi^{*}$ and $q=q_0$) such that $\|\check{h}\|_{L^2([0,1],H^{s}(M))} \leq C' \|h\|_{L^2([0,1],H^{s,\phi^{*}})}$. Therefore, since the sequence $(h^n)$ is weakly converging to $h^*$ in $L^2([0,1],H^{s,\phi^{*}})$, we also have that $(\check{h}^n)$ is weakly converging to $\check{h}^*$ in $L^2([0,1],H^{s}(M))$. Now, repeating the same reasoning as in the proof of Theorem \ref{theo:distance_fshape_bundle}, we have on the one hand $E_{q_0}(v^{*},\check{h}^{*}) \leq \lim \inf_{n\rightarrow +\infty} E_{q_0}(v^{n},\check{h}^{n})$ using the weak convergence in $L^2([0,1],H^{s,\phi^{*}})$ and 
$$A(q_1^{*},\check{f_0}+\zeta_1^{\check{h}^*}) \leq \lim \inf_{n\rightarrow +\infty} A(q_1^{n},\check{f_0}+\zeta_1^{\check{h}^n})$$ 
since $\check{h}^{n} \rightharpoonup \check{h}^{*}$ in $L^2([0,1],H^{s}(M))$. We conclude that $(v^{*},h^{*})$ is a minimizer of \eqref{eq:registration_fshape}.
\end{proof}
The general assumption in Theorem \ref{thm:existence1} is not necessarily straightforward to verify for relevant choices of functional shapes' data attachment terms. We will quickly review a few possibilities in the following. The easiest choice for fshape parametrizations would be quite naturally:
\begin{equation*}
 A(q_1,\check{f_1}) \doteq \int_{M} |q_1(m) - q^{\text{tar}}(m)|^2 d\mathcal{H}^d(m) + \int_{M} (\check{f_1}(m) - \check{f}^{\text{tar}}(m))^2 d\mathcal{H}^d(m)
\end{equation*}
This is a simple squared $L^2$ distance of the functions' couple $(q,\check{f})$. It's not difficult to verify that this choice of $g$ leads to the desired weak semicontinuity property and thus to existence of solutions for the control problem. The fundamental issue is that such terms are comparing the parametric functions $q$ and $\check{f}$ provided such parametrizations are even obtainable in practice, and more importantly they do not compare the fshapes represented by these parametrizations. If signals $\check{f_1}$ and $\check{f}^{\text{tar}}$ are both constants on $M$, we end up again with the term of the end of section \ref{ssec:lddmm}, which is not invariant through reparametrizations.

For pure geometry, as mentioned above, there are different frameworks constructing parametrization-invariant data attachment terms. However, the adjunction of signal functions on the shapes can make some of these frameworks rather difficult to extend. The viewpoint of geometric measure theory and representation of shapes by currents or varifolds has the advantage of being fairly easy to adapt to the situation of fshapes. This has been done respectively in \cite{Charon1} and \cite{Charlier15}. We will not redo a comprehensive presentation of these concepts. To keep this section brief, let's simply recall that such terms derive from the representation of a fshape as a distribution on an extended space of point position, signal values and Grassmannian, and that, as distributions, these objects are then compared based on reproducing kernel Hilbert metrics or pseudo-metrics. For the \textbf{fvarifold} case, data attachment terms eventually take the following form:
\begin{align}
 \label{eq:fvarifold}
 &A(q_1,\check{f_1}) = \nonumber \\
 &\iint_{M \times M} k_{p}(q_{1}(m),q_1(m')) \, k_{f}(\check{f_1}(m),\check{f_1}(m')) \, k_{t}(Tq_1(m),Tq_1(m')) \, \vol(g^{q_1}) \vol(g^{q_1}) \nonumber \\
 &-2\iint_{M \times M} k_{p}(q_{1}(m),q^{\text{tar}}(m')) \, k_{f}(\check{f_1}(m),\check{f}^{\text{tar}}(m')) \, k_{t}(Tq_1(m),Tq^{\text{tar}}(m')) \, \vol(g^{q_1}) \vol(g^{q^{\text{tar}}})  \nonumber \\
 &+\iint_{M \times M} k_{p}(q^{\text{tar}}(m),q^{\text{tar}}(m')) \, k_{f}(\check{f}^{\text{tar}}(m),\check{f}^{\text{tar}}(m')) \, k_{t}(Tq^{\text{tar}}(m),Tq^{\text{tar}}(m')) \, \vol(q^{\text{tar}}) \vol(q^{\text{tar}})
\end{align}
where $Tq_1(m)$ is a shortcut notation to denote the $d$-dimensional linear subspace given by the range of $dq_1(m)$ while $k_{p}$, $k_{f}$ and $k_{t}$ are three positive kernels respectively on $\R^n$, $\R$ and the Grassmann manifold of all $d$-dimensional subspaces in $\R^n$. The essential difference with the previous $L^2$ metric is that $g$ in equation \eqref{eq:fvarifold} only depends on the fshape $(X_1,f_1)$ represented by the parametrization $(q_1,\check{f_1})$. 

The variation of these terms with respect to variations $q+\delta q$ and $\check{f}+\delta \check{f}$ has also been computed for the purely geometrical situation \cite{Charon2,Miller2015} and generalized to the functional case in \cite{Charlier15}. Without entering into all the details and proofs, if we assume that $q \in C^2(M,\R^n)$ and $\check{f} \in C^1(M)$, this variation has the form described below:
\begin{equation}
 \label{eq:fvarifold_variation}
(\delta A|(\delta q ,\delta f)) = \int_{M} [\langle \alpha, (\delta q)^{\bot} \rangle + \left( \beta (dq^{-1})^*(\nabla \check{f}) | (\delta q)^{\top} \right) + \gamma \delta \check{f}] \vol(g) + \int_{\partial M} \left \langle \eta, \delta q \right \rangle dl
\end{equation}
where $\alpha$ is a normal vector field, $\beta, \gamma$ are scalar functions on $M$ which regularities depend on the chosen kernels, $\eta$ is defined on the boundary $\partial M$ and is a vector field normal to the boundary of the submanifold, $(\delta q)^{\top},(\delta q)^{\bot}$ are respectively the tangential and orthogonal components of $\delta q$.

The only issue that we intend to address here is the one of the existence of solutions to \eqref{eq:registration_fshape} when $A$ is given by a fvarifold data attachment term. The case of metamorphoses in $L^2$ (i.e for $s=0$) was treated extensively in \cite{Charlier15} section 5. In that case, theorem \ref{thm:existence1} does not apply because fvarifold terms are generally not lower semicontinuous for the weak convergence in $L^2([0,1],L^2(M))$. Instead, the proof was based on geometric measure theory type of arguments. By omitting the technical assumptions on the required regularities of kernels, the result proven in \cite{Charlier15} (Theorem 7) translates to our situation as the following:
\begin{thm}
\label{thm:existence2}
 For sufficiently regular kernels $k_e, k_f, k_t$ and if $\gamma_V$ and $\gamma_f$ are large enough, there exists a solution in $L^2([0,1],V\times L^2(M))$ of the control problem \eqref{eq:registration_fshape} with $s=0$.  
\end{thm}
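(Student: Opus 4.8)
The plan is to run the direct method of the calculus of variations, but—since $A$ is not weakly lower semicontinuous in $L^2([0,1],L^2(M))$—to handle the data term at the level of fvarifolds via geometric measure theory, following the scheme of Theorem 7 in \cite{Charlier15}. First I would fix a minimizing sequence $(v^n,\check h^n)$ for $E_{q_0}+A$. Testing against the trivial competitor $(v,\check h)=(0,0)$ gives $\inf(E_{q_0}+A)\leq A(q_0,\check f_0)=:A_0<\infty$, and since $A\geq 0$ this bounds $E_{q_0}(v^n,\check h^n)\leq A_0$. Because $\gamma_V$ and $\gamma_f$ are large, this confines the sequence: $\int_0^1\|v^n_t\|_V^2\,dt\leq 2A_0/\gamma_V$, and by Cauchy--Schwarz followed by the norm equivalence of Corollary \ref{cor:Sobolev_control_GV} (with $s=0$, the transport constants being uniform because the deformations are confined), $\|\check f^n_1-\check f_0\|_{L^2(M)}^2=\big\|\int_0^1\check h^n_t\,dt\big\|_{L^2(M)}^2\leq\int_0^1\|\check h^n_t\|_{L^2(M)}^2\,dt\leq C A_0/\gamma_f$. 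Thus, for $\gamma_V,\gamma_f$ large, the minimizing sequence stays in a small neighbourhood of the template both geometrically and in signal.

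From the bound on $v^n$ I extract $v^n\rightharpoonup v^*$ in $L^2([0,1],V)$; by the standard flow estimates (\cite{Younes}, Thm 8.11), and since $s=0$ requires only $V\hookrightarrow\Gamma^{1}$, the flows $\phi^n_t\to\phi^*_t$ uniformly on compacta together with their first derivatives, so $q^n_1\to q^*_1$ in $C^1(M,\R^n)$ and the tangent spaces $Tq^n_1$ and densities $\vol(g^{q^n_1})$ converge uniformly. The same norm equivalence turns the energy bound into a bound for $\check h^n$ in $L^2([0,1],L^2(M))$, so up to extraction $\check h^n\rightharpoonup\check h^*$ and $\check f^n_1=\check f_0+\int_0^1\check h^n_t\,dt\rightharpoonup\check f^*_1$ weakly in $L^2(M)$. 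At this point $E_{q_0}$ is weakly lower semicontinuous exactly as in the proof of Theorem \ref{theo:distance_fshape_bundle}, so the entire difficulty is concentrated in the data term. To treat $A$ I would pass to fvarifolds: for each $n$ let $\mu^n$ be the pushforward of $\vol(g^{q^n_1})$ under $m\mapsto(q^n_1(m),\check f^n_1(m),Tq^n_1(m))$, a Radon measure on $\R^n\times\R\times G_d(\R^n)$ whose mass equals $\vol(q^n_1(M))$ and is uniformly bounded; positions remain in a fixed compact, the Grassmannian factor is compact, and the signal marginal is tight thanks to the confinement of $\check f^n_1$ near $\check f_0$ secured by $\gamma_f$ large. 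Hence $\mu^n\rightharpoonup^{*}\mu^\infty$ (and $\mu^n\otimes\mu^n\rightharpoonup^{*}\mu^\infty\otimes\mu^\infty$) up to extraction, and for sufficiently regular (bounded, continuous, vanishing-at-infinity) kernels $k_p,k_f,k_t$ the integrand of \eqref{eq:fvarifold} is bounded and continuous, so each of the three double integrals passes to the limit and $A(\mu^n)\to A(\mu^\infty)$.

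The main obstacle—and the crux of the whole argument—is to upgrade the weak $L^2$ convergence $\check f^n_1\rightharpoonup\check f^*_1$ to a mode strong enough that $\mu^\infty$ is exactly the fvarifold of the candidate minimizer $(q^*_1,\check f^*_1)$, equivalently that $\mu^\infty$ is carried by the graph of $\check f^*_1$ over $X^*_1=q^*_1(M)$ and not by a diffuse Young measure in the signal fibre. The strong $C^1$ convergence $q^n_1\to q^*_1$ already fixes the position and Grassmannian marginals of $\mu^\infty$, so the only failure mode is spreading of the signal fibre—precisely the phenomenon behind the lack of weak lower semicontinuity of $A$, since spatial oscillations of $\check f^n_1$ cost nothing in the $s=0$ energy. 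Excluding it is where the geometric-measure-theoretic closure argument of \cite{Charlier15} (Thm 7) enters, exploiting the confinement of the signal near $\check f_0$ (hence the need for $\gamma_f$ large) together with the regularity of $k_f$. Granting this, $A(q^*_1,\check f^*_1)=A(\mu^\infty)=\lim_n A(q^n_1,\check f^n_1)$, and combining with $E_{q_0}(v^*,\check h^*)\leq\liminf_n E_{q_0}(v^n,\check h^n)$ gives $E_{q_0}(v^*,\check h^*)+A(q^*_1,\check f^*_1)\leq\liminf_n\big(E_{q_0}(v^n,\check h^n)+A(q^n_1,\check f^n_1)\big)=\inf$, so $(v^*,\check h^*)$ is a minimizer.

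I expect the routine parts to be the tightness and weak-$*$ convergence bookkeeping for the fvarifold terms (requiring the precise decay and smoothness on the kernels abbreviated by ``sufficiently regular''), and the genuinely hard part to be the rigidity step ruling out signal diffusion in the weak-$*$ limit, which is exactly where the quantitative role of large $\gamma_V,\gamma_f$ is consumed. The full technical execution is that of \cite{Charlier15}, Theorem 7, transcribed to the present parametrized notation.
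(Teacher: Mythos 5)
The paper does not actually prove this statement: Theorem~\ref{thm:existence2} is imported from \cite{Charlier15} (Theorem~7), and the surrounding text only records that the proof there rests on ``geometric measure theory type of arguments'' precisely because $A$ fails to be weakly lower semicontinuous on $L^2([0,1],L^2(M))$. Your direct-method skeleton is consistent with that strategy and its routine parts are fine: coercivity from testing against $(0,0)$, weak compactness of $(v^n)$ with uniform $C^1$ convergence of the flows (\cite{Younes}), transfer of the energy bound to $\check h^n$ via Corollary~\ref{cor:Sobolev_control_GV} and Lemma~\ref{lemma:Sobolev_parametric}, lower semicontinuity of $E_{q_0}$, and weak-$*$ compactness of the fvarifolds $\mu^n$ (uniformly bounded mass, compact position and Grassmannian factors) together with passage to the limit in the three double integrals of \eqref{eq:fvarifold} for bounded continuous kernels.

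The genuine gap is exactly the step you flag and then ``grant'': showing that the weak-$*$ limit $\mu^\infty$ is carried by the graph of $\check f^*_1$ over $q^*_1(M)$ rather than being diffuse in the signal variable. That step \emph{is} the content of \cite{Charlier15}, Theorem~7, so as written your argument is a reduction to the cited result, not a proof of it. Moreover, the mechanism you offer for closing it --- confinement of $\check f^n_1$ in a small $L^2$-ball around $\check f_0$ when $\gamma_f$ is large --- cannot suffice on its own: a spatially oscillating perturbation of small but non-vanishing $L^2$ norm still generates a nontrivial Young measure in the signal fibre, so membership in a ball of radius $O(\gamma_f^{-1/2})$ is fully compatible with the diffusion you must exclude. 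As the paper's own commentary indicates (``the existence of solutions only holds when the weight of the energy term \emph{relative to the data attachment one} is large enough''), the largeness of $\gamma_V,\gamma_f$ enters through a quantitative comparison between the energy cost of signal oscillation and the possible decrease of the fidelity term under oscillation, the latter controlled by the derivative bounds of $k_f$ (and of $k_p,k_t$ for the geometric part): replacing a non-concentrated limit by its fibrewise barycenter lowers the convex energy term by an amount proportional to the signal variance while perturbing $A$ by at most a kernel-Lipschitz constant times that variance, so above an explicit threshold on the weights a minimizing limit cannot diffuse. A self-contained proof must supply this comparison; otherwise the honest conclusion of your proposal is that it verifies the hypotheses under which \cite{Charlier15}, Theorem~7 applies.
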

This result shows the important restriction that occurs when doing fshape metamorphoses in the space $L^2$; the existence of solutions only holds when the weight of the energy term relative to the data attachment one is large enough. This condition may also be crucial in numerical applications from a stability perspective, as evidenced in section 9 of \cite{Charlier15}. 

This is one of the motivation to extend the framework to higher regularity norms. Indeed, in the Sobolev case for $s\geq 1$, one can recover a stronger existence result using weak continuity arguments. The important result on the data attachment term that is needed is the following:
\begin{lemma}
\label{lemma:control_fvarifold}
 If $\check{\zeta}^{n} \rightarrow \check{\zeta}^*$ in $L^{2}(M)$, then $A(q,\check{f}+\check{\zeta}^{n})\rightarrow A(q,\check{f}+\check{\zeta}^{*})$ for any $q \in C^{s}(M,\R^n)$ and $\check{f} \in H^{s}(M)$, $s\geq 0$.  
\end{lemma}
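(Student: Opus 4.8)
The plan is to exploit that, with the geometric support $q$ held fixed, the fvarifold term \eqref{eq:fvarifold} depends on the signal perturbation $\check{\zeta}$ only through the scalar kernel $k_f$ evaluated at signal values. First I would record that the third double integral of \eqref{eq:fvarifold} involves only $q^{\text{tar}}$ and $\check{f}^{\text{tar}}$, hence is constant in $\check{\zeta}$, while in the first two integrals every geometric factor --- the position kernels $k_p(q(m),q(m'))$, the Grassmannian kernels $k_t(Tq(m),Tq(m'))$, and the volume densities $\vol(g^q)$ --- is frozen, depending only on the fixed $q$. Writing $\check{f}_n \doteq \check{f}+\check{\zeta}^n$ and $\check{f}_* \doteq \check{f}+\check{\zeta}^*$, it therefore suffices to prove convergence of the two integrals $\iint_{M\times M} \Phi(m,m')\, k_f(\check{f}_n(m),\star)\,\vol(g^q)\,\vol(g^q)$, where $\Phi$ is a fixed bounded measurable weight and $\star$ stands for either $\check{f}_n(m')$ (self term) or $\check{f}^{\text{tar}}(m')$ (cross term). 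Since $\check{f}$ is fixed, $\check{\zeta}^n\to\check{\zeta}^*$ in $L^2(M)$ gives at once $\check{f}_n\to\check{f}_*$ in $L^2(M)$.

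Next I would transfer $L^2$ convergence into pointwise convergence of the integrands. From any subsequence of $(\check{f}_n)$ one may extract a further subsequence converging to $\check{f}_*$ almost everywhere on $M$. Along such a subsequence, continuity of the signal kernel $k_f$ --- part of the regularity hypotheses on kernels already assumed for the fvarifold functional --- yields that $k_f(\check{f}_n(m),\check{f}_n(m'))$ and $k_f(\check{f}_n(m),\check{f}^{\text{tar}}(m'))$ converge almost everywhere on $M\times M$ to their counterparts built from $\check{f}_*$.

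I would then conclude by dominated convergence. The kernels $k_p$, $k_f$, $k_t$ are bounded (guaranteed by the kernel regularity assumptions, these being in practice smooth bounded kernels such as Gaussians), and $M$ being compact makes the product volume measure $\vol(g^q)\,\vol(g^q)$ finite on $M\times M$; hence each integrand is dominated, uniformly in $n$, by a fixed integrable constant. The dominated convergence theorem then gives convergence of the two integrals along the extracted subsequence to their values at $\check{f}_*$, whence $A(q,\check{f}_n)\to A(q,\check{f}_*)$ along it. The main --- and essentially only --- subtlety is that $L^2$ convergence does not by itself deliver almost-everywhere convergence of the full sequence, only along subsequences; this is handled by the standard ``every subsequence has a further subsequence converging to the same limit'' device, which upgrades the subsequential convergence to convergence of the entire sequence $A(q,\check{f}+\check{\zeta}^n)\to A(q,\check{f}+\check{\zeta}^*)$. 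Beyond this point the argument is a routine application of dominated convergence, the essential structural observation being that freezing $q$ collapses the whole functional into a bounded, continuous function of the signal values alone.
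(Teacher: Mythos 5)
Your proof is correct. The paper does not actually spell out an argument for this lemma --- it only states that the result is an adaptation of Proposition 3 in \cite{Charon1} for functional currents --- and the route you take (freeze $q$ so that the dependence on the signal enters only through the bounded continuous kernel $k_f$, extract an a.e.\ convergent subsequence from the strong $L^2$ convergence, apply dominated convergence on the finite product measure space $M\times M$, and upgrade via the subsequence-of-a-subsequence device) is precisely the standard adaptation the authors have in mind; it is also consistent with their remark that the conclusion fails under merely weak $L^2$ convergence, since that is exactly where the a.e.\ subsequence extraction breaks down.
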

An equivalent result was proven for functional currents in \cite{Charon1} (Proposition 3). The proof for functional varifolds data attachment terms can be adapted straightforwardly and is not repeated here. Note that the conclusion does \textbf{not} hold if we only have weak and not strong convergence in $L^2(M)$. Now, the consequence is the following existence theorem:
\begin{thm}
 For sufficiently regular kernels $k_e, k_f, k_t$ and $\gamma_V,\gamma_f>0$, there exists a solution $(v^*,h^*) \in L^2([0,1],V\times H^s(M))$ of the control problem \eqref{eq:registration_fshape} with $s\geq 1$.
\end{thm}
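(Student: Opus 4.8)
The plan is to deduce the result from Theorem \ref{thm:existence1} by showing that the composite data-attachment functional $(v,\check h)\mapsto A(\phi_1^{v}\circ q_0,\check f_0+\zeta_1^{\check h})$ is in fact weakly \emph{continuous}, and hence a fortiori weakly lower semicontinuous, on $L^2([0,1],V\times H^s(M))$ as soon as $s\geq 1$. I would therefore start from a sequence $(v^n,\check h^n)$ converging weakly to $(v^*,\check h^*)$ in that space, the boundedness of a minimizing sequence and the extraction being obtained exactly as in the opening of the proof of Theorem \ref{thm:existence1} (using Corollary \ref{cor:Sobolev_control_GV} and Lemma \ref{lemma:Sobolev_parametric} to pass uniformly between the pullback metrics and the fixed metric $H^s(M)$), and then prove $A(q_1^n,\check f_1^n)\to A(q_1^*,\check f_1^*)$, where $q_1^n=\phi_1^{v^n}\circ q_0$ and $\check f_1^n=\check f_0+\zeta_1^{\check h^n}$.

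For the geometric argument, weak convergence $v^n\rightharpoonup v^*$ in $L^2([0,1],V)$ yields, by the flow estimates already invoked (cf. \cite{Younes} Thm.~8.11), uniform convergence of the flows together with all derivatives up to order $s$ on $M$; hence $q_1^n\to q_1^*$ in $C^s(M,\R^n)$, in particular in $C^1$, so that the point positions, the tangent planes $Tq_1^n$ and the volume densities $\vol(g^{q_1^n})$ entering \eqref{eq:fvarifold} all converge. For the signal argument, the integration map $\check h\mapsto\zeta_1^{\check h}=\int_0^1\check h_t\,dt$ is bounded and linear from $L^2([0,1],H^s(M))$ into $H^s(M)$ (by Cauchy--Schwarz on the unit interval), hence weak-to-weak continuous, so $\zeta_1^{\check h^n}\rightharpoonup\zeta_1^{\check h^*}$ weakly in $H^s(M)$.

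The decisive point, and the place where $s\geq 1$ is used, is that $M$ being compact, the Rellich--Kondrachov theorem makes the embedding $H^s(M)\hookrightarrow L^2(M)$ compact. A weakly convergent sequence in $H^s(M)$ is bounded, hence precompact in $L^2(M)$; together with uniqueness of the (weak) limit, the usual subsequence argument forces $\zeta_1^{\check h^n}\to\zeta_1^{\check h^*}$ \emph{strongly} in $L^2(M)$, and therefore $\check f_1^n\to\check f_1^*$ strongly in $L^2(M)$. This strong convergence is exactly the hypothesis required by Lemma \ref{lemma:control_fvarifold}, and it is precisely what is lacking in the case $s=0$ (where one only has weak $L^2$ convergence), which explains the need for the weaker Theorem \ref{thm:existence2} in that regime.

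Finally, combining the $C^1$ convergence of $q_1^n$ with the strong $L^2$ convergence of $\check f_1^n$, every integrand in \eqref{eq:fvarifold} converges (a.e.\ after passing to a subsequence of the signals) and is uniformly dominated by boundedness of the kernels and of the geometric data on the compact $M$; dominated convergence then gives $A(q_1^n,\check f_1^n)\to A(q_1^*,\check f_1^*)$. This is the joint-variable continuity extending Lemma \ref{lemma:control_fvarifold}, which is stated only for fixed $q$, and carrying it out rigorously is the single genuinely technical step — I expect the main obstacle to be not the compactness argument but this verification that $A$ is jointly continuous for the $C^1\times L^2$ topology. Granting the straightforward adaptation of \cite{Charon1} Proposition~3 announced after Lemma \ref{lemma:control_fvarifold}, it reduces to a routine dominated-convergence estimate, after which weak continuity of $A$ holds, Theorem \ref{thm:existence1} applies, and existence of a minimizer $(v^*,h^*)\in L^2([0,1],V\times H^s(M))$ follows.
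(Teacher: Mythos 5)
Your proposal is correct and follows essentially the same route as the paper's proof: boundedness and extraction of weak limits exactly as in Theorem \ref{thm:existence1}, the bound $\|\zeta_1^{\check h^n}\|_{H^s(M)}^2\leq\int_0^1\|\check h^n_t\|_{H^s(M)}^2dt$ plus Rellich--Kondrachov to upgrade weak $H^s(M)$ convergence of $\zeta_1^{\check h^n}$ to strong $L^2(M)$ convergence, identification of the strong limit with $\zeta_1^{\check h^*}$ via the weak limit, and Lemma \ref{lemma:control_fvarifold} to pass to the limit in the data term. The only differences are organizational --- you route the conclusion through Theorem \ref{thm:existence1} rather than rerunning the minimizing-sequence argument --- and you are in fact slightly more careful than the paper's own write-up in flagging that $q_1^n=\phi_1^{v^n}\circ q_0$ also varies with $n$, so that the fixed-$q$ statement of Lemma \ref{lemma:control_fvarifold} requires the (routine, given the uniform $C^1$ convergence of $q_1^n$) joint-continuity extension you describe.
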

\begin{proof}
Let $(v^n,h^n)$ be a minimizing sequence for \eqref{eq:registration_fshape} with data attachment term of the form \eqref{eq:fvarifold}. Then, since $\gamma_V, \gamma_f >0$, both sequences: 
 \begin{equation*}
  \int_{0}^{1} \|v^n_t\|_{V}^{2} dt 
 \end{equation*}
and with $q^n_t \doteq \phi_t^{v^n} \circ q_0$
 \begin{equation*}
  \int_{0}^{1} \|\check{h}^n_t \circ (q^n_t)^{-1}\|_{H^{s}(q^n_{t}(M))}^{2} dt 
 \end{equation*}
are bounded. In particular, since $v^n$ is bounded in $L^2([0,1],V)$, we can assume up to an extraction that $v^n\rightharpoonup v^*$ which implies that for all $t\in [0,1]$, $\phi_{t}^{v^n}$ and its derivatives up to order $s$ converge uniformly on every compact towards $\phi^* \doteq \phi_{t}^{v^*}$. Following the same steps and notations as in the proof of Theorem \ref{thm:existence1}, we can assume that $h^n=\check{h}^n\circ q_0^{-1}$ converges to $h^*$ weakly in $L^2([0,1],H^{s,\phi^{*}})$ which implies that $E_{q_0}(v^{*},\check{h}^{*}) \leq \lim \inf_{n\rightarrow +\infty} E_{q_0}(v^{n},\check{h}^{n})$. We also have that $(\check{h}^n)$ is bounded in the space $L^2([0,1],H^s(M))$ and converges weakly to $h^*$ in $L^2([0,1],H^s(M))$. Now, since $\zeta_1^{\check{h}^n} = \int_{0}^{1} \check{h}^n_t dt$, we deduce that:
\begin{equation*}
 \|\zeta_1^{\check{h}^n}\|_{H^s(M)}^2 \leq \int_{0}^{1} \|\check{h}^n_t\|_{H^s(M)}^2 dt 
\end{equation*}
and consequently $(\zeta_1^{\check{h}^n})$ is bounded in $H^s(M)$. As $s\geq 1$, from Rellich-Kondrachov theorem (Theorem 10.1 in \cite{Hebey}), we deduce that $\zeta_1^{\check{h^n}}$ converges (strongly) to a certain $\zeta^{\infty}$ in $L^2(M)$ up to another extraction. Moreover, for any $\nu \in L^2(M)$, using the weak convergence of $\check{h}^n \rightharpoonup \check{h}^{*}$  
\begin{align*}
 \langle \nu , \zeta_{1}^{h^n} \rangle_{L^2(M)} = &\int_{0}^{1} \langle \nu , h_{t}^{n} \rangle_{L^2(M)} dt \\
 \xrightarrow[n\rightarrow \infty]{} &\int_{0}^{1} \langle \nu , h_{t}^{*} \rangle_{L^2(M)} dt \\
 = &\langle \nu , \zeta_{1}^{h^*} \rangle_{L^2(M)} \ .
\end{align*}
Since we also have $\zeta_1^{\check{h^n}} \rightarrow \zeta^{\infty}$ strongly in $L^2(M)$, it must hold that $\zeta^{\infty} = \zeta_1^{\check{h}^*}$. With the result of Lemma \ref{lemma:control_fvarifold}, it results that $A(q_1,\check{f}_0+\check{\zeta}^{n}) \xrightarrow[n\rightarrow \infty]{} A(q_1,\check{f}_0+\check{\zeta}^{*})$ and eventually
\begin{equation*}
 E_{q_0}(v^{*},\check{h}^{*}) + A(q_1,\check{f}_0+\check{\zeta}^{*}) \leq \lim \inf_{n\rightarrow +\infty} E_{q_0}(v^{n},\check{h}^{n}) + A(q_1,\check{f}_0+\check{\zeta}^{n})
\end{equation*}
leading to the fact that $(v^{*},\check{h}^{*})$ is a minimizer of \eqref{eq:registration_fshape}.
\end{proof}

\subsection{Hamiltonian equations}
\subsubsection{PMP and general equations}
Following the existence of solutions, we are now interested in their characterization. For shape matching, this is traditionally done invoking the \textbf{Pontryagin Maximum Principle} (PMP) in order to derive Hamiltonian equations of optimal solutions' dynamics \cite{Arguillere2015b,Miller2015}. We extend this approach to fshape metamorphoses and the optimal control problem given by equation \eqref{eq:registration_fshape}. Here we have two state variables $\check{f} \in H^s(M)$ and the immersion $q$ that we take in the space $C^{s'}(M,\R^n)$ with $s'=\max(s,1)$, and two time-dependent controls $v_t \in V$ and $\check{h_t} \in H^s(M)$. We introduce two additional co-state variables $p \in C^{s'}(M,\R^n)^*$ and $p^{f} \in H^{s}(M)^*$ that we call respectively the geometric and functional \textbf{momenta}, and the following Hamiltonian $H: \ C^{s'}(M,\R^n) \times H^{s}(M) \times C^{s'}(M,\R^n)^* \times H^{s}(M)^* \times V \times H^{s}(M) \rightarrow \R$ corresponding to our problem:
\begin{equation}
 \label{eq:Hamiltonian}
 H(q,\check{f},p,p^{f},v,\check{h}) \doteq (p|\xi_{q}v) + (p^{f}|\check{h}) - \frac{\gamma_V}{2} \|v\|_V^2 - \frac{\gamma_f}{2} \|\check{h}\|_{H^s_q}^2 
\end{equation}
where we remind that $\xi_{q}v = v \circ q$ is the infinitesimal action of $v$ on $q$ and $(p|\xi_{q}v)$, $(p^{f}|\check{h})$ are shortcuts notations for the duality brackets in respectively $C^{s'}(M,\R^n)$ and $H^{s}(M)$, and $\|\cdot\|_{H^s_q}$ is given by equation \eqref{eq:Hs_norm_immersion}.

Assuming additional regularity for vector fields of $V$, we obtain the following Hamiltonian equations along optimal solutions:
\begin{thm}
\label{theo_Hamiltonian_eq_metam}
 We assume that $V$ is continuously embedded into $\Gamma^{s'+1}$. If $(v,\check{h})$ is an optimal solution for \eqref{eq:registration_fshape}, there exists time-dependent co-states $p \in H^1([0,1],C^{s'}(M,\R^n)^*)$ and $p^{f} \in H^1([0,1],H^s(M)^*)$ such that:
\begin{equation}
 \label{eq:Hamiltonian_evol_fshape}
\left\{ \begin{array}[h]{l}
 \dot{q_t} = \partial_{p} H(q_t,\check{f}_t,p_t,p_t^{f},v_t,\check{h}_t) \\
 \dot{\check{f_t}}= \partial_{p^f} H(q_t,\check{f}_t,p_t,p_t^{f},v_t,\check{h}_t)\\
 \dot{p_t} = -\partial_{q} H(q_t,\check{f}_t,p_t,p_t^{f},v_t,\check{h}_t) \\
 \dot{p^f_t} = -\partial_{f} H(q_t,\check{f}_t,p_t,p_t^{f},v_t,\check{h}_t)=0 \\
 \partial_{v}H(q_t,\check{f}_t,p_t,p_t^{f},v_t,\check{h}_t)=0, \ \partial_{\check{h}}H(q_t,\check{f}_t,p_t,p_t^{f},v_t,\check{h}_t)=0
\end{array}
\right. 
\end{equation}
with the endpoint conditions
\begin{equation}
\label{eq:time_boundary_conditions}
 p_1 = -\partial_{q} A(q_1,\check{f_1}), \ \ p^f_1 = -\partial_{\check{f}} A(q_1,\check{f_1})
\end{equation}
\end{thm}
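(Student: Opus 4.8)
The plan is to recast the optimal control problem \eqref{eq:registration_fshape} as a smooth, free-endpoint control problem on the product state space $C^{s'}(M,\R^n)\times H^s(M)$ and then invoke the infinite-dimensional Pontryagin Maximum Principle in the form established in \cite{Arguillere2015b} (see also \cite{Miller2015}). The controls $(v,\check h)$ range over the whole Hilbert space $L^2([0,1],V\times H^s(M))$, so there is no control constraint and the maximality condition of the PMP degenerates into the stationarity conditions $\partial_v H=0$ and $\partial_{\check h}H=0$; moreover, since the terminal cost $A$ replaces a hard endpoint constraint, no abnormality arises and the transversality condition yields directly the endpoint relations \eqref{eq:time_boundary_conditions}. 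The first task is to verify the regularity hypotheses of the PMP, i.e.\ that the dynamics $F((q,\check f),(v,\check h))=(\xi_q v,\check h)$ and the running cost are continuously differentiable in the state. For the signal component this is immediate, since $\dot{\check f}_t=\check h_t$ is affine. For the geometric component the delicate point is the differentiability of $q\mapsto \xi_q v = v\circ q$: its derivative in a direction $\delta q$ is $(dv\circ q)\,\delta q$, which maps $C^{s'}(M,\R^n)$ continuously into itself precisely because $v$ is assumed one degree more regular, that is $V\hookrightarrow \Gamma^{s'+1}$. This is exactly where the extra regularity on $V$ is used; the well-posedness of the control-to-state map itself relies on the flow results of \cite{Younes} together with Theorem \ref{theo:Sobolev_control_phi} and Corollary \ref{cor:Sobolev_control_GV}.

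Once the smoothness of $(v,\check h)\mapsto (q,\check f)$ is secured, I would carry out the variational/adjoint computation explicitly. Writing the reduced functional $J(v,\check h)=E_{q_0}(v,\check h)+A(q_1,\check f_1)$ and linearizing the state equations around an optimum gives the variational equations $\dot{\delta q}_t=(dv_t\circ q_t)\,\delta q_t+\xi_{q_t}\delta v_t$ and $\dot{\delta{\check f}}_t=\delta\check h_t$. Introducing the co-states $p\in C^{s'}(M,\R^n)^*$ and $p^f\in H^s(M)^*$ as time-dependent Lagrange multipliers and integrating by parts in time transfers the linearized dynamics onto the co-states; imposing that the first variation of $J$ vanishes for all $(\delta v,\delta\check h)$ then forces the adjoint equations $\dot p_t=-\partial_q H$ and $\dot{p^f_t}=-\partial_f H$, the stationarity conditions in the controls, and, from the boundary terms at $t=1$, the endpoint conditions \eqref{eq:time_boundary_conditions}. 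A clean structural observation simplifies the last step: the Hamiltonian \eqref{eq:Hamiltonian} has no explicit dependence on $\check f$ (the pullback norm $\|\cdot\|_{H^s_q}$ involves $q$ but not $\check f$), so $\partial_f H=0$ and the functional momentum is conserved, $\dot{p^f_t}=0$, exactly as stated.

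The main obstacle is the computation and the continuity of $\partial_q H$. Two terms contribute: the pairing $(p|\xi_q v)$, whose $q$-variation involves $dv\circ q$ and is again controlled by $V\hookrightarrow\Gamma^{s'+1}$; and the signal penalty $-\tfrac{\gamma_f}{2}\|\check h\|_{H^s_q}^2$, whose dependence on $q$ enters through the pullback metric and its covariant derivatives appearing in \eqref{eq:Hs_norm_immersion}. Differentiating this Sobolev norm with respect to the embedding requires differentiating the induced metric, its inverse, the volume density and the connection coefficients hidden in $\nabla^{k,q}$, which is technically heavy but yields a well-defined element of $C^{s'}(M,\R^n)^*$ as soon as $q\in C^{s'}$ with $s'=\max(s,1)$. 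I would also have to justify the time regularity $p\in H^1([0,1],C^{s'}(M,\R^n)^*)$ and $p^f\in H^1([0,1],H^s(M)^*)$, which follows from the boundedness of the right-hand sides of the adjoint equations along the optimal trajectory, using $v\in L^2([0,1],V)$ together with the controls of Theorem \ref{theo:Sobolev_control_phi} and Corollary \ref{cor:Sobolev_control_GV}. Apart from these analytic verifications, the derivation is a direct transcription of the finite-dimensional PMP to the present Hilbert-controlled, infinite-dimensional setting.
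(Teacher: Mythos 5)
Your proposal follows essentially the same route as the paper's proof in Appendix B: both recast \eqref{eq:registration_fshape} as a constrained optimization over state--control pairs, both use the embedding $V\hookrightarrow\Gamma^{s'+1}$ exactly where you place it (differentiability of $q\mapsto\xi_q v=v\circ q$ with derivative $(dv\circ q)\,\delta q$), both obtain the adjoint equations, the stationarity conditions in the controls and the transversality conditions \eqref{eq:time_boundary_conditions} by the same integration by parts in time, and both observe that the Hamiltonian \eqref{eq:Hamiltonian} has no explicit $\check f$-dependence so that $p^f$ is conserved.

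Two points are glossed over that the paper treats explicitly and that a complete write-up cannot skip. First, to obtain the multipliers one must verify that the differential of the constraint map $(q,\check f,v,\check h)\mapsto(\dot q-\xi_q v,\dot{\check f}-\check h)$ is surjective, so that the infinite-dimensional Lagrange multiplier theorem (Theorem 4.1 of \cite{Kurcyusz1976}) applies; your remark that ``no abnormality arises'' is the desired conclusion rather than a substitute for this check (the paper delegates the geometric component to Lemma 3 of \cite{Arguillere2015b}, the signal component being immediate). Second, the multiplier theorem produces $p$ a priori as an element of $L^2([0,1],C^{s'}(M,\R^n))^*$, and identifying it with a genuinely time-dependent co-state $t\mapsto p_t\in C^{s'}(M,\R^n)^*$ is not automatic because $C^{s'}(M,\R^n)$ is not reflexive; the paper invokes the Radon--Nikodym property only for the reflexive factor $H^s(M)$ and relies on Lemma 4 of \cite{Arguillere2015b} for the geometric factor. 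Relatedly, your justification of the $H^1$-in-time regularity of $p$ (``boundedness of the right-hand sides of the adjoint equations'') is slightly circular, since one must know $p$ is absolutely continuous before the adjoint ODE makes pointwise sense; the paper avoids this by first deriving the integral representation $p_t=\int_t^1\alpha_s\,ds-\partial_q A(q_1,\check f_1)$ with $\alpha\in L^2([0,1],C^{s'}(M,\R^n)^*)$, from which both the $H^1$ regularity and the differential form of the adjoint equation follow. With these repairs your argument coincides with the paper's.
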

The proof is detailed in Appendix \ref{appendix:proof_theorem_Hamiltonian}.

We can go a little further by expressing the last two conditions on the controls in the previous Hamiltonian equations and get the so-called \textit{reduced Hamiltonian equations}.
\begin{corollary}
 If $(v,\check{h})$ is an optimal solution for \eqref{eq:registration_fshape}, there exists co-states $p \in H^1([0,1],C^{s'}(M,\R^n)^*)$ and $p^{f} \in H^1([0,1],H^s(M)^*)$ such that:
 \begin{equation*}
  v_t = \frac{1}{\gamma_V} K_{V} \xi_{q_t}^* p_t, \ \check{h_t} = \frac{1}{\gamma_f} F_{q_t}^s p^f_t 
 \end{equation*}
and the state variables evolution is described by the following reduced Hamiltonian equations
\begin{equation}
 \label{eq:reduced_Hamiltonian_evol_fshape}
\left\{ \begin{array}[h]{l}
 \dot{q_t} = \partial_{p} H_r(q_t,\check{f}_t,p_t,p_t^{f}) \\
 \dot{\check{f_t}}= \partial_{p^f} H_r(q_t,\check{f}_t,p_t,p_t^{f})\\
 \dot{p_t} = -\partial_{q} H_r(q_t,\check{f}_t,p_t,p_t^{f}) \\
 \dot{p^f_t} = -\partial_{\check{f}} H_r(q_t,\check{f}_t,p_t,p_t^{f})=0
\end{array}
\right. 
\end{equation}
with $p_1 = -\partial_{q} A(q_1,\check{f_1})$, $p^f_1 = -\partial_{\check{f}} A(q_1,\check{f_1})$ and
\begin{equation}
\label{eq:reduced_Hamiltonian}
 H_r(q,\check{f},p,p^f) \doteq \frac{1}{2\gamma_V} (p|K_q p) + \frac{1}{2\gamma_f} \|F_{q}^s p^f\|_{H^s_{q}(M)}^2
\end{equation}
\end{corollary}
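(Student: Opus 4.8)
The plan is to deduce the corollary directly from Theorem~\ref{theo_Hamiltonian_eq_metam} by exploiting the two stationarity conditions on the controls, namely $\partial_{v}H = 0$ and $\partial_{\check{h}}H = 0$ appearing in the last line of \eqref{eq:Hamiltonian_evol_fshape}. These conditions let one express the optimal controls $v_t$ and $\check{h}_t$ as explicit functions of the states and co-states; substituting them back into $H$ then produces the reduced Hamiltonian $H_r$, after which an envelope (chain-rule) argument transfers the full Hamiltonian system into the reduced one.

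First I would differentiate the Hamiltonian \eqref{eq:Hamiltonian} in $v$. Since $v \mapsto (p|\xi_q v) - \frac{\gamma_V}{2}\|v\|_V^2$ is smooth and strictly concave, the condition $\partial_v H = 0$ reads $(p|\xi_q \delta v) = \gamma_V \langle v, \delta v\rangle_V$ for all $\delta v \in V$. Writing $K_V \colon V^* \to V$ for the Riesz isomorphism inverse to $L_V$ and $\xi_{q}^* \colon C^{s'}(M,\R^n)^* \to V^*$ for the adjoint of the infinitesimal action, this is equivalent to $\gamma_V L_V v = \xi_q^* p$, hence $v_t = \frac{1}{\gamma_V} K_V \xi_{q_t}^* p_t$. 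The embedding $V \hookrightarrow \Gamma^{s'+1}$ assumed in Theorem~\ref{theo_Hamiltonian_eq_metam} guarantees that $\xi_{q_t}^* p_t$ is a well-defined element of $V^*$ with the required time regularity.

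Next I would treat $\partial_{\check{h}} H = 0$, where the only subtlety is the $q$-dependence of the signal norm $\|\cdot\|_{H^s_q}$ defined in \eqref{eq:Hs_norm_immersion}. By Lemma~\ref{lemma:Sobolev_parametric} this norm is equivalent to the standard $H^s(M)$ norm, so the bilinear form $\langle\cdot,\cdot\rangle_{H^s_q}$ induces a topological isomorphism $L_q^s \colon H^s(M) \to H^s(M)^*$, whose inverse I denote $F_q^s$. The stationarity condition $(p^f|\delta \check{h}) = \gamma_f \langle \check{h}, \delta \check{h}\rangle_{H^s_q}$ for all $\delta \check{h}$ then gives $p^f = \gamma_f L_q^s \check{h}$, i.e. $\check{h}_t = \frac{1}{\gamma_f} F_{q_t}^s p^f_t$.

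Finally I would substitute these two expressions back into \eqref{eq:Hamiltonian}. Setting $K_q \doteq \xi_q K_V \xi_q^*$ and using the defining identities $\langle K_V\alpha, K_V\alpha\rangle_V = (\alpha|K_V\alpha)$ and $\langle F_q^s\mu, F_q^s\mu\rangle_{H^s_q} = (\mu|F_q^s\mu)$, each quadratic term collapses by a factor one half, yielding exactly $H_r(q,\check{f}, p, p^f) = \frac{1}{2\gamma_V}(p|K_q p) + \frac{1}{2\gamma_f}\|F_q^s p^f\|_{H^s_q(M)}^2$. To obtain \eqref{eq:reduced_Hamiltonian_evol_fshape}, I would invoke the envelope principle: writing $v^*(q,\check{f},p,p^f)$ and $\check{h}^*(q,\check{f},p,p^f)$ for the optimal controls just computed, one has $H_r = H(\,\cdot\,, v^*, \check{h}^*)$, so by the chain rule $\partial_q H_r = \partial_q H + \partial_v H\,\partial_q v^* + \partial_{\check{h}}H\,\partial_q \check{h}^*$; the last two terms vanish along the optimal trajectory precisely because $\partial_v H = \partial_{\check{h}}H = 0$ there, and likewise for the derivatives in $\check{f}$, $p$, $p^f$. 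Hence $\partial_q H_r = \partial_q H$ and its analogues hold, so the equations of Theorem~\ref{theo_Hamiltonian_eq_metam} become the reduced system \eqref{eq:reduced_Hamiltonian_evol_fshape}, the endpoint conditions $p_1 = -\partial_q A(q_1,\check{f_1})$ and $p^f_1 = -\partial_{\check{f}}A(q_1,\check{f_1})$ being unchanged. The main obstacle is not the formal computation but the rigorous justification of the operators $\xi_q^*$ and $F_q^s$ on the correct dual spaces and of the envelope/chain-rule step in infinite dimensions, which requires the smooth dependence of $(v^*,\check{h}^*)$ on the states and co-states; it is precisely the norm equivalence of Lemma~\ref{lemma:Sobolev_parametric} that makes $L_q^s$ boundedly invertible and this dependence well-behaved.
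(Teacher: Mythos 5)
Your proposal is correct and follows essentially the same route as the paper: stationarity in $v$ and $\check{h}$ yields $v_t=\frac{1}{\gamma_V}K_V\xi_{q_t}^*p_t$ and $\check{h}_t=\frac{1}{\gamma_f}F_{q_t}^sp_t^f$ (with $F_q^s$ defined exactly as the Riesz map for the $H^s_q$ inner product, justified by Lemma~\ref{lemma:Sobolev_parametric}), and substitution into $H$ collapses the quadratic terms to give $H_r$. Your explicit envelope/chain-rule step justifying $\partial_q H_r=\partial_q H$ along optimal trajectories is left implicit in the paper but is the right way to make that passage rigorous.
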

\begin{proof}
 The optimal $v$ must satisfy for almost $t\in [0,1]$, $(\partial_{v}H(q_t,\check{f}_t,p_t,p_t^{f},v_t,\check{h}_t)|\delta v)=0$ for all $\delta v \in V$. Introducing the dual application of the infinitesimal action $\xi_{q}^*: C^{s'}(M,\R)^* \rightarrow V^*$, this  gives:
 \begin{equation*}
  (\xi_{q_t}^{*} p_t|\delta v) - \gamma_V \langle v_t,\delta v \rangle_V = 0 \Rightarrow v_t = \frac{1}{\gamma_V} K_V \xi_{q_t}^{*} p_t 
 \end{equation*}
with $K_V: V^* \rightarrow V$ being the duality operator of $V$. On the other hand, $(\partial_{\check{h}}H(q_t,\check{f}_t,p_t,p_t^{f},v_t,\check{h}_t)|\delta \check{h})=0$ for all $\delta \check{h} \in H^s(M)$ leading to:
 \begin{equation*}
  \left( p^{f}_{t}|\delta \check{h}\right) - \gamma_{f} \langle \check{h_{t}},\delta \check{h} \rangle_{H^{s}_{q}(M)} = 0
 \end{equation*}
 Note that the previous equation involves the duality in $H^{s}(M)$ for the left term and the duality in $H_{q}^{s}(M)$ for the right one. The two Hilbert norms being equivalent on $H^{s}(M)$ (Lemma \ref{lemma:Sobolev_parametric}), we can introduce the linear mapping $F_{q}^{s}:H^{s}(M)^* \rightarrow H^{s}(M)$ defined by the property:
 \begin{equation}
 \label{eq:def_Fqs}
  \langle F_{q}^{s} p^{f}, \check{h} \rangle_{H^{s}_q(M)} = \left( p^{f} | \check{h} \right)
 \end{equation}
for all $p^{f},\check{h} \in H^{s}(M)$. This leads to 
 \begin{equation*}
  \check{h}_t = \frac{1}{\gamma_f} F_{q_t}^{s} p^f_t
 \end{equation*}
 Now, plugging the expressions of the optimal $v$ and $\check{h}$ in \eqref{eq:Hamiltonian}, we obtain the so-called reduced Hamiltonian of the problem:
 \begin{align*}
  H_{r}(q,\check{f},p,p^f) &= \frac{1}{\gamma_V} (p|\xi_{q}K_V \xi_{q}^{*} p) + \left( p^{f} | \check{h} \right) - \frac{1}{2\gamma_V} \underbrace{\langle K_V \xi_{q}^{*} p,K_V \xi_{q}^{*} p \rangle_V}_{=(\xi_{q}^{*} p|K_V \xi_{q}^{*} p)}  - \frac{1}{2\gamma_f} \|F_{q}^{s} p^f\|_{H_q^s}^2 \\
  &= \frac{1}{2\gamma_V} (p|\xi_q K_V \xi_q^*p) + \frac{1}{2\gamma_f} \|F_{q}^{s} p^f\|_{H_q^s}^2 \\
  &= \frac{1}{2\gamma_V} (p|K_q p) + \frac{1}{2\gamma_f} \|F_{q}^{s} p^f\|_{H_q^s}^2
 \end{align*}
 with $K_q : C^{s'}(M,\R^n)^* \rightarrow C^{s'}(M,\R^n), \ p\mapsto \xi_q K_V \xi_q^*p$, as well as the reduced Hamiltonian equations \eqref{eq:reduced_Hamiltonian_evol_fshape}. We notice that the reduced Hamiltonian does not depend on the variable $\check{f}$ giving once again $\dot{p^f_t} = \partial_{\check{f}} H_r(q_t,\check{f}_t,p_t,p_t^{f})=0$.
\end{proof}
The operator $K_{q}$ in the expression of $H_r$ can be also written based on the expression of the kernel $K_V$: for $p \in C^{s'}(M,\R^n)^*$ associated to the vector-valued measure $dp$ on $M$, $K_{q} p$ is the vector field given by
\begin{equation}
\label{eq:expression_Kq}
 K_{q} p (\omega)= \int_{M} K_{V}(q(\omega),q(\omega')) d p(\omega')
\end{equation}
The other term in the reduced Hamiltonian involves the 'change of metric' operator $F_{q}^{s}$ that depends on the regularity of the considered signal space; it is easy to express it explicitly in the case $s=0$ (cf section \ref{sec:equations_L2} below) or implicitly with the adequate elliptic operator as in the proof of Property \ref{lemma:regularity}. 

\subsubsection{Conservation laws}
\label{ssec:conservation_laws}
There are additional symmetries that can be uncovered from the particular form of the Hamiltonian and derived from a Noether theorem's type of argument. Indeed, in the case of fshape metamorphoses, we recall the expression of the Hamiltonian:
\begin{equation}
H(q,\check{f},p,p^{f},v,\check{h}) \doteq (p|\xi_{q}v) + (p^{f}|\check{h}) - \frac{\gamma_V}{2} \|v\|_V^2 - \frac{\gamma_f}{2} \|\check{h}\|_{H^s_q}^2
\end{equation}
We can consider another right group action on the state variables by the reparametrization group $\text{Diff}^s(M)$ defined for all $\tau \in \text{Diff}^s(M)$:
\begin{equation*}
\tau \cdot (q,f) = (q\circ \tau, f \circ \tau) 
\end{equation*}
On the other hand, defining the action on the costates $(p,p^f)$ as the following pushforward operations:
\begin{equation*}
(\tau^* p | \delta q ) \doteq (p | \delta q \circ \tau^{-1}), \ \  (\tau^* p^f | \delta f ) \doteq (p^f | \delta f \circ \tau^{-1})
\end{equation*}
we observe that the Hamiltonian is then invariant to the action in the sense that: 
\begin{equation}
H(q\circ \tau,\check{f}\circ \tau,\tau^* p,\tau^* p^{f},v,\check{h}\circ \tau) = H(q,f,p,p^{f},v,\check{h})
\end{equation}
This can be checked easily by using the equivariance of the norm $\|\cdot\|_{H^s_q}$, i.e that $\|\check{h} \circ \tau\|_{H^s_{q\circ \tau}} = \|\check{h}\|_{H^s_q}$. Denoting $\mathcal{X}(M)$ the space of continuous vector fields on $M$ that are tangential to the boundary $\partial M$, this leads to the following conservation law: 
\begin{thm}
\label{thm:conservation_law}
 Along each optimal trajectory $t\mapsto (q_t,\check{f}_t)$ such that $\check{f}_t \in H^{s+1}(M)$ , we have that the following $\mu_t \in \mathcal{X}(M)^*$:
\begin{equation}
\label{eq:conservation_law_3}
\mu_t \doteq dq_t^* p_t + d\check{f_t}^* p_1^f = 0
\end{equation}
for all $t \in [0,1]$.
\end{thm}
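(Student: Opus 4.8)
The plan is to run a Noether-type argument built on the reparametrization invariance of the Hamiltonian established just above the statement. Fix a vector field $X \in \mathcal{X}(M)$ and let $\tau_\epsilon \in \text{Diff}^s(M)$ be its flow, so that $\tau_0 = \text{Id}$ and $\frac{d}{d\epsilon}\big|_{\epsilon=0}\tau_\epsilon = X$; since $X$ is tangential to $\partial M$, the $\tau_\epsilon$ stay inside $\text{Diff}^s(M)$. Testing $\mu_t$ against $X$ produces the \emph{momentum map} $(\mu_t|X) = (p_t | dq_t\cdot X) + (p_1^f | d\check{f}_t\cdot X)$, where $dq_t\cdot X = \frac{d}{d\epsilon}\big|_{0}(q_t\circ\tau_\epsilon)$ and likewise for $\check{f}_t$; note that since $\dot{p}^f_t = 0$ one may freely write $p_t^f = p_1^f$. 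I would establish (i) that $t\mapsto(\mu_t|X)$ is constant in time and (ii) that its constant value is $0$, and then conclude $\mu_t = 0$ by arbitrariness of $X$.

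For step (i), differentiate the invariance identity $H(q\circ\tau_\epsilon,\check{f}\circ\tau_\epsilon,\tau_\epsilon^*p,\tau_\epsilon^*p^f,v,\check{h}\circ\tau_\epsilon) = H(q,\check{f},p,p^f,v,\check{h})$ at $\epsilon=0$. Because $H$ does not depend on $\check{f}$ (so $\partial_{\check{f}}H = 0$) and because the optimality condition of the PMP gives $\partial_{\check{h}}H = 0$ along the trajectory, the differentiated identity reduces to $(\partial_q H | dq_t\cdot X) + (\partial_p H | \delta^p) + (\partial_{p^f}H | \delta^{p^f}) = 0$, where $\delta^p,\delta^{p^f}$ are the infinitesimal generators of the cotangent-lifted action on the momenta; unwinding the definition $(\tau_\epsilon^*p|w) = (p|w\circ\tau_\epsilon^{-1})$ yields $(\delta^p|w) = -(p|dw\cdot X)$ and similarly for $\delta^{p^f}$. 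Substituting the Hamiltonian evolution equations $\partial_qH = -\dot{p}_t$, $\partial_pH = \dot{q}_t$, $\partial_{p^f}H = \dot{\check{f}}_t$ turns this into $(\dot{p}_t|dq_t\cdot X) + (p_t|d\dot{q}_t\cdot X) + (p_1^f|d\dot{\check{f}}_t\cdot X) = 0$. This is precisely $\frac{d}{dt}(\mu_t|X)$ once one also uses $\dot{p}^f_t = 0$, so $(\mu_t|X)$ is constant.

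For step (ii), I would evaluate the constant at the final time using the endpoint conditions \eqref{eq:time_boundary_conditions}: $(\mu_1|X) = -(\partial_q A|dq_1\cdot X) - (\partial_{\check{f}}A|d\check{f}_1\cdot X) = -\frac{d}{d\epsilon}\big|_{0}A(q_1\circ\tau_\epsilon,\check{f}_1\circ\tau_\epsilon)$. The fvarifold attachment term \eqref{eq:fvarifold} is by construction invariant under reparametrization — a change of variables $m\mapsto\tau_\epsilon^{-1}(m)$ leaves the double integrals unchanged because the volume densities $\vol(g^{q_1})$ transform consistently and the integrands depend only on the geometric-functional data, not on the parametrization — hence this derivative vanishes and $(\mu_1|X) = 0$. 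Combined with step (i), $(\mu_t|X) = 0$ for all $t$ and all $X \in \mathcal{X}(M)$, which gives $\mu_t = 0$.

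The main obstacle is the rigorous justification in the infinite-dimensional setting rather than the formal calculation. The extra regularity hypothesis $\check{f}_t\in H^{s+1}(M)$ is exactly what makes $d\check{f}_t\cdot X$ an admissible argument for the functional momentum $p_1^f\in H^s(M)^*$, and what licenses differentiating $t\mapsto(\mu_t|X)$ and interchanging $\frac{d}{dt}$ with the duality pairings in step (i); one must check that $t\mapsto dq_t\cdot X$ and $t\mapsto d\check{f}_t\cdot X$ are differentiable with values in the appropriate dual spaces, leaning on the regularity $V\hookrightarrow\Gamma^{s'+1}$ assumed in Theorem \ref{theo_Hamiltonian_eq_metam}. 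Restricting $X$ to be tangent to $\partial M$ is essential on two counts: it keeps $\tau_\epsilon$ within $\text{Diff}^s(M)$, and it annihilates the boundary integral $\int_{\partial M}\langle\eta,\delta q\rangle dl$ appearing in the variation \eqref{eq:fvarifold_variation} of $A$, so that the reparametrization invariance of $A$ genuinely forces $(\mu_1|X)$ to vanish with no leftover boundary contribution.
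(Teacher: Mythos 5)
Your proposal is correct and follows essentially the same route as the paper: a Noether-type argument differentiating the reparametrization invariance $H(q\circ\tau_\epsilon,\check{f}\circ\tau_\epsilon,\tau_\epsilon^*p,\tau_\epsilon^*p^f,v,\check{h}\circ\tau_\epsilon)=H(q,\check{f},p,p^f,v,\check{h})$ at $\epsilon=0$, using $\partial_{\check{f}}H=0$ and the PMP optimality $\partial_{\check h}H=0$ together with the Hamiltonian equations to get $\frac{d}{dt}(\mu_t|X)=0$, and then killing the constant via the endpoint conditions \eqref{eq:time_boundary_conditions} and the reparametrization invariance of the fvarifold term. Your abstract bookkeeping of the cotangent-lifted generators $(\delta^p|w)=-(p|dw\cdot X)$ reproduces the paper's terms $-(p|dv\circ q(dq(u)))$ and $(p^f|\nabla\check h\cdot u)$ exactly, and your remarks on the roles of $\check f_t\in H^{s+1}(M)$ and of the tangency of $X$ to $\partial M$ match the paper's.
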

\begin{proof}
 We introduce a one-parameter group of diffeomorphic reparametrizations of $M$, $z\mapsto \tau_z$, $z \in [-\epsilon,\epsilon]$, with $\tau_z \in \text{Diff}(M)$, $\tau_0 = \text{Id}_{M}$ and $\dot{\tau}_0=u$ with $u$ a $C^1$ vector field on $M$. Since $\tau_z(M) = M$ for all $z$, it implies that the normal component of $u$ along the boundary of the domain vanishes and so $u \in \mathcal{X}(M)$. With the actions introduced above, we have seen that for all $z \in [-\epsilon,\epsilon]$
 $$ H(\tau_z \cdot q,\tau_z \cdot \check{f},\tau_z ^* p,\tau_z ^*p^{f},v,\check{h}) = H(q,\check{f},p,p^{f},v,\check{h}) .$$
 With the assumptions made, we have $\check{f}_t \in H^{s+1}(M)$ and thus $\check{h}_t \in H^{s+1}(M)$ for all $t$, and differentiating the previous expression at $z=0$ leads to:
 \begin{align*}
  &-(p|dv \circ q(dq(u))) + (\partial_{q} (p|\xi_q v)|dq(u)) -\frac{\gamma_f}{2} (\partial_{q} \|\check{h}\|_{H^s_{q}}^2|dq(u)) + \gamma_f \langle \check{h},\nabla \check{h} \cdot u \rangle_{H^s_{q}}= 0 \\
  &\Rightarrow -(p|dv \circ q(dq(u))) + (\partial_{q} H | dq(u)) + \langle F_{q}^{s} p^f , \nabla h \cdot u \rangle_{H^s_{q}}= 0
 \end{align*}
which, by the definition of $F_q^s$, gives
\begin{equation}
\label{eq:conservation_law_proof1}
 -(p|dv \circ q(dq(u))) + (\partial_{q} H | dq(u)) +  (p^f | \nabla \check{h} \cdot u)= 0 .
\end{equation}
Now, defining $\mu_t$ as in equation \eqref{eq:conservation_law_3}, if $(q_t,\check{f_t},p_t,p_t^f)$ satisfies the Hamiltonian equations \eqref{eq:Hamiltonian_evol_fshape}, we obtain:
\begin{align*}
 (\dot{\mu}_t|u) &= \frac{d}{dt} [(p_t|dq_t(u)) + (p_1^f|d\check{h_t}(u))] \\ 
 &= (\dot{p}_t |dq_t(u)) + (p_t|dv_t \circ q_t(dq_t (u))) + (p_1^f|d\check{h}_t(u)) \\
 &= (- \partial_{q} H |dq_t(u)) + (p_t|dv_t \circ q_t(dq_t (u))) + (p_1^f|\nabla \check{h}_t \cdot u)
\end{align*}
Using \eqref{eq:conservation_law_proof1} at $(q_t,\check{f_t},p_t,p_t^f)$, we find that $(\dot{\mu}_t|u) = 0$ for any $u$ and thus the conservation of $\mu_t$. In addition, we have with \eqref{eq:time_boundary_conditions} the endpoint conditions $p_1 = - \partial_{q} A(q_1,\check{f_1}), \ p^f = - \partial_{\check{f}} A(q_1,f_1)$. Since fvarifold data attachment terms are invariant to reparametrization, i.e $A(q\circ \tau_z,\check{f}\circ \tau_z) = A(q,\check{f})$ for all $z \in [-\epsilon,\epsilon]$ we obtain by differentiating with respect to $z$ that for all $u$:
\begin{equation*}
 \left(\partial_{q} A(q_1,\check{f}_1) | dq_1.u \right) + \left(\partial_{\check{f}} A(q_1,\check{f}_1) | \nabla \check{f}_1.u \right) = -\left(p_1 | dq.u \right) -\left(p^f | \nabla \check{f}_1.u \right) = 0
\end{equation*}
or, in other words:
\begin{equation}
 dq_1^* p_1 + d\check{f_1}^* p^f = 0 .
\end{equation}
With the previous conservation of $\mu_t$, we get the result claimed in Theorem \ref{thm:conservation_law}. 
\end{proof}
This conservation law leads in particular to some properties of orthogonality for the momentum $p_t$. Indeed, since for any vector field $u \in \mathcal{X}(M)$,
\begin{equation*}
 \left( p_t | dq_t.u \right) = -\left(p^f | \nabla \check{f_t}.u \right)
\end{equation*}
we can see that $p_t$ vanishes for all tangent vector fields to $q_t(M)$ that satisfy $\nabla \check{f_t}.u$ i.e that are tangential to the level lines of the signal $f_t$.  

The only non-trivial assumption in Theorem \ref{thm:conservation_law} is the $H^{s+1}$ regularity of the signal $\check{f}_t$ (or equivalently $\check{h}_t$) along the entire trajectory. In lack of a more general result, we provide at least a sufficient condition (when $M$ has no boundary) in the property below:
\begin{property}
\label{lemma:regularity}
 Assume that $M$ is a manifold without boundary. Provided the kernels defining the fidelity term in \eqref{eq:fvarifold} are sufficiently regular, if $V$ is continuously embedded into the space $C^{2s'}(\R^n,\R^n)$ with $s'=\min\{s,1\}$, and $q_1 \in C^{2s'}(M,\R^n), \ \check{f}_1 \in H^{s+1}(M)$ then optimal solutions of \eqref{eq:registration_fshape} satisfy for all $t \in [0,1]$, $q_t \in C^{2s'}(M,\R^n)$ and $\check{f}_t \in H^{s+1}(M)$.  
\end{property}
\begin{proof}
 With the equation $\dot{q_t} = v_t \circ q_t$, it is clear that with $q_1 \in C^{2s'}(M,\R^n)$ and $v_t \in C^{2s'}(\R^n,\R^n)$ for all $t$, we get $q_t \in C^{2s'}(M,\R^n)$ for all $t$. On the other hand, the evolution of $\check{f}$ is governed by the equation $\dot{\check{f_t}} = h_t = \frac{1}{\gamma_f} F_{q_t}^{s} p^f$. Since $q_1 \in C^{2s'}(M,\R^n)$ and $\check{f}_1 \in H^{s+1}(M)$ and with the regularity assumptions on the kernels defining the fidelity term, it can be seen from \eqref{eq:fvarifold} and \eqref{eq:fvarifold_variation} that 
 \begin{equation*}
  (p^{f} | \check{h}) = -(\partial_{\check{f}} A(q_1,\check{f}_1) | \check{h}) = -\int_{M} \gamma  \check{h} \vol(g)
 \end{equation*}
 where $\gamma$ is a function which we can assume to belong to $H^1(M)$ with appropriate regularity for kernels (and since $\check{f}_1 \in H^{s+1}(M) \subset H^{1}(M)$). Now we examine the two cases:
 \begin{itemize}
  \item[$\bullet$] $s=0$: in that case, as shall be detailed in section \ref{sec:equations_L2}, $F_{q_t}^{0} p^f = -|g_t|^{-1/2} \gamma$ where $|g_t|^{1/2}$ is the volume density induced on $M$ by the embedding $q_t$. Since $q_t \in C^{2}(M,\R^n)$, we have $h_t = \frac{1}{\gamma_f} F_{q_t}^{0} p^f \in H^1(M)$ for all $t \in [0,1]$ and therefore $\check{f}_t \in H^{1}(M)$.
  \item[$\bullet$] $s\geq 1$: then $s'=s$ and we can introduce the operators $A_{q_t} \doteq \sum_{k=0}^{s} (\nabla^*)^s (\nabla)^s$ where once again $\nabla$ is the covariant derivative operator associated to the metric $g_t$ and $\nabla^*$ its adjoint for that metric. As such, $A_{q_t}$ is an elliptic self-adjoint positive differential operator on $M$ of order $2s$ and from the results of \cite{Hormander} Theorem 19.2.1, $A_{q_t}$ is a Fredholm operator from $H^{2s}(M)$ to $L^2(M)$ and since it is self-adjoint the index of the operator vanishes. Moreover, $A_{q_t}$ being positive and thus injective, it results that it is also surjective. Consequently, there exists $u \in H^{2s}(M)$ such that $A_{q_t} u = \gamma$ and by definition of $A_{q_t}$, we deduce that $\check{h}_t = \frac{1}{\gamma_f} F_{q_t}^{s} p^f = -\frac{1}{\gamma_f} u \in H^{2s}(M) \subset H^{s+1}(M)$. Now, with $\check{f}_1 \in H^{s+1}(M)$, we obtain eventually $\check{f}_t \in H^{s+1}(M)$ for all $t$.     
 \end{itemize}      
\end{proof}

\subsubsection{Link to image metamorphosis}
\label{ssec:image_metamorphosis}
As presented so far, the model of fshape metamorphoses generalizes, on the one hand, submanifold deformation and registration that corresponds to the limit case of $\gamma_f \rightarrow +\infty$ in the expression of the energy \eqref{eq:energy_def} and $k_f \equiv 1$ in the fidelity term \eqref{eq:fvarifold}. 

But it can be also viewed as extending metamorphoses of classical images studied in previous works like \cite{Trouve1,Holm2009,Richardson2015}. In the fshape perspective, this is the situation where $M=\Omega$ is a bounded domain of $\R^d$ and all geometrical shapes are fixed to $X=q(\Omega)=\Omega$. In other words, keeping the notation $\Omega \subset \R^n$ for the image domain itself, we take $V$ to be embedded into $C^{s+2}_0(\Omega)$, the space of velocity fields of class $C^{s+2}$ on $\Omega$ such that, together with all derivatives of order $\leq s$, vanish on the boundary of $\Omega$. We then obtain paths $t\mapsto q_t \in C^{s+2}(\Omega,\Omega)$ with $q_0 = \text{Id}_{\Omega}$ and $\dot{q_t} = v_t \circ q_t$. In that particular setting, this implies that for all $t$, $q_t$ identifies to the deformation $\phi_t$ itself and is in that case a $C^{s+2}$-diffeomorphism of $\Omega$. We can then introduce the change of variable $\check{h} = \zeta \circ \phi \Leftrightarrow \zeta = \check{h} \circ \phi^{-1}$, and the Hamiltonian of \eqref{eq:Hamiltonian} becomes:
 \begin{align}
 \label{eq:Hamiltonian_image}
 H(\phi,\check{f},p,p^{f},v,\zeta) &= (p|v\circ \phi) + (p^{f}|\zeta \circ \phi) - \frac{\gamma_V}{2} \|v\|_V^2 - \frac{\gamma_f}{2} \|\zeta \circ \phi\|_{H^s_q}^2  \nonumber \\
 &= (p|v\circ \phi) + (p^{f}|\zeta \circ \phi) - \frac{\gamma_V}{2} \|v\|_V^2 - \frac{\gamma_f}{2} \|\zeta\|_{H^s(\Omega)}^2
 \end{align}
With $q=\phi$ and $\zeta = \check{h} \circ \phi^{-1}$ and introducing the application $\tilde{\xi}_{\phi}: H^s(\Omega) \rightarrow H^s(\Omega), \ \zeta \mapsto \zeta \circ \phi$ and the Riesz isometry $K_{H^s}: H^s(\Omega)^*\rightarrow H^s(\Omega)$, Hamiltonian equations \eqref{eq:Hamiltonian_evol_fshape} and \eqref{eq:reduced_Hamiltonian_evol_fshape} may be rewritten as:
\begin{equation}
\label{eq:Hamiltonian_evol_image}
\left\{ \begin{array}[h]{l}
 \dot{\phi} = v \circ \phi \\
 \dot{\check{f}}= \zeta \circ \phi \\
 \dot{p} = -\partial_{\phi} (p|v\circ \phi) - \partial_{\phi} (p^{f}|\zeta \circ \phi) \\
 \dot{p^f} = 0 \\
 v=\frac{1}{\gamma_V} K_V \xi_{\phi}^{*} p , \ \zeta = \frac{1}{\gamma_f} (F_{q}^s p^f) \circ \phi^{-1} = \frac{1}{\gamma_f} K_{H^s} \left(\tilde{\xi}_{\phi}^{*} p^f \right)
\end{array}
\right. 
\end{equation}
the last equality resulting from the fact that $F_{q}^s p^f = K_{H^s} \left(\tilde{\xi}_{\phi}^{*} p^f \right) \circ \phi$ since for all $u \in H^s(\Omega)$
\begin{align*}
 \left \langle \left(K_{H^s} \tilde{\xi}_{\phi}^{*} p^f \right) \circ \phi , u \right \rangle_{H^s_q} &= \left \langle K_{H^s} \tilde{\xi}_{\phi}^{*} p^f , u \circ \phi^{-1} \right \rangle_{H^s(\Omega)} \\
 &= \left(\tilde{\xi}_{\phi}^{*} p^f | u\circ \phi^{-1} \right) \\
 &= \left(p^f | \tilde{\xi}_{\phi}(u\circ \phi^{-1}) \right) \\
 &= \left(p^f | u \right)
\end{align*}

In conclusion of this section, the Hamiltonian of \eqref{eq:Hamiltonian_image}, the Hamiltonian evolution equations \eqref{eq:Hamiltonian_evol_image} and the conservation law of Theorem \ref{thm:conservation_law} are precisely the ones of image metamorphosis given in section 2 of \cite{Richardson2015} (in the case of Sobolev metrics) which, as expected, can be treated theoretically as a special case of the functional shape setting presented here.  

\subsubsection{The particular case $s=0$}
\label{sec:equations_L2}
We now give a more specific and explicit expression for the evolution equations in the simplest case $s=0$ that corresponds to the continuous form of the discrete $L^2$ metamorphosis equations presented in \cite{Charlier15}. We make the additional regularity assumptions of theorem \ref{thm:conservation_law}, that is $q \in C^2(M,\R^n)$ and $\check{f_1} \in H^1(M)$. We can also identify $p^f$ as the $L^2$ function on $M$ given by Riesz representation theorem. The operator $F_{q}^{0}$ can be then expressed easily since:
\begin{align*}
 \langle p^f , \check{h} \rangle_{L^2(M)} &= \int_{M} p^{f}(m) \check{h}(m) d\mathcal{H}^d(m) \\
 &= \int_{M} |g|^{-1/2}(m) p^{f}(m) \check{h}(m) \vol(g) \\
 &= \langle |g|^{-1/2} p^{f}, \check{h} \rangle_{L^2_q} \ .
\end{align*}
where we write $g$ for the pullback metric induced by $q$ and $|g|^{1/2}$ the corresponding volume density. This gives $F_{q}^{0} p^f = |g|^{-1/2} p^{f}$ and the reduced Hamiltonian
\begin{equation}
 \label{eq:reduced_Hamiltonian_L2}
 H_r(q,\check{f},p,p^f) \doteq \frac{1}{2\gamma_V} (p|K_q p) + \frac{1}{2\gamma_f} \int_{M} (p^{f})^2 |g|^{-1/2} d\mathcal{H}^d
\end{equation}
The two first equations in the Hamiltonian system then write: 
\begin{equation*}
\left\{ \begin{array}[h]{l}
 \dot{q_t} = \frac{1}{\gamma_V} K_{q_t}p_t \\
 \dot{\check{f_t}}=\frac{1}{\gamma_f} |g_t|^{-1/2} p^{f}
\end{array}
\right. 
\end{equation*}
where $p^f$ is a shortcut for $p^f_{1} = p^{f}_{t}$. With the assumptions made, $p^f$ is a $H^1$ function on $M$ and the previous equation implies that for all $t$, $\check{f_t}$ is also in $H^1(M)$. Writing in short $g_t$ for the metric induced by $q_t$, the evolution of geometric momentum $p_t$ is described by:
\begin{equation*}
 (\dot{p_{t}}|\delta q) = -\frac{1}{2\gamma_V} (\partial_{q} (p_t|K_{q_t} p_t) | \delta q) -\frac{1}{2\gamma_f} \large( \partial_q \int_{M} (p^{f})^2 |g_t|^{-1/2} d\mathcal{H}^d \large| \delta q \large) 
\end{equation*}
The previous expression involves the variation of the volume density $|g|^{-1/2}$ with respect to $q$. This is given for example in \cite{Bauer2012} and leads to:
\begin{equation}
\label{eq:pt_dot}
 (\dot{p_{t}}|\delta q) = -\frac{1}{2\gamma_V} (\partial_{q} (p_t|K_{q_t} p_t) | \delta q) +\frac{1}{2\gamma_f}  \int_{M} (p^{f})^2 |g_t|^{-1/2} [\text{div}_{g_t}(\delta q^{\top}) - H_{g_t} \cdot \delta q^{\bot}]  d\mathcal{H}^d
\end{equation}
where $\delta q = \delta q^{\top} + \delta q^{\bot}$ is the decomposition of $\delta q$ in its tangential and normal components to the immersion $q_t$, $\text{div}_{g_t}(\delta q)$ is by definition the tangential divergence of the vector field $\delta q^{\top}$ and $H_{g_t}$ the mean curvature vector for the metric $g_t$. The previous equation involves two terms, the first of which is the same one appearing in Hamiltonian equations of pure geometric shape registration while the second one induces retro action of signal on geometric evolution. 

Momentum $p$ belongs a priori to the very large space of distributions $C^{1}(M,\R^n)^*$. However, with the previous assumptions, its general form can be in fact described more accurately as a vector field on $M$ plus a singular term on the boundary: 
\begin{property}
 For all $t \in [0,1]$, we have:
 $$ p_t =p_t^{in} + p_t^{bo} $$
 where $p_t^{in}$ is a vector field in $L^2(M, \R^n)$ and $p_t^{bo} = \int_{\partial M} \delta_{s} \otimes w_t(s) d\mathcal{H}^{d-1}(s)$ a vector-valued distribution supported on $\partial M$. Moreover, the tangential part of vector field $p_t^{in}$ lies in the vector bundle generated by the vector field $\nabla f_t$. 
\end{property}
\begin{proof}
As already noted before (eq.\eqref{eq:fvarifold_variation}, the boundary condition $p_1 = -\partial_{q} A(q_1,\check{f}_1)$, implies that $p_1$ decomposes as $p_1 = p_1^{in} + p_1^{bo}$ where $p_1^{in} \in L^2(M,\R^n)$ and $p_1^{bo}$ is a singular vector distribution supported on the boundary $\partial M$ of the form $p_1^{bo} = \int_{\partial M} \delta_{s} \otimes w_1(s) d\mathcal{H}^{d-1}(s)$ with $w_1$ a vector field on $\partial M$. In addition, the time derivative of $p_t$ in equation \eqref{eq:pt_dot} can be rewritten using the divergence theorem and regularity of $q$ and $p^f$ as:  
\begin{align}
 (\dot{p_{t}}|\delta q)= &-\frac{1}{2\gamma_V} (\partial_{q} (p_t|K_{q_t} p_t) | \delta q) +\frac{1}{2\gamma_f} \int_{M} \nabla (|g_t|^{-1}(p^{f})^2) \cdot \delta q^{\top} \ |g_t|^{1/2} d\mathcal{H}^d \nonumber \\ 
 &-\frac{1}{2\gamma_f} \int_{M} (p^{f})^2 |g_t|^{-1/2} H_{q_t} \cdot \delta q^{\bot} d\mathcal{H}^d + \frac{1}{2\gamma_f} \int_{\partial M} |g_t|^{-1}(p^{f})^2 \delta q^{\top} \cdot \vec{N_{q_t}} d\mathcal{H}^{d-1}
\end{align}
where once again $\nabla$ denotes the pullback covariant derivative by the embedding $q_t$, $\vec{N_{q_t}}$ the unit outward normal vector field on the boundary. Moreover, for any vector field $p^{in} \in L^2(M,\R^n)$, the expression of $K_q$ in \eqref{eq:expression_Kq} becomes:  
\begin{equation*}
 K_q p^{in} = \int_{M} K_V(q(\cdot),q(m)) p^{in}(m) d\mathcal{H}^d(m)  
\end{equation*}
and therefore
\begin{equation*}
 (p^{in}|K_{q} p^{in}) = \iint_{M \times M} p^{in}(m) \cdot K_V(q(m),q(m')) p^{in}(m') d\mathcal{H}^d(m) d\mathcal{H}^d(m')  
\end{equation*}
leading to a variation in $L^2(M,\R^n)$
\begin{equation*}
 \partial_q (p^{in}|K_{q} p^{in})(m) = \int_{M} p^{in}(m) \cdot \partial_1 K_V(q(m),q(m')) p^{in}(m') d\mathcal{H}^d(m') .
\end{equation*}
On the other hand, if $p^{bo}$ is any singular vector-valued measure on $\partial M$ of the form $p^{bo} = \int_{\partial M} \delta_{s} \otimes w(s) d\mathcal{H}^{d-1}(s)$ with $w(s) \in \R^n$ for all $s$ then:
\begin{equation*}
 K_q p^{bo} = \int_{\partial M} K_V(q(\cdot),q(s)) w(s) d\mathcal{H}^{d-1}(s) 
\end{equation*}
As previously, we obtain that the different terms $\partial_q (p^{in}|K_{q} p^{bo}), \ \partial_q (p^{bo}|K_{q} p^{in}), \ \partial_q (p^{bo}|K_{q} p^{bo})$ can be expressed either as $L^2$ vector fields or vector-valued distributions on $\partial M$. Thus, writing $\dot{p_t} = F(p_t,q_t)$, we see that the application $F$ restricted to distributions of the form $p=p^{in}+p^{bo}$ decomposes as $F(p^{in} + p^{bo},q)=F_1(p^{in},q) + F_2(p^{bo},q)$ where $F_1(\cdot,q)$ and $F_2(\cdot,q)$ are $C^1$ applications respectively from the space of $L^2$ vector fields into itself and the space of singular vector measures on $\partial M$ into itself. With the condition on $p$ at $t=1$, we deduce that at all $t$, $p_t$ is a distribution of the same form.

The last statement in the property follows from the conservation law of Theorem \ref{thm:conservation_law}. Indeed we have, for all vector field $u \in \mathcal{X}(M)$ vanishing on the boundary of $M$, $(p_t^{in}|dq_t(u)) = -(p^f|\nabla \check{f_t} \cdot u)$. We deduce that $(p_t^{in}|dq_t(u))$ vanishes for any $u$ orthogonal to the (vector) $\nabla \check{f_t}$ giving that the component of $p_t^{in}$ tangential to $q_t$ must live in the space generated by $\nabla (\check{f_t} \circ q_t^{-1}) = \nabla f_t$.
\end{proof}

\begin{figure}
\centering
 \includegraphics[width=15cm]{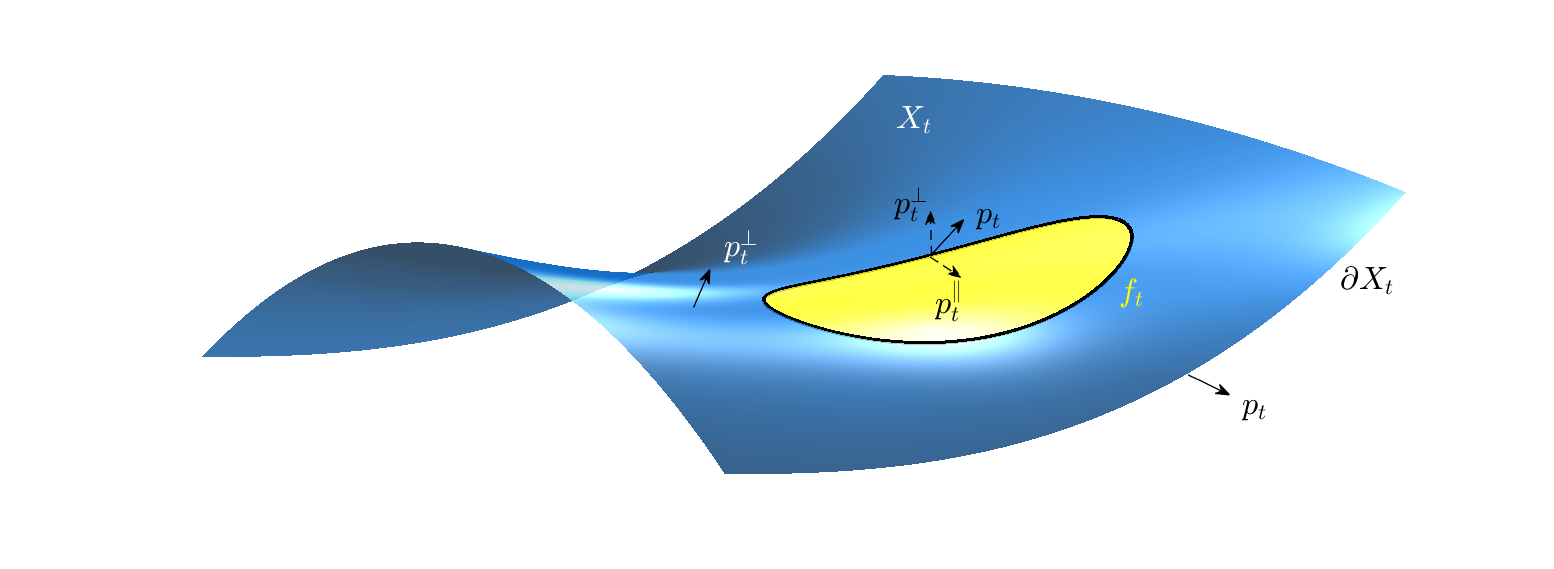}
\caption{General form of the geometric momentum field $p_t$ in metamorphosis: $p_t$ is normal to the surface within each domain of constant signal for $f_t$, components in the tangent space to $X_t$ are concentrated along the level lines and belong to the subspace generated by the gradient of $f_t$.}
\label{fig:momentum_p}
\end{figure}

This property shows in particular that the momentum $p_t$ is orthogonal to the shape at time $t$ at all points located in the interior of a level set of $\check{f_t} \circ q_t^{-1}=f_t$. In other words, tangential components in $p_t$ only appears at boundaries of the level sets of these signals, as illustrated in Figure \ref{fig:momentum_p}.  

\subsubsection{An example of geodesic trajectories}\label{part:evol_sphere_fshape}
As an explicit example of joint evolution of geometry and signal under the previous metamorphosis model in $L^2$ ($s=0$), we consider the very simple case of centered 2-dimensional spheres in $\R^3$ with constant signals. Denote by $q_t: \ \mathbb{S}^2 \rightarrow \R^3$ the parametrization of the sphere of radius $r_t$, i.e $q_t(m) = r_t m$ and with constant signals $f_t$ on $\mathbb{S}^2$. Considering only trajectories governed by constant normal momentum field $p_0 = \rho_0 m$, constant functional momentum $p^f$ and a translation/rotation invariant kernel for deformations of the form $K_V(x,y)= k_{V}(|x-y|^2) \text{I}_{3\times 3}$, it is clear that geodesic trajectories from the metamorphosis equations of previous subsection can only lead to spherical shapes with constant signals and at all times $p_t = \rho_t m$. We can thus describe geodesic trajectories by the evolution of the radius $r_t$ and the signal value $f_t$, which we will deduce from the previous reduced Hamiltonian equations.   

In this specific case, we have $|g_t|^{1/2} = r_t^2$ and consequently:
\begin{equation*}
 \dot{f_t} = \frac{p^f}{\gamma_f r_t^2}
\end{equation*}
Secondly, the velocity field $v_t \circ q_t = \gamma_V^{-1} K_{q_t}p_t$ is such that:
\begin{align*}
 K_{q_t}p_t(m) &= \left(\int_{\mathbb{S}^2} k_V(r_t^2 |m - m'|^2) m' d \Haus^2(m') \right) \rho_t \\
 &= \left(\int_{\mathbb{S}^2} k_V(2 r_t^2[1-\langle m,m' \rangle]) m' d \Haus^2(m') \right) \rho_t
\end{align*}
which leads to the following evolution on the sphere radius $r_t$:
\begin{equation*}
 \dot{r_t} = \frac{1}{\gamma_V} \left(\int_{\mathbb{S}^2} k_V(2 r_t^2[1-\langle m,m' \rangle]) \langle m,m' \rangle d \Haus^2(m') \right) \rho_t
\end{equation*}
Using Funk-Hecke formula, we can rewrite the previous as:
\begin{equation*}
 \dot{r_t} = \frac{1}{\gamma_V} \underbrace{4\pi \left(\int_{-1}^{1} u.k_V(2 r_t^2[1-u]) du \right)}_{\doteq \chi(r_t)} \rho_t
\end{equation*}
Finally, the ODE on $p_t$ translates to the following one on $\rho_t$:
\begin{align*}
 \dot{\rho_t} &= -\frac{1}{2\gamma_V} \chi'(r_t) \rho_t^2 + \frac{(p^f)^2}{\gamma_f r_t^3} 
\end{align*}
Eventually, we have obtained that the time evolution of fshapes in this situation is governed by the following three differential equations:
\begin{equation}
\label{eq:evol_sphere_fshape}
\left\{ \begin{array}[h]{l}
 \dot{f_t} = \frac{p^f}{\gamma_f r_t^2} \\
 \dot{r_t}=\frac{1}{\gamma_V} \chi(r_t) \rho_t \\
 \dot{\rho_t} = -\frac{1}{2\gamma_V} \chi'(r_t) \rho_t^2 + \frac{(p^f)^2}{\gamma_f r_t^3} 
\end{array}
\right. 
\end{equation}

\begin{figure}
\centering
 \begin{tabular}{ccc}
 \includegraphics[width=5cm]{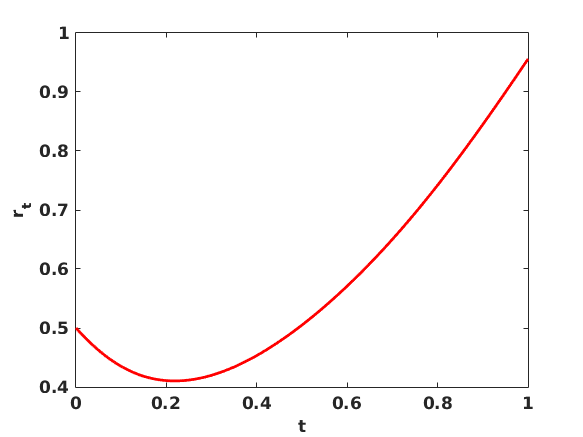} & \includegraphics[width=5cm]{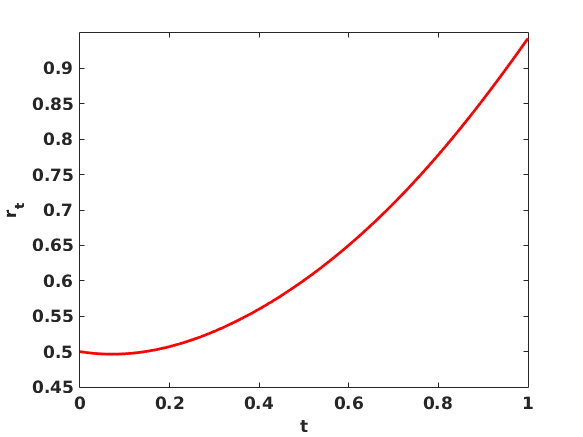} & \includegraphics[width=5cm]{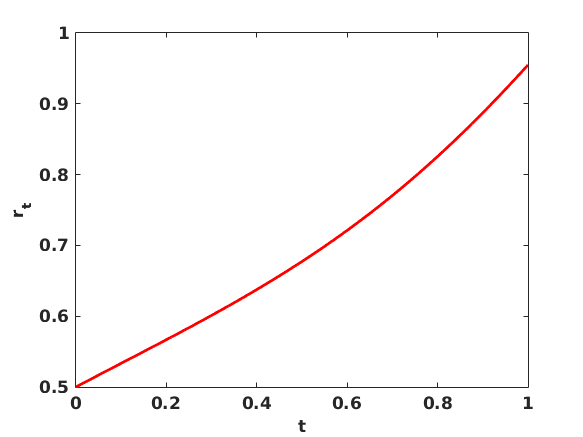} \\
 \includegraphics[width=5cm]{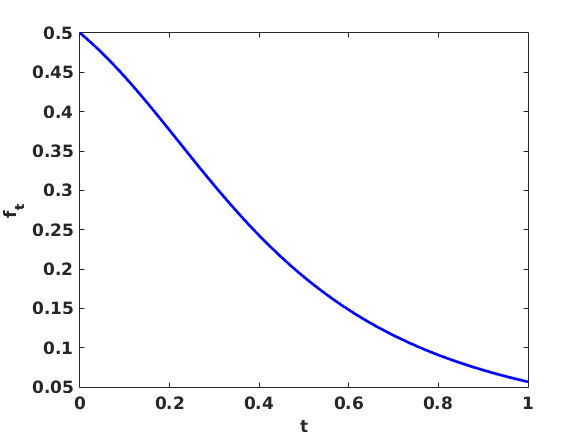} & \includegraphics[width=5cm]{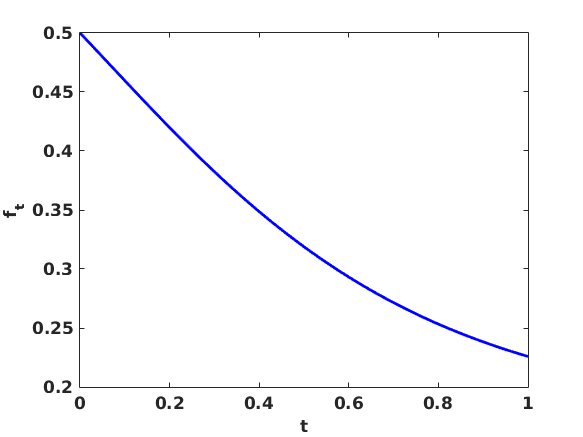} & \includegraphics[width=5cm]{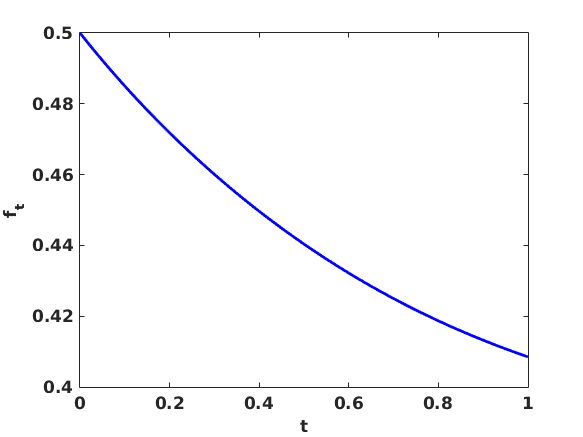} \\
 $\rho_0=-0.25$, $p^f=-0.6$ & $\rho_0=-0.03$, $p^f=-0.5$ & $\rho_0=0.1$, $p^f=-0.2$
 \end{tabular}
\caption{Sphere case: examples of evolutions of the radius and signals along geodesic trajectories for different initial momenta $\rho_0$ and $p^f$ and Gaussian kernel ($\sigma=0.3$, $\gamma_V=1, \gamma_f=5$).}
\label{fig:metam_sphere}
\end{figure}

There are several remarks to be made on the previous equations. First, we see that the speed of signal evolution is proportional to the inverse of the squared radius, thus $f_t$ will vary faster at times when the sphere is smaller in size. Second, the equations governing the radius evolution are identical to the pure LDDMM equations except for the additional recall term $(p^f)^2/(\gamma_f r_t^3)$ in the momentum dynamics. This term may 'bend' the usual trajectories of classical shape evolution as evidenced in the plots of figure \ref{fig:metam_sphere}. These plots show trajectories for $r_t$ and $f_t$ along a few geodesics, calculated from equation \eqref{eq:evol_sphere_fshape} with a Gaussian kernel $k_V(u) = e^{-\frac{u}{2\sigma^2}}$, for which one can verify that:
\begin{equation*}
 \chi(r) = 4\pi \frac{\sigma^2}{r^2} (1+e^{-\frac{2r^2}{\sigma^2}}) \left[1 - \frac{\sigma^2}{r^2} \tanh\left(\frac{r^2}{\sigma^2} \right) \right] 
\end{equation*}
The left hand figures for instance show that under certain combinations of parameters and initial conditions, the sphere may contract (while signal variation accelerates) before expanding, which is a very different behavior compared to the case of pure geometric shapes or to the 'tangential' model for fshapes developed in \cite{Charlier15}.   

\section{Discrete model}

The model of fshape metamorphosis described so far may be rewritten in a totally discrete setting, which is the essential step towards an actual matching algorithm solving numerically the minimization problem of equation \eqref{eq:registration_fshape}. Discretization schemes have already been developed in previous articles for simpler or less general models, in particular \cite{Charlier15} and \cite{Nardi2015}. The latter reference also partly addresses the important issue of $\Gamma$-convergence of the discrete solutions. In the following sections, we will first provide a generic form for the discrete evolution equations along general fshape metamorphoses. The cases of functional surfaces' metamorphoses in $L^2$ and $H^1$ are then treated more specifically with some more details on the chosen finite elements and numerical computations.


\subsection{Discrete Fshapes}

The notations and definitions in the rest of this section closely follow the ones of \cite{Charlier15}. A continuous fshape $(X,f)$ of dimension $d$ embedded in $\R^n$ is only known through a finite set of $P \geq (d+1)$ points with their attached signal and connectivity relations between vertices. An important example to keep in mind is the case of functional surface ($d=2$) coming from 3D medical imaging ($n=3$). This kind of data usually comes from a complex pipeline ranging from image acquisition to segmentation and surface extraction. In this context, the ideal underlying continuous functional surface $(X,f)$ is unknown and is approximated by a textured triangular mesh typically containing several thousands of points ($P \approx 10^4$). 

In the discrete setting, an fshape is therefore described by a triplet of objects $(\x,\f,\C)$ where 
\begin{itemize}
\item $\x = (x_k)_{k=1,\ldots,P}$ is the $P \times n$ matrix of the $P$ vertex coordinates $x_k \in \R^n$. 
\item $\f= (f_k)_{k=1,\ldots,P} \in \R^{P\times 1}$ is a column vector of signal values attached to each vertex (in Lagrangian coordinates). 
\item $\C \in \left\{ 1,\cdots,P \right\}^{T \times (d+1)}$ is a $T \times (d+1)$ connectivity matrix. The mesh is thus composed by $T>0$ simplexes of dimension $d$ so that the $\ell$-th row of $\C$ contains the indices of the $d+1$ vertices of the cell $\ell \in \left\{ 1,\cdots,T \right\}$.
\end{itemize}
In exact translation of the continuous transport equations \eqref{eq:action_fshape}, the transformation of a discrete fshape by a deformation $\phi$ and functional residual $\bzeta \in \R^{P \times 1}$ is the discrete fshape given by $\phi \cdot \x = (\phi(x_k))$, $\f+\bzeta = (f_k+\zeta_k)$ and the same connectivity matrix $\C$. 

\subsection{Discrete functional norm}

At this stage, a continuous fshape $(X,f)$ is approximated by a discrete fshape $(\x,\f,\C)$ which is nothing but a graph with a signal attached at each vertex. From this graph, we define a piecewise polyhedral domain $\mathcal T$ of $\R^n$ made of $d$-dimensional simplices whose vertices and edges are stored in $\x$ and $\C$. Now let $\tilde f: \mathcal T \to \R$ be a function satisfying $\tilde f(x_k) = f_k$. The $H^s$ norm of $\tilde f$ on $\mathcal T$ is denoted $\| \f \|_{H^s(\x)}$  (we drop the dependency of $\tilde f$) and can be written in all generality as 
\begin{equation}\label{eq.l2approx}
	\|\f\|_{H^s(\x)}^2  = \f^{T} D_s(\x) \f 
\end{equation}
where $D_s(\x)$ is a symmetric positive definite $P\times P$ matrix depending on the interpolation formula chosen to define $\tilde f$ on $\mathcal T$. The entry of $D_s(\x)$ may be computed from the matrices $\x$ and $\C$ and 
 $D_s(\x)$ is generally sparse. In the following subsections, we will examine the most useful cases in practice: $d=1$ where $\mathcal T$ is the union of piecewise linear segments and $d=2$ where $\mathcal T$ is the union of piecewise triangular cells.
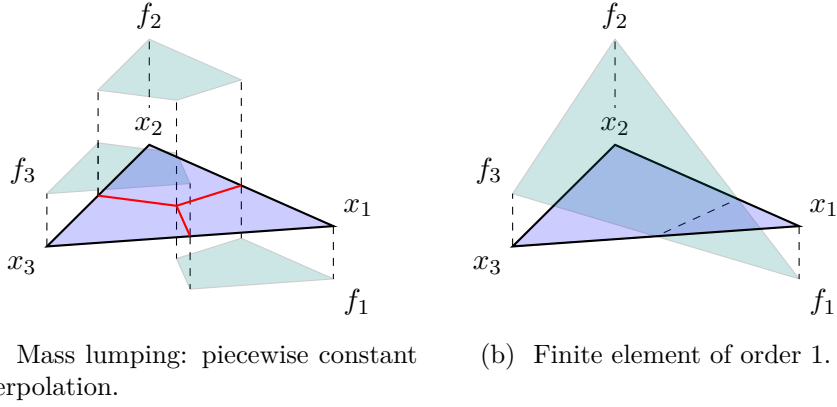
\begin{figure}[h!]
		\centering

		\begin{subfigure}[t]{6cm}
		\centering
			\begin{tikzpicture}[scale=.7]
				\coordinate (X2) at (0,0,0);
				\coordinate (X1) at (5,0,4);
				\coordinate (X3) at (0,0,5);

				\coordinate (X0) at (5/3,0,3);
				\coordinate (X12) at (5/2,0,2);
				\coordinate (X13) at (5/2,0,4.5);
				\coordinate (X23) at (0,0,2.5);

				\coordinate (F2)   at (0  ,2,0);
				\coordinate (FF0)  at (5/3,2,3);
				\coordinate (FF12) at (5/2,2,2);
				\coordinate (FF23) at (0 ,2,2.5);

				\coordinate (F3) at (0,1,5,);
				\coordinate (FFF0) at (5/3,1,3);
				\coordinate (FFF13) at (5/2,1,4.5);
				\coordinate (FFF23) at (0,1,2.5);

				\coordinate (F1) at  (5,-1,4);
				\coordinate (F12) at (5/2,-1,2);
				\coordinate (F13) at (5/2,-1,4.5);
				\coordinate (F0) at (5/3,-1,3);


				\draw[dashed] (X1) node[above right] {$x_{1}$} -- (F1) node[below right] {$f_{1}$} ;
				\draw[dashed] (X0) node[right] {} -- (F0) node[right] {} ;
				\draw[dashed] (X13) node[right] {} -- (F13) node[right] {} ;
				\draw[dashed] (X12) node[right] {} -- (F12) node[right] {} ;

				\draw[dashed] (X2) node[fill=white,above] {$x_{2}$} -- (F2) node[above] {$f_{2}$} ;
				\draw[dashed] (X0) node[left] {} -- (FF0) node[right] {} ;
				\draw[dashed] (X23) node[left] {} -- (FF23) node[right] {} ;
				\draw[dashed] (X12) node[left] {} -- (FF12) node[right] {} ;

				\draw[dashed] (X3)  node[below left] {$x_{3}$}-- (F3) node[above left] {$f_{3}$} ;
				\draw[dashed] (X0) node[left] {} -- (FFF0) node[right] {} ;
				\draw[dashed] (X23) node[left] {} -- (FFF23) node[right] {} ;
				\draw[dashed] (X13) node[left] {} -- (FFF13) node[right] {} ;


				\draw[fill=blue!50!green, opacity=.2] (F1) -- (F13) -- (F0) -- (F12) -- cycle;
				\draw[fill=blue!50!green, opacity=.2] (F2) -- (FF12) -- (FF0) -- (FF23) -- cycle;
				\draw[fill=blue!50!green, opacity=.2] (F3) -- (FFF23) -- (FFF0) -- (FFF13) -- cycle;

				\draw[thick,fill=blue,fill opacity=.2] (X1) -- (X2) -- (X3)  -- cycle;
				\draw[thick,red] (X12)  node[above right] {}-- (X0) node[below left] {};
				\draw[thick,red] (X13)  node[above right] {}-- (X0) node[below left] {};
				\draw[thick,red] (X23)  node[above right] {}-- (X0) node[below left] {};

			\end{tikzpicture}	
		\caption{\label{fig.lump} Mass lumping: piecewise constant interpolation.}
		\end{subfigure}
		\begin{subfigure}[t]{6cm}
		\centering
			\begin{tikzpicture}[scale=.7]
				\coordinate (X2) at (0,0,0);
				\coordinate (X1) at (5,0,4);
				\coordinate (X3) at (0,0,5);

				\coordinate (X0) at (5/3,0,3);
				\coordinate (X12) at (5/2,0,2);
				\coordinate (X13) at (5/2,0,4.5);
				\coordinate (X23) at (0,0,2.5);

				\coordinate (F2)   at (0  ,2,0);
				\coordinate (FF0)  at (5/3,2,3);
				\coordinate (FF12) at (5/2,2,2);
				\coordinate (FF23) at (0 ,2,2.5);

				\coordinate (F3) at (0,1,5,);
				\coordinate (FFF0) at (5/3,1,3);
				\coordinate (FFF13) at (5/2,1,4.5);
				\coordinate (FFF23) at (0,1,2.5);

				\coordinate (F1) at  (5,-1,4);
				\coordinate (F12) at (5/2,-1,2);
				\coordinate (F13) at (5/2,-1,4.5);
				\coordinate (F0) at (5/3,-1,3);

				\coordinate (X4) at (intersection of X1--X2 and F1--F2);
				\coordinate (X5) at (intersection of X1--X3 and F1--F3);

				\draw[dashed] (X1) node[above right] {$x_{1}$} -- (F1) node[below right] {$f_{1}$} ;
				\draw[dashed] (X2) node[fill=white,above] {$x_{2}$} -- (F2) node[above] {$f_{2}$} ;
				\draw[dashed] (X3)  node[below left] {$x_{3}$}-- (F3) node[above left] {$f_{3}$} ;
				\draw[dashed] (X3) -- (X2) -- (X4) -- (X5) -- cycle;
				\draw[thick,fill=blue, fill opacity=.2] (X5) -- (X3) -- (X2) -- (X4) ;

				\draw[fill=blue!50!green, opacity=.2] (F1) -- (F3) -- (F2) -- cycle;
				\draw[thick,fill=blue,fill opacity=.2] (X4) -- (X1) -- (X5);
			\end{tikzpicture}
		\caption{\label{fig.l2p2} Finite element of order 1.}
		\end{subfigure}
		\caption{\label{fig.fem} An illustration of two different interpolations to defined $\tilde f$ on $\mathcal T$ composed by a single triangle. The graph of $\tilde f$ is in green.}
	\end{figure}

\subsubsection{Mass lumping}
\label{ssec:mass_lumping}

This formula is used to compute the $L^2$ norm (i.e. the $H^s$ norm with $s=0$) of a piecewise constant $\tilde f$ on $\mathcal T$ as in Figure \ref{fig.lump}. The idea is to choose an interpolation scheme with a diagonal weight matrix. We let 
\begin{equation}
	D_0(\x)= \operatorname{Diag} \left (\frac{1}{d+1}\sum_{\tau\ni x_k} r_\tau \right)_{k=1}^{P}
	\label{eq.lump}
\end{equation}
where $r_{\tau}$ is the $d$-volume of simplex $\tau$. If  $\mathcal T$ is triangular mesh ($d=2$), it means that the $k$-th diagonal entry of $D_0(\x)$ is computed by performing a sum of the areas of all triangles $\tau \in \mathcal T$ containing the $k$-th vertex (of coordinate vector $x_k$).

\subsubsection{Exact formula for P1 finite elements}

Let $(\psi^{(0)}_\tau)_{\tau\in\mathcal T}$ be the canonical basis for the finite elements of order 0 (i.e. $\psi_\tau^{(0)}:\mathcal T \to \R$ is equal to 1 on the cell $\tau$ and 0 everywhere else) and  $(\psi^{(1)}_k)_{k=1}^P$ be the canonical basis for the finite elements of order 1 (i.e. $\psi_k^{(1)}:\mathcal T \to \R$ is continuous piecewise linear such that $\psi_k^{(1)}(x_\ell) = 1$ if $k=\ell$ and $0$ if $k\neq \ell$). 

Let $\tilde f = \sum_{k=1}^P f_k \psi_k^{(1)}$ be the function defined on $\mathcal T$ by piecewise linear interpolation of the $f_k$'s with P1 finite elements. Using standard numerical integration formula 
as in \cite{Allaire} page 178 
we have 
\[
	\|\tilde f\|^2_{L^2(\mathcal T)} = \|\f\|^2_{L^2(\x)} \doteq  \begin{cases}\displaystyle \sum_{\tau\in \mathcal T} \frac{r_\tau}{6} \Big( \big(f^{(1)}_\tau\big)^2 + 4\big(f^{(12)}_\tau\big)^2 + \big(f^{(2)}_\tau\big)^2 \Big), & \text{ if $d=1$}\\\displaystyle  \sum_{\tau\in \mathcal T} \frac{r_\tau}{3} \Big( \big(f^{(12)}_\tau\big)^2 + \big(f^{(23)}_\tau\big)^2 + \big(f^{(31)}_\tau\big)^2 \Big), & \text{ if $d=2$}
	  \end{cases}
\]
where $f^{(ij)}_\tau = \frac 1 2 (f^{(i)}_\tau + f^{(j)}_\tau)$ is the value of $\tilde f$ at the center of the edge linking vertices $i$ and $j$ in cell $\tau$. We can now define the matrix $D_0(\x)\in\R^{P\times P}$ as the (symmetric) matrix of the following quadratic form
\begin{equation}\label{eq.l2p2}
	\f \mapsto	\f^T D_0(\x) \f \doteq \|\f\|^2_{L^2(\x)}.
\end{equation}
Formula \eqref{eq.l2p2} may be used as an alternative to equation \eqref{eq.lump} to compute $L^2$ norm. We emphasis that matrix $D_0$ in equation \eqref{eq.l2p2} is sparse but no longer diagonal and that the computation is exact on finite elements of order 1. 

For the computation of the $H^1$ norm of $f$, note that the gradient of $\tilde f$ is defined almost everywhere on $\mathcal T$ and is constant on the interior of each cell. We thus introduce the function $g =  \sum_{\tau\in\mathcal T} c_\tau \psi^{(0)}$ with $c_\tau = \| \nabla \tilde f_\tau \|_{\R^d}$ and we use the simple integration formula exact on finite elements of order 0 to get 
\[
	\|\nabla \tilde f\|^2_{L^2(\mathcal T)} = \|g \|_{L^2(\x)}^2  \doteq \sum_{\tau\in\mathcal T} r_\tau  c_\tau^2.
\]
Finally, $D_1(\x)\in\R^{P\times P}$ is the symmetric matrix of the quadratic form defined by
\[
\f \mapsto	\f^T D_1(\x) \f \doteq  \|\f\|^2_{L^2(\x)}\ + \|\nabla \f\|^2_{L^2(\x)}.
\]

\subsection{Deformation on discrete fshapes}

\subsubsection{Discrete Hamiltonian equations}

We can now derive a discrete fshape metamorphosis model along the same lines as the continuous one of previous sections. If we fix $M$ as the template polyhedral manifold $X_0$ itself and consider signals that are obtained with a given finite element interpolation of the values at the vertices, then the state variables in this discrete setting are the two vectors $\x$ and $\f$ and a metamorphosis is determined by a couple $(v_t,\h_t)$ with $v \in L^2([0,1],V)$ and $\h_t=(h_{k,t}) \in \R^{P \times 1}$ such that we have the finite-dimensional evolution equations:
\begin{equation}
 \label{eq:discrete_evol}
   \left\{
  \begin{array}[h]{rcl}
    \dot{x}_{k,t} & = & v_t(x_{k,t})\\
    \dot{f}_{k,t} & = & h_{k,t}
  \end{array}\right.,
\end{equation}
The energy \eqref{eq:energy_def} becomes:
\begin{equation}
\label{eq:discrete_energy_def}
 E_{\x}(v,\h) = \frac{\gamma_{V}}{2} \int_{0}^{1} \|v_t\|_{V}^{2} dt + \frac{\gamma_{f}}{2} \int_{0}^{1} \h_t^{T} D_s(\x_t) \h_t dt 
\end{equation}
The Hamiltonian corresponding to the minimization problem with this discrete energy also takes the form:
\begin{align}
\label{eq:discrete_Hamiltonian}
 H(\x_t,\f_t,\p_t,\p^{f}_t,v,\h_t) &\doteq (\p_t|v_t(\x)) + (\p^{f}_t|\h_t) - \frac{\gamma_V}{2} \|v_t\|_V^2 - \frac{\gamma_f}{2} \h^{T}_t D_s(\x_t) \h_t \nonumber \\
 &= \langle \sum_{\ell=1}^{P} p_{\ell,t}^{T} K_V(x_{\ell,t},\cdot),v_t\rangle_V + \h_t^{T} \p_{t}^{f} - \frac{\gamma_V}{2} \|v_t\|_V^2 - \frac{\gamma_f}{2} \h_t^{T} D_s(\x_t) \h_t
\end{align}
where $\p \in \R^{P \times n}$ and $\p^f \in \R^{P \times 1}$ are the discrete co-state variables. Denoting $K_V$ the vector kernel associated to the RKHS $V$, the optimality conditions along geodesics $\partial_{v} H(\x_t,\f_t,\p_t,\p^{f}_t,v_t,\h_t) = 0$ and $\partial_{\h} H(\x_t,\f_t,\p_t,\p^{f}_t,v_t,\h_t) = 0$ from the PMP lead to the following expressions of the optimal controls:
\begin{equation*}
	\begin{cases}
		\displaystyle v_t  =  \frac{1}{\gamma_V} \sum_{\ell=1}^{P} K_V(x_{\ell,t},\cdot) p_{\ell,t} \\
		\displaystyle \h_{t}  =  \frac{1}{\gamma_f} D_s^{-1}(\x_t)  \p^f_t
	\end{cases}
\end{equation*}
As usual for the LDDMM model, optimal velocity fields $v_t$ are entirely parametrized by the finite dimensional momenta vectors $p_{k,t}$ attached to each vertex position. It results in the following discrete reduced Hamiltonian:
\begin{equation}
\label{eq:discrete_reduced_Hamiltonian}
H_r(\x_t,\f_t,\p_t,\p^{f}_t) = \frac{1}{2\gamma_V}  \p_t^T K_{\x_t,\x_t} \p_t + \frac{1}{2\gamma_f}  (\p^{f}_t)^T D_s^{-1}(\x_t)  \p^f_t
\end{equation}
where $\p_t^T K_{\x_t,\x_t} \p_t \doteq \sum_{k,\ell=1}^P p_{k,t}^T K_V(x_{k,t},x_{\ell,t}) p_{\ell,t}$.

\subsubsection{Forward equations}\label{seq.forward}

From equation \eqref{eq:reduced_Hamiltonian_evol_fshape}, we obtain the discrete equivalent of the Hamiltonian evolution equations:
\begin{equation}
 \label{eq:discrete_reduced_Hamiltonian_evol_fshape}
\begin{pmatrix}
	\dot \x_t \\ \dot \f_t\\ \dot \p_t \\ \dot \p^f_t
\end{pmatrix} = 
\begin{pmatrix}
	\phantom{-}\partial_{\p} H_r(\x,\f,\p,\p^f) \\
	\phantom{-}\partial_{\p^f} H_r(\x,\f,\p,\p^f) \\
	-\partial_{\x} H_r(\x,\f,\p,\p^f) \\
	-\partial_{\f} H_r(\x,\f,\p,\p^f)
\end{pmatrix} = 
	 F(\x,\f,\p,\p^f).
\end{equation}
It may be written in an explicit way by using formula \eqref{eq:discrete_reduced_Hamiltonian} and we have
\begin{equation}
	F(\x,\f,\p,\p^f) =
 \begin{pmatrix}
	\frac{1}{\gamma_V} K_{\x_t,\x_t} \p_{t} \\
	\frac{1}{\gamma_f} D^{-1}_s(\x_t) \p^f \\
	-\frac{1}{2\gamma_V}  \p_{t}^T \partial_{\x_t} K_{\x_{t},\x_{t}} \p_{t} + \frac{1}{2\gamma_f} (D^{-1}_s(\x_t)\p^{f})^T \partial_{\x_{t}} D_s(\x_t)  (D_s^{-1}(\x_t)\p^f)\\
	\displaystyle  0
 \end{pmatrix}.
\end{equation}
Some remarks can be made about the system of forward equations \eqref{eq:discrete_reduced_Hamiltonian_evol_fshape}. First, we recover the fact that the momentum $\p^f_t$ is constant over the time (see Theorem \ref{theo_Hamiltonian_eq_metam}) and for that reason we have dropped the subscript $t$ in writing $\p^f \doteq \p^f_t$. We also point out that formula \eqref{eq:discrete_reduced_Hamiltonian_evol_fshape} contains new  terms (\ie compared to the 'tangential' algorithm of \cite{Charlier15}) related to the evolution of the signal. In particular, $\dot \p_t$ now depends on the functional momentum $\p^f$ meaning that a variation in the signal induces a variation in the geometry (see Section \ref{part:evol_sphere_fshape} for an illustration). Finally, these new terms involve in particular the inverse of the sparse matrix $D_s(\x_t) \in \R^{P\times P}$ used in the computation of the functional norms (see equation \eqref{eq.l2approx}). Each time step thus requires solving the sparse (but still large) linear system $D_s(\x_t) h = \p^f$ which may be numerically costly. We use MATLAB linear sparse solver to perform that operation. Yet, this can result in a typically 5 to 10 times slower algorithm compared to the 'tangential' one for fshapes having in the range of ten thousand vertices.

\subsubsection{Geodesic shooting algorithm}
\label{ssec:algo_matching}

Along the same lines, data attachment term $g(\x_1,\f_1)$ and its derivatives with respect to $\x_1$ and $\f_1$ are discretized from the continuous version of equation \eqref{eq:fvarifold}: we refer to \cite{Charlier15} for the detailed expressions. 

The discrete equivalent of fshape registration equation \eqref{eq:registration_fshape} can be then cast as a finite dimensional optimization problem on the initial momenta variables $\p_0\doteq \rst{\p_t}{t=0} \in \R^{P \times 3}$ and $\p^f \in \R^{P \times 1}$ that writes:
\begin{equation}
 \label{eq:registration_fshape_discrete}
 \min_{\p_0,\p^f} J(\p_0,\p^f) \doteq \tfrac{1}{2\gamma_V} \p_0^T K_{\x,\x} \p_0 + \tfrac{1}{2\gamma_f} (\p^{f})^T D_s^{-1}(\x_0) \p^f + \gamma_W g(\x_1,\f_1)
\end{equation}
subject to the dynamics described by equation \eqref{eq:discrete_reduced_Hamiltonian_evol_fshape}. Due to the intricate dependency of final states $\x_1$ and $\f_1$ in the variables $\p_0$ and $\p^f$ as well as the possible non-convexity of $g$, this is typically a non-convex problem and thus, at best, we aim at finding a (not necessarily unique) minimum. The formulation of equation \eqref{eq:registration_fshape_discrete} suggests a \textbf{geodesic shooting} scheme for solving the minimization generalizing widely used similar frameworks in diffeomorphic shape matching, as the ones presented for example in \cite{Allassonniere2005,Vialard2012b}. 

In the case of our problem, this amounts essentially in a gradient descent on the initial momenta variables $(\p_0,\p^f)$. The gradients of the two first terms in equation \eqref{eq:registration_fshape_discrete} are easily computed, only the last term $g(\x_1,\f_1)$ that involves final states is slightly more involved. It may be tackled by integrating backward the so called adjoint linearized system of equations: 
\begin{align}
\label{eq:discrete_adjoint_equation}
\begin{pmatrix}
\dot X_t \\ \dot F_t\\ \dot P_t \\ \dot P^f_t	\end{pmatrix} &= (-dF(\x_t,\f_t,\p_t,\p^f_t))^T\begin{pmatrix}
X_t \\ F_t\\ P_t \\ P^f_t	\end{pmatrix}
\end{align}
with the adjoint variables $X_t\in\R^{P\times n}$, $F_t\in \R^{P\times 1}$, $P_t\in\R^{P\times n}$, $P^f_t\in \R^{P\times 1}$ and the endpoint conditions $X_1=\partial_{\x} g(\x_1,\f_1)$, $F_1 = \partial_{\f} g(\x_1,\f_1)$, $P_1=\partial_{\p} g(\x_1,\f_1)=0$ and $P^f_1=\partial_{\p^f} g(\x_1,\f_1)=0$. In practice, the system of equations \eqref{eq:discrete_adjoint_equation} is tedious to implement and we use instead the finite difference trick presented in \cite{arguillere14:_shape} (Section 4.1 just before Proposition 9). To integrate the adjoint system \eqref{eq:discrete_adjoint_equation}, rather than explicitly compute each term in the matrix $dF^T$, we only need to compute a single directional derivative at each time step with a finite difference method. This has has several advantages: it is rather general, it greatly simplifies the implementation and in the end amounts in about twice the computational cost of the forward system of equations \eqref{eq:discrete_reduced_Hamiltonian_evol_fshape}.

\medskip

In summary, the gradient of the objective functional with respect to $\p_0$ and $\p^f$ is obtained by the following forward-backward scheme:
\begin{itemize}
 \item[$(1)$] Compute $(\x_t,\f_t,\p_t,\p_t^f)$ by integrating equation \eqref{eq:discrete_reduced_Hamiltonian_evol_fshape} forward with initial conditions $(\x_0,\f_0,\p_0,\p_0^f)$.
 
 \item[$(2)$] Compute the gradients of $g(\x_1,\f_1)$ with respect to $\f$ and $\x$. 
 
 \item[$(3)$] Transport the gradients to $t=0$ by integrating backward equation \eqref{eq:discrete_adjoint_equation} with final conditions $X_1=\partial_{\x} g(\x_1,\f_1)$, $F_1 = \partial_{\f} g(\x_1,\f_1)$, $P_1=0$, $P^f_1=0$.
 
 \item[$(4)$] Set $\nabla_{\p_0} J = \frac{1}{\gamma_V} K_{\x,\x} \p_0 + P_0$ and $\nabla_{\p^f} J = D_0(\x_0)(\frac{1}{\gamma_f}  D_s^{-1}(\x_0) \p^f + \gamma_W P^f_0)$
 
\end{itemize}

We point out that the gradient with respect to the functional momentum $\p^f$ at step (4) is computed with respect to the $L^2$ metric on $X_0$ instead of the Euclidean metric, which adds the extra weight matrix $D_0(\x_0)$. This can be crucial for example when the mesh $X_0$ is not regular but contains triangles of very different areas. The updates on $\p^f$ obtained from the gradient computed with respect to this metric ensures that the signal variations $\dot \f = D^{-1}_s (\x) \p^f$ will not be too much affected by the quality of the initial mesh.

The rest of the fshape matching algorithm consists in an adaptive step gradient descent simultaneously on $\p_0$ and $\p^f$. The architecture of the code is in MATLAB with time-consuming segments (computation of kernel sums for the most part) externalized in CUDA. The whole code is available within the \texttt{FshapesTk} software \cite{fstk}.

\section{Results and discussion}
In this section, we show a few results of the fshape matching algorithm presented in section \ref{ssec:algo_matching}. We will first focus on some simple examples to illustrate certain aspects of the method in particular the influence of the norm regularity. Following these, we evaluate qualitatively the output of the algorithm on a few examples of functional shapes originating from medical imaging. All experiments were performed on a server machine equipped with a Nvidia GTX 555 graphics card.      

\subsection{Synthetic data}

\paragraph{Digits.} We first evaluate the algorithm on an example mimicking the situation of gray level images as in Section \ref{ssec:image_metamorphosis}. Here, the geometrical part of both the source and target fshapes is the flat square  $[-1,1]\times [-1,1]\times \left\{ 0 \right\} \subset \R^3$. These two distinct triangular meshes were created with a standard Delaunay triangulation method and contain 4900 vertices each as shown in Figure \ref{fig:metam_digits_st}.  The signal part represents two  handwritten digits with value ranging from 0 (red) to 0.6 (blue). 

Figure \ref{fig:metam_digits} shows an example of metamorphoses in $L^2$ with varying penalty coefficients on the functional momentum part of the energy $\gamma_f$ and $\gamma_V$. Results are consistent with the expected behavior: the smaller $\gamma_f$ the more the transformation is performed in the photometric component instead of deforming the image by the diffeomorphism. We chose for the kernel $K_V$ defining $\|\cdot\|_{V}$ a sum of two radial scalar Gaussian \cite{Vialard2012} with (small) widths $0.2$ and $0.1$ (the square having an edge of size 2). The optimization is performed with a coarse to fine strategy (as described in \cite{Charlier15}) and the final kernels $k_p$ and $k_f$ are taken Gaussian as well with respectively $\sigma_p = 0.05$ and $\sigma_f=0.7$.

\begin{figure}
\centering
 \begin{tabular}{cc}
  \includegraphics[width=3cm]{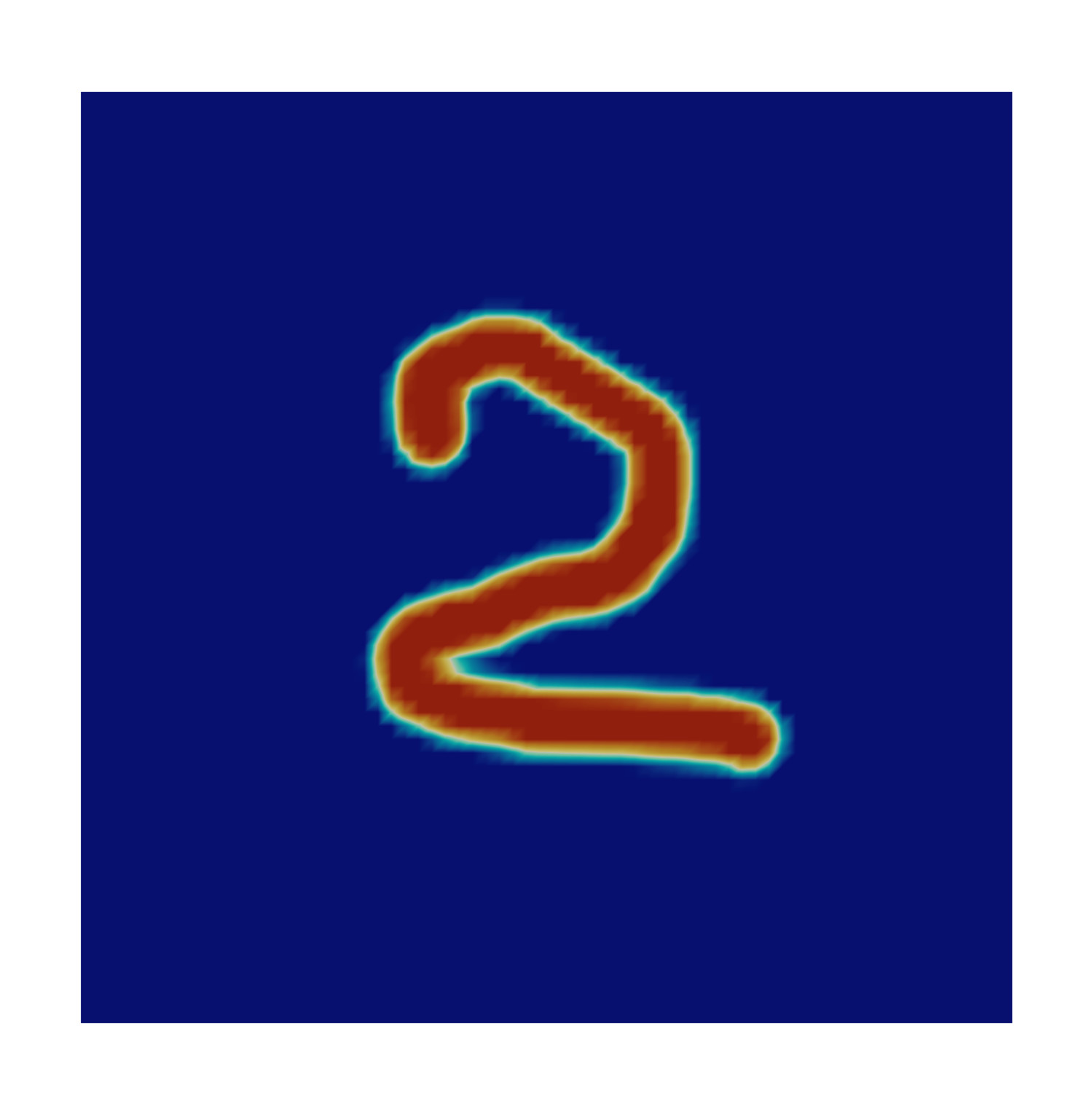}& \includegraphics[width=3cm]{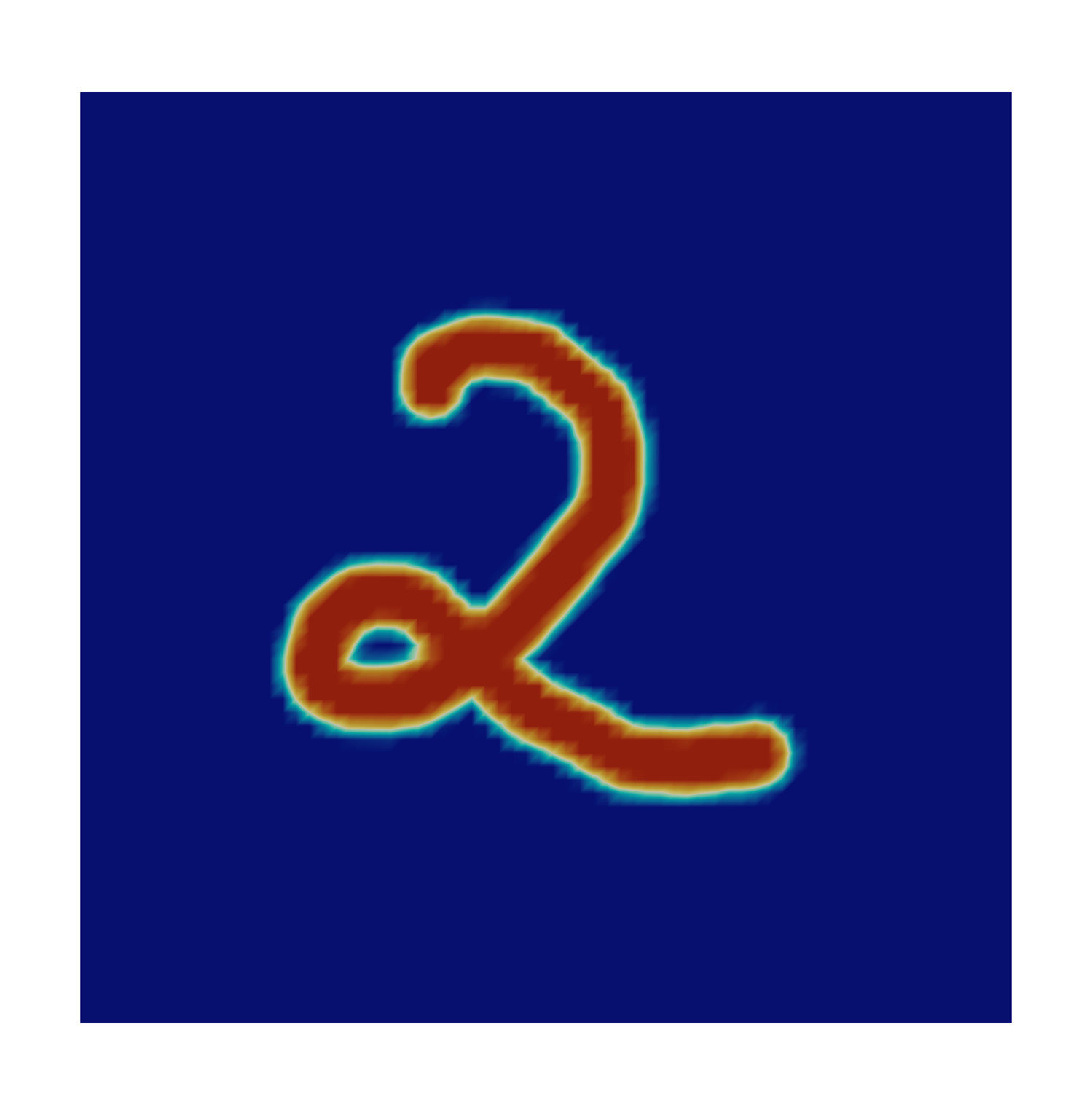} 
 \end{tabular}\\ 
 \caption{Digits: source fshape (left) and target fshape (right)}
\label{fig:metam_digits_st}
\end{figure}

\begin{figure}
\centering
 \begin{tabular}{ccccc}
  \includegraphics[width=3cm]{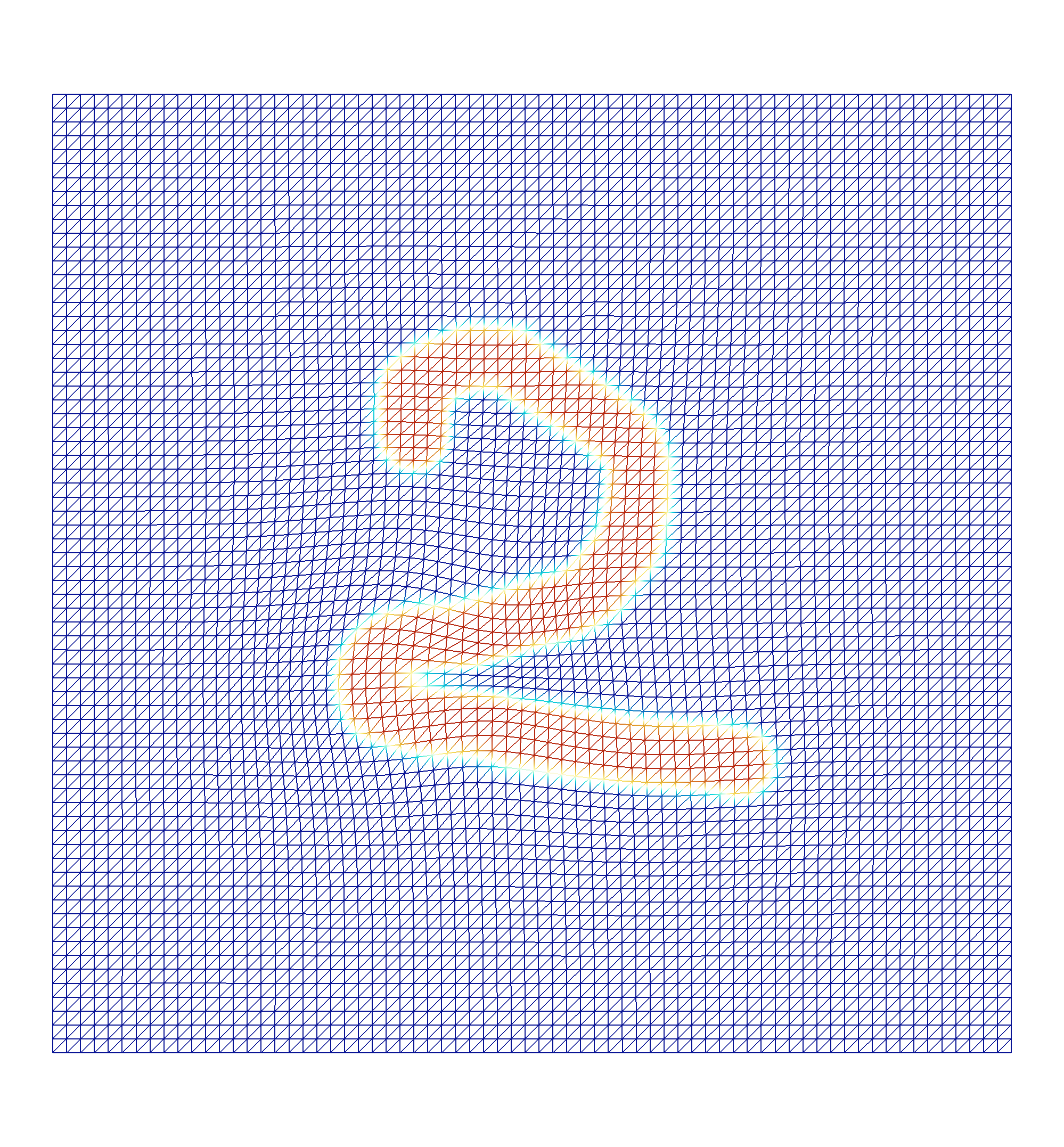} & \includegraphics[width=3cm]{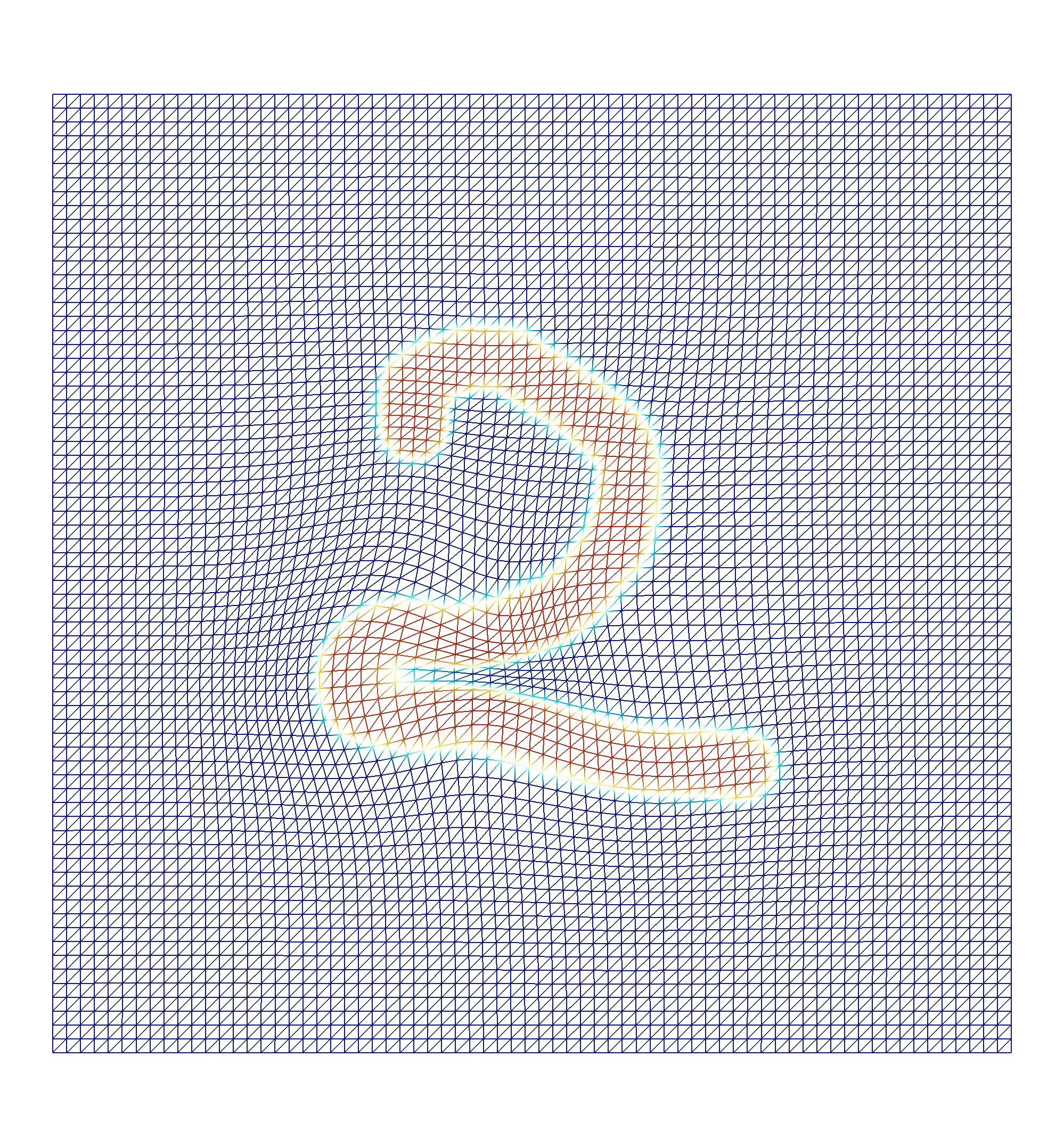} & \includegraphics[width=3cm]{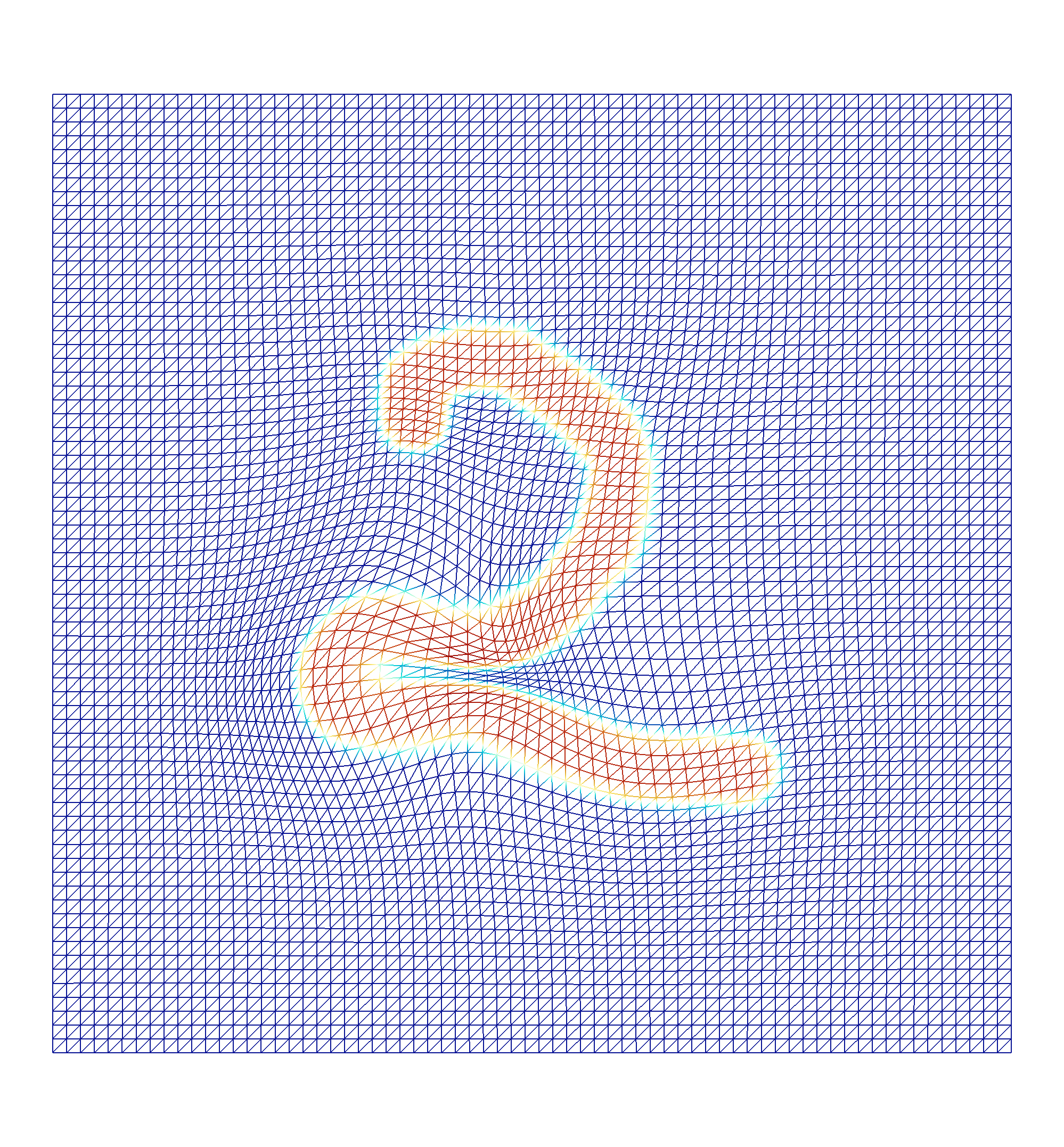} & \includegraphics[width=3cm]{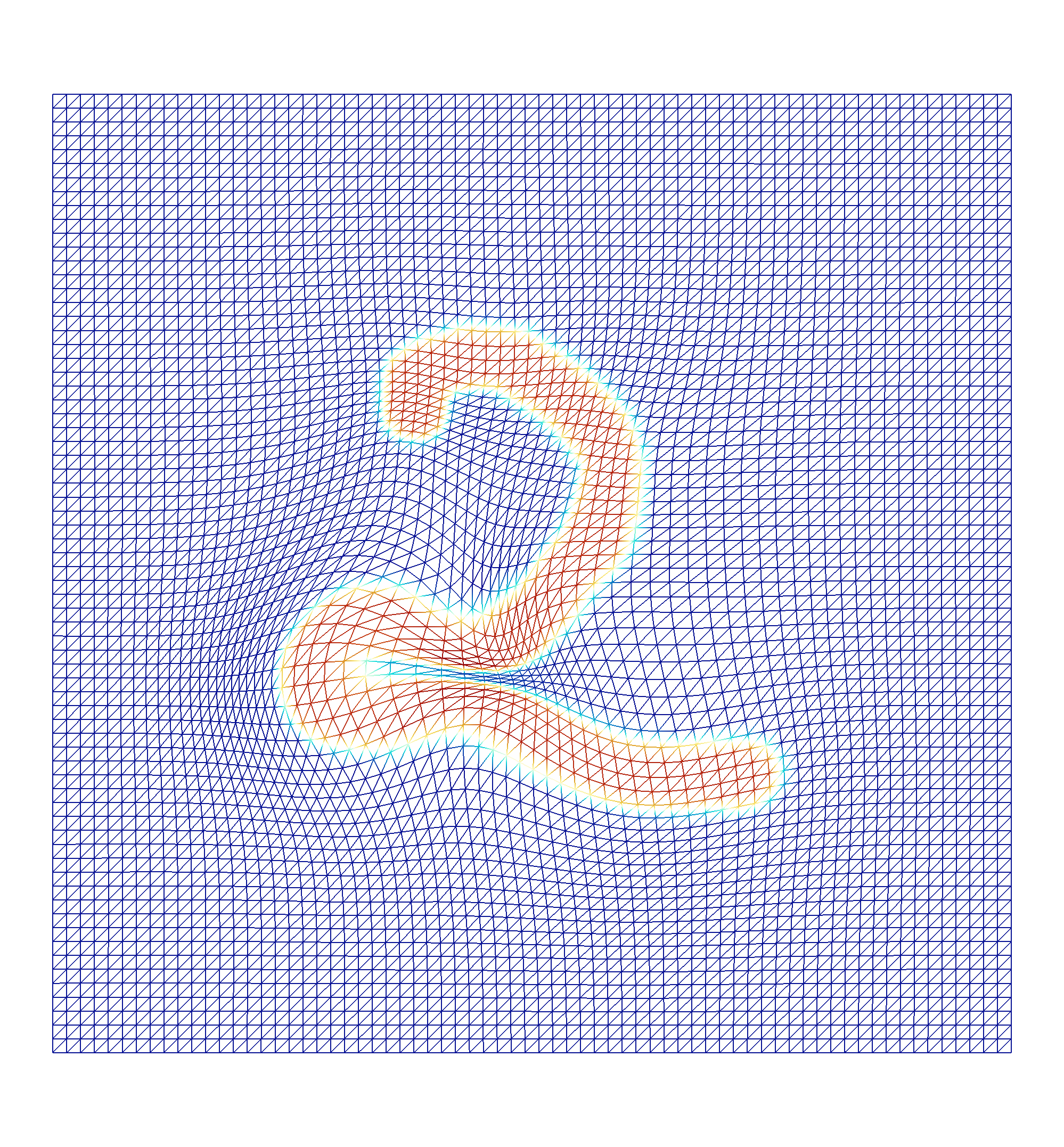}& \includegraphics[width=3cm]{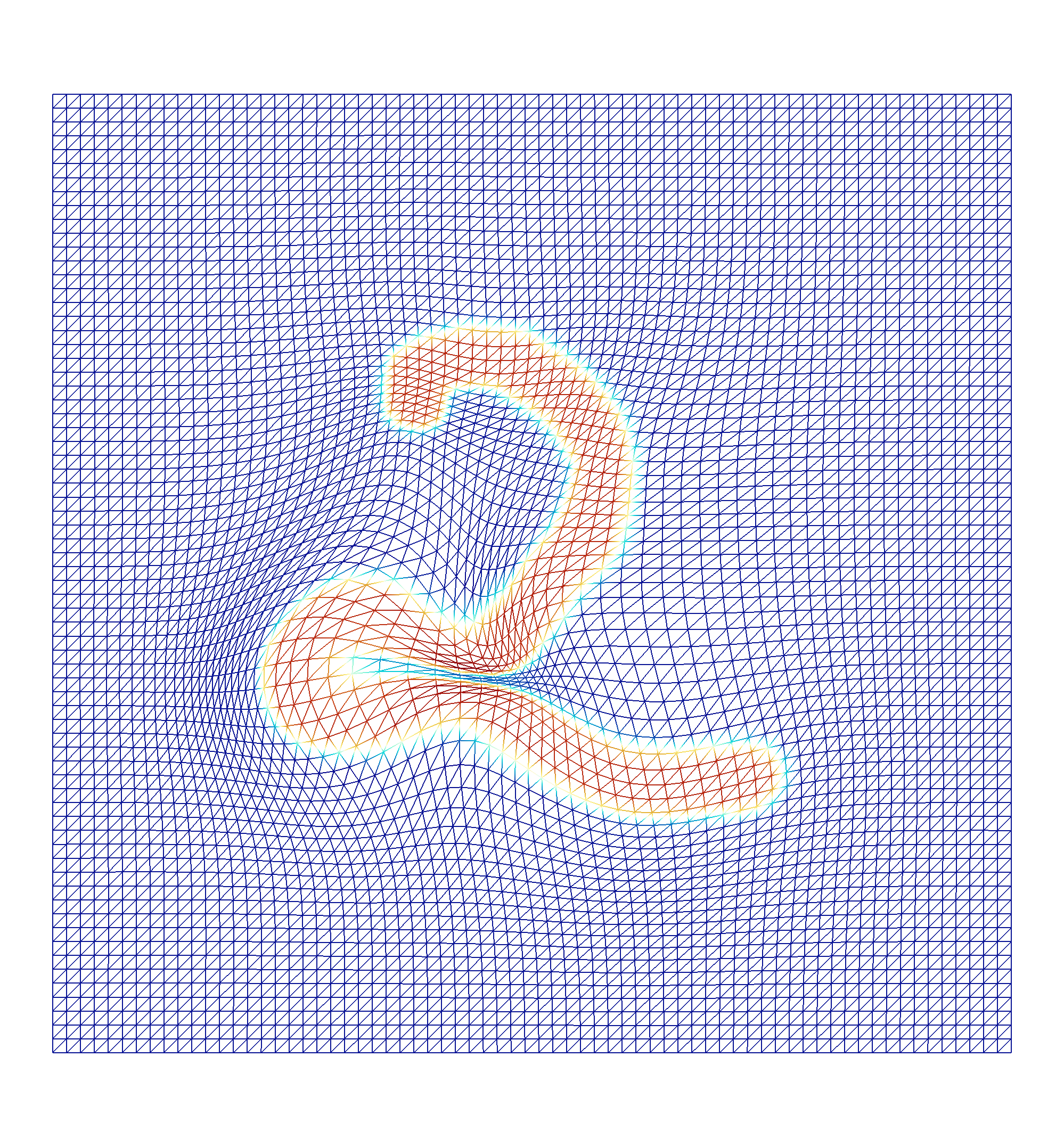}  \\
 \phantom{a} & \phantom{a} & \phantom{a} & \phantom{a} & \phantom{a} \\
 \includegraphics[width=3cm]{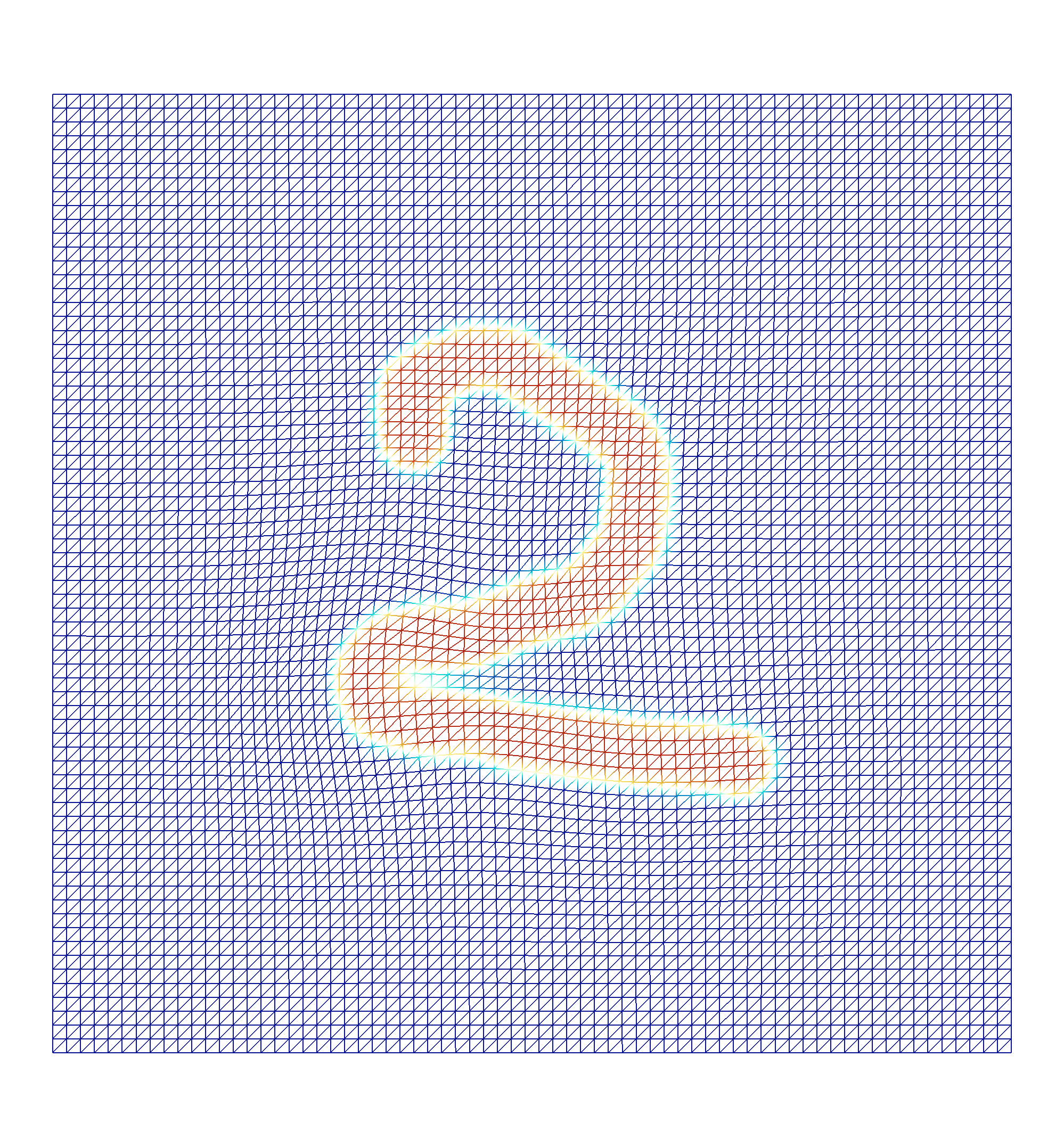} & \includegraphics[width=3cm]{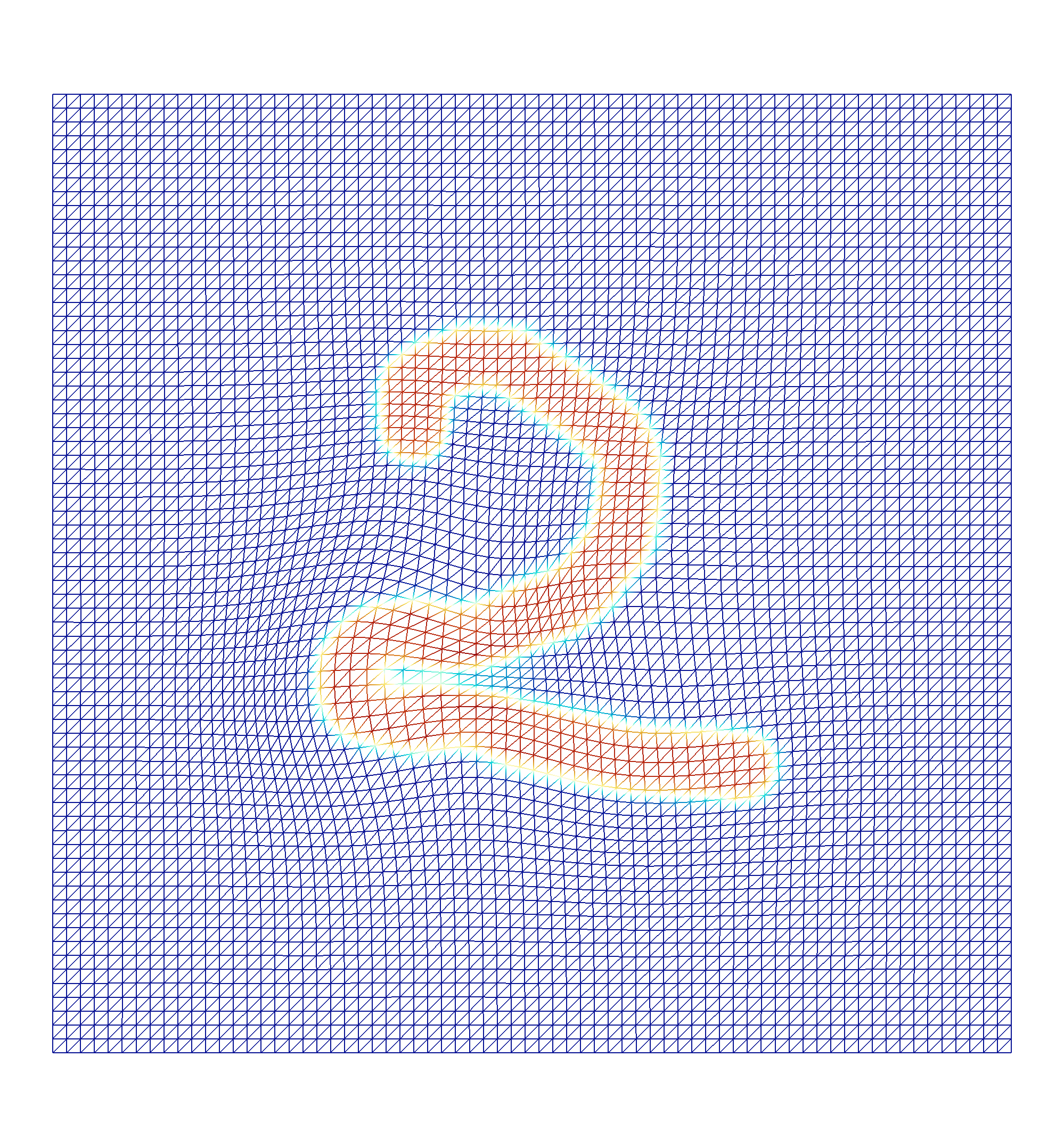} & \includegraphics[width=3cm]{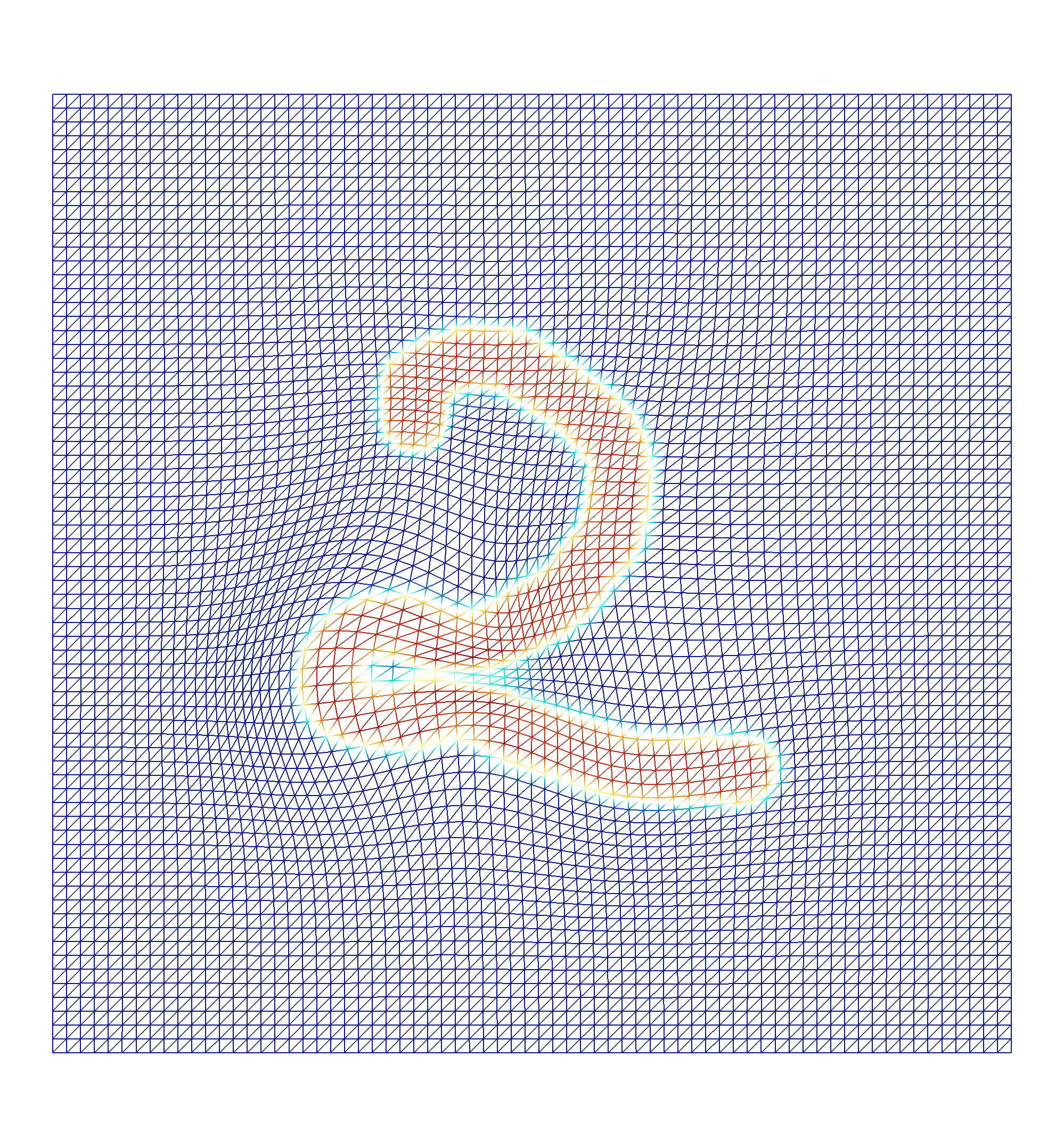} & \includegraphics[width=3cm]{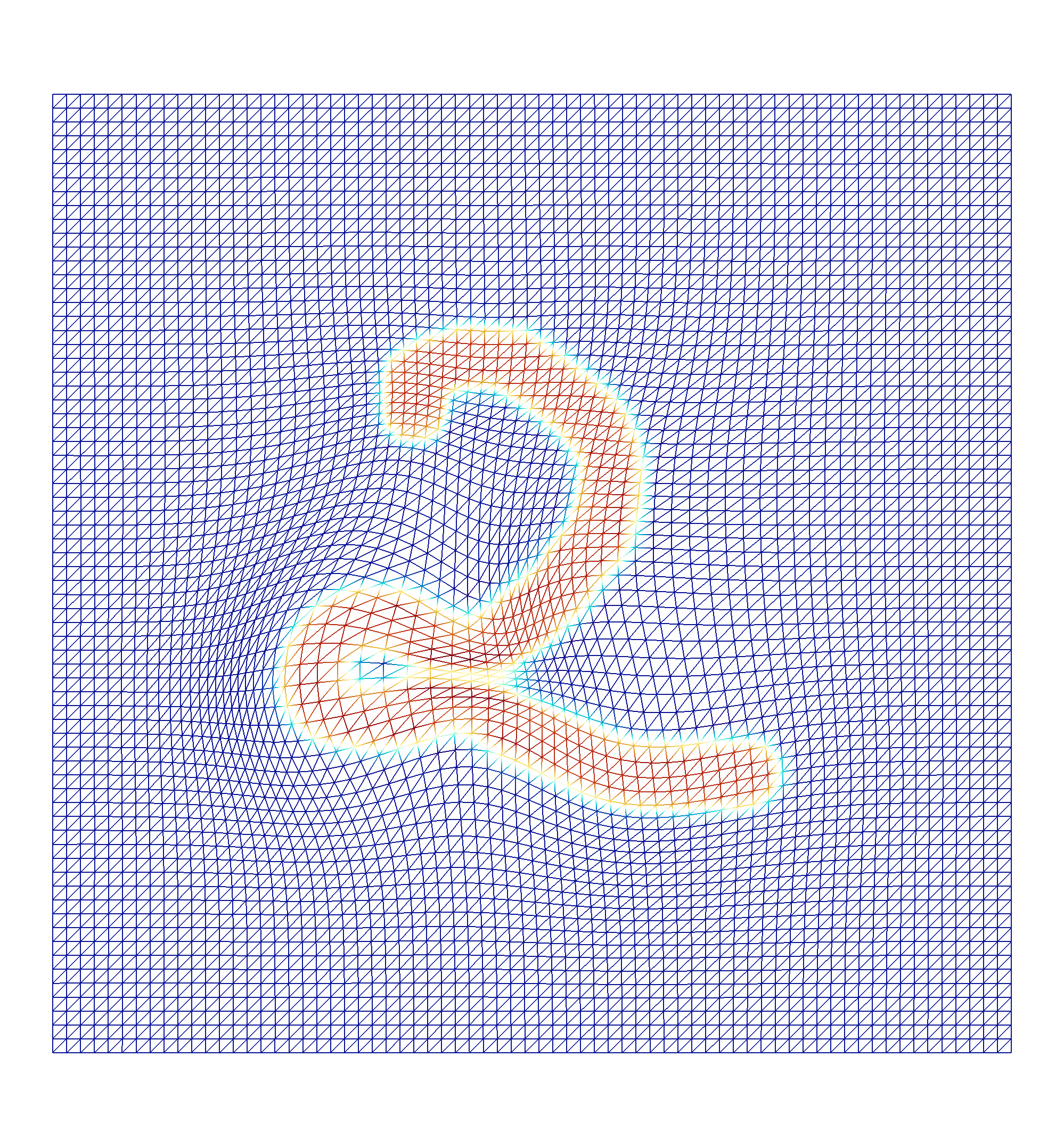}& \includegraphics[width=3cm]{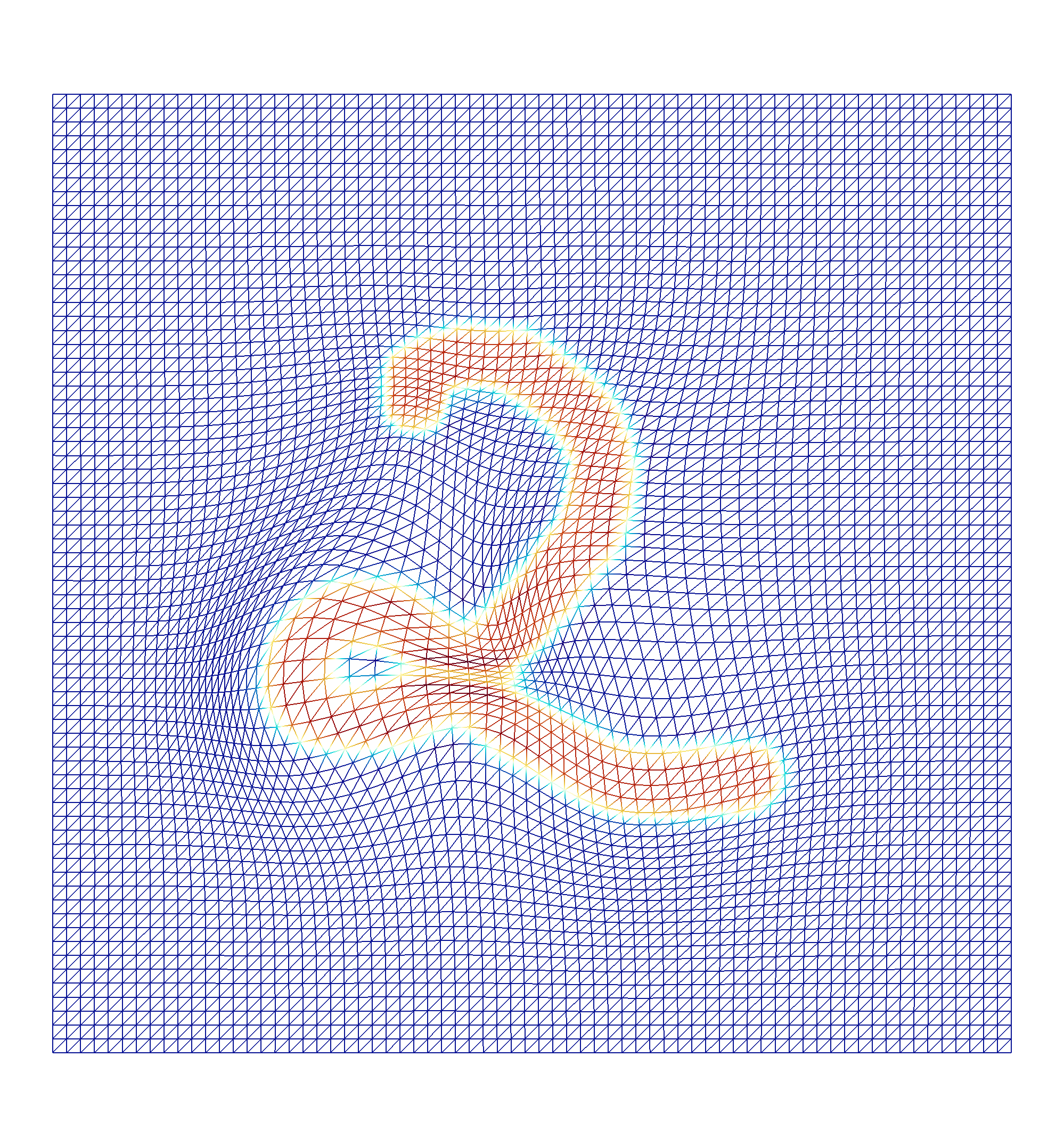}  \\
 \phantom{a} & \phantom{a} & \phantom{a} & \phantom{a} & \phantom{a} \\
\includegraphics[width=3cm]{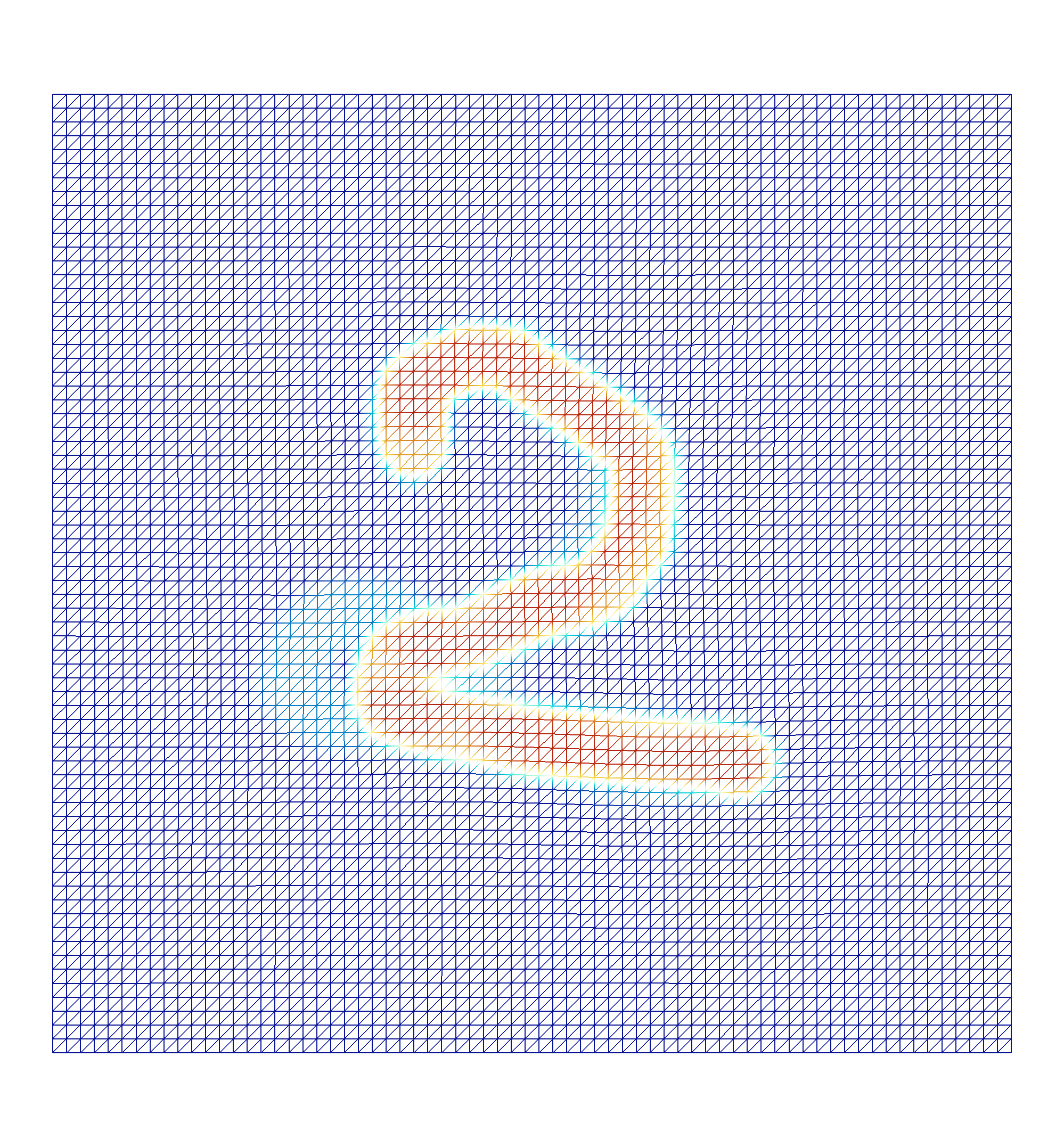} & \includegraphics[width=3cm]{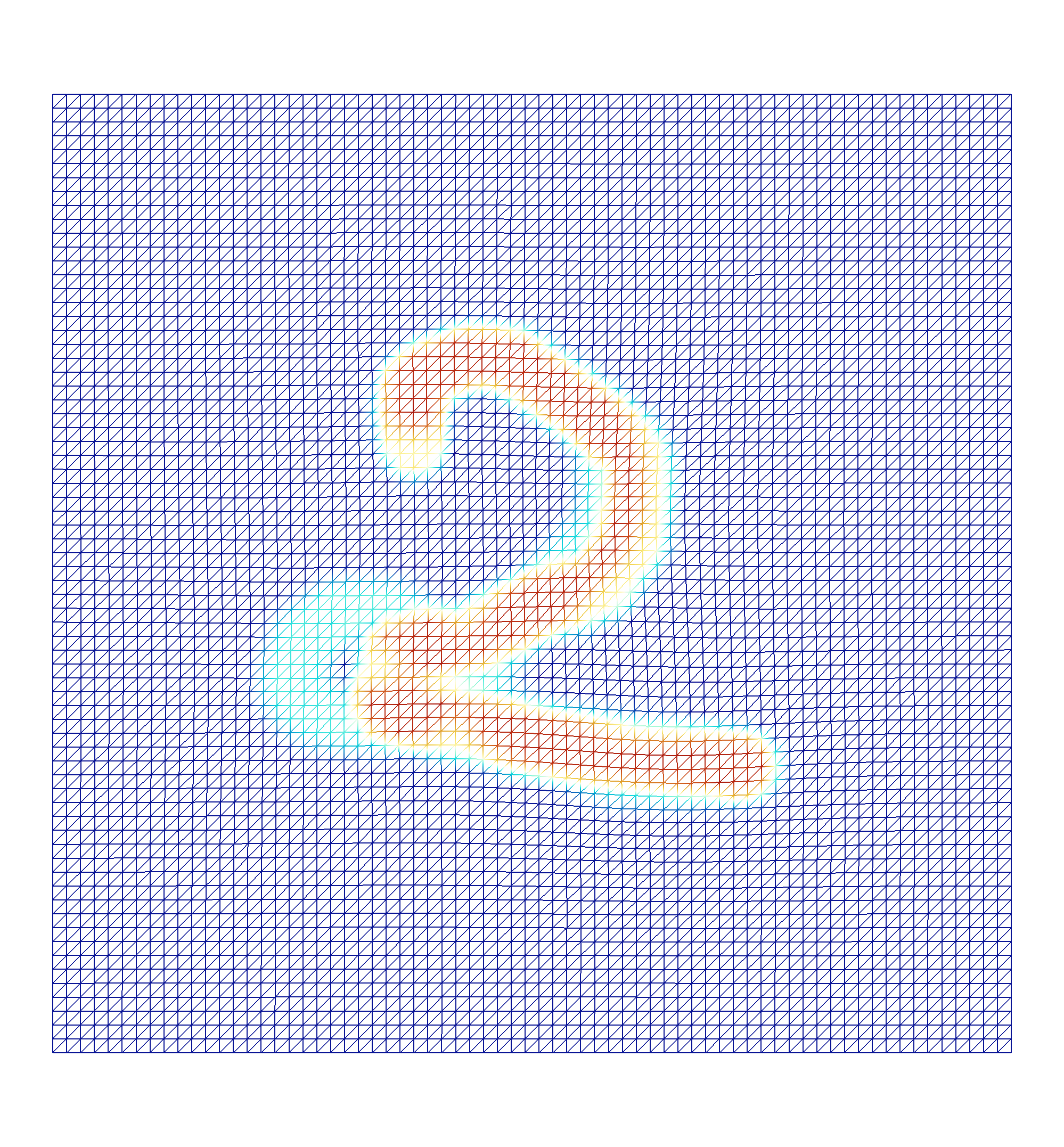} & \includegraphics[width=3cm]{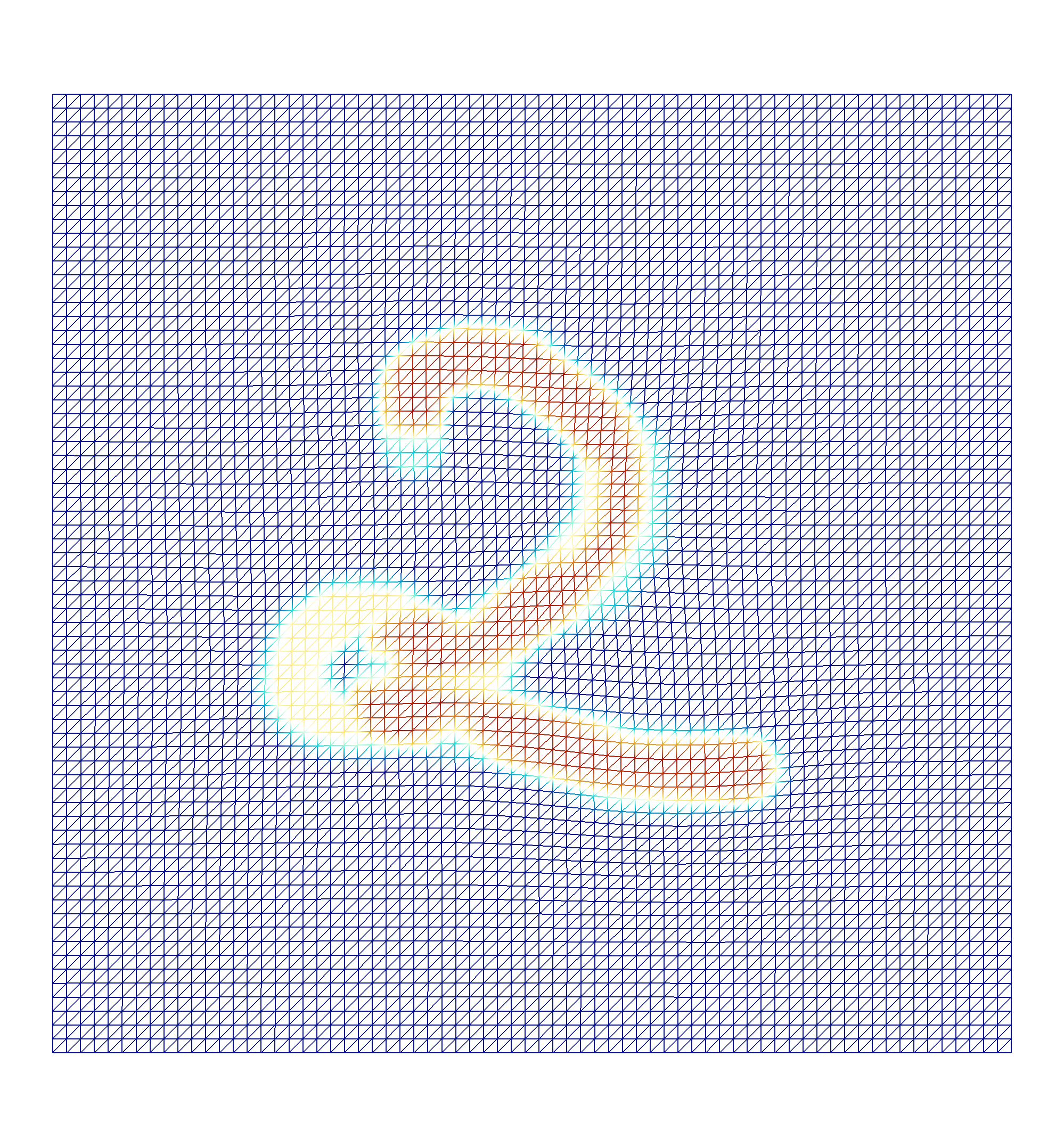} & \includegraphics[width=3cm]{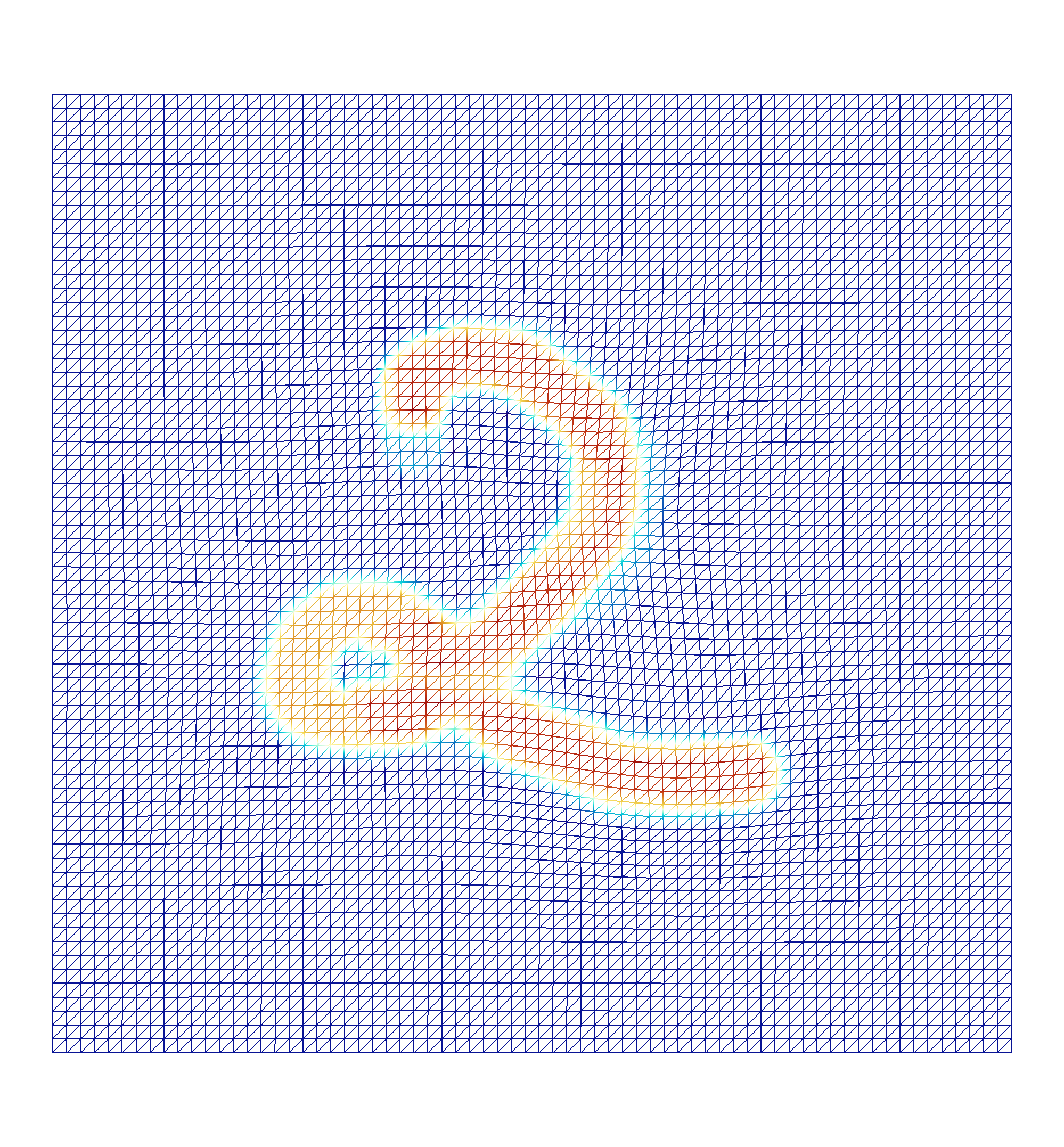}& \includegraphics[width=3cm]{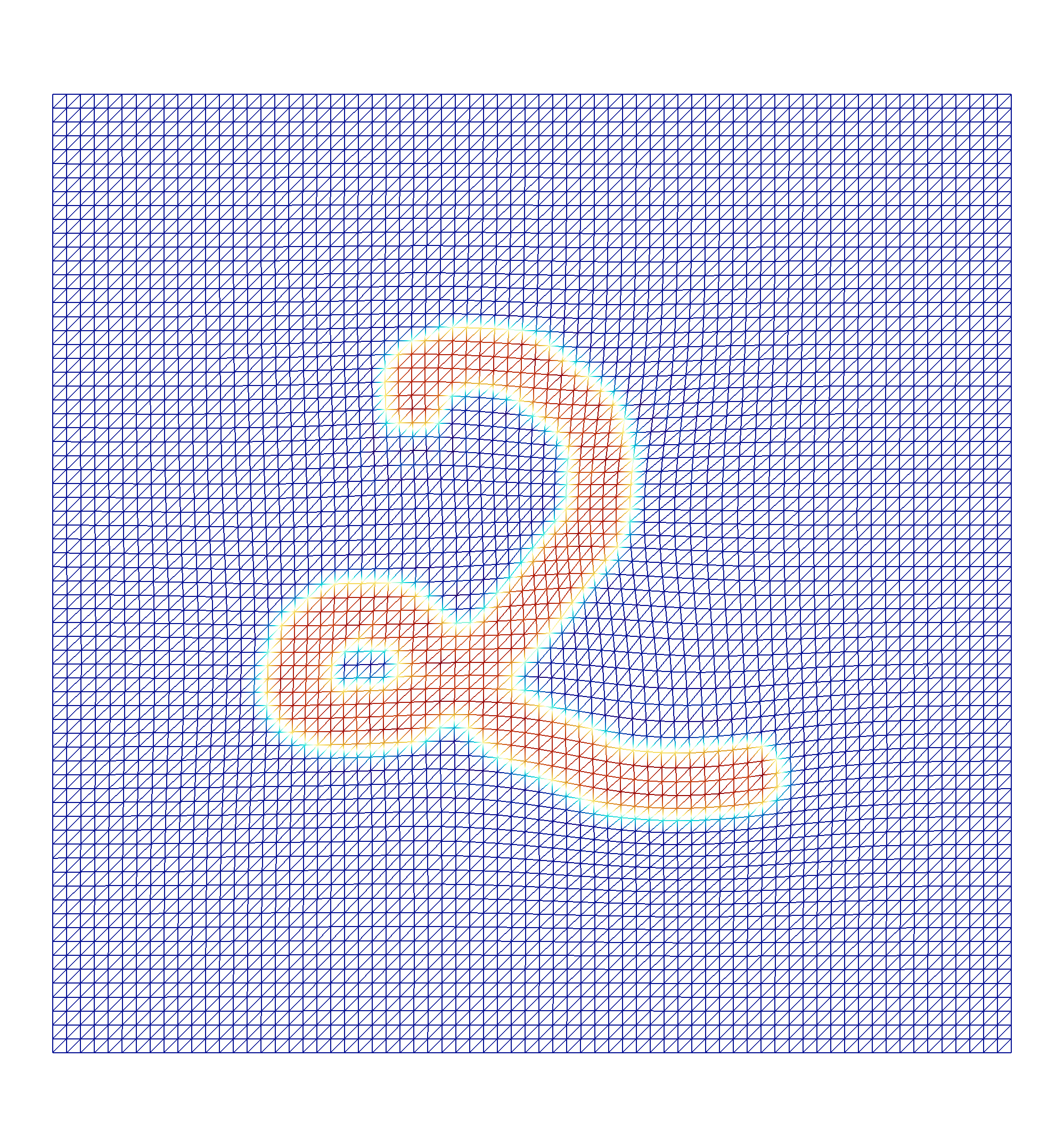}  \\
 \phantom{a} & \phantom{a} & \phantom{a} & \phantom{a} & \phantom{a} \\
  $t=0.2$ & $t=0.4$ & $t=0.6$ & $t=0.8$ & $t=1$
 \end{tabular}
\caption{Estimated metamorphoses of fshapes with a signal representing handwritten digits. The meshes are plotted in wireframe representation to clearly see the deformation. Three experiments for different parameters in the energy are showm: $\gamma_V/\gamma_f = 1$ (first row), $\gamma_V/\gamma_f = 20$ (second row) and $\gamma_V/\gamma_f = 100$ (third row). 
}
\label{fig:metam_digits}
\end{figure}

\paragraph{Stanford bunny.} Secondly, we examine the effect of increasing the metric regularity in the functional dynamics' penalty. The example in Figure \ref{fig:metam_bunny} is a metamorphosis of a sphere (with 10242 vertices) onto the Stanford bunny surface (with 2581 vertices) with a fairly smooth signal function. Results from metamorphosis in $H^1$ display nice regular evolution throughout time and a resulting transformation very consistent with the target despite the difference of sampling between the two meshes. On the other hand, the equivalent result in $L^2$ (with the same parameters) shows some residual oscillatory patterns in the recovered signal unlike the target one, appearing mostly in areas where the transformation is not as close to the target. The qualitative comparison is shown in Figure \ref{fig:metam_bunny_h1_l2} with several views. This effect is particularly obvious on the below part of the mesh where some holes are present in the target. Such oscillations had been noticed already and studied in simpler settings as in \cite{Nardi2015}. They are in a sense numerical manifestations of the conditions on the existence of solutions to the problem with $L^2$ and the absence of weak continuity in $L^2$ of the fvarifold terms. Note that oscillations may be still alleviated if one increases the penalty weight $\gamma_f$; however this would also result in less overall accuracy in the signal matching. Another classical advantage of $H^1$ metamorphosis over $L^2$ is the robustness to signal noise: resulting metamorphoses in $L^2$ are much more affected by the presence of noise or outliers in signal values than higher regularity metrics. In terms of running time however, the $L^2$ metamorphosis scheme with the mass lumping discretization described in \ref{ssec:mass_lumping} only involves inversion of diagonal linear systems in the signal dynamics, resulting in an algorithm running in 45 minutes which is about 6 times faster compared to the finite elements scheme of the $H^1$ case.                   
\begin{figure}
\centering
 \begin{tabular}{cccc}
 \includegraphics[width=3.2cm]{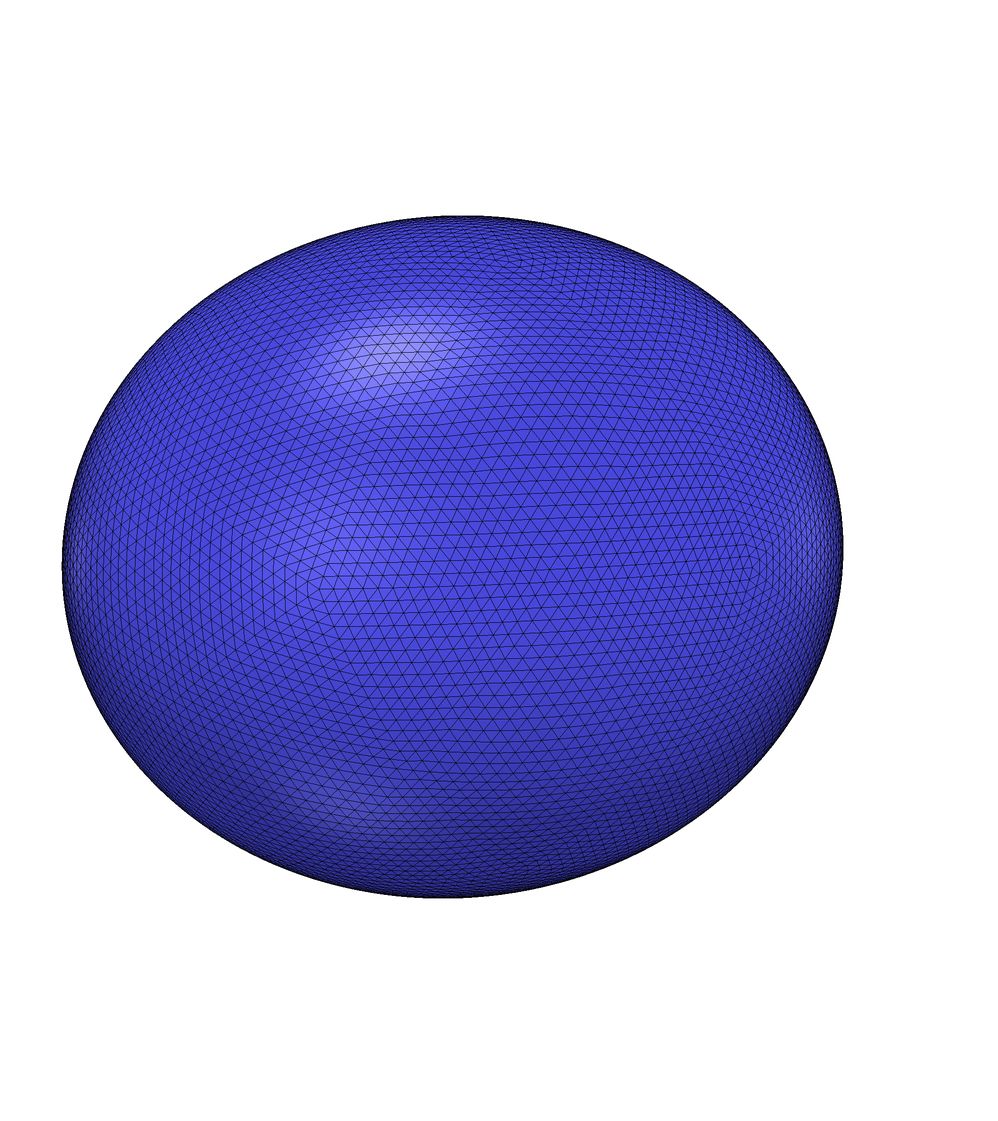} & \includegraphics[width=3.2cm]{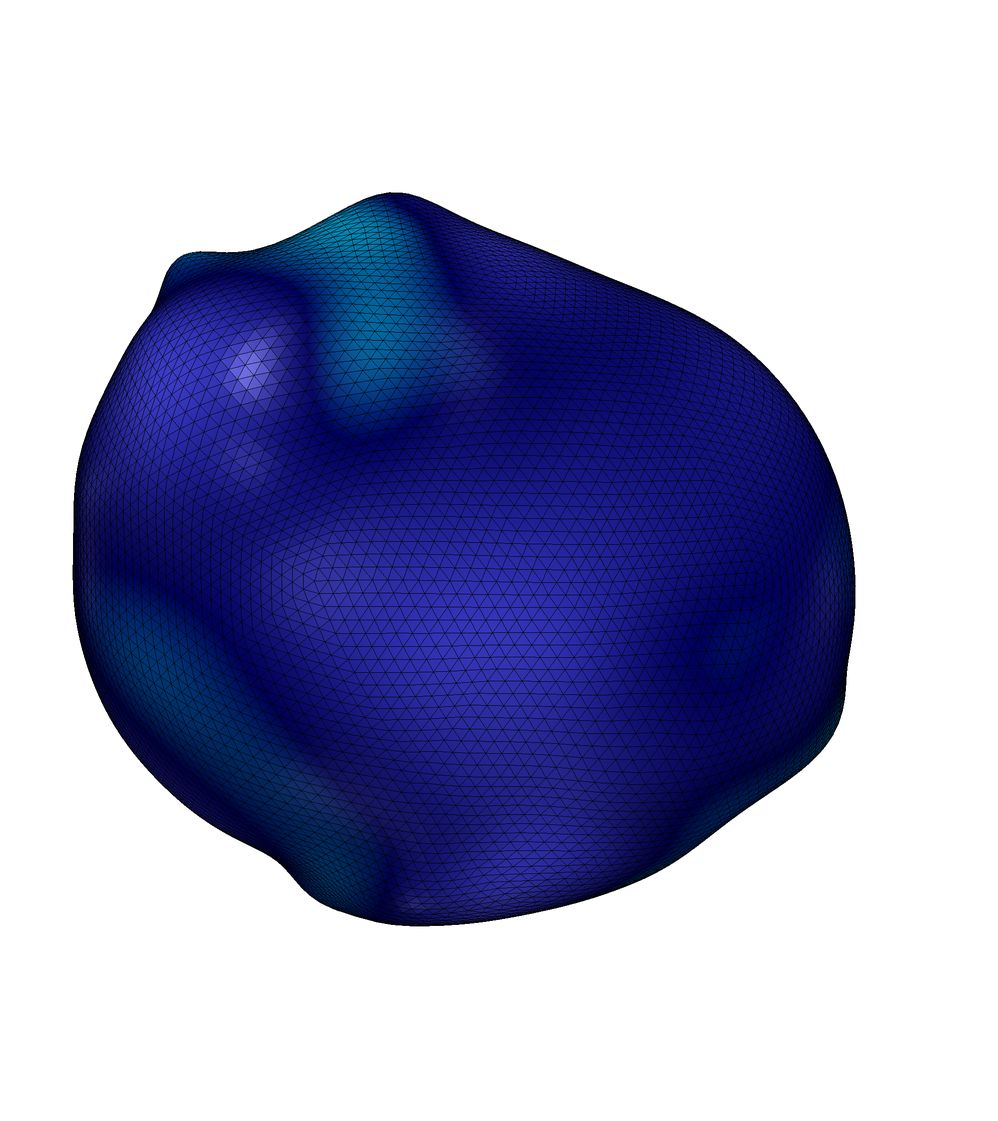} & \includegraphics[width=3.2cm]{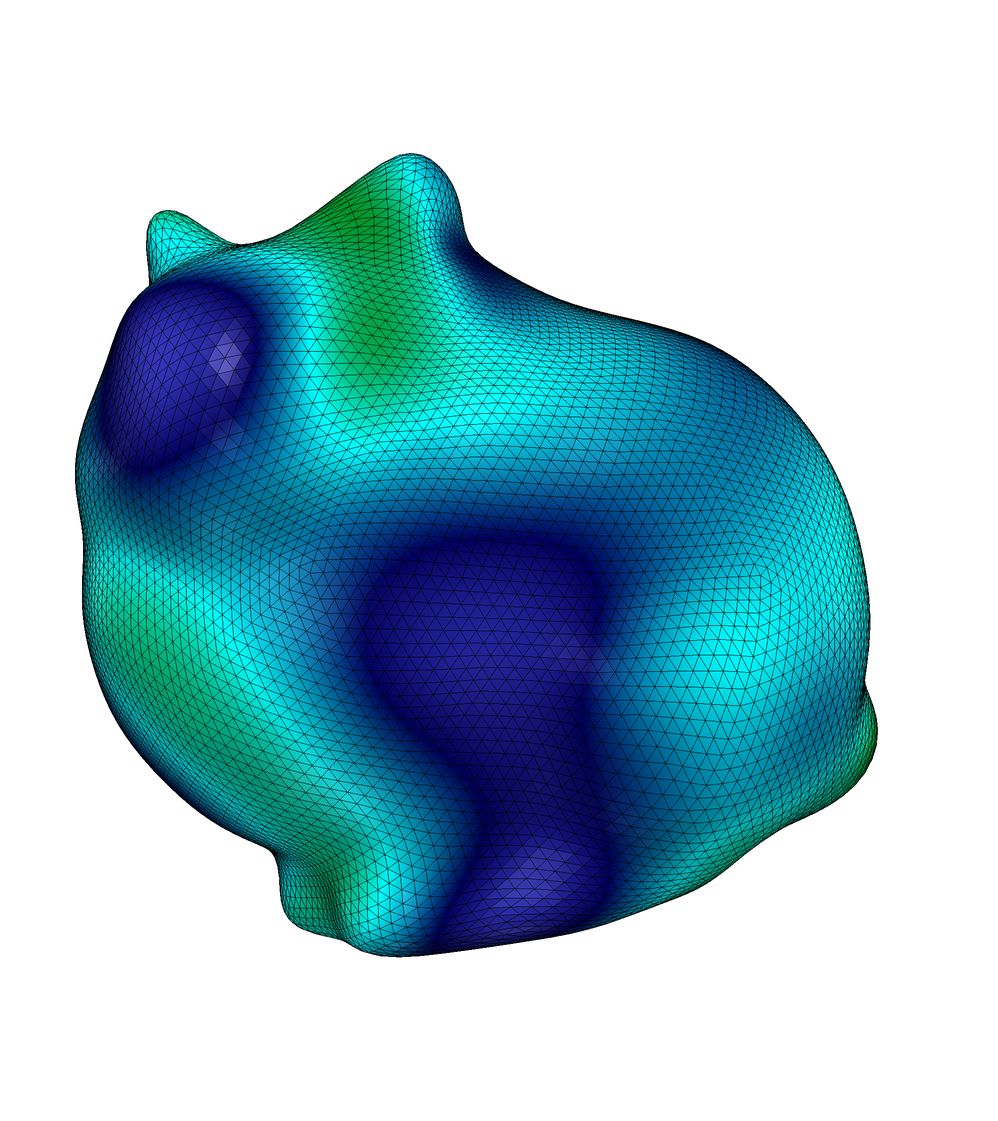} & \includegraphics[width=3.2cm]{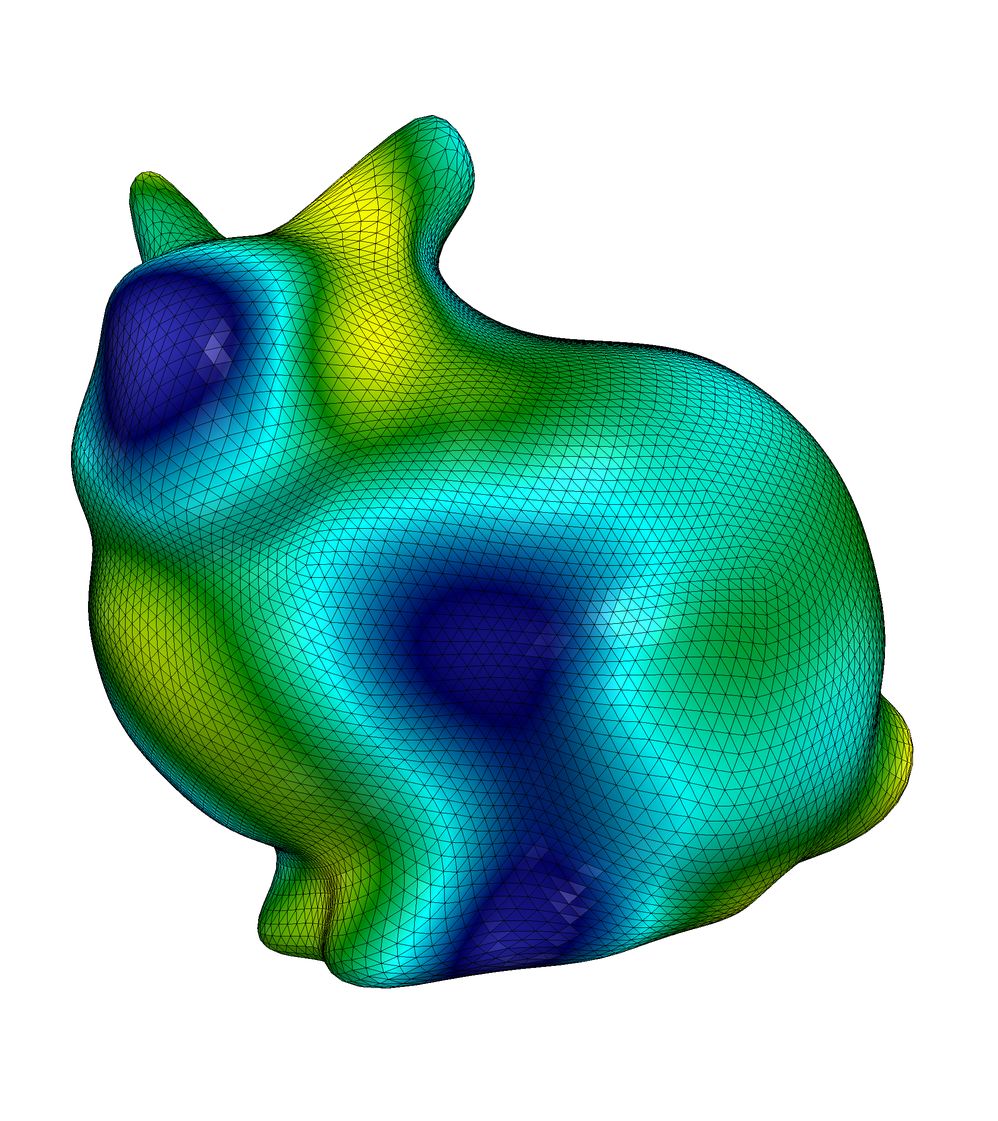} \\
 Source & $t=0.2$ & $t=0.4$ & $t=0.6$ 
 \end{tabular}
 \begin{tabular}{ccc}
 \includegraphics[width=3.2cm]{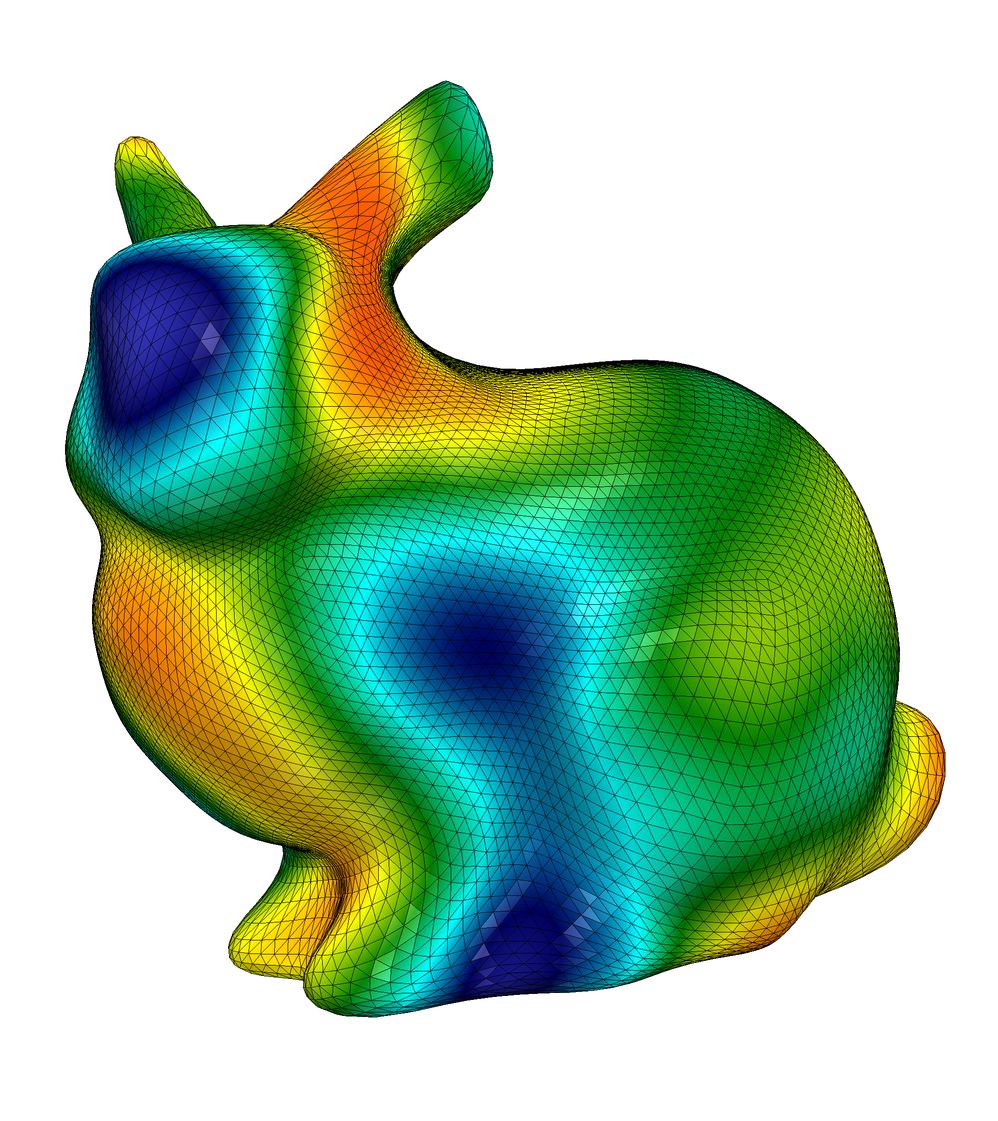} & \includegraphics[width=3.2cm]{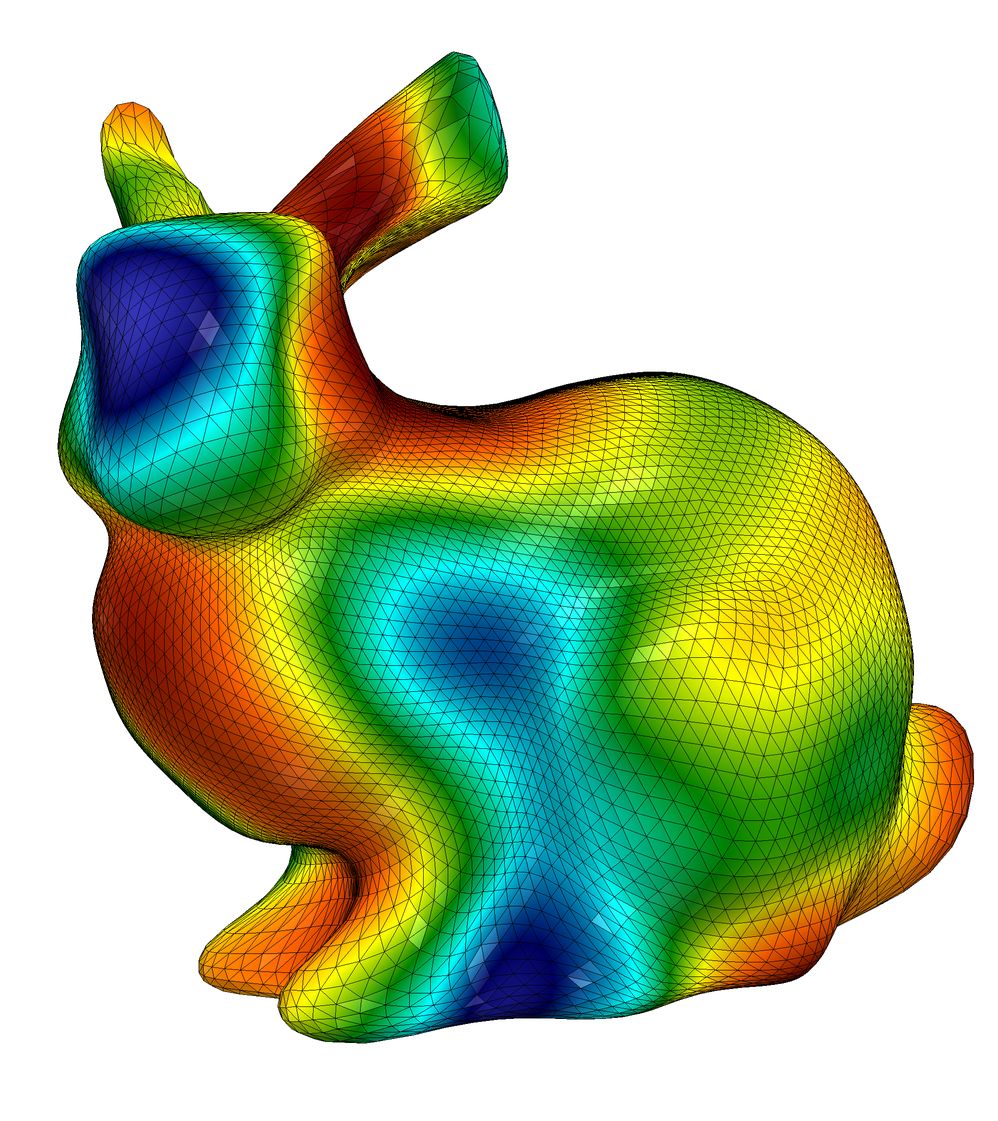} &  \includegraphics[width=3.2cm]{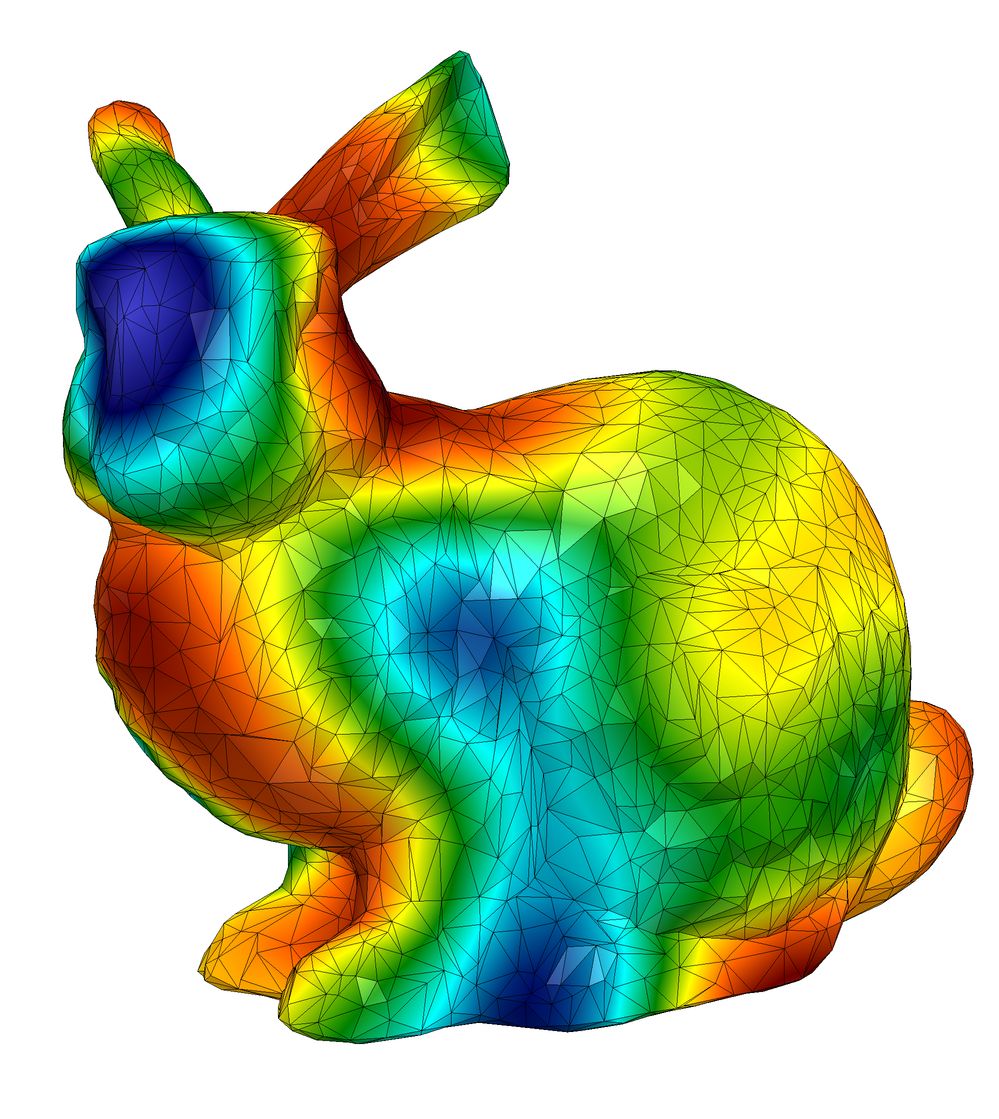}\\
  $t=0.8$ & $t=1$ & target 
 \end{tabular}
 
 \caption{$H^1$ metamorphosis of a sphere with constant signal onto a textured Stanford bunny.}
\label{fig:metam_bunny}
\end{figure}

\begin{figure}
\centering
  \begin{tabular}{ccc}
 \includegraphics[width=3.2cm]{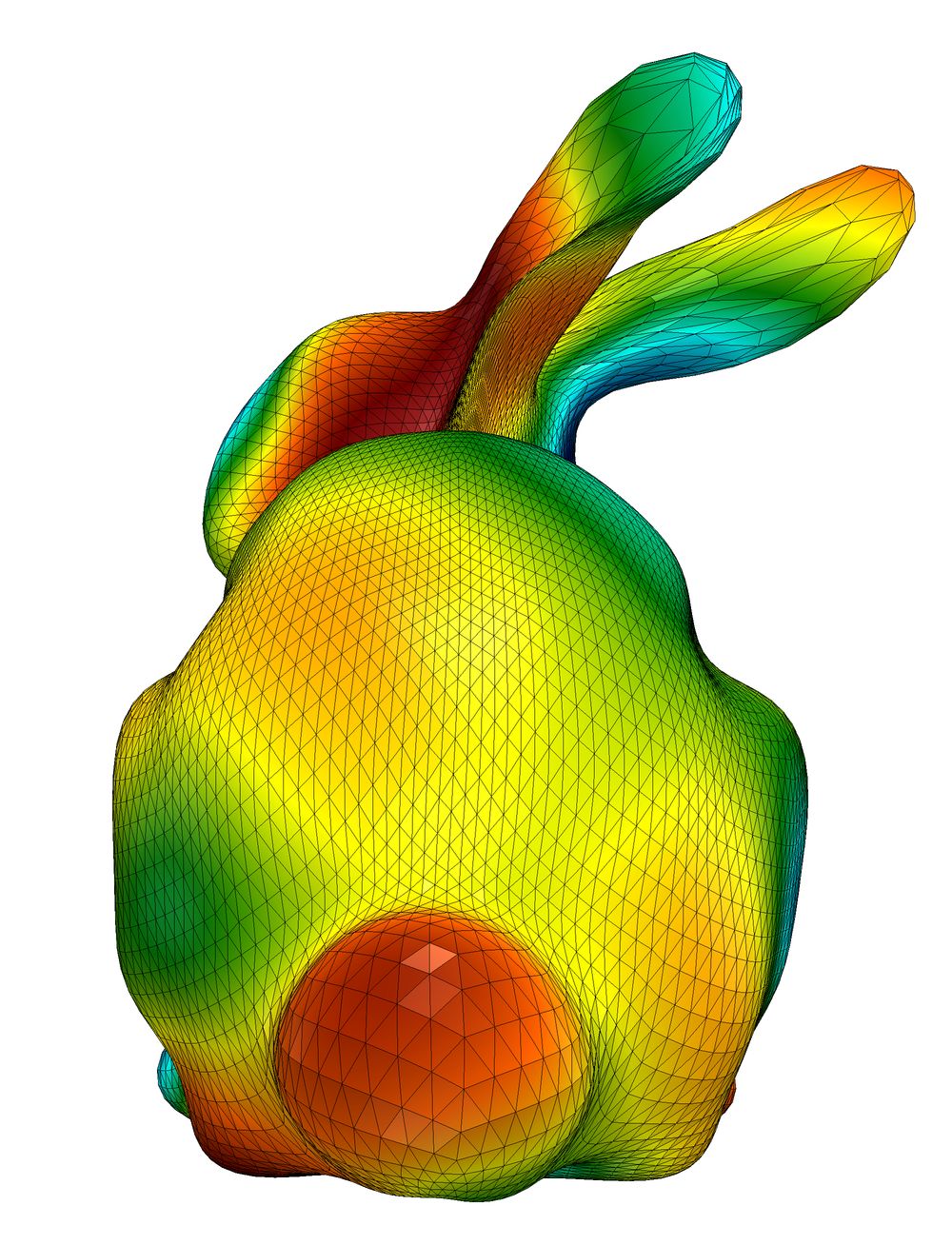}   & \includegraphics[width=3.2cm]{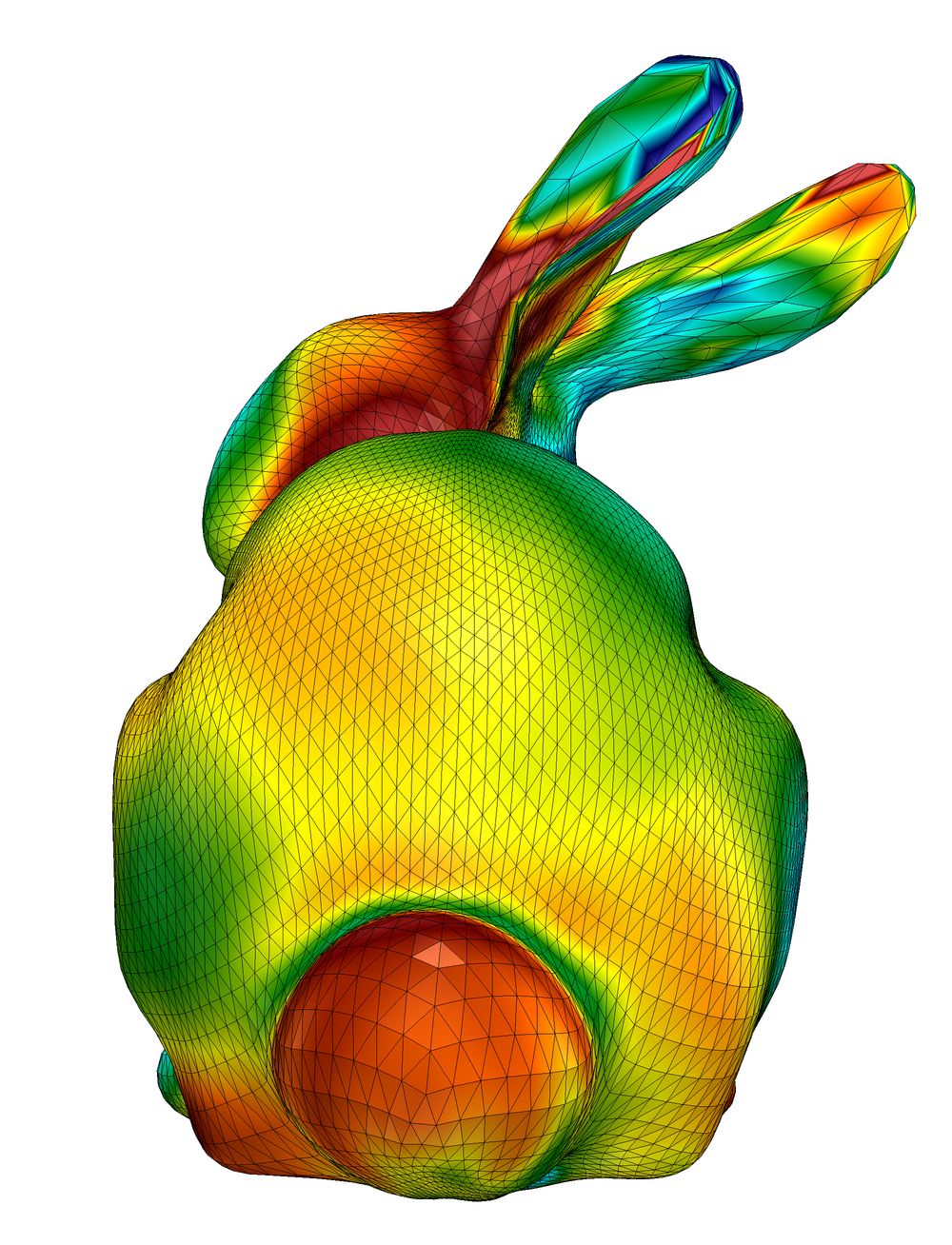}   & \includegraphics[width=3.2cm]{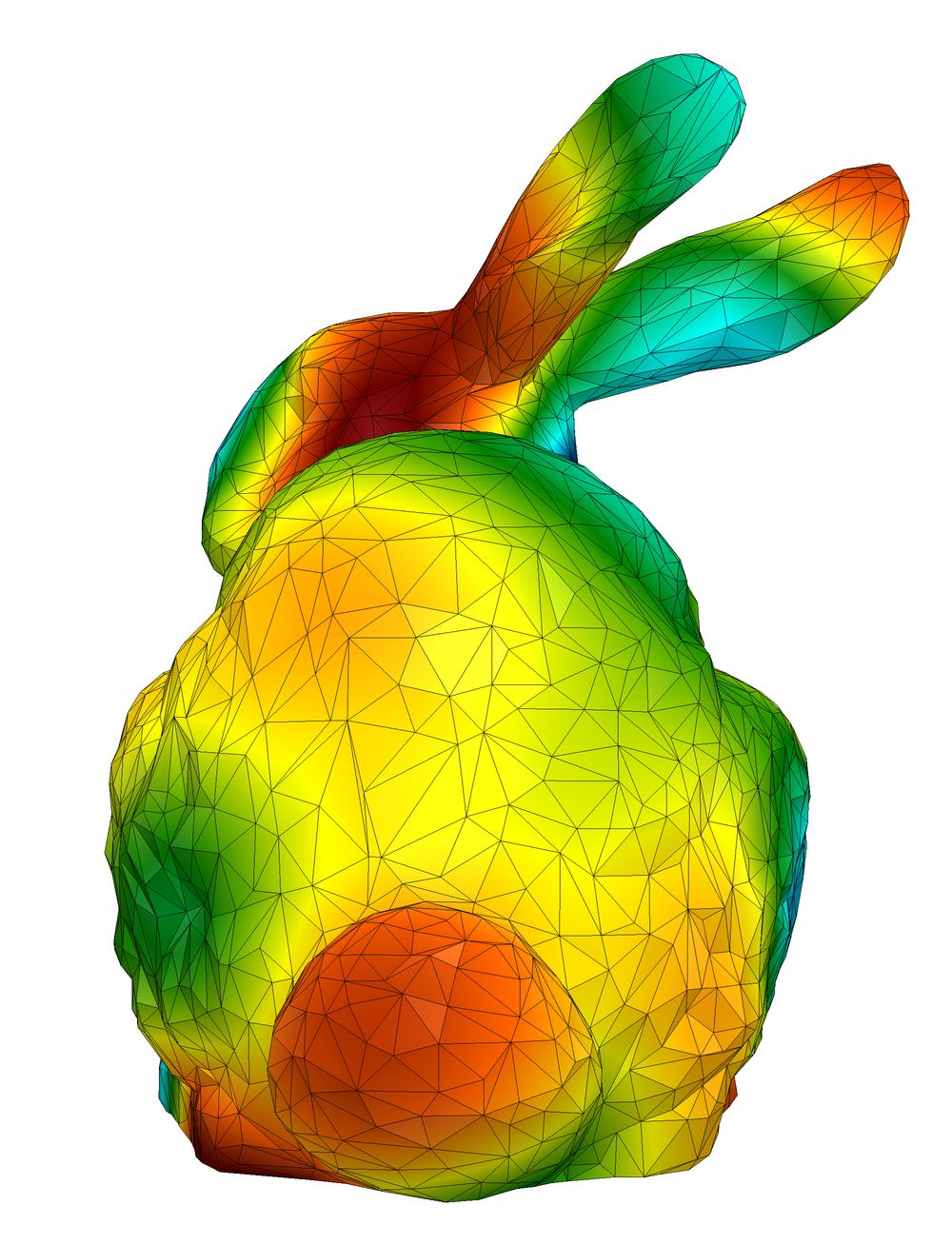}  \\
 \includegraphics[width=4cm]{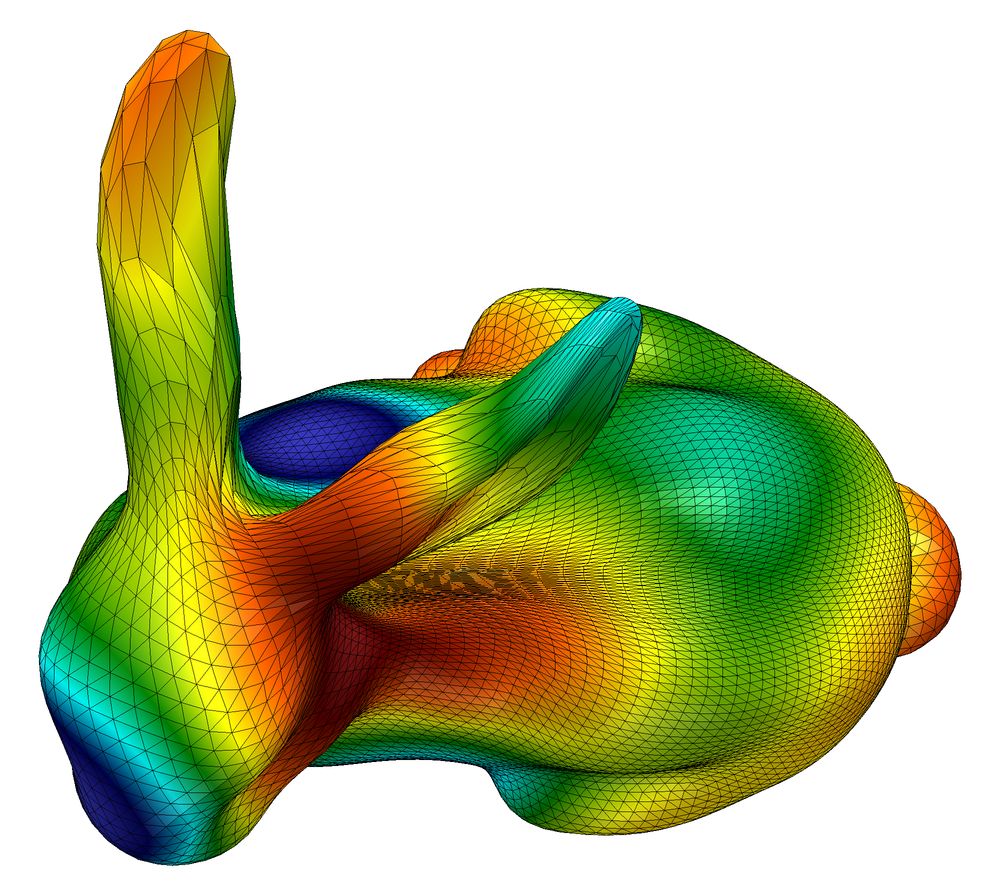} & \includegraphics[width=4cm]{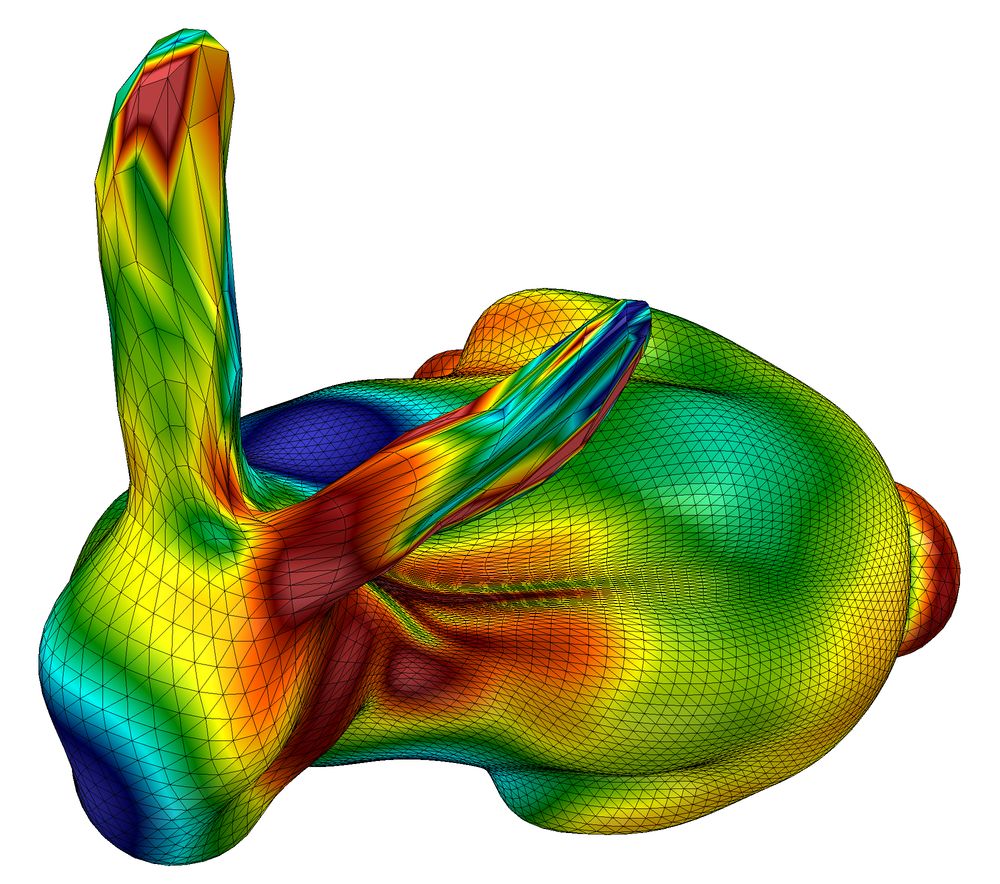} & \includegraphics[width=4cm]{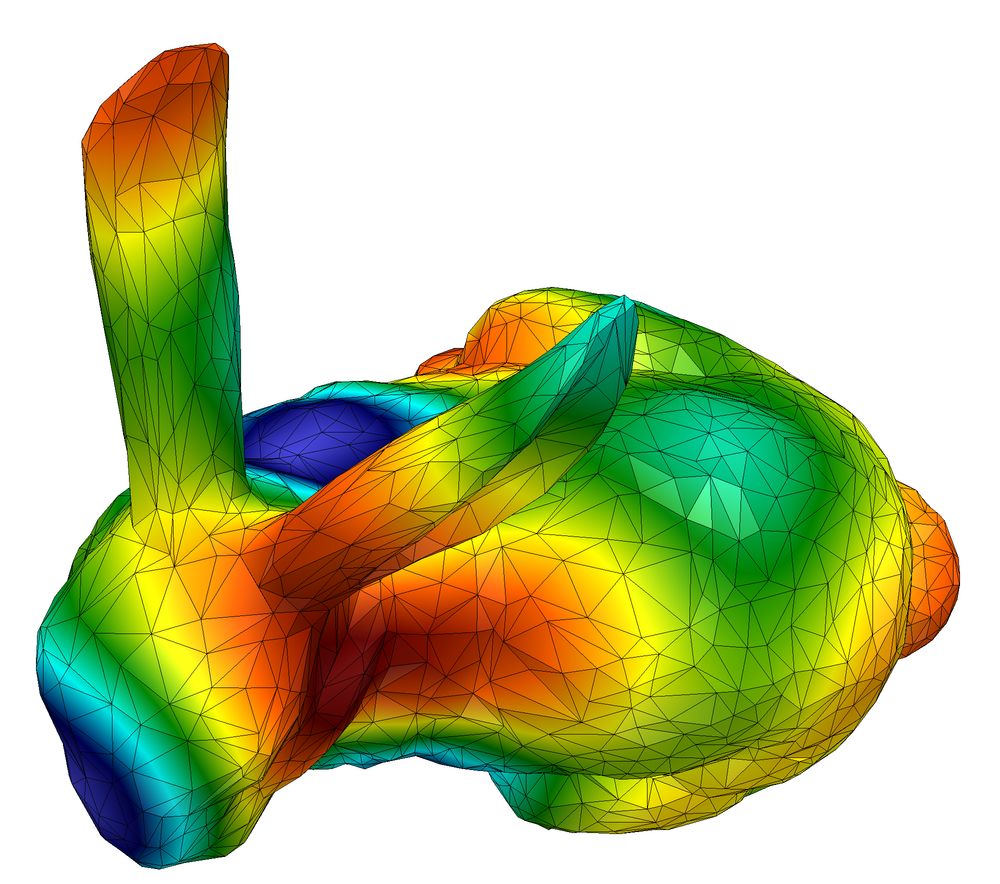}\\
 \includegraphics[width=4cm]{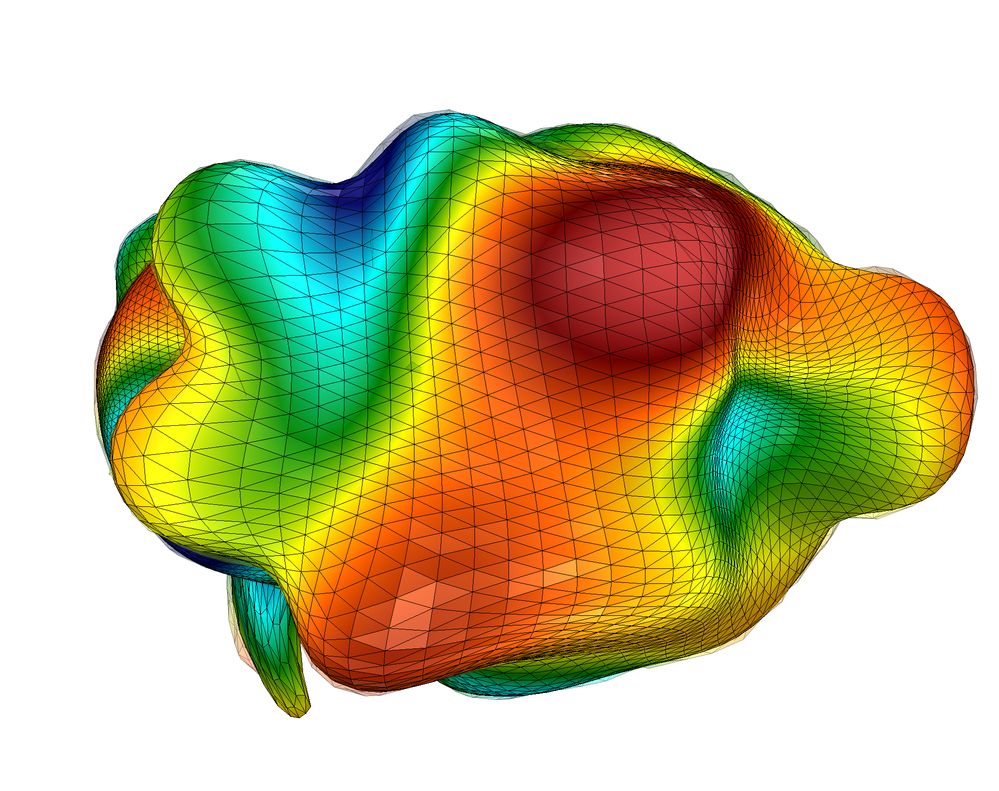} & \includegraphics[width=4cm]{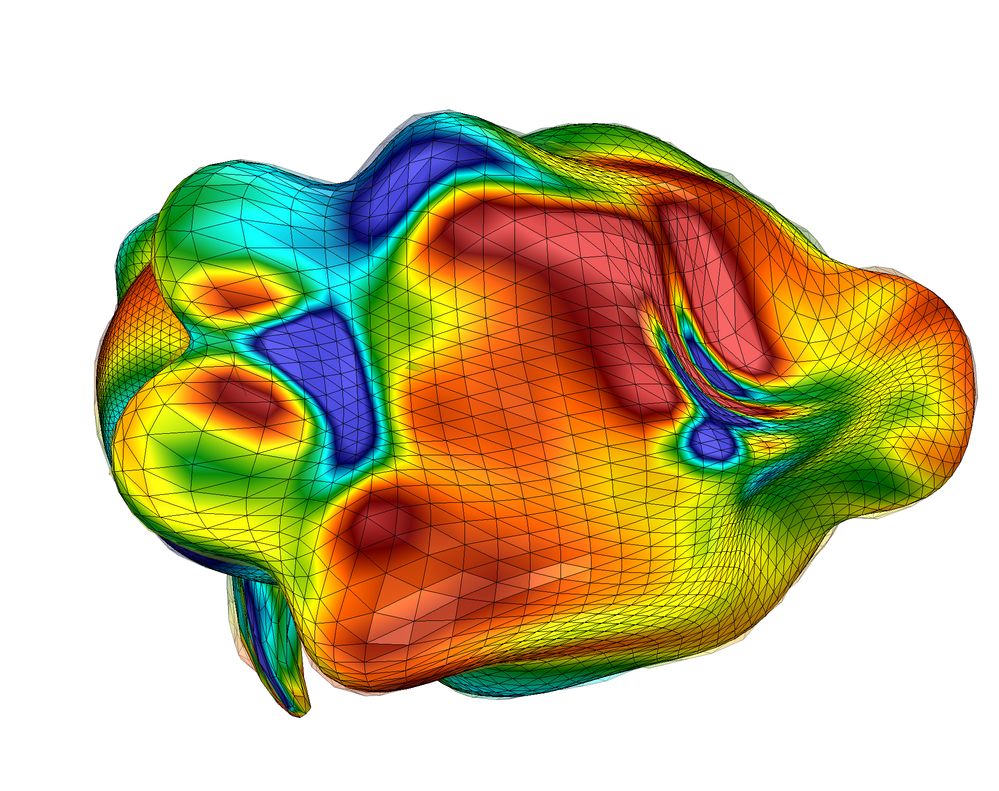} & \includegraphics[width=4cm]{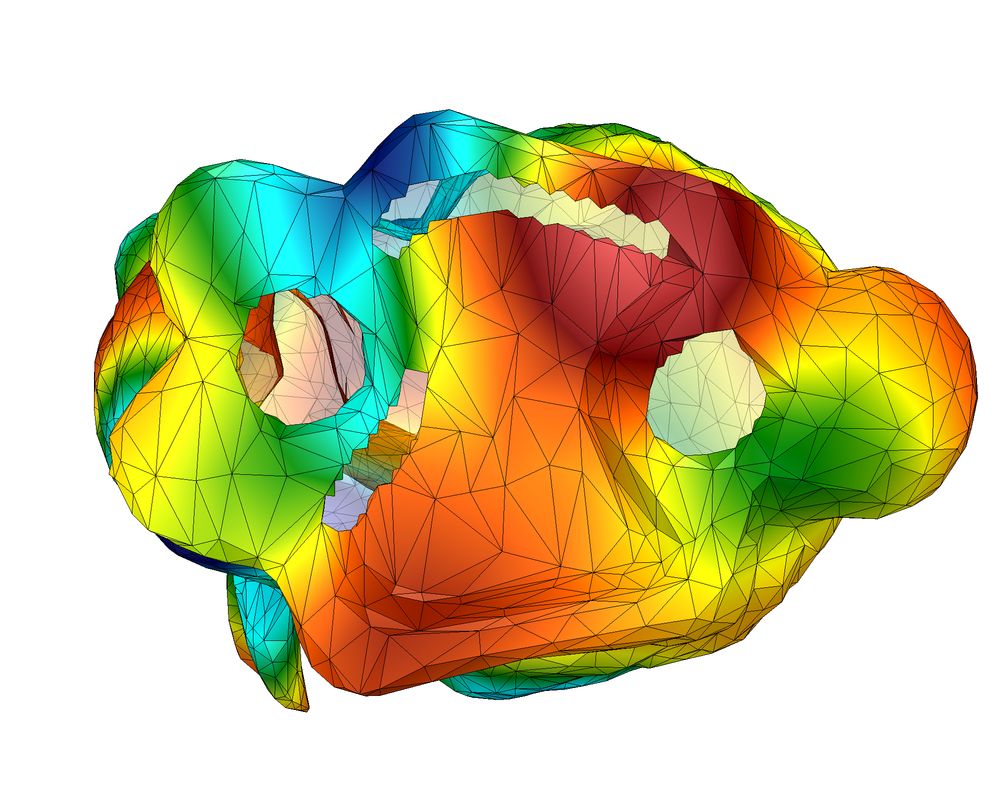} \\
 $H^1$ & $L^2$ & Target  
 \end{tabular}
 
 \caption{Comparison between $L^2$ and $H^1$ metamorphoses from different views. Last row, notice the holes underneath the target mesh: $H^1$ metamorphosis completes with smooth signal whereas $L^2$ metamorphosis creates oscillating signal.}
\label{fig:metam_bunny_h1_l2}
\end{figure}

\subsection{Real data}
The algorithm was also tested on some functional shapes occurring in medical imaging. In the following, we present a couple of qualitative results on these datasets mostly to try the behavior and robustness of the method on potentially more involved situations than the previous synthetic cases.

\paragraph{Thickness maps.} We first examine the output of metamorphosis matching (in $H^1$) on anatomical surfaces with estimation of the membrane thickness at each vertex. The first example in Figure \ref{fig:metam_OCT} is from a dataset of Nerve Fiber Layer (NFL) membranes in the retina with estimated measurements of thickness. The example corresponds to two age-matched subjects, one control and one affected by glaucoma. Each surface has 5000 vertices and the algorithm is run for 220 iterations in a total time of about 3.3 hours. We show the output metamorphosis together with the magnitude of the geometric momentum and the functional momentum. The deformation is mostly concentrated along the optical nerve opening while the functional momentum shows the overall decrease in thickness, particularly in a typical crescent region around the opening. Although illustrated here on two particular subjects, such anatomical effects have been analyzed and confirmed statistically in \cite{Lee15}.    

\begin{figure}
\centering
 \begin{tabular}{ccccc}
 \includegraphics[width=3.2cm]{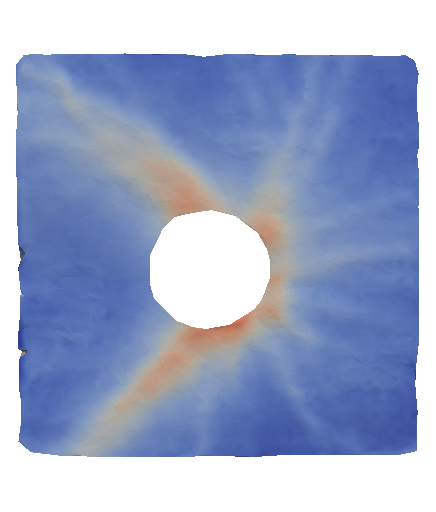} & \includegraphics[width=3.2cm]{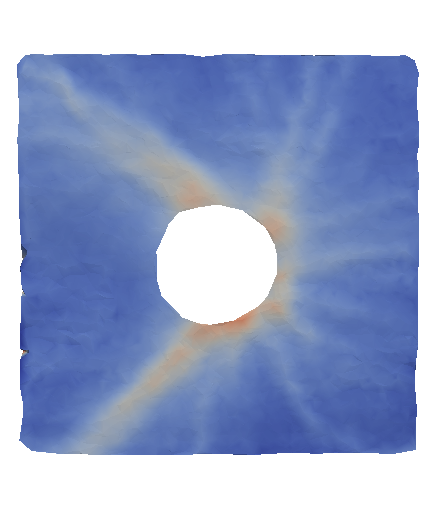} & \includegraphics[width=3.2cm]{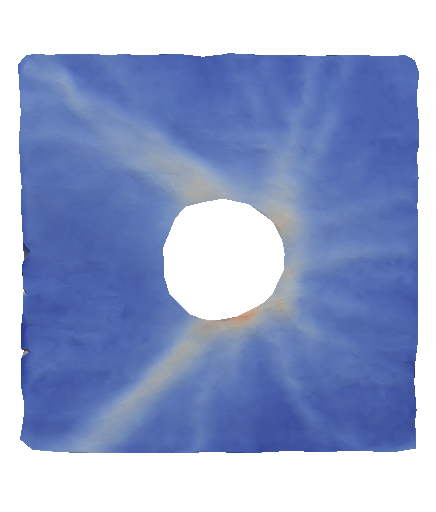} & \includegraphics[width=3.2cm]{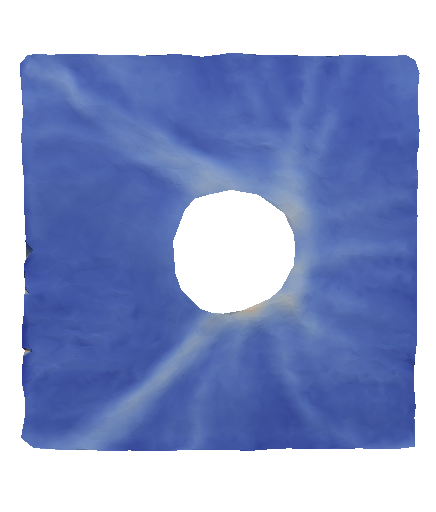} & \includegraphics[width=3.2cm]{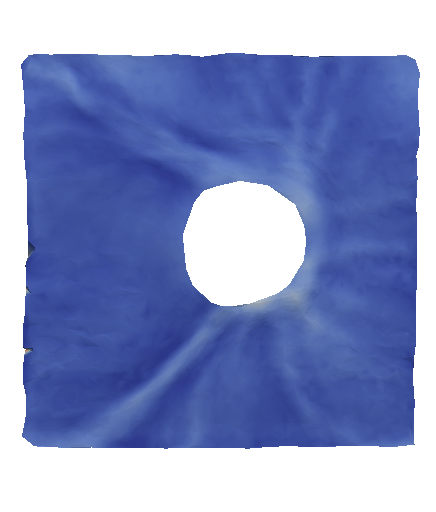}\\
 $t=0$ & $t=0.2$ & $t=0.4$ & $t=0.7$ & $t=1$   
 \end{tabular}
  \begin{tabular}{ccc}
 \includegraphics[width=3.5cm]{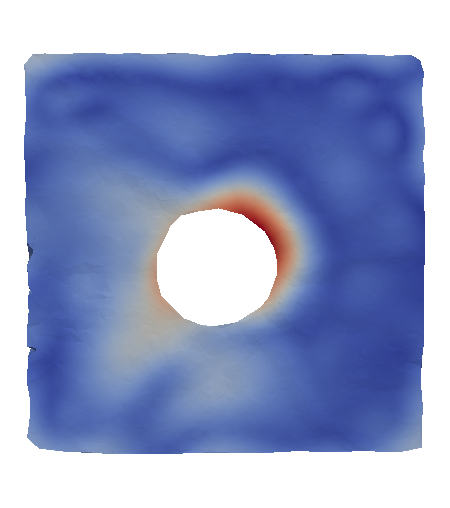} & & \includegraphics[width=3.5cm]{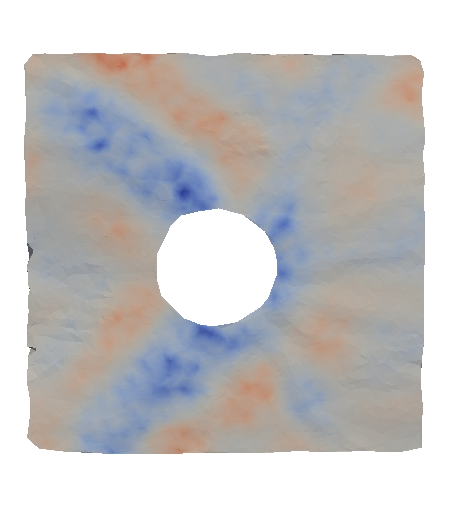}\\
 Geometric momentum (magnitude) & & Functional momentum 
 \end{tabular}
 
 \caption{Metamorphosis between two subjects of the Nerve Fiber Layer dataset (data courtesy of S. Lee, M. Sarunic, F. Beg, Simon Fraser University).}
\label{fig:metam_OCT}
\end{figure}

\begin{figure}[t]
\centering
 \begin{tabular}{ccc}
 \includegraphics[width=4.5cm]{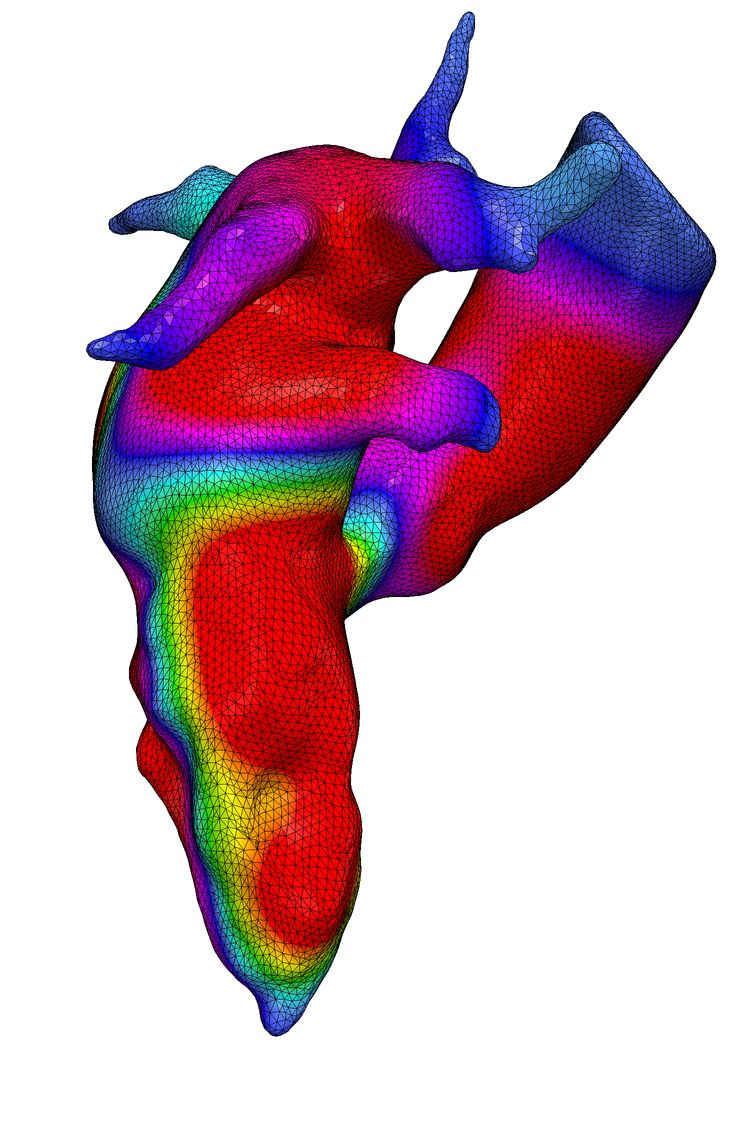} &\includegraphics[width=4.5cm]{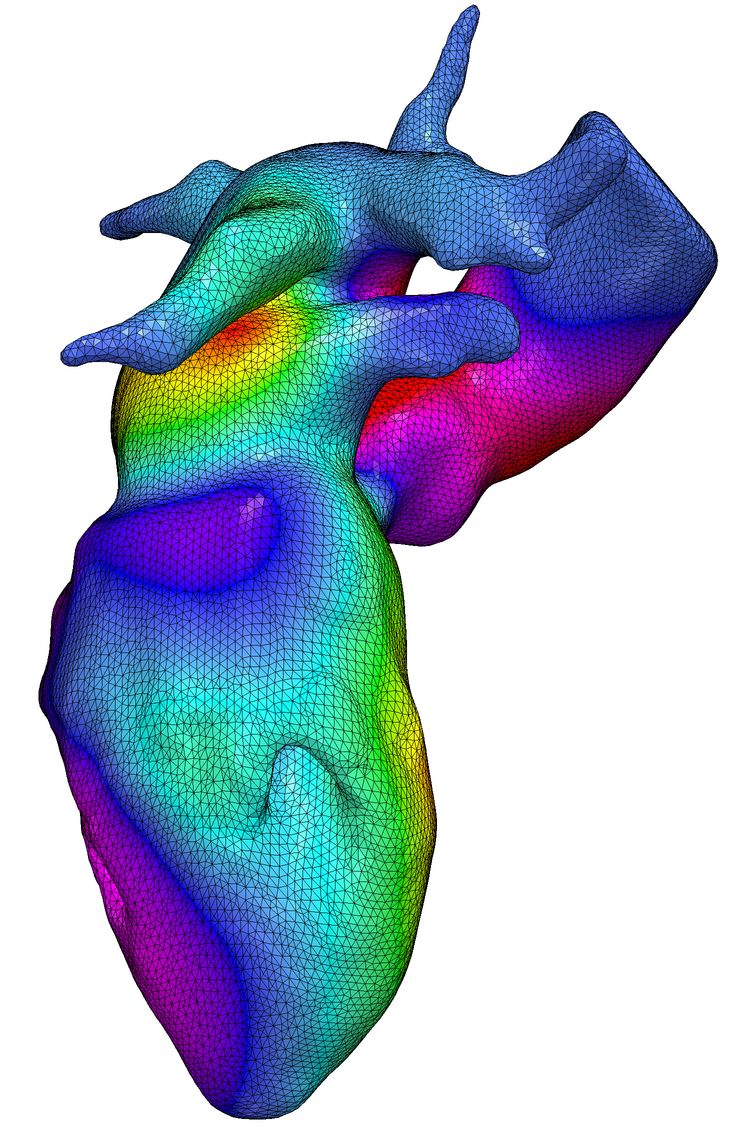}\\
  Source & Target &
 \end{tabular}
 \caption{Valves dataset: heart surfaces with pressure (data courtesy of C. Chnafa, S. Mendez and F. Nicoud, University of Montpellier)}
\label{fig.valvee}
\end{figure}

\paragraph{Heart pressure.} As a last example, we consider a surface of heart with signals corresponding to simulated pressure maps on the membrane (see Figure \ref{fig.valves}). We show the time evolution obtained from the metamorphosis matching algorithm between the initial and final states of the cardiac cycle in Figure \ref{fig:metam_heart}. Surfaces have approximately 26000 vertices, and the algorithm took on the order of 6 hours to reach convergence. It is also interesting to compare the resulting fshape evolution to the output of another model and algorithm for fshape matching (cf Figure \ref{fig:tan_heart}): the 'tangential' model studied in \cite{Charlier15}. In the latter, the penalty on signal variations is measured with the metric of the reference template only as opposed to evolving the metric with the shape in metamorphosis. The dynamics of signal evolution is then always a simple affine interpolation in $t$ between the initial and final values whereas the metamorphosis evolution tends to show an early acceleration of signal decrease on the lower valve that is inflating.

\begin{figure}[t]
\centering
 \begin{tabular}{ccc}
 \includegraphics[width=4.5cm]{valves_met_00.jpg} &\includegraphics[width=4.5cm]{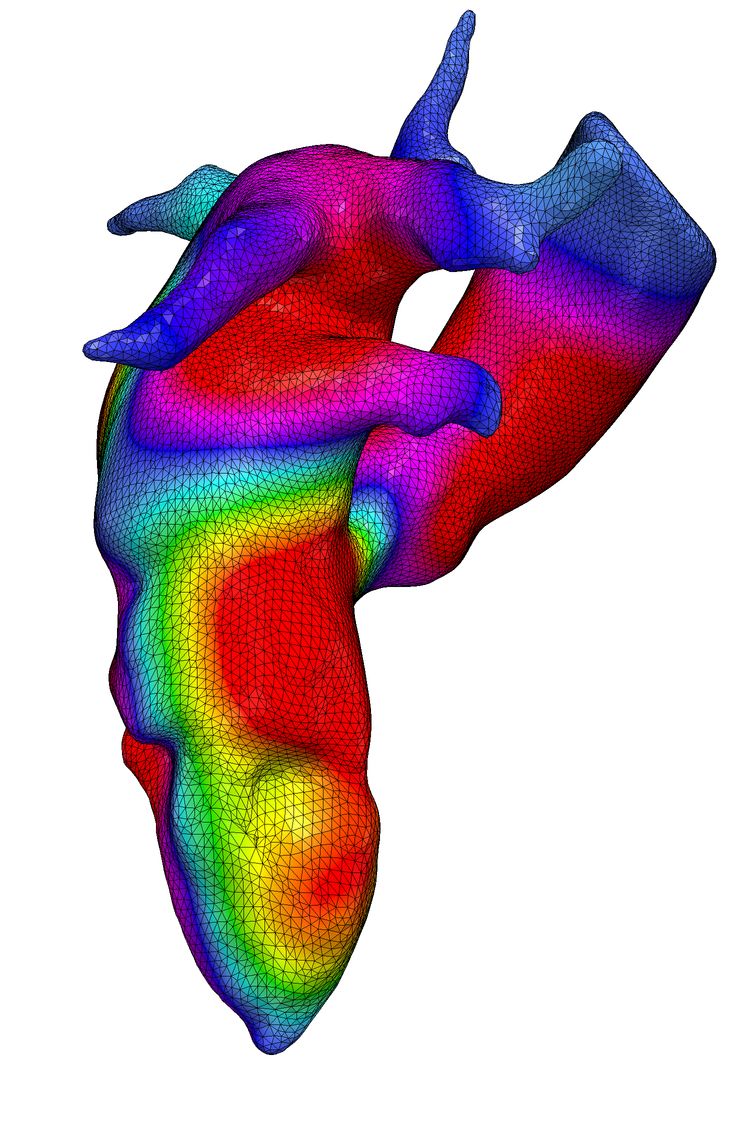} & \includegraphics[width=4.5cm]{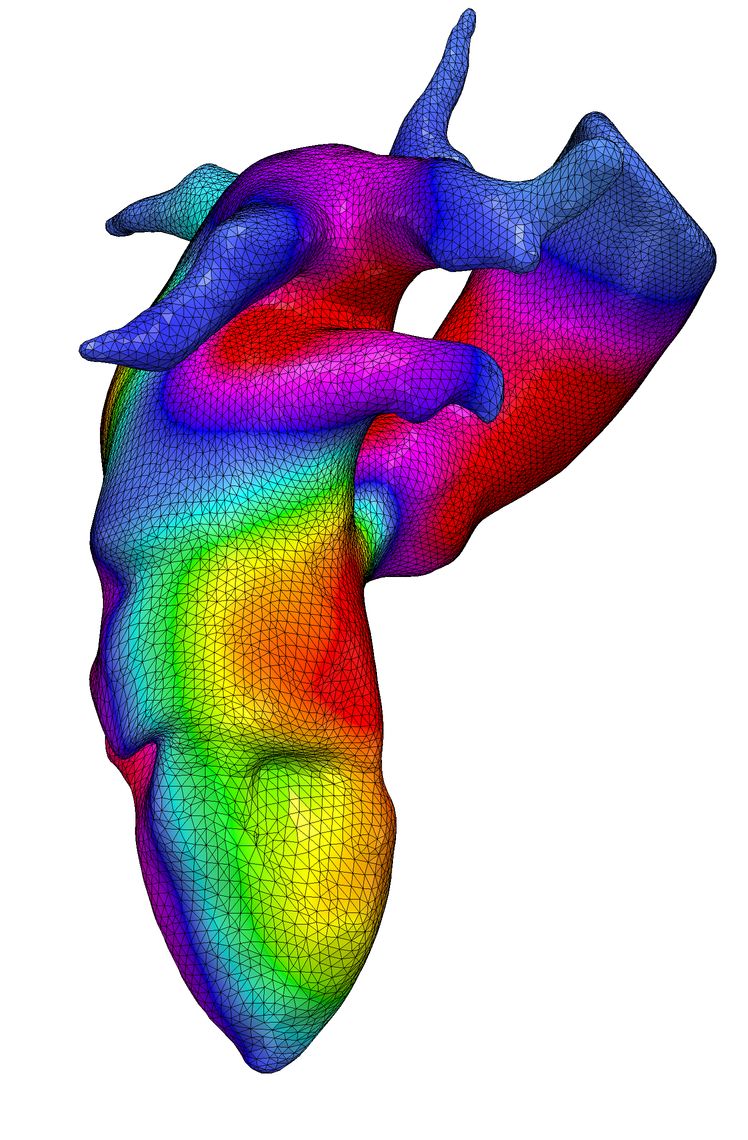} \\ Source & $t=0.2$ & $t=0.4$ \\
 \includegraphics[width=4.5cm]{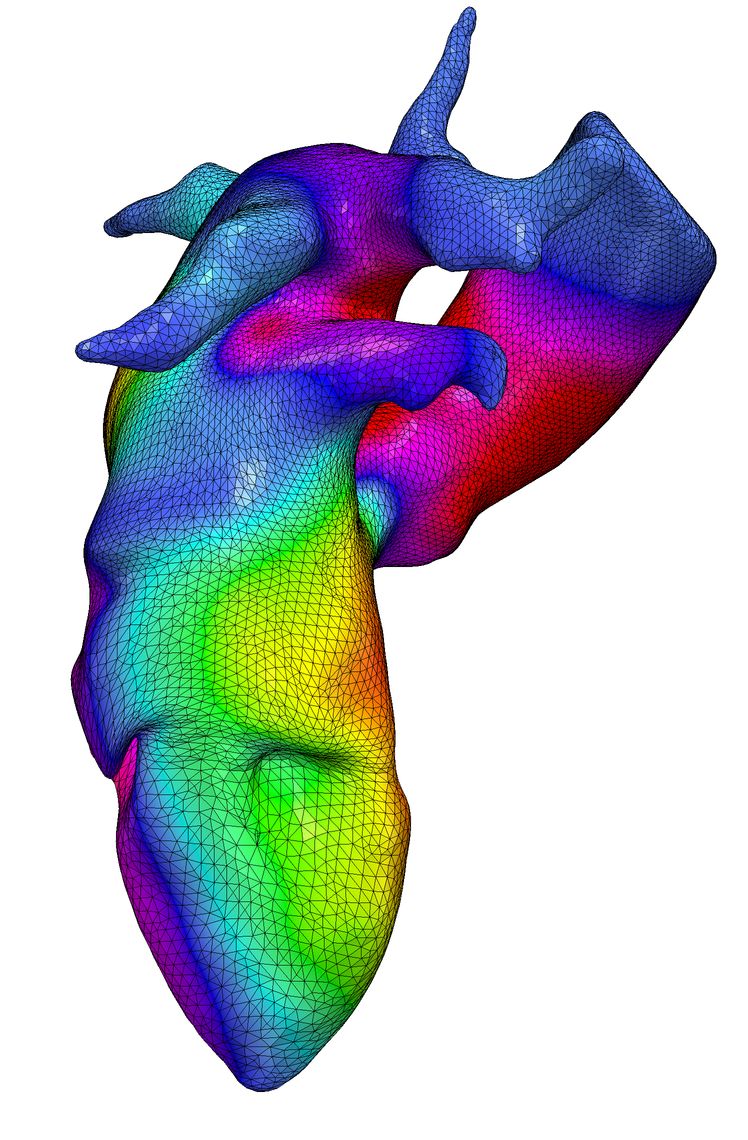} & \includegraphics[width=4.5cm]{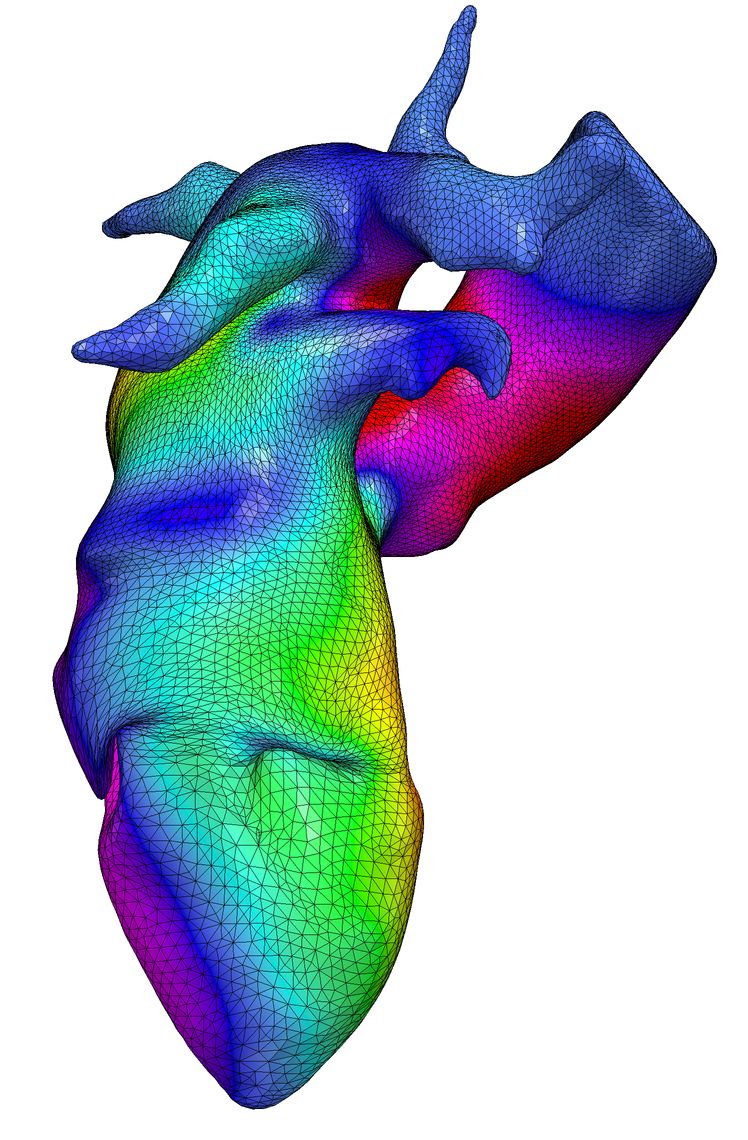} & \includegraphics[width=4.5cm]{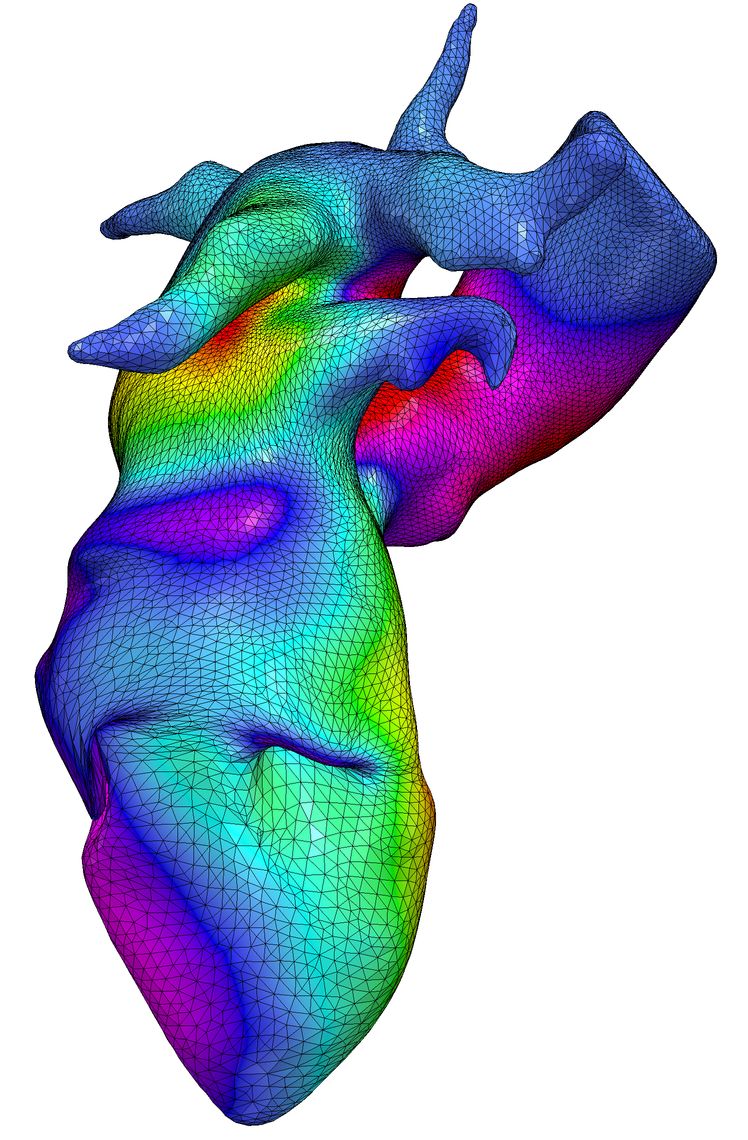}\\ $t=0.6$ & $t=0.8$ & $t=1$ 
 \end{tabular}
 \caption{Metamorphosis with $H^1$ regularity}
\label{fig:metam_heart}
\end{figure}

\begin{figure}[t]
\centering
 \begin{tabular}{ccc}
 \includegraphics[width=4.5cm]{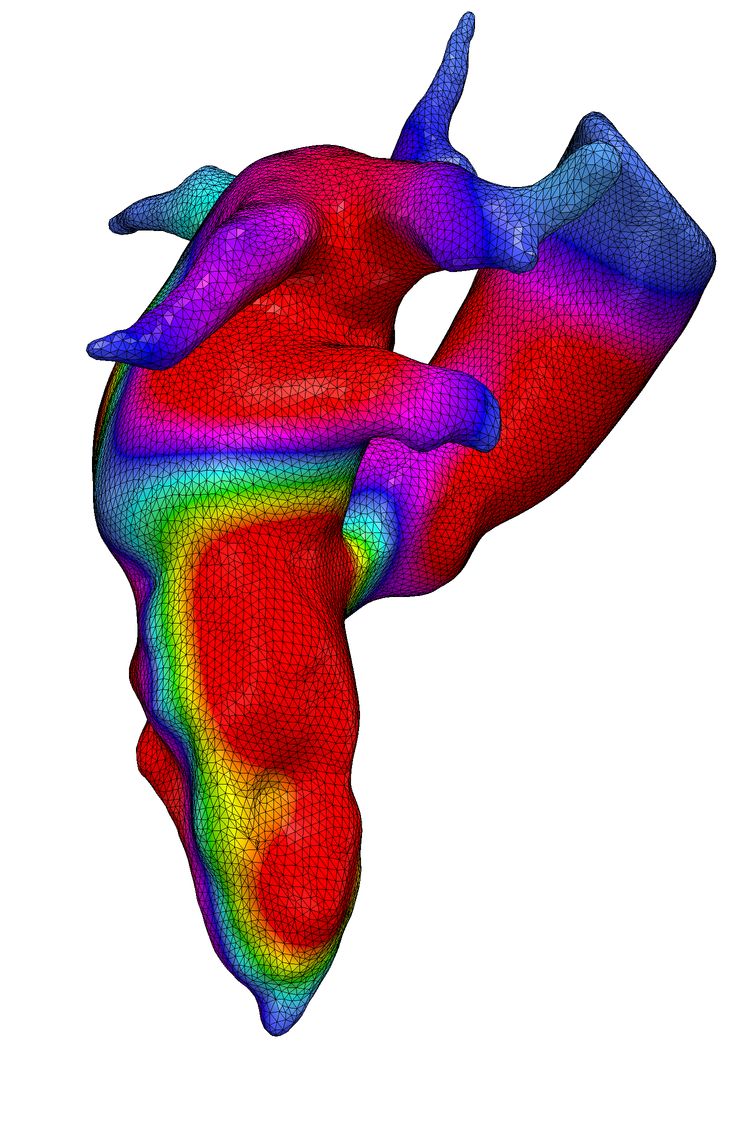} &\includegraphics[width=4.5cm]{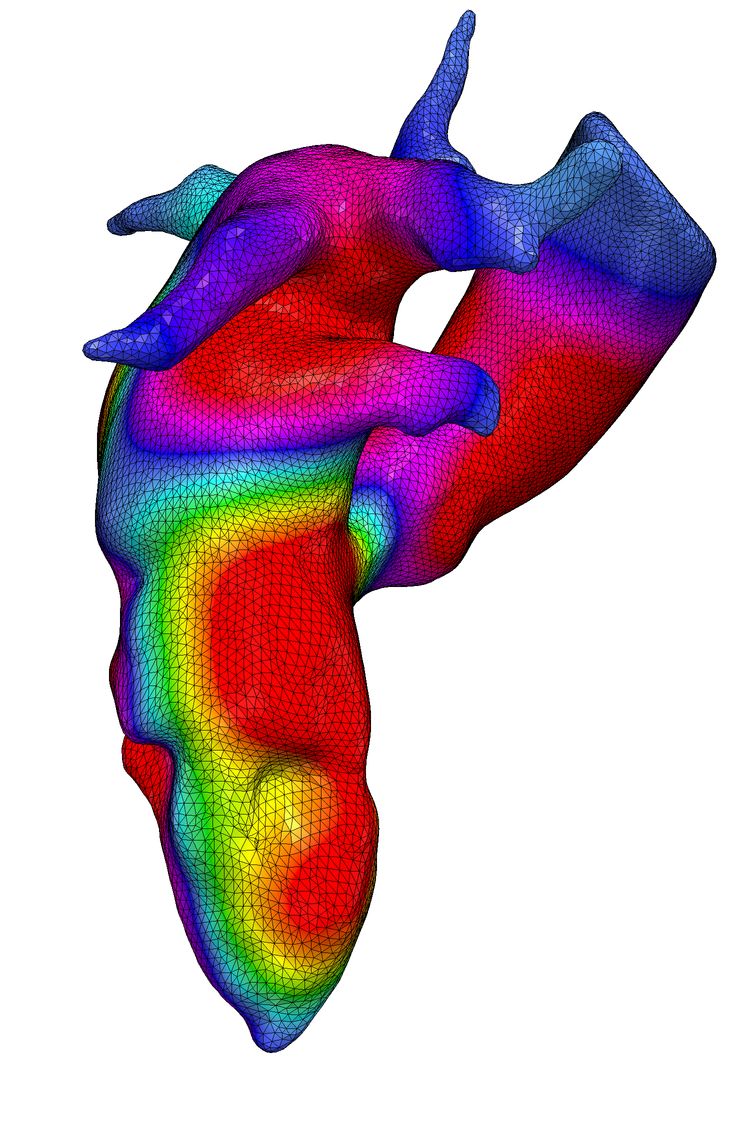} & \includegraphics[width=4.5cm]{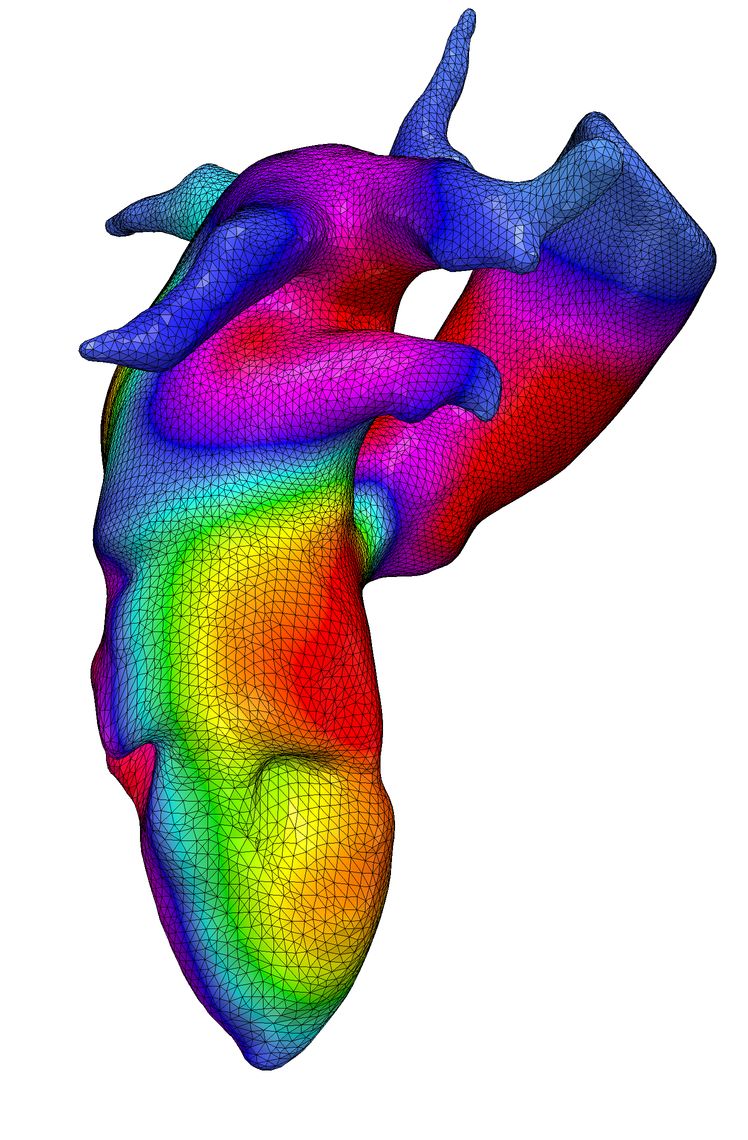} \\ Source & $t=0.2$ & $t=0.4$ \\
 \includegraphics[width=4.5cm]{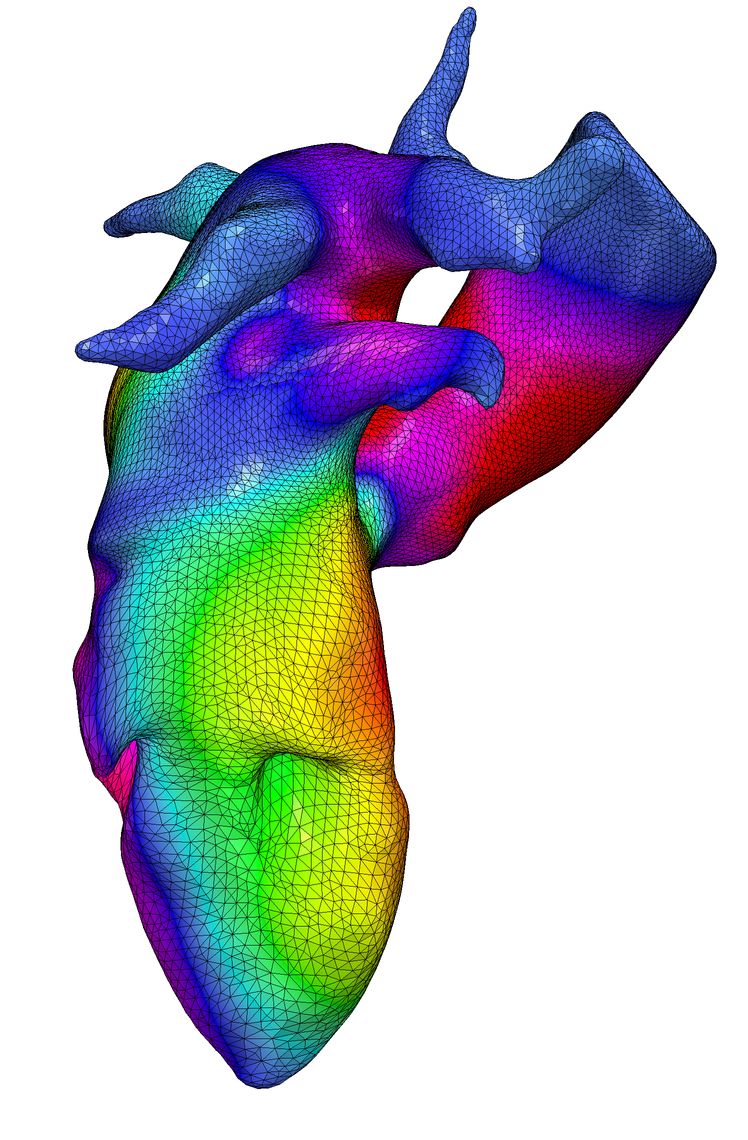} & \includegraphics[width=4.5cm]{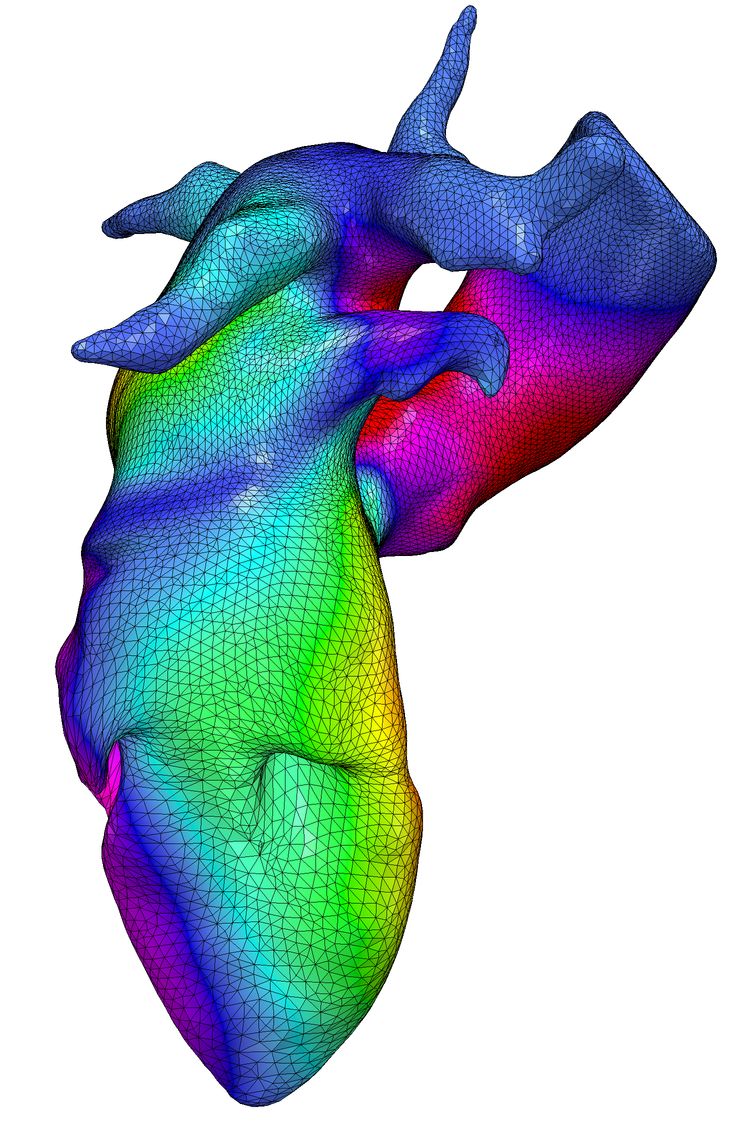} & \includegraphics[width=4.5cm]{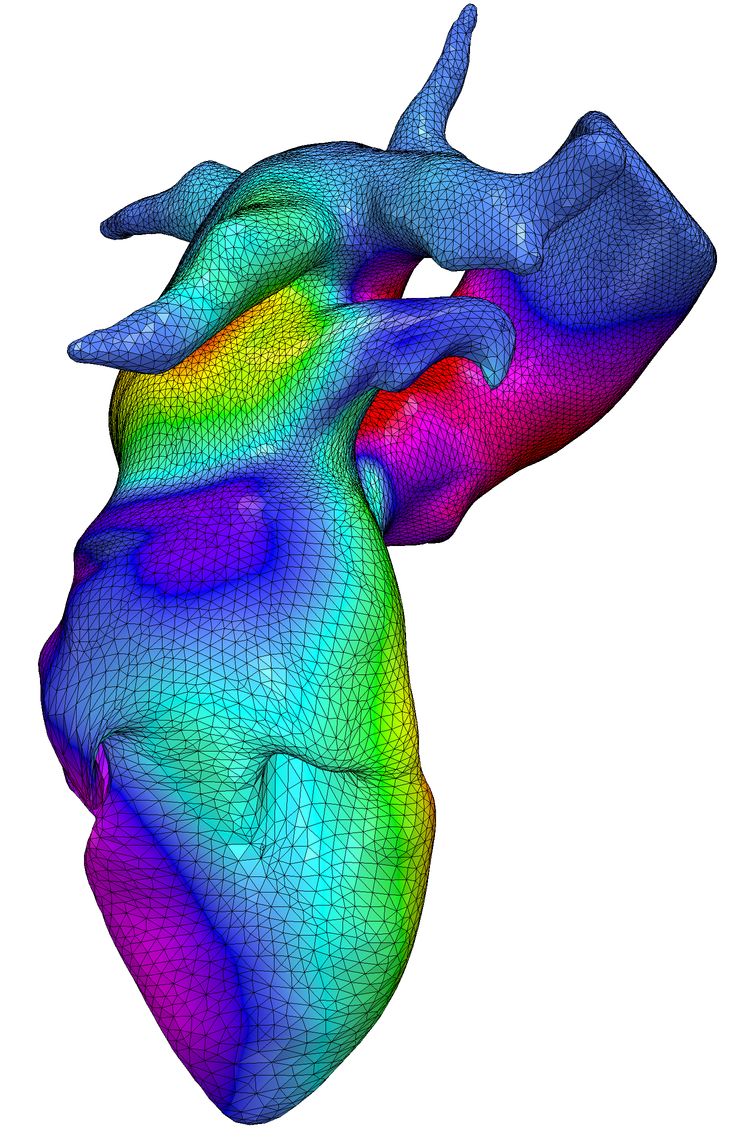}\\ $t=0.6$ & $t=0.8$ & $t=1$ 
 \end{tabular}
 \caption{Comparison with the transformation obtained from the 'tangential model' (with $H^1$ penalty)}
\label{fig:tan_heart}
\end{figure}

\subsection{Conclusion and discussion}
We have presented a new model for the representation and registration of fshapes, i.e objects combining a deformable geometric support with a photometric component. From a theoretical standpoint, this model extends the existing idea of metamorphosis on flat images and, unlike earlier approaches like the tangential model of \cite{Charlier15}, leads to a well-defined complete metric space structure when restricting to fshape bundles. In addition, the framework was derived for the class of signals of higher Sobolev regularity on the manifolds which we showed is necessary in certain instances. 

This was then cast into a formulation for geometric-functional matching between two given fshapes, combining metamorphosis energy with data attachment terms based on functional varifolds. We have shown that it is a well-posed optimal control problem (with some conditions on energy weights in the case $s=0$) and investigated carefully the Hamiltonian dynamics of minimizers as well as the equivalent of the EP-diff conservation equation for that model. We have also derived the corresponding discrete model and algorithm to numerically solve the matching problem in the cases $s=0$ and $s=1$. Questions regarding the $\Gamma$-convergence of the discrete to the continuous models is left for future study, although a significant step was made in that direction with the results of \cite{Nardi2015}. Still, numerical simulations show the ability of this approach to recover joint geometric and photometric variations between a given template and target fshape at the price of extra parameters in the model and extra numerical cost compared to a pure diffeomorphic registration. 

The approach was restricted here to the problem of matching between two subjects, the direct follow-up being to extend the model and algorithm to \textit{atlas estimation} on populations, following the footsteps of \cite{Charlier15,Lee15}. One advantage to expect from it is that the metric framework we obtain from metamorphosis would provide a more theoretically suitable setting to statistical analysis on those geometric-functional transformations.  

A second clear restriction of the paper comes from the very nature of signals and the definition of geometric action equation \eqref{eq:action_defo_fshape} that was considered here. The model was indeed built on standard image deformation action and is therefore not necessarily adapted to all types of functional maps. Other typical cases could involve densities, vector fields, tensor fields on shapes for which the transport equations could significantly differ from equation \eqref{eq:action_defo_fshape} and so would the associated Hamiltonian dynamics and the behavior of geodesics. We postulate however that a very similar approach to the one developed here could be undertaken with other signal spaces or group actions and lead to interesting extensions of the present work.

\subsection*{Acknowledgements}
The authors would like to thank Sylvain Arguill\`{e}re for many interesting discussions on the optimal control aspects of this manuscript.

\appendix
\section{Proof of Theorem \ref{theo:Sobolev_control_phi}}
\label{appendix:proof_theorem_control_phi}  
Before the actual proof of Theorem \ref{theo:Sobolev_control_phi}, we shall introduce a few definitions and intermediate results. Let $s\geq 0$ and $s'=\max(s,1)$ and we recall that $X$ is a compact submanifold of $\R^n$ of dimension $d$ and class $C^{s}$ and that $V\hookrightarrow C^{s'}_0(\R^n,\R^n)$. For a given coordinate system $(x^i)_{1\leq i\leq d}$, we will denote respectively by $(\partial_{i})$ and $(dx^i)$ the corresponding frame and coframe. We introduce the following class of sections over the $(a,b)$ tensor bundle:
\begin{de}
We say that $A\in\Gampol^{p,s}(T^a_b(U))$ with $a,b,p,s \in \N$, $a,b\leq s$ and $p<s'$ if there exists a coordinate system $(x^i)_{1\leq i\leq d}$ on $U$ such that for any $(\phi,u)\in \Diff^s_0(\R^n)\times U$
$$A(\phi,u)=A^\alpha_\beta(\phi,u)\partial^a_\alpha\otimes dx^\beta$$
where for any compact $K\subset U$ there exists two polynomials $P$ and $Q$ such that for any multi-indices $\alpha,\beta$ and for any $\phi,\phi'\in\Diff^{s}_0$ we have 
$$u\mapsto A^\alpha_\beta(\phi,u))\in C^p(U,\R)\text{, }
\sup_{K,k\leq p}|\partial^kA^\alpha_\beta(\phi,u)|\leq P(\rho_{s}(\phi))\,,$$
and
$$\sup_{K,k\leq p}|\partial^kA^\alpha_\beta(\phi,u)-\partial^kA^\alpha_\beta(\phi',u)|\leq \rho_{s}(\phi'\circ\phi^{-1})Q(\rho_{s}(\phi),\rho_{s}(\phi'))$$
with the notation $\rho_s(\psi)\doteq \sum_{k\leq s} \|d^k (\psi-\Id)\|_\infty+\|d^k(\psi^{-1}-\Id)\|_\infty$ for any $\psi\in \Diff^s_0(\R^n)$.
\end{de}
In the previous, we point out that $\alpha$ and $\beta$ are multi-indices of integers between $1$ and $d$ such that $|\alpha|=a, \ |\beta|=b$. When $a=b=0$, the space $\Gampol^{p,s}(T^a_b(U))$  will be denoted $\Cpol^{p,s}(U)$.
\begin{remark}
\label{rem:27.8.1}
  A first important remark is that the definition is not dependent on the choice of the coordinate system.  Indeed, if $s=0$, we have $a=b=p=0$ and the definition does not depend on any coordinate system. If $s=1$ then $p=0$ and if $(y_1,\cdots,y^d)$ is another coordinate system, it is sufficient to notice that $\frac{\partial}{\partial x^i}=\frac{\partial y^j}{\partial x^i}\frac{\partial }{\partial y_j}$  and $dx^i=\frac{\partial x^i}{\partial y^j}dy^j$ where the mappings $\frac{\partial y^j}{\partial x^i}$ and $\frac{\partial x^i}{\partial y^j}$ are continuous and bounded on $K$. Last, if $s\geq 2$, we get $\tilde{A}^\talpha_\tbeta(\phi,u)\frac{\partial^a}{\partial y^\talpha}\otimes dy^\tbeta=A^\alpha_\beta(\phi,u)\frac{\partial^a}{\partial x^\alpha}\otimes dx^\beta$ for $\tilde{A}^\talpha_\tbeta(\phi,u)=A^\alpha_\beta(\phi,u)\frac{\partial y^\talpha}{\partial x^\alpha}\frac{\partial x^\beta}{\partial y^\talpha}$ with  $\frac{\partial y^\talpha}{\partial x^\alpha}=\prod_{i=1}^a\frac{\partial y^{\talpha_i}}{\partial x^{\alpha_i}}\in C^{s-1}(U,\R)$ and  $\frac{\partial x^\beta}{\partial y^\tbeta}=\prod_{i=1}^b\frac{\partial x^{\beta_i}}{\partial y^{\tbeta_i}}\in C^{s-1}(U,\R)$. Since $p\leq s-1$, we deduce that $\tilde{A}^\talpha_\tbeta(\phi,u)\in C^p(U,\R)$ for any $(\talpha,\tbeta)$ and satisfies the needed polynomial controls in the coordinate system $(y^1,\cdots,y^d)$ thanks to the Fa\`a di Bruno Formula.

 A second  useful remark is that $\Cpol^{p,s}(U)$ is an algebra over the field $\R$.
\end{remark}
\begin{lemma}
Assume here that $s\geq 2$. For any coordinate system $(x^i)_{1\leq i\leq d}$ on an open set $U\subset X$ we have for any $1\leq i\leq d$ that
$$\nabla\partial_i\in \Gampol^{s-2,s}(T^1_1(U))\text{ and }\nabla dx^i\in \Gampol^{s-2,s}(T^0_2(U))$$ 
where for $\phi\in \Diff^s_0(\R^n)$, $\nabla=\nabla^\phi$ is the Levi-Civita covariant derivative associated with the pullback metric $g=g^\phi$ on $X$ of the induced metric $g^{\phi(X)}$ on $Y=\phi(X)$ by the Euclidean metric on $\R^n$.
\end{lemma}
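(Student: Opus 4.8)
The plan is to reduce everything to a statement about the Christoffel symbols of $g=g^\phi$ in the chosen coordinates, and then to track how the two-polynomial estimates defining $\Cpol^{\cdot,s}$ propagate through the algebraic operations that build those symbols. Writing $q\colon U\to\R^n$ for the $C^s$ embedding given by the chart $(x^i)$, the pullback metric has components
\begin{equation*}
 g_{ij}(\phi,u)=\langle d\phi(q(u))\,\partial_i q(u),\, d\phi(q(u))\,\partial_j q(u)\rangle ,
\end{equation*}
where $\langle\cdot,\cdot\rangle$ is the Euclidean product on $\R^n$. Since $q\in C^s$, its derivatives up to order $s$ are bounded on any compact $K\subset U$ by constants depending only on $q$, while the spatial derivatives of $d\phi$ up to order $s-1$ (equivalently of $\phi$ up to order $s$) are controlled by $\rho_s(\phi)$. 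Differentiating $g_{ij}$ in $u$ up to order $s-1$ and applying the Fa\`a di Bruno formula to the compositions with $q$, every resulting term is a product of $q$-derivatives (bounded on $K$) and derivatives of $\phi$ of order $\le s$, which yields $\sup_{K,k\le s-1}|\partial^k g_{ij}|\le P(\rho_s(\phi))$. The corresponding Lipschitz-in-$\phi$ estimate with the factor $\rho_s(\phi'\circ\phi^{-1})$ follows by writing $\phi'=(\phi'\circ\phi^{-1})\circ\phi$ and estimating the difference of derivatives by a mean value argument. This shows $g_{ij}\in\Cpol^{s-1,s}(U)$.

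The delicate step is to show that the cometric $g^{ij}$ again lies in $\Cpol^{s-1,s}(U)$. By Cramer's rule $g^{ij}=\mathrm{cof}_{ij}(g)/\det(g_{kl})$, and since $\Cpol^{s-1,s}(U)$ is an algebra (Remark \ref{rem:27.8.1}) the cofactors, being polynomials in the $g_{kl}$, stay in the class; the issue is the reciprocal of the determinant. Here I would use that $\phi^{-1}$ is also controlled: from $d\phi(x)^{-1}=d(\phi^{-1})(\phi(x))$ one gets $\|d\phi(x)^{-1}\|\le 1+\rho_s(\phi)$, so the smallest singular value of $d\phi$ is at least $(1+\rho_s(\phi))^{-1}$. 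Consequently, for any tangent vector to $X$, $g\ge(1+\rho_s(\phi))^{-2}\bar g$ with $\bar g$ the undeformed induced metric, whence
\begin{equation*}
 \det(g_{kl})\ \ge\ (1+\rho_s(\phi))^{-2d}\,\det(\bar g_{kl})\ \ge\ \frac{c_K}{(1+\rho_s(\phi))^{2d}},
\end{equation*}
with $c_K=\min_K\det(\bar g_{kl})>0$ since $q$ is an embedding. Thus $1/\det(g_{kl})$ is bounded by a polynomial in $\rho_s(\phi)$, and differentiating the quotient shows that its $u$-derivatives up to order $s-1$ obey the same polynomial and Lipschitz bounds; hence $g^{ij}\in\Cpol^{s-1,s}(U)$. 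This determinant lower bound, degenerating only polynomially in $\rho_s(\phi)$, is the crux of the argument, and the bookkeeping needed to push the $Q$-type difference estimates through the inversion is the main technical obstacle.

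It remains to assemble the Christoffel symbols
\begin{equation*}
 \Gamma^k_{ij}=\tfrac12\,g^{kl}\bigl(\partial_i g_{jl}+\partial_j g_{il}-\partial_l g_{ij}\bigr).
\end{equation*}
Each derivative $\partial_m g_{jl}$ costs one order of regularity, so it lies in $\Cpol^{s-2,s}(U)$, while $g^{kl}\in\Cpol^{s-1,s}(U)\subset\Cpol^{s-2,s}(U)$; using once more that $\Cpol^{s-2,s}(U)$ is an algebra we conclude $\Gamma^k_{ij}\in\Cpol^{s-2,s}(U)$ (the hypothesis $s\ge2$ guarantees $s-2\ge0$ and $s-2<s'=s$, so the class is defined). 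Finally, $\nabla\partial_i=\Gamma^k_{ji}\,\partial_k\otimes dx^j$ and $\nabla dx^i=-\Gamma^i_{jk}\,dx^k\otimes dx^j$ have all their component functions in $\Cpol^{s-2,s}(U)$, which is exactly the assertion $\nabla\partial_i\in\Gampol^{s-2,s}(T^1_1(U))$ and $\nabla dx^i\in\Gampol^{s-2,s}(T^0_2(U))$. No appeal to coordinate independence is needed since the claim is stated in a fixed chart, though it is consistent with Remark \ref{rem:27.8.1}.
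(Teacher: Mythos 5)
Your proposal is correct and follows essentially the same route as the paper's proof: establish $g_{ij}\in\Cpol^{s-1,s}(U)$ via chain-rule/Fa\`a di Bruno bounds and the decomposition $\phi'=(\phi'\circ\phi^{-1})\circ\phi$, control the cometric through the comatrix and a polynomial lower bound on $\det(g)$ coming from $\|d\phi^{-1}\|_\infty$, and then assemble $\Gamma^k_{ij}$ and the covariant derivatives of the frame and coframe using the algebra property. Your determinant bound $\det(g)\geq c_K(1+\rho_s(\phi))^{-2d}$ is just a slightly repackaged version of the paper's estimate $\det(g)^{-1}\leq\|d\phi^{-1}\|_\infty^{2d}$, so there is no substantive difference.
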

\begin{proof}
First we have $\nabla \partial_j=\Gamma^l_{ij}\partial_l\otimes dx^i$ where the $\Gamma^l_{ij}$ are the Christoffel symbols of second kind so that it is sufficient to prove that $\Gamma^k_{ij}\in\Cpol^{s-2,s}(U)$. For given $\phi\in G_V$, as a function of $u\in U$ we have $g_{ij}=\langle d \phi.\partial_i,d\phi.\partial_j\rangle\in C^{s-1}(U,\R)$. Using the chain rule, we get easily for $u\in K$ that, for any $k\leq s-1$,  $|\partial^kg_{ij}|\leq P_k(\|\phi\|_{k+1,\infty})$ where $P$ is a polynomial. Moreover, introducing $\psi=\phi'\circ\phi^{-1}-\Id$, 
\begin{align*}
 g_{ij}(\phi')-g_{ij}(\phi) &=\langle d(\psi+\Id)\circ\phi\cdot d\phi\cdot\partial_i,d(\psi+\Id)\circ\phi\cdot d\phi\cdot\partial_j\rangle -\langle d \phi\cdot\partial_i,d\phi\cdot\partial_j\rangle \\
 &=\langle d\psi\circ\phi\cdot d\phi\cdot\partial_i,d\psi\circ\phi\cdot d\phi\cdot\partial_j\rangle+\langle d\phi\cdot\partial_i,d\psi\circ\phi\cdot d\phi\cdot\partial_j\rangle+\langle d\psi\circ\phi\cdot d\phi\cdot\partial_i,d\phi\cdot\partial_j\rangle
\end{align*}
we get that $|\partial^kg_{ij}(\phi')-\partial^kg_{ij}(\phi)|\leq \|\psi\|_{k+1,\infty}Q_k(\|\psi\|_{k+1,\infty},\|\phi\|_{k+1,\infty})$ and we deduce immediately that $g_{ij}\in \Cpol^{s-1,s}(U)$.

We need now a similar control for the cometric $g^{ij}$. Denoting $\g=(g_{ij})_{1\leq i,j\leq d}$, we have $\g^{-1}=(g^{ij})_{1\leq i,j\leq d}$ and $\g^{-1}=\Com(\g)^T/\det(\g)$ where $\Com(\g)$ is the comatrix of the matrix $\g$. Since $\Com(\g)^T$ is a polynomial expression in the coefficients $g_{ij}$ we get, using the algebra structure property of Remark \ref{rem:27.8.1}, that all the coefficients of $\Com(\g)$ are in $\Cpol^{s-1,s}(U)$. Similarly, $\det(\g)\in \Cpol^{s-1,s}(U)$ so that, in order to get $\det(\g)^{-1}\in \Cpol^{s-1,s}(U)$,  it is sufficient to prove that for any compact $K\subset U$, there exists  a polynomial $P$ such that
  \begin{equation}
\label{eq:20.8.4}
\det(\g)^{-1}\leq  P(\rho_{s}(\phi))\,.
  \end{equation}
However, since  $T_{\phi(u)}Y=\text{Span}\{d\phi(u)\cdot\partial_i,\ 1\leq i\leq d\}$ where $Y=\phi(X)$, then for $(e_1,\cdots,e_d)$ an orthonormal basis of $T_{\phi(u)}Y$, we have $\det(\g)^{-1}=\det((\langle d\phi^{-1}(\phi(u)).e_i,d\phi^{-1}(\phi(u)).e_j\rangle)_{ij})\leq \|d\phi^{-1}\|^{2d}_\infty$. Using the fact that $\Gamma^k_{ij}=\frac{1}{2}(\partial_ig_{mj}+\partial_j g_{mi}-\partial_mg_{ij})g^{mk}$ we get immediately that $\Gamma^{k}_{ij}\in \Cpol^{s-2,s}(U)$ and $\nabla \partial_i\in \Gampol^{s-2,s}(T^1_1(U))$. Now since $0=\nabla(dx^i(\partial_j))=\nabla dx^i(\partial_j)+dx^i(\nabla\partial_j)$ we get $\nabla dx^i(\partial_j)=-\Gamma^i_{lj}dx^l$ and $\nabla dx^i=-\Gamma^i_{lj}dx^j\otimes dx^l$. Since we have just proved that $\Gamma^i_{lj}\in \Cpol^{s-2,s}(U)$, we get the result.
\end{proof}
\begin{lemma}
\label{lem:20.8.1}
  Let $\cI=\{\ (\alpha,\beta)\ |\ \alpha \in \indset^a,\ \beta\in\indset^b,\ 1\leq a<b\leq s\ \}$ and $(x^i)_{1\leq i\leq d}$ be a coordinate system defined on an open set $U\subset X$. 

There exists a family of functions $(c^\alpha_\beta)_{(\alpha,\beta)\in\cI}$ such that 
\begin{enumerate}
\item for any $(\alpha,\beta)\in\cI$, we have $c^\alpha_\beta\in \Cpol^{s-(1+|\beta|-|\alpha|),s}(U)\subset \Cpol^{0,s}(U)$
\item for any $0\leq k\leq s$ any $f\in H^k_{loc}(U)$ and any $\beta\in \indset^k$, we have (a.e.) on $U$
  \begin{equation}
    \label{eq:20.8.2}
    \partial^k_\beta f=\nabla^k_\beta f+\sum_{l=1}^{k-1}\sum_{\alpha,|\alpha|=l}c^\alpha_\beta\nabla^l_\alpha f
  \end{equation}
\end{enumerate}
where for $s\geq 1$ and $\phi\in \Diff^s_0(\R^n)$, $\nabla=\nabla^\phi$ is Levi-Civita covariant derivative associated with the pullback metric $g=g^\phi$ on $X$ on the Euclidean metric on $\phi(X)$ and where  $\partial^k_\beta f=\partial^k\!f(\partial^k_\beta)$ and $\nabla^l_\alpha f=\nabla^l\! f(\partial^l_\alpha)$.
\end{lemma}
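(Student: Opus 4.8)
The plan is to prove the identity by induction on the order $k=|\beta|$, simultaneously defining the coefficients $c^\alpha_\beta$ (which do not depend on $f$) through the same recursion. The two base cases are immediate: for $k=0$ one has $\partial^0 f=f=\nabla^0 f$, and for $k=1$, since $f$ is a scalar one has $\nabla f=df$, so $\nabla_i f=\partial_i f$; in both cases the correction sum is empty. This also explains why the statement has content only for $s\geq 2$, since $\cI$ is empty otherwise.

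For the inductive step I would fix $\beta=(\beta_1,\beta')$ with $|\beta'|=k-1$ and write $\partial^k_\beta f=\partial_{\beta_1}(\partial^{k-1}_{\beta'}f)$. Applying the induction hypothesis to $\partial^{k-1}_{\beta'}f$ and then $\partial_{\beta_1}$ with the Leibniz rule reduces everything to computing partial derivatives of the coordinate components of the tensors $\nabla^{l}f$ for $l\leq k-1$. The single geometric ingredient needed is the standard identity expressing the partial derivative of the components of a $(0,m)$-tensor $T$ through the components of $\nabla T$,
\begin{equation*}
 \partial_i T_{j_1\cdots j_m}=(\nabla T)_{i\,j_1\cdots j_m}+\sum_{r=1}^{m}\Gamma^p_{i j_r}\,T_{j_1\cdots p\cdots j_m}.
\end{equation*}
Applied with $T=\nabla^{k-1}f$ this produces the leading term $\nabla^k_\beta f$ together with order $(k-1)$ corrections each carrying a single Christoffel factor; applied with $T=\nabla^l f$ inside each summand $\partial_{\beta_1}(c^\alpha_{\beta'}\nabla^l_\alpha f)$ it produces the order $(l+1)$ term $\nabla^{l+1}f$ and order $l$ Christoffel corrections. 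Collecting by covariant order then defines each $c^\alpha_\beta$ as a universal polynomial expression in the Christoffel symbols, their partial derivatives, and the lower coefficients $c^\alpha_{\beta'}$ and $\partial_{\beta_1}c^\alpha_{\beta'}$.

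The main work lies in the regularity bookkeeping. I would use the previous lemma, which gives $\Gamma^p_{ij}\in\Cpol^{s-2,s}(U)$, together with the two structural facts that $\Cpol^{\cdot,s}(U)$ is an algebra (Remark \ref{rem:27.8.1}) and that partial differentiation maps $\Cpol^{p,s}(U)$ into $\Cpol^{p-1,s}(U)$. A short computation with the index $p(\alpha,\beta)=s-(1+|\beta|-|\alpha|)$ then shows that it is exactly preserved by the recursion: the Christoffel corrections at order $k-1$ land in $\Cpol^{s-2,s}(U)$, which is precisely the target class when $|\alpha|=k-1,|\beta|=k$, and differentiating $c^\alpha_{\beta'}\in\Cpol^{s-k+|\alpha|,s}$ drops the index to $s-(1+k-|\alpha|)=p(\alpha,\beta)$; one checks along the way that every coefficient that is actually differentiated has index $s-k+|\alpha|\geq 1$ (because $|\alpha|\geq 1$ and $k\leq s$), so one never leaves the admissible range and the inclusion $\Cpol^{s-(1+|\beta|-|\alpha|),s}\subset\Cpol^{0,s}$ is legitimate since $|\beta|-|\alpha|\leq s-1$. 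Finally, the identity is first established for smooth $f$, where all the manipulations are classical; since the coefficients $c^\alpha_\beta$ are continuous and bounded on compact subsets of $U$ and every derivative of $f$ up to order $k$ lies in $L^2_{\mathrm{loc}}$, both sides depend continuously on $f\in H^k_{\mathrm{loc}}(U)$, so the identity extends by density and holds almost everywhere on $U$.
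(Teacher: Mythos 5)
Your proposal is correct and follows essentially the same route as the paper: induction on the order $k$, with the inductive step driven by the commutation identity $\partial_i T_{j_1\cdots j_m}=(\nabla T)_{i j_1\cdots j_m}+\sum_r\Gamma^p_{ij_r}T_{j_1\cdots p\cdots j_m}$ (which is exactly the paper's $\partial_i(\nabla^k_\beta f)=\nabla^{k+1}_{(i,\beta)}f+\nabla^k f(\nabla_{\partial_i}\partial^k_\beta)$ written in components), the same use of $\Gamma^p_{ij}\in\Cpol^{s-2,s}(U)$ from the preceding lemma, and the same bookkeeping of the index $s-(1+|\beta|-|\alpha|)$ under multiplication and differentiation. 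The only addition is your explicit density argument extending the identity from smooth $f$ to $H^k_{\mathrm{loc}}$, which the paper leaves implicit in its ``(a.e.)'' statement.
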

\begin{proof}
For $k=0$ or $k=1$ the result is trivial. Let consider a proof by induction for $k\geq 1$. We have for $\beta\in\indset^k$ and $\tbeta=(i,\beta)$ that
$$\partial^{k+1}_\tbeta f=\partial_i(\partial^k_\beta f)=\partial_i[\nabla^k_\beta f+\sum_{l=1}^{k-1}\sum_{\alpha,|\alpha|=l}c^\alpha_\beta\nabla^l_\alpha f]\,.$$ 
However, $\partial_i(\nabla^k_\beta f)=\nabla^{k+1}_\tbeta f+\nabla^k f(\nabla_{\partial_i}\partial^k_\beta)$. Moreover, since we have
\begin{equation*}
 \nabla_{\partial_i}\partial^k_\beta=\sum_{l=1}^k\otimes_{j=1}^{l-1}\partial_{\beta_l}\otimes \nabla_{\partial_i} \partial_{\beta_l}\otimes _{j=l+1}^{k}\partial_{\beta_j}=\sum_{l=1}^k\Gamma_{i\beta_l}^m\otimes_{j=1}^{l-1}\partial_{\beta_j}\otimes\partial_{m}\otimes _{j=l+1}^{k}\partial_{\beta_j}
\end{equation*}
we get that $\nabla_{\partial_i}\partial^k_\beta\in \Gampol^{s-2,s}(T^k_0(U))$ and $\nabla^k f(\nabla_{\partial_i}\partial^k_\beta)$ can be written as $\sum_{\alpha,|\alpha|=k}c^\alpha_\tbeta\nabla^k_\alpha f$ for functions $c^\alpha_\tbeta\in \Cpol^{s-2,s}(U)$. 
Similarly, we have for $1\leq l\leq k$ and $\alpha\in\indset^l$ that 
\begin{equation*}
 \partial_i(c^\alpha_\beta\nabla^l_\alpha f)=\partial_i(c^\alpha_\beta)\nabla^l_\alpha f+ c^\alpha_\beta \nabla^{l+1}_{(i,\alpha)}f+c^\alpha_\beta\nabla^lf(\nabla_{\partial_i}\partial^l_\alpha) .
\end{equation*}
Denoting $c^\alpha_{\tbeta,1}=\partial_i(c^\alpha_\beta)\in \Cpol^{s-(1+|\tbeta|-|\alpha|),s}(U)$, $c^{(i,\alpha)}_{\tbeta,2}=c^\alpha_\beta\in \Cpol^{s-(1+|\beta|-|\alpha|),s}(U)=\Cpol^{s-(1+|\tbeta|-|(i,\alpha)|),s}(U)$ and since $\nabla_{\partial_i}\partial^l_\alpha\in \Gampol^{s-2,s}(T^{l+1}_0(U))$, $c^\gamma_{\tbeta,3}=c^\alpha_\beta dx^\gamma(\nabla_{\partial_i}\partial_\alpha^l)\in \Cpol^{s-(1+|\beta|-l),s}(U)\subset\Cpol^{s-(1+|\tbeta|-|\gamma|),s}(U)$ we get that $\partial_i(c^\alpha_\beta\nabla^l_\alpha f)$ can be written as $\sum_{m=l}^{l+1}\sum_{\gamma,|\gamma|=m}c^\gamma_{\tbeta}\nabla^m_\gamma f$ for some appropriate functions $c^\gamma_\tbeta\in \Cpol^{s-(1+|\tbeta|-|\gamma|),s}(U)$ and decomposition \eqref{eq:20.8.2} holds for the rank $k+1$.
\end{proof}
We finally get to the main result itself.\\
\\
\textbf{Proof of Theorem \ref{theo:Sobolev_control_phi}.}
The starting point is to recast the Sobolev norm on $Y=\phi(X)$ as an integral on $X$ through the pullback metric and pullback covariant derivative. Up to the introduction of a finite partition of unity $(\chi_l)$ subordinated to finite covering of $X$ with charts $(U_l,\psi_l)$, we can restrict to one open set $U=U_l$ and show that for $\chi=\chi_l$ and $K=\text{supp}(\rho)$, there exists a polynomial $P$ such that
  \begin{equation}
\sum_{k=0}^s\int_K \chi.g^0_k(\nabla^k f,\nabla^k f)\vol(g)\leq P(\rho_s(\phi))\sum_{k=0}^s\int_K \chi.\og^0_k(\onabla^k f,\onabla^k f)\vol(\og)\label{eq:20.8.3}
\end{equation}
where $\og=g^{\text{Id}}$ and $\onabla=\nabla^{\text{Id}}$.
For $s=0$ the results comes from the inequalities \eqref{eq:20.8.4}. Let assume that $s\geq 1$ (and thus $s'=s$).
From Lemma \ref{lem:20.8.1}, there exists universal functions $c^\alpha_\beta\in \Cpol^{0,s}(U)$ for any pair $(\alpha,\beta)\in \cI$ such that $\partial^k_\beta f=\nabla^k_\beta f+\sum_{l=1}^{k-1}\sum_{\alpha,|\alpha|=l}c^\alpha_\beta\nabla^l_\alpha f$. In particular, if we denote  $\cJ\doteq \{(k,\gamma)\ |\ 0\leq k\leq d,\ \gamma=\indset^k\ \}$, $\f=(\partial^k_\gamma f)_{(k,\gamma)\in\cJ}$ and $\btf=(\nabla^k_\gamma f)_{(k,\gamma)\in\cJ}$, then there exists $\M\in \Cpol^{0,s}(U,L(\R^\cJ,\R^\cJ))$ (invertible since triangular with ones on the diagonal) with coefficients in $\Cpol^{0,s}(U)$ such that $\f=\M\btf$. Moreover, since $\sum_{k=0}^sg^0_k(\nabla^k f,\nabla^k f)\vol(g)$ can be rewritten as $\q(\btf)$ where $\q$ is a non-degenerate positive quadratic form continuously depending on the location $u\in U$ and coefficients in $\Cpol^{s-1,s}(U)\subset \Cpol^{0,s}(U)$, we get that there exists a polynomial $\tilde{P}$ such that $\q(\btf)=\q(\M^{-1}\f)\leq P(\rho_s(\phi))|\f|^2$ so that
$$\sum_{k=0}^s\int_K \chi.g^0_k(\nabla^k f,\nabla^k f)\vol(g)\leq \tilde{P}(\rho_s(\phi))\sum_{k=0}^s\int_K \chi.|\partial^k f|^2dx\,.$$
Furthermore, considering $\M$ for $\phi=\text{Id}$ there exists a constant $R\geq 0$ such that we have $\sum_{k=0}^s\int_K \chi.|\partial^k f|^2dx\leq R\sum_{k=0}^s\int_K \chi.\og^0_k(\onabla^k f,\onabla^k f)\vol(\og)$ so that \eqref{eq:20.8.3} holds with $P=R\tilde{P}$ and we have obtained Theorem \ref{theo:Sobolev_control_phi}. \hfill $\qed$   \\

We conclude this appendix by adding an extra property of continuity with respect to $\phi$ of the pullback $H^s$ metrics, which is used in the proof of Theorem \ref{theo:distance_fshape_bundle}. From the previous developments, we get that for any chart $(U,\varphi)$ on $X$ associated with a coordinate system $(x^1,\cdots,x^d)$ on $U$  there exists a family of functions $c^\alpha_{\beta}$ such that for any $f\in H^s_{loc}(U)$
\begin{equation}
 \partial^k_\beta f=\nabla^k_\beta f+\sum_{l=1}^{k-1}\sum_{\alpha,|\alpha|=l}c^\alpha_\beta\nabla^l_\alpha f\,.\label{eq:27.8.3}
\end{equation}

Let us denote $E\doteq\bigoplus_{k=0}^s (\stackrel{k}{\otimes}T^*X)$, $E$ is a $C^{s-1}$ vector bundle over $X$. For any local chart $(U,\varphi)$ with coordinate functions $(x^1,\cdots,x^d)$, $(q_k(dx^\beta))_{\beta\in\indset^k,1\leq k\leq d}$ is a local frame of $E$ over $U$ where $q_k:\stackrel{k}{\otimes}T^*X\to E$ denotes the canonical embedding. We will also consider $\myEnd(E)\to X$ the endomorphism vector bundle where $\myEnd(E)_x\doteq\myEnd(E_x)$.
\begin{de}
  We say that $M\in \Gamma^{0,s}(\myEnd(E))$ where $M:\Diff^s_0(\R^n)\to \Gamma^0(\myEnd(E)))$ if for any coordinate system $(x^1,\cdots,x^d)$ defined on a open set $U\subset M$, all the coefficients of $M$ in the local frame $(dx^\beta)_\beta$ are in $\Cpol^{0,s}(U)$.
\end{de}

\begin{de}
  We say that $G\in \Gamma^{0,s}(E^*\otimes E^*)$ where $G:\Diff^s_0(\R^n)\to \Gamma^0(E^*\otimes E^*)$ if for any coordinate system $(x^1,\cdots,x^d)$ defined on a open set $U\subset M$, all the coefficients of $G$ in the local frame $(q_k(\partial^k_\alpha)\otimes q_{k'}(\partial^{k'}_{\alpha'}))$ for $0\leq k,k'\leq s$ and $(\alpha,\alpha')\in\indset^k\times\indset^{k'}$ are in $\Cpol^{0,s}(U)$, where $q_k:\stackrel{k}{\otimes}TM\to E^*$ is the canonical embedding.
\end{de}

Now, writing
$$ \|f\|_{H^{s,\phi}(X)} \doteq \|f \circ \phi^{-1} \|_{H^s(\phi(X))}$$
as the pullback $H^s$ metric on $X$ induced by $\phi \in \Diff^s_0(\R^n)$, we have the following property:
\begin{lemma}
\label{lemma:continuity_Hsphi}
 For any $f\in H^{s}(X)$, the application $\Diff^s_0(\R^n) \rightarrow \R_{+}$, $\phi \mapsto \|f\|_{H^{s,\phi}(X)}$ is continuous.
\end{lemma}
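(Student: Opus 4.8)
The plan is to recognize that, once everything is pushed down to $X$ via the pullback construction, the squared pullback norm $\|f\|_{H^{s,\phi}(X)}^2$ is the integral of a \emph{fixed} $L^2$ quantity built from $f$ against coefficients that depend on $\phi$ only through the classes $\Cpol^{0,s}$ and $\Gamma^{0,s}(E^*\otimes E^*)$ introduced above, whose definitions already encode continuity in $\phi$. First I would reduce to a single chart: fix a finite atlas $(U_l,\psi_l)$ with subordinate partition of unity $(\chi_l)$, set $K_l=\text{supp}(\chi_l)$, and write
$$\|f\|_{H^{s,\phi}(X)}^2 = \sum_l \sum_{k=0}^s \int_{K_l} \chi_l\, g^0_k(\nabla^k f,\nabla^k f)\,\vol(g),$$
with $g=g^\phi$, $\nabla=\nabla^\phi$. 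On each $U_l$, exactly as in the proof of Theorem \ref{theo:Sobolev_control_phi}, Lemma \ref{lem:20.8.1} produces the $\phi$-independent vector $\f=(\partial^k_\gamma f)_{(k,\gamma)\in\cJ}\in L^2(K_l,\R^\cJ)$, the invertible matrix $\M\in \Cpol^{0,s}(U_l,L(\R^\cJ,\R^\cJ))$ (lower triangular, unit diagonal) with $\f=\M\btf$, and a positive quadratic form $\q_\phi$ with coefficients in $\Cpol^{0,s}(U_l)$ such that $\sum_k g^0_k(\nabla^k f,\nabla^k f)\vol(g)=\q_\phi(\btf)\,dx$. Substituting $\btf=\M^{-1}\f$ gives a symmetric matrix $G_\phi=(\M^{-1})^T Q_\phi\,\M^{-1}$ on $U_l$ (with $Q_\phi$ the matrix of $\q_\phi$) so that the integrand becomes $\langle G_\phi\f,\f\rangle\,dx$.

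Second I would check that every entry of $G_\phi$ lies in $\Cpol^{0,s}(U_l)$, i.e. that $G_\phi\in\Gamma^{0,s}(E^*\otimes E^*)$. The entries of $Q_\phi$ are assembled from $g_{ij}$, the cometric $g^{ij}$, the Christoffel symbols, the functions $c^\alpha_\beta$ of Lemma \ref{lem:20.8.1} and the volume factor $|g|^{1/2}$, all shown to sit in $\Cpol^{\cdot,s}\subset\Cpol^{0,s}$. Since $\M$ is lower triangular with unit diagonal, the entries of $\M^{-1}$ are polynomials in those of $\M$, hence again in $\Cpol^{0,s}$ by the algebra property of Remark \ref{rem:27.8.1}, which likewise closes the product $(\M^{-1})^T Q_\phi\,\M^{-1}$ inside $\Cpol^{0,s}$. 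The one point needing care is $|g|^{1/2}$: using the uniform lower bound $\det(\g)^{-1}\le P(\rho_s(\phi))$ from \eqref{eq:20.8.4}, $\det(\g)$ stays bounded away from $0$ on $K_l$, so $t\mapsto\sqrt t$ is Lipschitz on the relevant range and both the polynomial sup-bound and the continuity estimate of $\Cpol^{0,s}$ transfer to $|g|^{1/2}$.

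Finally I would exploit the continuity estimate built into the definition of $\Cpol^{0,s}$: for any entry $A$ of $G_\phi$,
$$\sup_{K_l}\bigl|A(\phi,\cdot)-A(\phi',\cdot)\bigr|\le \rho_s(\phi'\circ\phi^{-1})\,Q\bigl(\rho_s(\phi),\rho_s(\phi')\bigr).$$
If $\phi_n\to\phi$ in $\Diff^s_0(\R^n)$, then $\rho_s(\phi_n\circ\phi^{-1})\to 0$ while $\rho_s(\phi_n)$ stays bounded, so the entries of $G_{\phi_n}$ converge to those of $G_\phi$ uniformly on $K_l$ and are uniformly bounded there by the polynomial sup-bound $P(\rho_s(\phi_n))$. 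Since $\f\in L^2(K_l)$ is fixed, dominated convergence yields $\int_{K_l}\chi_l\langle G_{\phi_n}\f,\f\rangle\,dx\to\int_{K_l}\chi_l\langle G_\phi\f,\f\rangle\,dx$; summing over the finitely many charts shows $\phi\mapsto\|f\|_{H^{s,\phi}(X)}^2$ is continuous, and taking square roots gives the claim.

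I expect the main obstacle to be the second step: certifying that all the algebraic operations (inverting $\M$, forming the cometric, extracting $|g|^{1/2}$) preserve not merely the polynomial sup-bounds but, crucially, the continuity-in-$\phi$ estimate characterizing $\Cpol^{0,s}$. Once $G_\phi\in\Gamma^{0,s}(E^*\otimes E^*)$ is established, the passage to the limit is a routine dominated-convergence argument against the fixed $L^2$ datum $\f$.
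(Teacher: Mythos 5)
Your proposal is correct and follows essentially the same route as the paper's proof: both express $\|f\|_{H^{s,\phi}(X)}^2$ as an integral of the fixed, $\phi$-independent derivative data of $f$ against coefficients lying in $\Cpol^{0,s}$ (via Lemma \ref{lem:20.8.1} and the $\Cpol^{0,s}$ membership of the metric, cometric and Christoffel symbols established for Theorem \ref{theo:Sobolev_control_phi}), and then invoke the continuity-in-$\phi$ estimate built into the definition of $\Cpol^{0,s}$ to pass to the limit. The only difference is presentational — you work chart by chart against the coordinate-derivative vector $\f$ and Lebesgue measure, while the paper packages the same computation globally as an endomorphism $\Lambda(\phi)$ of the bundle $E$ acting on $\op_E(f)$ against the reference metric $\og_E$ — and your explicit attention to the square root $|g|^{1/2}$ via the lower bound \eqref{eq:20.8.4} is a point the paper leaves implicit.
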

\begin{proof}
Let's introduce $p_E:\Diff^s_0(\R^n)\times H^s(X)\to L^2(X,E)$ such that 
$$p_E(\phi,f)\doteq \bigoplus_{k=0}^s q_k(\nabla^k f)$$
where $\nabla=\nabla^\phi$ for $\phi \in \Diff^s_0(\R^n)$ as well as the pullback of the metric $g_E(\phi)\doteq \oplus_{k=0}^s g^0_k$ with once again $g=g^\phi$. Then we have by definition
$$\|f\|_{H^{s,\phi}(X)}^2=\int_X g_E(\phi)\left(p_E(\phi,f),p_E(\phi,f)\right)\vol(g^\phi) \ .$$
Now, with \eqref{eq:27.8.3}, we see that there exists $M\in \Gamma^{0,s}(\myEnd(E))$ such that $p_E(\phi,f)=M(\phi)\cdot p_E(\Id,f)$. Similarly, thanks to the previously derived expressions of the metric $g^\phi$, we have $g_E\in \Gamma^{0,s}(E^*\otimes E^*)$ and thus there exists $S\in \Gamma^{0,s}(\myEnd(E))$ such that
\begin{align*}
  \int_X
  g_E(\phi)\left(p_E(\phi,f),p_E(\phi,f)\right)\vol(g^\phi)&=\int_X
  \og_E\left(S(\phi)\cdot p_E(\phi,f),p_E(\phi,f)\right)\vol(\og)\\
  &=\int_X \og_E\left(S(\phi)\cdot
    M(\phi)\cdot\op_E(f),M(\phi)\cdot\op_E(f)\right)\vol(\og)\\
&=\int_X \og_E\left(\Lambda(\phi)\cdot\op_E(f),\op_E(f)\right)\vol(\og)
\end{align*}
with $\Lambda\in \Gamma^{0,s}(\myEnd(E))$ and $\og_E=g_E(\Id)$, $\op_E(f) = p_E(\Id,f)$. Since the coefficients of $\Lambda$ in a local frame belong to $\Cpol^{0,s}$, they are in particular continuous with respect to $\phi$ for the norm of uniform convergence of $\phi$ and its derivatives up to order $s$ on the compact $X$. As a consequence, if $\phi^n \rightarrow \phi$ then $\Lambda(\phi^n) \rightarrow \Lambda(\phi)$ in $\Gamma^0(\myEnd(E))$ and:
$$\|f\|_{H^{s,\phi^n}(X)}^2 \xrightarrow[n\rightarrow \infty]{} \int_X \og_E\left(\Lambda(\phi)\cdot\op_E(f),\op_E(f)\right)\vol(\og) = \|f\|_{H^{s,\phi}(X)}^2 $$
which completes the proof.
\end{proof}

\section{Proof of Theorem \ref{theo_Hamiltonian_eq_metam}}
\label{appendix:proof_theorem_Hamiltonian}
 The proof follows similar steps as the pure diffeomorphic case derived in \cite{Arguillere2015b}. Let's introduce the total cost functional: 
 \begin{align*}
  J(q,\check{f},v,\check{h}) &= \int_0^1 \left[\frac{\gamma_V}{2} \|v_t\|_{V}^2 +\frac{\gamma_f}{2} \|\check{h_t}\|_{H^s_q}^2 \right]dt +g(q_1,\check{f_1}) \\
  &=\int_0^1 L(q_t,v_t,\check{h}_t)dt +g(q_1,\check{f_1}) 
 \end{align*}
where $L$ is by definition the Lagrangian function. It is differentiable with respect to $v \in V$, $\check{h} \in H^s(M)$ as well as $q \in C^{s'}(M,\R^n)$ since $s'\geq s$. The variation of $J$ writes: 
 \begin{align*}
   &(\delta J(q,\check{f},v,\check{h})|(\delta q, \delta \check{f},\delta v, \delta \check{h})) \\
   &=\int_0^1 \left[(\partial_q L(q_t,v_t,\check{h}_t)|\delta q_t) + (\partial_{v} L(q_t,v_t,\check{h}_t)|\delta v_t) + (\partial_{\check{h}} L(q_t,v_t,\check{h}_t)|\delta \check{h}_t)\right]dt \\
   &+ (\partial_{q}g(q_1,\check{f_1})|\delta q_1) + (\partial_{\check{f}}g(q_1,\check{f_1})|\delta \check{f}_1)
 \end{align*}
Note that the previous expression involves different duality brackets, in $(C^{s'}(M,\R^n)^*,C^{s'}(M,\R^n))$ for variation with respect to $\delta q$, in $(V^*,V)$ for the variation with respect to $\delta v$ and in $(H^{s}(M)^*,H^s(M))$ for the variation with respect to $\delta \check{h}$ and $\delta \check{f}$. The optimality of solutions $(q_t,\check{f}_t,v_t,\check{h}_t)$ means that formally $\delta J$ should vanish under variations satisfying the control evolutions $\dot{q_t} = \xi_{q_t}v_t$ and $\dot{\check{f_{t}}} = \check{h}_t$. 

Let $H_{(q_0,\check{f}_0)}^{1}([0,1],C^{s'}(M,\R^n)\times H^s(M))$ be the space of time-dependent states with $H^1$ regularity in time and initial conditions $(q_0,\check{f}_0)$. We define the constraint application 
\begin{equation*}
 \varUpsilon: \ H_{(q_0,\check{f}_0)}^{1}([0,1],C^{s'}(M,\R^n)\times H^s(M)) \times L^2([0,1],V\times H^s(M)) \rightarrow L^2([0,1],C^{s'}(M,\R^n)) \times L^2([0,1],H^s(M))
\end{equation*} 
by $\varUpsilon(q,\check{f},v,\check{h}) \doteq (\dot{q}-\xi_{q}v,\dot{\check{f}}- \check{h})$. It is clearly differentiable with respect to $\check{f},v$ and $\check{h}$. Now, since it is assumed that $V\hookrightarrow \Gamma^{s+1}$, the application $q \mapsto \xi_q v = v \circ q$ is differentiable with respect to $q \in C^{s'}(M,\R^n)$ and equal to $(\partial_q \xi_q v | \delta q) = d_{q} v(\delta q)$. It results that $\varUpsilon$ is differentiable with respect to $q$ as well. 

With these notations, we are considering minimizers of $J$ in the constraint set $\varUpsilon^{-1}(\{0\})$. In order to invoke Lagrange multipliers theorem in this infinite-dimensional setting (Theorem 4.1 in \cite{Kurcyusz1976}), it needs to be checked that $d_{(q,\check{f},v,\check{h})}\varUpsilon$ is surjective for all $(q,\check{f},v,\check{h})$. Writing $\varUpsilon_{1}(q,v) = \dot{q}-\xi_{q}v$ and $\varUpsilon_{2}(\check{f},\check{h}) = \dot{\check{f}}- \check{h}$, we have from \cite{Arguillere2015b} (lemma 3) that $d_{(q,v)}\varUpsilon_1$ is surjective and it is straightforward to verify that so is $d_{(\check{f},\check{h})}\varUpsilon_2$. We deduce the existence of Lagrange multipliers $p \in L^2([0,1],C^{s'}(M,\R^n))^*$ and $p^f \in L^2([0,1],H^s(M))^*$ such that:
\begin{align}
 \label{eq:proof_theorem_Hamiltonian1}
 0 &= \left(d_{(q,\check{f},v,\check{h})}J + (d_{(q,\check{f},v,\check{h})}\varUpsilon)^*(p,p^f) | (\delta q,\delta \check{f},\delta v, \delta \check{h}) \right ) \nonumber \\
 &= (p|\dot{\delta q}) - (p|\partial_q \xi_q v.\delta q) - (p|\xi_q \delta v) + (p^f|\dot{\delta \check{f}}) - (p^f|\delta \check{h}) \nonumber \\
 &\phantom{=}+\int_0^1 \left[(\partial_q L(q_t,v_t,\check{h}_t)|\delta q_t) + (\partial_{v} L(q_t,v_t,\check{h}_t)|\delta v_t) + (\partial_{\check{h}} L(q_t,v_t,\check{h}_t)|\delta \check{h}_t)\right]dt \nonumber \\
 &\phantom{=}+ (\partial_{q}g(q_1,\check{f_1})|\delta q_1) + (\partial_{\check{f}}g(q_1,\check{f_1})|\delta \check{f}_1)
\end{align}
Moreover, as $H^s(M)$ is reflexive, it satisfies the Radon-Nykodym property and we have $L^2([0,1],H^s(M))^* = L^2([0,1],H^s(M)^*)$ which allows to identify $p^f$ as a square-integrable function in $H^s(M)^*$. The case of the geometric momentum $p$ is however slightly more involved but was addressed separately in lemma 4 of \cite{Arguillere2015b}, leading to an equivalent identification $p \in L^2([0,1],C^{s'}(M,\R^n)^*)$. It is then straightforward from the expression of the Hamiltonian in \eqref{eq:Hamiltonian} that $\dot{q_t} = \xi_{q_t} v_t = \partial_p H(q_t,\check{f}_t,p_t,p_t^{f},v_t,\check{h}_t)$ and $\dot{\check{f}} = \check{h}_t = \partial_{p^f} H(q_t,\check{f}_t,p_t,p_t^{f},v_t,\check{h}_t)$. 

Considering the variation on $\delta q$ only (i.e with $\delta v = 0$, $\delta \check{f} = 0$ and $\delta \check{h} =0$) in \eqref{eq:proof_theorem_Hamiltonian1}, we obtain for all $\delta q \in C^s(M,\R^n)$: 
\begin{align}
\label{eq:proof_theorem_Hamiltonian2}
 (p|\dot{\delta q}) &= (p|\partial_q \xi_q v.\delta q)-\int_0^1 (\partial_q L(q_t,v_t,\check{h}_t)|\delta q_t) dt - (\partial_{q}g(q_1,\check{f_1})|\delta q_1) \nonumber \\
 &=\int_0^1 (p_t|(\partial_{q} \xi_{q_t} v_t)(\delta q_t)) dt - \int_0^1 (\partial_q L(q_t,v_t,\check{h}_t)|\delta q_t) dt - (\partial_{q}g(q_1,\check{f_1})|\delta q_1) \nonumber \\
 &=\int_0^1 (\underbrace{(\partial_{q} \xi_{q_t} v_t)^*p_t-\partial_q L(q_t,v_t,\check{h}_t)}_{\doteq \alpha_t}|\delta q_t) dt - (\partial_{q}g(q_1,\check{f_1})|\delta q_1)
\end{align}
Let's denote $r_t = \dot{\delta q}_t$ so that $\delta q_t = \int_{0}^{t} r_s ds$ and:
\begin{align*}
 \int_0^1 (\alpha_t|\delta q_t) dt &= \int_0^1 \int_0^t (\alpha_t|r_s) dt \\
 &= \int_{0}^1 \left(\int_{s}^1 \alpha_t dt \Big| r_s \right) ds 
\end{align*}
This together with \eqref{eq:proof_theorem_Hamiltonian2} shows that $p_t = \int_{t}^1 \alpha_s ds - \partial_{q}g(q_1,\check{f_1})$ for almost all $t\in[0,1]$. Now since $\alpha \in L^2([0,1],C^{s'}(M,\R^n)^*) \subset L^1([0,1],C^{s'}(M,\R^n)^*)$, it results that $p \in H^1([0,1],C^{s'}(M,\R^n)^*)$ and:
\begin{equation*}
 \dot{p_t} = -\alpha_{t} = \partial_q L(q_t,v_t,\check{h}_t) - (\partial_{q} \xi_{q_t} v_t)^*p_t = -\partial_q H(q_t,\check{f}_t,p_t,p_t^{f},v_t,\check{h}_t)
\end{equation*}
with the endpoint condition $p_1 = -\partial_{q}g(q_1,\check{f_1})$. 

Similarly, the variation with respect to $\delta \check{f}$ in \eqref{eq:proof_theorem_Hamiltonian1} leads to:
\begin{equation*}
 (p^f|\dot{\delta \check{f}}) + (\partial_{\check{f}}g(q_1,\check{f_1})|\delta \check{f}_1) =0 
\end{equation*}
If we write $\rho_t = \dot{\delta \check{f_t}}$, we obtain:
\begin{equation*}
 \int_0^1 \left(p^f_t + \partial_{\check{f}}g(q_1,\check{f_1}) \big| \rho_t \right) dt =0 
\end{equation*}
which thus holds for all $\rho \in L^2([0,1],H^s(M))$. It results that for almost all $t \in [0,1]$, $p^f_t = -\partial_{\check{f}}g(q_1,\check{f_1})$ or in other words $p^f \in H^1([0,1],H^s(M)^*)$ and:
\begin{equation*}
\dot{p_t^f} = 0 = -\partial_{\check{f}} H(q_t,\check{f}_t,p_t,p_t^{f},v_t,\check{h}_t)
\end{equation*}

Finally, the variations with respect to $v$ and $\check{h}$ give:
\begin{align*}
&\int_0^1 \left(\xi_{q_t}^* p_t - (\partial_{v} L(q_t,v_t,\check{h}_t)|\delta v_t) \right) dt =0 \\
&\int_0^1 \left(p^f_t - \partial_{\check{h}} L(q_t,v_t,\check{h}_t)|\delta \check{h}_t \right) dt =0
\end{align*}
for all $\delta v \in L^2([0,1],V), \delta \check{h} \in L^2([0,1],H^s(M))$. Therefore
\begin{align*}
&\xi_{q_t}^* p_t - (\partial_{v} L(q_t,v_t,\check{h}_t) = \partial_{v} H(q_t,\check{f}_t,p_t,p_t^{f},v_t,\check{h}_t) = 0\\
&p^f_t - \partial_{\check{h}} L(q_t,v_t,\check{h}_t) = \partial_{\check{h}} H(q_t,\check{f}_t,p_t,p_t^{f},v_t,\check{h}_t) = 0
\end{align*}
and the proof of Theorem \ref{theo_Hamiltonian_eq_metam} is complete.

\bibliographystyle{plain}
\bibliography{myRefs} 

\end{document}